\documentclass{amsart}

\usepackage{amsmath, amssymb, amsthm, marginnote, ltxcmds, adjustbox, stmaryrd, graphicx, bbold, extarrows}
\usepackage{esvect}

\usepackage{mathtools, stackengine, changepage, ragged2e}

\usepackage{todonotes}
\usepackage{float}
\usepackage{multirow}

\definecolor{greencolor}{rgb}{0,0.45,0}
\definecolor{ucphcolor}{rgb}{0.517,0.016,0.016}
\definecolor{OliveGreen}{rgb}{0,0.6,0}
\usepackage[colorlinks,
            citecolor=greencolor,
            linkcolor=ucphcolor,
            urlcolor=greencolor, unicode]{hyperref}
\usepackage[nameinlink,capitalise,noabbrev]{cleveref}

\usepackage{subfig}

\usepackage[backend=biber, style=alphabetic, maxnames=10, maxalphanames=3, minalphanames=3]{biblatex}
\usepackage{tikz}
\usetikzlibrary{cd}
\usetikzlibrary{arrows, arrows.meta, positioning, calc}
\tikzcdset{arrow style=tikz, diagrams={>={Straight Barb[scale=0.8]}}}

\tikzstyle{arrow} = [-{Straight Barb[scale=0.8]}, line width=0.2mm]
\tikzset{
math to/.tip={Glyph[glyph math command=rightarrow]},
loop/.tip={Glyph[glyph math command=looparrowleft, swap]},
}

\usepackage{enumitem}
{\end{enumerate}%
}

\usepackage[
	letterpaper,
	twoside=false,
	textheight=22cm,
	textwidth=14.4cm,
	marginparsep=0.75cm,
	marginparwidth=2.5cm,
	heightrounded,
	centering
]{geometry}

\usepackage{lineno}

\usepackage{contour}
\usepackage{ulem}

\contourlength{0.5pt}

\newcommand{\myuline}[1]{%
  \uline{\phantom{#1}}%
  \llap{\contour{white}{#1}}%
}

\makeatletter
\newcommand*{\saved@myuline}{}
\let\saved@myuline\myuline

\newcommand*{\mathuline}{%
  \mathpalette{\math@myuline\saved@myuline}%
}
\newcommand*{\math@myuline}[3]{%
  \mbox{#1{$#2#3\m@th$}}%
}

\renewcommand*{\myuline}{%
  \relax  
  \ifmmode
    \expandafter\mathuline
  \else
    \expandafter\saved@myuline
  \fi
}
\makeatother

\usepackage{stmaryrd}

\usepackage[capitalise]{cleveref}

\Crefname{prop}{Proposition}{Propositions}
\Crefname{lem}{Lemma}{Lemmas}
\Crefname{cor}{Corollary}{Corollaries}
\Crefname{thm}{Theorem}{Theorems}
\Crefname{alphThm}{Theorem}{Theorems}

\Crefname{defn}{Definition}{Definitions}
\Crefname{notation}{Notation}{Notations}
\Crefname{cons}{Construction}{Constructions}
\Crefname{rmk}{Remark}{Remarks}
\Crefname{obs}{Observation}{Observations}
\Crefname{trick}{Trick}{Tricks}
\Crefname{warning}{Warning}{Warnings}
\Crefname{conj}{Conjecture}{Conjectures}
\Crefname{assump}{Assumption}{Assumptions}
\Crefname{recollect}{Recollection}{Recollections}
\Crefname{terminology}{Terminology}{Terminologies}
\Crefname{conditionsec}{Condition}{Conditions}
\Crefname{fact}{Fact}{Facts}
\Crefname{exercise}{Exercise}{Exercises}
\Crefname{setting}{Setting}{Settings}

\Crefname{question}{Question}{Questions}
\Crefname{example}{Example}{Examples}

\Crefname{figure}{Figure}{Figures}

\crefformat{equation}{(#2#1#3)}
\crefformat{section}{\S#2#1#3}
\crefmultiformat{section}{\S\S#2#1#3}{and~#2#1#3}{, #2#1#3}{, and~#2#1#3}

\newtheorem{thm}[subsubsection]{Theorem}
\newtheorem{prop}[subsubsection]{Proposition}
\newtheorem{lem}[subsubsection]{Lemma}
\newtheorem{cor}[subsubsection]{Corollary}

\newtheorem{alphThm}{Theorem}
\renewcommand{\thealphThm}{\Alph{alphThm}}
\newcommand{\neutralize}[1]{\expandafter\let\csname c@#1\endcsname\count@}
\makeatother

\newtheorem*{thm*}{Theorem}
\newtheorem*{prop*}{Proposition}
\newtheorem*{lem*}{Lemma}
\newtheorem*{cor*}{Corollary}
  
\newtheorem{alphConj}{Conjecture}
\renewcommand{\thealphConj}{\Alph{alphConj}}


\newtheorem{alphCor}[alphThm]{Corollary}
\renewcommand{\thealphCor}{\Alph{alphCor}}


\newtheorem{alphProp}{Proposition}
\renewcommand{\thealphProp}{\Alph{alphProp}}


\theoremstyle{definition}
\newtheorem*{defn*}{Definition}
\newtheorem{defn}[subsubsection]{Definition}
\newtheorem{cons}[subsubsection]{Construction}
\newtheorem{nota}[subsubsection]{Notation}

\newtheorem{recollect}[subsubsection]{Recollections}
\newtheorem{terminology}[subsubsection]{Terminology}

\newtheorem{example}[subsubsection]{Example}
\newtheorem{setting}[subsubsection]{Setting}

\newtheorem{rmk}[subsubsection]{Remark}
\newtheorem{obs}[subsubsection]{Observation}
\newtheorem{fact}[subsubsection]{Fact}

\newtheorem{warning}[subsubsection]{Warning}

\theoremstyle{definition}

\newcommand{\cn}{^{\mathrm{cn}}}
\newcommand{\boldTwocat}{\mathbb{C}\mathrm{AT}}
\newcommand{\boldPr}{\mathbb{P}\mathrm{r}}
\newcommand{\boldCat}{\mathbb{C}\mathrm{at}}
\newcommand{\X}{\mathcal{X}}
\newcommand{\cocartCat}{\mathrm{coCart}}
\newcommand{\THH}{\mathrm{THH}}
\newcommand{\cone}{^{\triangleright}}

\newcommand{\poset}{\mathrm{Pos}}
\newcommand{\tcocone}{^{\triangleright}}
\newcommand{\geomFix}[1]{\Phi^{\family}}

\newcommand{\Mot}{\mathrm{Mot}}
\newcommand{\loc}{^{\mathrm{loc}}}
\newcommand{\U}{\mathcal{U}}

\newcommand{\cofree}{\mathrm{cofree}}

\newcommand{\finiteSmall}{\mathrm{fin}}

\newcommand{\baseCat}{\mathcal{T}}

\newcommand{\distributeAll}[1]{^{\Delta_{#1}}}

\newcommand{\family}{\mathcal{F}}

\newcommand{\Tr}{\mathrm{Tr}}

\newcommand{\perfect}{^{\mathrm{perf}}}
\newcommand{\norm}{\mathrm{N}}
\newcommand{\calg}{\mathrm{CAlg}}
\newcommand{\spc}{\mathcal{S}}

\newcommand{\ind}{\mathrm{Ind}}
\DeclareMathOperator{\coind}{Coind}

\DeclareMathOperator{\cmonoid}{CMon}
\newcommand{\canonical}{\mathrm{can}}
\newcommand{\forget}{\mathrm{fgt}}
\newcommand{\borel}{\mathrm{Bor}}
\DeclareMathOperator{\loops}{\Omega^{\infty}}
\newcommand{\inclusion}{\mathrm{incl}}
\newcommand{\eval}{\mathrm{ev}}
\newcommand{\constant}{\operatorname{const}}
\newcommand{\cat}{\mathrm{Cat}}

\newcommand{\commmonoid}{\mathrm{CMon}}
\newcommand{\presentable}{\mathrm{Pr}}

\newcommand{\A}{\mathcal{A}}

\newcommand{\sC}{{\mathcal C}}
\newcommand{\D}{{\mathcal D}}

\newcommand{\B}{\mathcal{B}}
\newcommand{\op}{^{\mathrm{op}}}

\DeclareMathOperator{\mapsp}{map}
\newcommand{\sphere}{\mathbb{S}}

\newcommand{\sU}{{\mathcal U}}

\newcommand{\E}{\mathcal{E}}

\DeclareMathOperator{\res}{Res}
\DeclareMathOperator{\mackey}{Mack}
\DeclareMathOperator{\presheaf}{PSh}
\newcommand{\orbit}{\mathcal{O}}

\newcommand{\exact}{^{\mathrm{ex}}}
\newcommand{\spectra}{\mathrm{Sp}}
\newcommand{\finite}{\mathrm{Fin}}

\newcommand{\id}{\mathrm{id}}
\DeclareMathOperator{\map}{Map}

\DeclareMathOperator{\cofib}{cofib}
\DeclareMathOperator{\func}{Fun}

\newcommand{\module}{\mathrm{Mod}}
\newcommand{\unit}{\mathbb{1}}

\newcommand{\everythingAlgebra}{\mathbb{E}}
\newcommand{\perfectCat}{\mathrm{Perf}}
\newcommand{\algebraCategory}{\mathrm{Alg}}

\newcommand{\stmodSmall}{\mathrm{stmod}}
\newcommand{\proper}{\mathcal{P}}
\newcommand{\CAT}{\mathrm{CAT}}

\newcommand{\udl}[1]{\underline{{#1}}}

\def\colim{\qopname\relax m{colim}}

\DeclareMathOperator{\spancategory}{Span}

\newcommand{\presentableGstable}{\mathrm{Pr}_G^{\mathrm{st}}}

\newcommand{\presentablestable}{\mathrm{Pr}^{\mathrm{st}}}

\DeclareMathOperator{\const}{Const}

\DeclareMathOperator{\Endomorphism}{End}
\newcommand{\lmodule}{\mathrm{LMod}}

\newcommand{\one}{\mathbb{1}}

\newcommand{\induced}{\mathrm{Ind}}

\newcommand{\dual}{\mathrm{dual}}

\newcommand{\abs}{\mathrm{abs}}
\newcommand{\spl}{\mathrm{split}}
\newcommand{\mot}{\mathrm{mot}}
\newcommand{\yo}{\text{\usefont{U}{min}{m}{n}\symbol{'107}}}
\DeclareFontFamily{U}{min}{}
\DeclareFontShape{U}{min}{m}{n}{<-> dmjhira}{}
\newcommand{\yoneda}{\yo}

\newcommand{\arrdisp}{0.33ex}
\newcommand{\arrdisplacementsp}{0.72ex}

\newcommand{\ardis}{\ar@<\arrdisp>}
\newcommand{\ardissp}{\ar@<\arrdisplacementsp>}

\newcommand*\cocolon{%
        \nobreak
        \mskip6mu plus1mu
        \mathpunct{}%
        \nonscript
        \mkern-\thinmuskip
        {:}%
        \mskip2mu
        \relax
}



\title[{Equivariant  motives and  norms on algebraic K-theory}]{{Equivariant localizing motives and multiplicative norms on algebraic K-theory}}
\author{\textsc{Kaif Hilman}  \and \textsc{Maxime Ramzi}}

\date{\today}

\makeatletter
\patchcmd{\@setaddresses}{\indent}{\noindent}{}{}
\patchcmd{\@setaddresses}{\indent}{\noindent}{}{}
\patchcmd{\@setaddresses}{\indent}{\noindent}{}{}
\patchcmd{\@setaddresses}{\indent}{\noindent}{}{}
\makeatother

\address{Kaif Hilman, Mathematik Zentrum der Universität Bonn, Endenicher Allee 60, 53115 Bonn, Germany}
\email{kaif@math.uni-bonn.de}
\urladdr{https://sites.google.com/view/kaif-hilman/}

\address{Maxime Ramzi, FB Mathematik und Informatik, Universität Münster, Einsteinstra\ss e 62, Münster, Germany}
\email{mramzi@uni-muenster.de}
\urladdr{https://sites.google.com/view/maxime-ramzi-en}

\begin{document}

\begin{abstract}
 We construct multiplicative norms on  equivariant nonconnective algebraic $K$-theory for finite groups $G$. We also construct a genuine equivariant version of THH equipped with a Dennis trace map from K-theory  compatible with the multiplicative norms. To do so, we follow the general strategy of Blumberg--Gepner--Tabuada in the nonequivariant case by generalizing their category of localizing motives to the genuine equivariant context, building upon the theory of perfect $G$-stable categories from \cite{kaifNoncommMotives}. Crucially, we proceed using the recent perspective on noncommutative motives from \cite{MotLoc} which allows us to deal with non-exact functors on this category of motives. Together with an isotropy separation argument for equivariant cubes, we prove our main theorem that   norms  of stable categories preserve equivariant motivic equivalences. As an immediate consequence, we obtain  a unique equivariant multiplicative refinement of nonconnective algebraic $K$-theory.

From these constructions and results, we draw several applications, namely: (1) that the endofunctor of (equivariant) tensor powers on ordinary perfect stable categories preserve motivic equivalences; (2) that the multiplicative norms also preserve the additive motivic equivalences, thus yielding a motivic refinement of a result of \cite{elmantoHaugseng,cnossennorms} that connective algebraic K-theory admits multiplicative norms; (3) we construct a genuine equivariant version  of topological Hochschild homology equipped with a Dennis trace map that is compatible with multiplicative norms; and (4) we prove that every genuine $G$-spectrum is the K-theory of a perfect $G$-stable category.

\end{abstract}

\maketitle

\vspace{-9mm}

\tableofcontents

\section{Introduction}

Developments in recent decades have taught us that algebraic K-theory is best thought of as a homological invariant on stable categories, central to many parts of algebra, geometry, and topology.  Just as in the algebraic topology of spaces where cohomological methods gain their full strength in large part from multiplicative structures such as cup products and Steenrod powers, algebraic K-theory finds its deepest results in connection with analogous multiplicative structures.\footnote{For example, see the positive solution of the chromatic redshift conjecture for $\mathbb{E}_{\infty}$--rings by \cite{landMathewMeierTamme,CMMN2,nullstellensatz} or the new proof of B\"okstedt periodicity using polynomial functoriality in \cite{BGMN}.} The fundamental result in service of guaranteeing such structures, which was proved in the seminal work by Blumberg-Gepner-Tabuada  \cite{BGT,BGTMult} on noncommutative motives, is that the algebraic K-theory functor $K\colon \cat\perfect\rightarrow\spectra$ has a lax symmetric monoidal refinement. In particular, this enhances the spectrum $K(\sC)$ to an $\everythingAlgebra_{\infty}$-ring spectrum when $\sC$ is a symmetric monoidal stable category. But even more than that, the motivic perspective of the aforementioned work endows K-theory with a universal property. For instance, this ensures all the desired multiplicative structures in probing K-theory via trace methods.

The goal of this paper is to extend this discussion to the setting of genuine  equivariant stable homotopy theory for finite groups and capture the subtle multiplicative norm structures of Greenlees-May \cite{greenleesMayMU} and especially of the groundbreaking paper \cite{HHR} of Hill-Hopkins-Ravenel (cf. the introduction of \cite{kaifNoncommMotives} for motivations for such structures, for example, in trying to prove completion theorems for K-theoretic objects). As we shall see, our equivariant work will have consequences also to the nonequivariant theory. For the sake of a leaner mathematical narrative, we have deferred explaining the connections to previous work on this subject in the literature to the end of the introduction.

\addtocontents{toc}{\protect\setcounter{tocdepth}{1}} % Temporarily hide subsections in TOC

\subsection{The main results}
Let $G$ be a finite group. There is no shortage in  diversity as to what equivariant algebraic K-theory could mean.\footnote{See for example the end of the introduction in \cite{kaifNoncommMotives} for a general lay of the land together with precise references.} For us, we will follow the interpretation of \cite{barwick2} and mean by this the functor $\udl{K}\colon \mackey_G(\cat\perfect)\rightarrow \spectra_G\coloneqq \mackey_G(\spectra)$ obtained by taking $G$-Mackey functors $\mackey_G(-)$, in the sense of \cite{barwick1}, of the  \textit{nonconnective} algebraic K-theory functor $K\colon \cat\perfect\rightarrow \spectra$.

In fact, in order to phrase the structure of equivariant multiplicative norms purely $\infty$-categorically, we will need to work in the more highly structured setting of categories parametrized over the orbit category $\orbit_G$ of the group $G$ as introduced by \cite{expose1Elements,nardinThesis,nardinShah}. In this setup, the basic objects are so-called \textit{$G$-categories}, which are objects $\udl{\sC} =\{\sC_H\}_{H\leq G}$ in the category $\cat_G\coloneqq \func(\orbit_G\op,\cat)$ of $G$-categories. For the reader familiar with classical equivariant homotopy theory, this should indeed be viewed as a categorification of the notion of genuine $G$-spaces. 

Moreover, as we shall recall in \cref{section:elements}, there are also the equivariant analogues of stability and symmetric monoidal structures, termed unsurprisingly as $G$-stability and $G$-symmetric monoidal structures. For instance, just as a symmetric monoidal category is an object in $\cmonoid(\cat)$, the category of $\everythingAlgebra_{\infty}$-monoids in categories,  a $G$-symmetric monoidal category is an object in $\mackey_G(\cat)$, the $G$-Mackey functors in categories. The structures in the latter category should be thought of as an $\infty$-categorical axiomatization of the sorts of structures that were considered in \cite{HHR}. Intuitively, an object $\udl{\sC}\in \mackey_G(\cat)$ consists of the data of symmetric monoidal categories $\sC_H$ for each subgroup $H\leq G$, and additionally, for each inclusion of subgroups $H\leq K$, the datum of  symmetric monoidal functors $\res^K_H\colon \sC_K\rightarrow\sC_H$ and $\norm^K_H\colon \sC_H\rightarrow\sC_K$ called the restriction and multiplicative norm functors respectively, together with various compatibility coherences. The classical archetypal example to keep in mind here is the  tensor induction in representation theory which takes a $H$-representation $V$ and produces the $K$-representation $\bigotimes_{k\in K/H}kV$.

On the other hand, the notion of $G$-stability starts with ``fiberwise'' stability, namely the stability of each $\sC_H, H\leq G$, and is combined with $G$-semiadditivity. The latter notion axiomatizes the Wirthm\"uller isomorphism and is about admitting both induction and coinduction functors $\sC_H\to\sC_G$ for each subgroup $H$, defined as left and right adjoints to the restriction functor respectively, and a certain canonical map between them being an equivalence. The first-named author proved in \cite[Thm. B]{kaifNoncommMotives} that these $G$-stable $G$-categories form a nonfull subcategory $\cat\perfect_G$ of the category of $G$-Mackey functors of stable categories, where some previous notions of equivariant K-theory have been defined. These are in some sense the more natural/categorical Mackey functors, since their transfer structures arise from adjunctions rather than being abstract transfers. Furthermore, these in fact assemble to a $G$-symmetric monoidal $G$-category $\udl{\cat}\perfect_G$.

Having sketched the general language, we now come to the contents of this article. The key insight in the recent work \cite{MotLoc} of the second-named author with Sosnilo and Winges is that the category of localizing motives  as constructed by Blumberg-Gepner-Tabuada -- which admits a universal property for mapping out of via colimit-preserving functors per construction -- may alternatively be obtained by performing a Dwyer-Kan localization on the category $\cat\perfect$ by inverting an appropriate notion of motivic equivalences. 

One virtue of this observation is that the category of localizing motives  admits a new universal property for mapping out of  via not necessarily colimit-preserving functors. This opens up the possibility for constructing ``power operations'' at the level of localizing motives (which then descends to the level of algebraic K-theory by decategorification), and the main aim of the present work is to capitalise on this to deal with the multiplicative norms in the more refined setting of $G$-stable categories. In slightly more detail, we mimic \cite{MotLoc}  and introduce an analogous notion of finitary $G$-localizing invariants on the $G$-category $\udl{\cat}\perfect_G$, which in turn gives rise to a definition of  $G$-motivic equivalences, cf. \cref{defn:motivic_equivalence}. Using this, we then define the $G$-category $\udl{\Mot}\loc_G$ as a Dwyer-Kan localization of $\udl{\cat}\perfect_G$ against the $G$-motivic equivalences. Our main results are then the following pair of theorems:

\begin{alphThm}[cf. {\cref{cor:Nmweq}}]\label{alphThm:main}
    Let $H\leq K$ be an inclusion of subgroups of a finite group $G$. The multiplicative norm functor $\norm_H^K : \cat\perfect_H\to\cat\perfect_K$ sends $H$-motivic equivalences to $K$-motivic equivalences. 
\end{alphThm}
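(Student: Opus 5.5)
Following the strategy of \cite{MotLoc} in the nonequivariant case, I would first reduce the statement to a statement about generators of the $H$-motivic equivalences. The class of $H$-motivic equivalences should be the strongly saturated class (closed under 2-out-of-3, filtered colimits, retracts) generated by three kinds of basic maps. The first kind consists of maps inducing equivalences on idempotent completions. The second consists of comparison maps from Verdier-type cofiber sequences: for a $H$-Verdier sequence $\udl{\A}\to\udl{\B}\to\udl{\sC}$, the map $\cofib$-comparison $\udl{\B}/\udl{\A}\to\udl{\sC}$, or equivalently the square with $0$. The third, encoding finitarity, consists of maps from filtered colimits. I would take this generation statement as established (as an equivariant analogue of the characterization in \cite{MotLoc}, presumably proved earlier alongside \cref{defn:motivic_equivalence}). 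Since $\norm_H^K$ preserves filtered colimits (it is built from tensor products, which commute with filtered colimits separately in each variable, and $K/H$ is finite) and preserves idempotent-completion equivalences, the only serious point is the behaviour of $\norm_H^K$ on localization sequences. Moreover, $\norm_H^K$ is not exact, so saturation arguments cannot be applied naively. I would use the trick of \cite{MotLoc}: $\norm_H^K(\udl{\sC})$ is a restriction along the diagonal of the multivariable functor $(\udl{\sC}_{k})_{k\in K/H}\mapsto \bigotimes_k \udl{\sC}_k$. That functor is exact in each variable separately, so it suffices to show that this multifunctor sends motivic equivalences in each variable to motivic equivalences.

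The heart of the argument is an equivariant cube. Given an $H$-Verdier sequence $\udl{\A}\to\udl{\B}\to\udl{\sC}$, applying $\norm_H^K$ to $\udl{\B}$ gives a $K$-category carrying a filtration indexed by the $K$-poset of subsets of $K/H$. This records which tensor factors lie in $\udl{\A}$, and the cube $S\mapsto \bigotimes_{k\in S}\udl{\A}\otimes\bigotimes_{k\notin S}\udl{\B}$ is $K$-equivariant. I would show that the total cofiber of the associated cube of $K$-motives is the motive of $\norm_H^K\udl{\sC}$. To check this, I would use isotropy separation: it suffices to check the statement after applying each geometric fixed point functor to $K$-motives (equivalently, after restriction to subgroups $L\le K$ together with geometric fixed points). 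For each $L$, the cube decomposes along the $L$-orbits of $K/H$. Free-orbit contributions become induced and reduce to the nonequivariant cube statement of \cite{MotLoc}, which says that $\otimes$ is exact in each variable. Orbits with nontrivial isotropy contribute smaller norms, and these are handled by induction on $|K|$ (or on $[K:H]$) via the double coset formula for $\res^K_L\norm^K_H$.

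The main obstacle is exactly this isotropy separation step for equivariant cubes. One has to check that the non-free faces of the cube (where $L$ fixes points of $K/H$) produce cofiber sequences of motives, which requires knowing that $\norm$ composed with geometric fixed points behaves like a Tate-type diagonal. To handle this, I would first try to prove that $\Phi^L\norm_H^K$ of a Verdier sequence is again a Verdier-type sequence, via the formula $\Phi^L\norm\simeq \bigotimes_{\text{orbits}}\Phi(\cdot)$. Once that is done, motivic equivalences are detected jointly by the family $\{\Phi^L\}$. Granting this conservativity (which I expect was established earlier), the inductive argument closes and $\norm_H^K$ preserves motivic equivalences.
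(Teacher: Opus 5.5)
Your ingredients (an equivariant cube, isotropy separation, induction on the group) are the right ones, but the argument has genuine gaps. First, the currying reduction is exactly what fails equivariantly. At $K$-fixed points the indexed product $f_*\udl{\cat}\perfect_H$ (for $f\colon K/H\to K/K$) has no independent variables, because $K$ permutes the tensor factors. A factorization of $\norm_H^K(\phi)$ one factor at a time therefore passes through maps like $\phi\otimes\id\otimes\cdots\otimes\id$, which are not morphisms of $\cat\perfect_K$. So exactness of $\bigotimes_{k\in K/H}$ in each variable only shows that the underlying map $\res^K_e\norm_H^K(\phi)$ is a motivic equivalence; this is precisely the obstruction the paper's cube argument is designed to get around. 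The reduction to generators of a strongly saturated class is not something the paper establishes or uses, and, as you note yourself, it would not interact well with a nonexact functor anyway.

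The paper needs neither reduction. Motivic equivalences are exactly the maps inverted by $\udl{\U}\loc$, so it suffices that $\udl{\U}\loc_K\circ\norm_H^K$ inverts them. By isotropy separation in $\udl{\Mot}\loc_K$ (\cref{obs:proper_isotropy_separation_generalities}) this is checked in two places. After restriction to proper subgroups it is handled by induction, via the double coset formula and the reduction of norms along non-orbits to binary tensor products of $V$-categories, where currying is legitimate. After the motivic $\Phi^{\proper}$, the decisive claim is that $\Phi^{\proper}\udl{\U}\loc_K\norm_H^K\colon\cat\perfect_H\to\Phi^{\proper}\udl{\Mot}\loc_K$ is itself a finitary Mackey localizing invariant (\cref{cor:Geometriclocalizing}), and hence inverts $H$-motivic equivalences by definition.

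Your route to this crux does not work as stated, because it conflates two different geometric fixed points. The formula $\Phi^L\norm\simeq\bigotimes_{\text{orbits}}\Phi(\cdot)$ and the Verdier-sequence statement you aim for concern categorical geometric fixed points of $K$-categories. The conservativity you invoke holds for geometric fixed points of $K$-motives, and these are not the motives of the former. For example, take the free $C_p$-category with underlying category $\spectra^\omega$ and fixed points $\perfectCat(\sphere[C_p])$. Its categorical geometric fixed points vanish, whereas by \cref{cor:corep_verdier_quot} the motivic ones see the cofiber of the assembly map $K(\sphere)_{hC_p}\to K(\sphere[C_p])$, which does not vanish. Likewise, a statement about the total cofiber of the cube of $K$-motives cannot come out of motivic isotropy separation, since being a Karoubi sequence is a property of categories.

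What is missing are the two categorical statements of \cref{prop:cubical_inclusions}.

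(1) The map $\colim_{f_*[1]\setminus 1}\norm(\udl{\A}\to\udl{\B})\to\norm\udl{\B}$ is fully faithful, so together with $\norm\udl{\B}\to\norm\udl{\sC}$ it forms a Karoubi sequence. This is proved by isotropy separation for fully faithfulness at the level of categories (\cref{lem:fully_faithful_isotropy_separation}). Restrictions are handled by induction. Since the only $K$-fixed vertex of the punctured cube is $\emptyset$, the categorical $\Phi^{\proper}$ of the map is $\Phi^{\proper}\norm\udl{\A}\to\Phi^{\proper}\norm\udl{\B}$. This is fully faithful because norms preserve fully faithful functors (\cref{lem:norms_preserve_fully_faithfulness}) and categorical $\Phi^{\proper}$ does too. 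The nonequivariant base case (\cref{prop:nonequivariant_cube_fully_faithful}) is a statement about colimits in $\cat\perfect$ proved via adjointability, not a formal consequence of exactness of $\otimes$ in each variable.

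(2) Filter the punctured cube by cardinality of subsets. Each step is fully faithful with cofiber a finite sum of properly induced categories (\cref{lem:inducedness_of_cube_cofibers}), since no nonempty proper subset of $K/H$ is $K$-fixed. These layers die in $\Phi^{\proper}\udl{\Mot}\loc_K$, so $\Phi^{\proper}\udl{\U}\loc_K$ of the punctured colimit agrees with $\Phi^{\proper}\udl{\U}\loc_K(\norm\udl{\A})$. One therefore never needs the full total-cofiber statement, only its image after $\Phi^{\proper}$.

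Only (1) and (2) together show that $\Phi^{\proper}\udl{\U}\loc_K\norm_H^K$ takes Karoubi sequences to cofiber sequences.
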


% As an immediate consequence of this result, we obtain the symmetric monoidal statement in the following collection of results:

\begin{alphThm}[cf. {\cref{thm:GMotGPres,thm:motnorm,prop:corep,cor:consKlaxG}}]\label{alphThm:K_is_G-laxsym}
    The $G$-category $\udl{\Mot}\loc_G$ is $G$-stable and $G$-presentable, and the Dwyer-Kan localization $\udl{\sU}\loc\colon \udl{\cat}\perfect_G\to \udl{\Mot}\loc_G$ is the universal finitary $G$-localizing invariant. Moreover, $\udl{\Mot}\loc_G$ attains a unique $G$-symmetric monoidal structure which refines $\udl{\sU}\loc$ to a $G$-symmetric monoidal functor.  
    
    Finally, equivariant nonconnective K-theory $\udl{K}\colon \udl{\cat}\perfect_G\subset \udl{\mackey}_G(\cat\perfect)\xrightarrow{\udl{K}} \myuline{\spectra}_G$ is corepresented by the tensor unit in $\udl{\Mot}\loc_G$ in that there is a factorization of $\udl{K}$ as \[\udl{\cat}\perfect_G\xrightarrow{\udl{\sU}\loc}\udl{\Mot}\loc_G\xrightarrow{\myuline{\mapsp}(\unit,-)}\myuline{\spectra}_G\] and thus $\udl{K}\colon \udl{\cat}\perfect_G\rightarrow\myuline{\spectra}_G$ refines to a $G$-lax symmetric monoidal functor.  Furthermore, the canonical map $(-)^\simeq \to \Omega^\infty \udl{K}_G$ has a unique lax $G$-symmetric monoidal refinement. 
\end{alphThm}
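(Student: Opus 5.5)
We plan to derive Theorem~\ref{alphThm:K_is_G-laxsym} from Theorem~\ref{alphThm:main} together with the nonequivariant results of \cite{MotLoc} and \cite{BGT}, applied fiberwise, and the theory of perfect $G$-stable categories of \cite{kaifNoncommMotives}.

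\textbf{Step 1: $G$-presentability and $G$-stability.} We first identify $\udl{\Mot}\loc_G$ concretely. Following \cite{MotLoc}, we would show that the Dwyer--Kan localization of $\udl{\cat}\perfect_G$ at $G$-motivic equivalences agrees, fiberwise over $G/H$, with a localization of $\func(\cat\perfect_H{}^{\omega,\op},\spectra)$, or a suitable $H$-stable variant, at the localizing-sequence and filtered-colimit conditions. This realizes $\Mot\loc_H$ as an accessible Bousfield localization of a $G$-presentable $G$-stable $G$-category. The restriction functors preserve motivic equivalences, since finitary localizing invariants restrict to such invariants. Induction and coinduction exist at the level of $\cat\perfect_G$ and also preserve motivic equivalences. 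The Wirthm\"uller identification descends from $\udl{\cat}\perfect_G$, which is $G$-semiadditive by \cite{kaifNoncommMotives}. Together these give $G$-stability and $G$-presentability. Universality then follows formally: a finitary $G$-localizing invariant inverts $G$-motivic equivalences by definition, so it factors uniquely through the localization. Because the target is $G$-stable and the invariant preserves the relevant finite colimits and filtered colimits, the factorization is a $G$-colimit-preserving functor.

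\textbf{Step 2: the $G$-symmetric monoidal structure.} The key input is that a $G$-symmetric monoidal structure on a $G$-category $\udl{\sC}$ descends uniquely to a Dwyer--Kan localization $\udl{\sC}[W^{-1}]$ exactly when two conditions hold. First, the fiberwise tensor products preserve $W$ in each variable. Second, the norms $\norm_H^K$ send $W_H$ to $W_K$. For the fiberwise tensors we use the nonequivariant argument of \cite{MotLoc}: tensoring with a fixed object preserves localizing sequences and filtered colimits, hence preserves motivic equivalences. The norm condition is precisely Theorem~\ref{alphThm:main}. With both conditions in hand, $\udl{\Mot}\loc_G$ acquires a unique $G$-symmetric monoidal structure making $\udl{\sU}\loc$ $G$-symmetric monoidal.

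\textbf{Step 3: corepresentability of $\udl{K}$ and the lax refinement.} We would show that, for each $H$, $\map(\unit_H,\sU\loc(\sC))\simeq K(\sC_H)$. Here $\unit_H$ is the image of $\udl{\perfectCat}(\sphere)$, i.e.\ $\spectra^\omega_H$ as a perfect $H$-stable category. The argument is Yoneda in the presheaf model of Step~1 combined with the BGT corepresentability of nonconnective $K$-theory. We would use that $\map_{\cat\perfect_H}(\spectra^\omega_H,\udl{\sC})$ is the underlying fiber $\sC_H$, which holds by freeness of $\spectra^\omega_H$ on one generator in the $H$-stable setting. We then check that these identifications are compatible with restrictions and with the transfers induced from $G$-semiadditivity, producing an equivalence of $G$-functors to $\myuline{\spectra}_G$.

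The corepresented functor $\myuline{\mapsp}(\unit,-)$ is the right adjoint of the $G$-symmetric monoidal unit map $\myuline{\spectra}_G\to\udl{\Mot}\loc_G$. It is therefore $G$-lax symmetric monoidal, and composing with $\udl{\sU}\loc$ gives the lax refinement of $\udl{K}$. For the map $(-)^\simeq\to\Omega^\infty\udl{K}$, we note that $(-)^\simeq$ is corepresented by the unit in $\udl{\cat}\perfect_G$ on the cartesian level. The map is then the unit of the localization composed with group completion. Uniqueness of its lax refinement follows from a universal property: $(-)^\simeq$ is the initial lax $G$-symmetric monoidal functor to $G$-spaces, so it is unit-corepresented, and maps out of it are determined by where they send the unit.

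\textbf{Main obstacle.} We expect the hardest step to be Step~2's norm compatibility, and this is exactly the content of Theorem~\ref{alphThm:main}. Since we may assume it, the remaining delicate point is the descent criterion for $G$-symmetric monoidal structures along parametrized Dwyer--Kan localizations. We would try to prove that criterion via the $\infty$-operadic or Mackey-functor description in $\mackey_G(\cat)$, localizing each span-functoriality levelwise.
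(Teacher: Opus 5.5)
Your outline has the same architecture as the paper's proof, and Step~2 is essentially what the paper does. \cref{cor:Nmweq} is proved for all $w\colon W\to V$, and its reduction to orbits is exactly your fiberwise-tensor argument. The descent criterion is cited rather than proved (\cref{recollect:monoidal_DK_localization}, i.e.\ \cite[Prop. 2.3.4]{kaifNoncommMotives}). The gaps are in the bridges from the genuine setting to \cite{MotLoc} and \cite{BGT}.

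\textbf{Step~1.} $H$-motivic equivalences are defined using $H$-semiadditive $H$-functors into $H$-stable $H$-categories. A fiberwise presheaf model of the kind you describe only sees levelwise (``Mackey'') invariants $\cat\perfect_H\to\E$. You must therefore show that these two classes of equivalences coincide. The paper does this in \cref{cor:Mackeyweq=weq}, via the fixed-points/Mackey adjunction \cref{lem:fixmackadj}. Alternatively, you could run your construction with the levelwise class and check at the end that the output is a $G$-localizing invariant. The same comparison is how the paper sees that induction preserves the class, as needed for \cref{lm:Gaddloc}. For restriction, note that restricting a $G$-invariant to $H$ proves nothing; the paper instead coinduces an $H$-invariant (\cref{lm:weqsubGcat}).

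More seriously, the agreement of such a Dwyer--Kan localization with a BGT-style presheaf localization is the main theorem of \cite{MotLoc}. So ``following'' it means re-proving it over $\cat\perfect_H$. Universality is also not formal: that the factorization $\overline{E}$ preserves fiberwise colimits is exactly \cite[Thm. B.6]{MotLoc}. The missing idea is \cref{thm:equivmod}, namely $\cat\perfect_H\simeq\module_{\spectra_H^\omega}(\cat\perfect)$. Together with \cref{cor:Gmotivic}, this lets one cite \cite[App. B]{MotLoc} verbatim.

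\textbf{Step~3.} ``Checking compatibility with restrictions and transfers'' does not assemble levelwise equivalences into an equivalence of $G$-functors $\udl{\cat}\perfect_G\to\myuline{\spectra}_G$; that is an infinite coherence problem. The paper avoids it in two moves.
\begin{enumerate}
\item $\udl{K}_G$ is \emph{defined} (\cref{defn:officialdefnK}) as the adjunct of $K((-)^G)$ under \cref{lem:fixmackadj}. Since $\myuline{\mapsp}(\unit,\udl{\sU}\loc(-))$ is likewise $G$-semiadditive into $\udl{\mackey}_G(\spectra)$, one only has to identify a single functor $\cat\perfect_G\to\spectra$.
\item That identification comes from descending $\myuline{\spectra}^\omega_G\otimes-\dashv(-)^G$ to an adjunction $\Mot\loc\rightleftarrows\Mot\loc_G$ (\cref{lm:adjmotGmot}, which again needs \cref{cor:Mackeyweq=weq}) and then applying \cite[Prop. 9.26]{BGT}.
\end{enumerate}
Your ``freeness of $\spectra_H^\omega$'' is the right input. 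However, Yoneda in a presheaf model does not compute $K(\sC_H)$ by itself; the BGT computation has to be transported along this descended adjunction.

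\textbf{Minor points.} $(-)^\simeq$ is corepresented by $\myuline{\spectra}^\omega_G$, the unit of the norm structure, not a cartesian one. The comparison map is simply the one induced by $\udl{\sU}\loc$ on maps out of the unit, not a group completion. Your initiality argument for uniqueness is fine.
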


Therefore, for any equivariant $\infty$-operad $\mathcal O$ and any $\mathcal O\otimes\mathbb E_1$-ring $G$-spectrum $R\in\spectra_G$, the $G$-spectrum $\udl{K}(R)$ naturally attains an $\mathcal{O}$-ring $G$-spectrum structure.\footnote{This provides one articulation of an answer to Problem 1.2 of Blumberg in \cite{AIMproblemList}.}

Among other things, \cref{alphThm:main} allows for a detailed analysis of operations related to motivic tensor powers in the nonequivariant setting, which we highlight below. Now, let us  briefly comment on its proof. Just as in the nonequivariant setting, being a $G$-localizing invariant is defined in terms of sending Karoubi sequences (i.e. bifiber sequences) of $G$-categories to fiber sequences. Proving the theorem then essentially demands a study of how these exact sequences interact with the norm functors $\norm^K_H$. The difficulty here is that these norms are not linear functors, and so applying it to Karoubi sequences produces null sequences in $\udl{\cat}\perfect_G$ which may fail to be either fiber or cofiber sequences -- this is analogous to the module theoretic statement that if $A$ is a sub-$R$-module of $B$, then $B^{\otimes_R n}/A^{\otimes_R n}$ is not $(B/A)^{\otimes_R n}$. 

To make matters worse, the leftmost term in these normed cofiber sequences involve colimits over equivariant cubes, which are significantly trickier to handle than ordinary cubes. For a more verbose discussion of these problems, we refer the reader to the introduction of \cite{kaifNoncommMotives}. 

The key techniques we develop to deal with these two problems, which might be of independent utility, are then certain isotropy separation arguments performed at the level of categories to check fully faithfulness, cf. \cref{lem:fully_faithful_isotropy_separation}, as well as an equivariant cubical filtration, cf. \cref{prop:cubical_inclusions}. These methods permits us to give inductive proofs for the relevant fully faithfulness statements as well as perform an equivariant cubical devissage to handle the cubical terms in the cofiber sequences above.

\subsection{Applications}

We now turn to some consequences of the main results above. Our first application pertains to nonlinear operations on K-theory  which features both in the question of polynomial functoriality of $K$-theory à la \cite{BGMN} and in questions of categorifying commutative ring spectra following \cite{MotLoc}. As an immediate consequence of \cref{alphThm:main}, we obtain the following, which is part of the theory of Borel localizing motives recorded in \cref{subsec:borel_motives}. 

\begin{alphCor}[cf. {\cref{cor:tensor_power_preserve_moteq}}]\label{alphCor:symmetric_powers}
    Let $G$ be a finite group and $X$ a finite $G$-set. The functor $((-)^{\otimes X})_{hG}: \cat\perfect\to \cat\perfect$ preserves motivic equivalences, and therefore induces an endofunctor of $\Mot\loc$.  In particular, for any natural number $n$, $((-)^{\otimes n})_{h\Sigma_n}$ preserves motivic equivalences, and therefore, the functor $\Gamma_n$ introduced in \cite[Cons. 4.16]{BGMN} also preserves motivic equivalences, and induces an endofunctor of $\Mot\loc$. 
\end{alphCor}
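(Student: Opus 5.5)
The plan is to deduce the corollary from \cref{alphThm:main} by a Borel construction. For a motivic equivalence $f\colon \sC\to\D$ in $\cat\perfect$, we must show that $((\sC)^{\otimes X})_{hG}\to((\D)^{\otimes X})_{hG}$ is again a motivic equivalence.

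\textbf{Step 1: reduce to $X=G/H$.} A finite $G$-set decomposes as $X=\coprod_i G/H_i$. Then $\sC^{\otimes X}\simeq\bigotimes_i\sC^{\otimes G/H_i}$ as categories with $G$-action. Taking homotopy orbits does not commute with this tensor product, so we work $G$-equivariantly instead of splitting the orbits off by hand.

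\textbf{Step 2: the equivariant input.} Regard $\sC$ as a Borel $H$-stable category with trivial action, i.e.\ the cofree $G$-category on $\sC$ restricted to $H$, say $\udl{\sC}^{\borel}_H$. The cofree (Borel) functor $\cat\perfect\to\cat\perfect_H$ should send motivic equivalences to $H$-motivic equivalences. The reason is that any finitary $H$-localizing invariant, precomposed with this functor and evaluated at each level $H/L$, is the $hL$-fixed points of a finitary localizing invariant of the underlying category. This is exactly the content of the subsection on Borel localizing motives, and it is where a real check is needed.

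\textbf{Step 3: apply the norm.} By \cref{alphThm:main}, $\norm_H^G\udl{\sC}^{\borel}$ preserves motivic equivalences. Its underlying category with $G$-action is $\sC^{\otimes G/H}$ with the permutation action. For general $X$, take the $G$-tensor product of the $\norm_{H_i}^G$. Here $\udl{\sU}\loc$ is $G$-symmetric monoidal by \cref{alphThm:K_is_G-laxsym}, so motivic equivalences are stable under $G$-tensor products, for instance by tensoring one variable at a time. The result is a $G$-motivic equivalence whose underlying Borel object is $\sC^{\otimes X}$.

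\textbf{Step 4: extract homotopy orbits.} Show that the functor $\cat\perfect_G\to\cat\perfect$ sending $\udl{\E}$ to $(\E_e)_{hG}$ preserves motivic equivalences. Let $E$ be a finitary localizing invariant. Then $\udl{\E}\mapsto E((\E_e)_{hG})$ is a finitary $G$-localizing invariant, for the following reasons.
\begin{itemize}
\item Homotopy orbits in $\cat\perfect$ are colimits, so they preserve Karoubi sequences and filtered colimits.
\item Restriction to the underlying category is exact.
\end{itemize}
The $G$-semiadditivity requirement (Wirthmüller) on the invariant needs an argument. Alternatively, we can identify $(\E_e)_{hG}$ with $\E_G$ in the Borel-complete case. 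Failing that, we can view this functor as $\E\mapsto \colim_{BG}\res^G_e$, i.e.\ induction-then-quotient, and check that it factors through $\udl{\Mot}\loc_G$ via the universal property in \cref{alphThm:K_is_G-laxsym} composed with the colimit-preserving functor $\Mot\loc_G\to\Mot\loc$.

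The main obstacle I expect is Step 4, together with the compatibility in Step 2: checking that the Borel/cofree embedding and the homotopy-orbit extraction interact well with the genuine $G$-motivic structure.

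The $\Sigma_n$ statement is the case $G=\Sigma_n$, $X=\{1,\dots,n\}$. Since $\Gamma_n$ in \cite[Cons. 4.16]{BGMN} is built from such symmetric-power functors by cofibers and colimits, which preserve motivic equivalences, $\Gamma_n$ does too. Every functor preserving motivic equivalences descends to $\Mot\loc$ by the Dwyer–Kan localization property.
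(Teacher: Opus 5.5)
\textbf{Gap in Step 2.} Your overall architecture matches the paper's: constant action, embedding into genuine $H$-categories, \cref{alphThm:main}, passage to the underlying Borel object, then $(-)_{hG}$. However, Step 2 fails as written.

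You use the cofree embedding $b_*$, whose value at $H/L$ is $\sC^{hL}=\func(BL,\sC)$. Homotopy fixed points in $\cat\perfect$ preserve neither Karoubi sequences nor filtered colimits; see the Warning following \cref{cor:tensor_power_preserve_moteq}. So for a finitary Mackey $H$-localizing invariant $F$, there is no reason for $F\circ b_*\const$ to be a finitary localizing invariant.

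The justification you give is also false. Already for $F=K((-)^H)$ it would say $K(\func(BL,\sC))\simeq K(\sC)^{hL}$. This fails for $\sC=\perfectCat(\mathbb{C})$ and $L=C_2$: the left side has $K_0\cong R(C_2)\cong\mathbb{Z}^2$, while $\pi_0$ of the right side contains the $2$-adic integers.

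\textbf{The fix.} Use the left adjoint $b_!\colon\func(BH,\cat\perfect)\hookrightarrow\cat\perfect_H$ instead, as the paper does. Its value at $H/L$ is the homotopy orbit category $\sC_{hL}\simeq((\ind\sC)^{hL})^{\omega}$; for the trivial action this is $\sC\otimes\perfectCat(\sphere[L])$. This functor preserves Karoubi sequences, filtered colimits and finite sums. Hence $F\circ b_!\const$ is a finitary localizing invariant, and by \cref{cor:Mackeyweq=weq} the functor $b_!\const$ sends motivic equivalences to $H$-motivic equivalences. Since $b^*b_!\simeq\id$ and $b^*\norm^G_H\simeq\norm^G_H b^*$, the underlying Borel object of $\norm^G_H b_!\const\sC$ is still $\sC^{\otimes G/H}$. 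With this substitution, the rest of your argument goes through, apart from one point in Step 4.

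\textbf{Step 4.} The $G$-semiadditivity (Wirthmüller) worry is unnecessary. By \cref{cor:Mackeyweq=weq}, you only need $\udl{\E}\mapsto E((\E_e)_{hG})$ to be a finitary \emph{Mackey} $G$-localizing invariant. That means an additive functor $\cat\perfect_G\to\spectra$ which sends Karoubi sequences to bifiber sequences and preserves filtered colimits; no Wirthmüller condition enters.

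What does need proof is that $(-)_{hG}$ preserves Karoubi sequences. Being a colimit only gives cofiber sequences; you must also show that $\A_{hG}\to\B_{hG}$ is fully faithful. The paper does this in \cref{obs:orbits_preserves_moteq}, identifying the map with $((\ind\A)^{hG})^{\omega}\to((\ind\B)^{hG})^{\omega}$ via ambidexterity in $\presentable^L_{\mathrm{st}}$. This is also exactly the input that makes $b_!$ work in Step 2.

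Your fallback of identifying $(\E_e)_{hG}$ with $\E_G$ is not available, because norms leave the image of $b_!$. For example, $\norm^G_e\spectra^{\omega}$ is the unit of $\cat\perfect_G$. Its $G$-fixed points are $\spectra^{\omega}_G$, not $(\spectra^{\omega})_{hG}=\perfectCat(\sphere[G])$.
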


From the latter part of this statement, one can in fact directly recover the main theorem of \cite{BGMN} which extends the functoriality of the connective algebraic K-theory space functor to polynomial functors between stable categories, refining to the level of spaces many of the classical nonadditive operations on the K-theory groups. 

Before moving on to the next application, let us explain the essential content of this result and its proof. In the body of the paper, we will in fact prove something slightly stronger, i.e. that $(-)^{\otimes X}\colon \cat\perfect\rightarrow \func(BG,\cat\perfect)$ sends motivic equivalences to \textit{Borel} motivic equivalences, i.e. those morphisms which are inverted by all localizing invariants out of $\func(BG,\cat\perfect)$. Importantly, note that there are localizing invariants out of $\func(BG,\cat\perfect)$ which do not factor through the forgetful functor to $\cat\perfect$, for instance, the functor $K((-)_{hG})$. So even though the notion of Karoubi sequences in both $\cat\perfect$ and $\func(BG,\cat\perfect)$ only depend on the underlying sequence in $\cat\perfect$, Borel motivic equivalences in $\func(BG,\cat\perfect)$ are \textit{not} checked by passing to the underlying morphism in $\cat\perfect$.

From the point of view of giving a proof to such results, the reason it is not as straightforward as one might first surmise is the following: for two motivic equivalences $f\colon \A \rightarrow \B$ and $f'\colon\A'\rightarrow \B'$ in $\cat\perfect$, the easy reason that  $f\otimes f'$ is also a motivic equivalence is that we can factor it as $(f\otimes \id_{\B'})\circ(\id_{\A}\otimes f')$. In turn, it is easy to show that both $(f\otimes \id_{\B'}), (\id_{\A}\otimes f')$ are motivic equivalences because the tensor product preserves Karoubi sequences in each variable. However, in the equivariant context, one needs rather to argue that morphisms like $f^{\otimes G}\colon \A^{\otimes G}\rightarrow \B^{\otimes G}$ are (Borel) motivic equivalences. In this situation, the currying trick from before of factorizing the morphism fails because the factorization is not through $G$-equivariant morphisms, i.e. they are not morphisms in $\func(BG,\cat\perfect)$. This is the fundamental reason which necessitates a symmetric way of dealing with the problem via cubes as afforded by \cref{alphThm:main}.

\vspace{2mm}

Coming back to the genuine equivariant situation, notice that the main theorems have been about nonconnective algebraic K-theory, and one can also wonder about similar questions for the connective variant thereof or rather its associated category of motives. In this setting, the appropriate version of excision is the weaker requirement of being a $G$-splitting\footnote{Traditionally, splitting invariants are called additive invariants. We have opted to follow this new terminology from \cite{MotLoc}, see their Remark 6.4 for a justification.} invariant (\cref{defn:splitting_invariants}) rather than a $G$-localizing invariant. For this question, we use that nonconnective K-theory is a $G$-lax symmetric monoidal functor by \cref{alphThm:K_is_G-laxsym} and deduce the following connective analogue:

\begin{alphThm}[cf. {\cref{cor:norms_on_splitting_motives}}]\label{alphThm:splitting_motives}
    There is a  $G$-category $\udl{\Mot}^{\spl,\abs}_G$ which is $G$-additive, equipped with a map $\udl{\sU}^{\spl,\abs}\colon \udl{\cat}\perfect_G\to \udl{\Mot}^{\spl,\abs}_G$ witnessing it as the  universal  $G$-splitting invariant. Moreover, $\udl{\Mot}^{\spl,\abs}_G$ attains a unique $G$-symmetric monoidal structure which refines $\udl{\sU}^{\spl,\abs}$ to a $G$-symmetric monoidal functor.  
    
    Finally, equivariant connective K-theory $\udl{K}\cn\colon \udl{\cat}\perfect_G\subset \udl{\mackey}_G(\cat\perfect)\xrightarrow{\udl{K}\cn} \myuline{\spectra}\cn_G$ is corepresented by the tensor unit in $\udl{\Mot}^{\spl,\abs}_G$ and thus $\udl{K}\cn\colon \udl{\cat}\perfect_G\rightarrow\myuline{\spectra}\cn_G$ refines to a $G$-lax symmetric monoidal functor.
\end{alphThm}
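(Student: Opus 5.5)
The idea is to run the same Dwyer--Kan localization strategy as for $\udl{\Mot}\loc_G$ in \cref{alphThm:K_is_G-laxsym}, but with $G$-splitting sequences (split Verdier sequences of $G$-stable categories) in place of Karoubi sequences. Following \cite{MotLoc}, I would define a morphism in $\cat\perfect_H$ to be an \emph{additive $H$-motivic equivalence} if every $H$-splitting invariant valued in a $G$-additive (not necessarily stable) target inverts it. I would then define $\udl{\Mot}^{\spl,\abs}_G$ as the Dwyer--Kan localization of $\udl{\cat}\perfect_G$ at these maps, levelwise in $H\leq G$. The restriction functors $\res^K_H$ preserve split sequences, so they preserve the equivalences and the localizations assemble into a $G$-category.

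To get $G$-additivity and the universal property, I would mirror \cite{MotLoc}. First show that the localization agrees with the accessible localization of $\func^{\times}(\udl{\cat}\perfect_G^{\omega},\udl{\spc})$ obtained by forcing split sequences to become fiber sequences. Then show that it is generated under filtered colimits by compact objects, so that mapping out of it via colimit-preserving $G$-functors recovers exactly the finitary $G$-splitting invariants. $G$-semiadditivity should be inherited from $\udl{\cat}\perfect_G$, where $\mathrm{Ind}^K_H$ and $\coind^K_H$ agree. Group-completeness of the mapping spaces should follow from the split sequence $\A\to\mathrm{Ar}(\A)\to\A$, exactly as in the nonequivariant additivity argument.

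The main point is the symmetric monoidal structure. A $G$-symmetric monoidal structure on a Dwyer--Kan localization exists uniquely, making the localization functor $G$-symmetric monoidal, as soon as the localizing class is closed under the fiberwise tensor products and under the norms $\norm^K_H$. Closure under fiberwise tensor products is easy: $\A\otimes-$ preserves split Verdier sequences, and one uses the factorization trick $f\otimes f'=(f\otimes\id)\circ(\id\otimes f')$. The main obstacle is showing that $\norm^K_H$ preserves additive motivic equivalences.

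Here I would use \cref{alphThm:main} together with connective K-theory, in the following way. The corepresentability statement identifies the mapping spectra in $\udl{\Mot}\loc$ out of $\udl{\sU}\loc(\A)$ with a nonconnective K-theory. Precomposing with $\norm^K_H$ therefore turns the norm into a map between localizing motives via \cref{alphThm:main}, which one then compares with connective covers. More concretely, the strategy is:
\begin{enumerate}
\item Reduce to the case of generating additive motivic equivalences: the maps $\A\oplus\mathcal{C}\to\mathcal{B}$ coming from split sequences $\A\to\mathcal{B}\to\mathcal{C}$, together with their closure under 2-out-of-3 and filtered colimits. Both of these closure operations are preserved by $\norm^K_H$, since the norm commutes with filtered colimits.
\item Apply the norm to $\mathcal{B}$, which carries a semiorthogonal decomposition $\langle\A,\mathcal{C}\rangle$. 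The norm $\norm^K_H\mathcal{B}$ then acquires a $K$-equivariant cubical semiorthogonal filtration indexed by $K\times_H\{0,1\}$, with associated graded pieces given by induced tensor products of copies of $\A$ and $\mathcal{C}$.
\item Invoke the equivariant cubical filtration \cref{prop:cubical_inclusions}, in its split form, to conclude that $\norm^K_H\mathcal{B}$ is additively motivically equivalent to the sum of these pieces, which is precisely $\norm^K_H(\A\oplus\mathcal{C})$ under the distributivity formula.
\end{enumerate}

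Since every step of the filtration is split, only splitting (not localizing) invariants are needed for this argument. This gives the unique $G$-symmetric monoidal refinement. Finally, $\udl{K}\cn$ is the universal $G$-splitting invariant into $\myuline{\spectra}\cn_G$, so it is corepresented by the unit, and lax $G$-monoidality follows formally from corepresentability by the unit in a $G$-symmetric monoidal $G$-category.
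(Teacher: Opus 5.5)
Your framework matches the paper: you localize $\udl{\cat}\perfect_G$ at universal $K_G$-equivalences, get $G$-semiadditivity from \cref{lm:Gaddloc}, and reduce the $G$-symmetric monoidal structure to closure of this class under norms. But the step that carries the weight has a genuine gap.

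In step 3 you invoke \cref{prop:cubical_inclusions} ``in its split form'', and no such form is available. That proposition only says that the maps $\colim_{(f_*[1])_{\leq i-1}\setminus 1}\to\colim_{(f_*[1])_{\leq i}\setminus 1}$ and $\colim_{f_*[1]\setminus 1}f_\otimes(\udl{\A}\to\udl{\B})\to f_\otimes\udl{\B}$ are fully faithful with the stated cofibers. Splitting invariants give no information about Karoubi sequences that are not split. You would have to show that these inclusions admit adjoints in $\cat\perfect_G$, i.e.\ that the right adjoints of their Ind-extensions preserve compact objects. The isotropy-separation induction behind \cref{prop:cubical_inclusions}, namely \cref{lem:fully_faithful_isotropy_separation}, only controls mapping spectra and does not by itself yield this. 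Compactness is not detected by restrictions to proper subgroups together with $\Phi^{\proper}$: for instance $E{C_p}_+\in\spectra_{C_p}$ has compact underlying spectrum and vanishing geometric fixed points, but is not compact. This is exactly the difficulty flagged in \cref{rmk:counterintuitive_splitting}, which the paper deliberately avoids. Even granting splitness, you would still need to check that the specific map $\norm^K_H(\udl{\A}\oplus\udl{\sC})\to\norm^K_H\udl{\B}$ realizes the identification of the graded pieces with the distributivity decomposition of $\norm^K_H(\udl{\A}\oplus\udl{\sC})$.

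The paper closes this with a different idea. Your fourth paragraph gestures at it but never makes it precise, and your steps 1--3 do not use it; in particular they never use \cref{alphThm:main}.
\begin{itemize}
\item By \cref{lm:univKeqdesc}, a map $f$ is a universal $K_V$-equivalence if and only if there is $g$ with $[gf]=[\id]$ and $[fg]=[\id]$ in $K_0$ of the respective endofunctor categories.
\item The $G$-lax symmetric monoidal structure on nonconnective $\udl{K}$ from \cref{cor:consKlaxG} (ultimately from \cref{alphThm:main}), combined with the normed evaluation map, yields natural maps $w_\otimes\udl{K}_W(\udl{\func}\exact_W(\udl{\A},\udl{\B}))\to\udl{K}_V(\udl{\func}\exact_V(w_\otimes\udl{\A},w_\otimes\udl{\B}))$. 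These are compatible with composition and identities.
\item The induced multiplicative norm on $\pi_0$ then transports the relations to $w_\otimes f$ and $w_\otimes g$ (\cref{cor:norm_of_K-equiv}). No split cubical analysis is needed.
\end{itemize}

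Some smaller points:
\begin{itemize}
\item Reducing to generating maps is legitimate, but via the universal property of the Dwyer--Kan localization. The ``2-out-of-3 and filtered colimits'' description of the class is not accurate: absolute splitting invariants need not commute with filtered colimits, and saturation is not just 2-out-of-3 closure.
\item The universal property of $\udl{\Mot}^{\spl,\abs}_G$ is the one for finite-indexed-product-preserving functors into $G$-additive targets, immediate from \cref{lm:Gaddloc}. It is not the finitary presentable one you sketch.
\item $\udl{K}\cn$ is not itself the universal $G$-splitting invariant; that is $\udl{\sU}^{\spl,\abs}$. So its corepresentability by the unit needs an actual argument.
\end{itemize}
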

Perhaps somewhat unexpectedly, we obtain \cref{alphThm:splitting_motives} indirectly \textit{from} \cref{alphThm:K_is_G-laxsym} rather than dealing with it directly even though the former statement about splitting motives seems superficially  a simpler situation than the latter about localizing motives. For a possible explanation of this, we refer the reader to \cref{rmk:counterintuitive_splitting}. Incidentally, this result also settles in the affirmative the question as to whether the ``pointwise'' and ``normed'' motives considered in \cite{kaifNoncommMotives} are equivalent.

\vspace{2mm}

Another consequence of this approach to producing norms is that it directly gives $K$-theory a universal property as a functor equipped with multiplicative norms. To illustrate the theoretical usefulness of this, we turn to an equivariant generalization of trace methods. More specifically, we leverage on the work of \cite{HSS} to construct an equivariant version $\udl{\THH}$ of topological Hochschild homology as a normed functor\footnote{This in particular answers Problem 1.6 of Merling in \cite{AIMproblemList}.} which is also a $G$-splitting invariant. By \cref{alphThm:K_is_G-laxsym}, we  therefore obtain: 

\begin{alphThm}[cf. {\cref{cor:dennis_trace}}]\label{alphThm:dennis_trace}
    There is an equivariant Dennis trace transformation $\udl{K}\to \udl{\THH}$ of $G$-lax symmetric monoidal functors $\udl{\cat}\perfect_G\rightarrow \myuline{\spectra}_G$.    In particular for any $G$-$\infty$-operad $\mathcal O$ and any $\mathbb E_1\otimes\mathcal O$-ring $G$-spectrum $R$, the Dennis trace map $K_G(R)\to \THH_G(R)$ can be canonically upgraded to an $\mathcal O$-ring map.  
\end{alphThm}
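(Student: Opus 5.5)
The plan is to obtain the Dennis trace from the universal property in \cref{alphThm:K_is_G-laxsym}. That theorem says nonconnective $\udl{K}$ is corepresented by the unit of the $G$-symmetric monoidal $G$-presentable category $\udl{\Mot}\loc_G$. I will therefore build $\udl{\THH}$ as a $G$-symmetric monoidal (or at least $G$-lax symmetric monoidal) finitary $G$-localizing invariant. I will then get the trace as the unique lax natural transformation from the corepresented functor determined by the unit.

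First, construct $\udl{\THH}$. For each $H\leq G$, the value $\THH_H(\udl{\sC})$ should be the Hill--Hopkins--Ravenel/\cite{HSS}-style genuine $H$-equivariant cyclic bar construction on the $H$-stable category $\udl{\sC}_{H}$. The restrictions come from restricting the category. The norms come from the canonical equivalence $\norm^K_H\THH(\udl{\sC})\simeq\THH(\norm^K_H\udl{\sC})$, which is a genuine-equivariant form of the fact that $\THH$ commutes with tensor products. The norm-compatibility and $G$-symmetric monoidality are exactly what \cite{HSS} supply, since they build $\THH$ as a normed (indeed $G$-symmetric monoidal) functor from $\udl{\cat}\perfect_G$ to $\myuline{\spectra}_G$. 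I would verify that their construction can be fed $G$-stable categories, and not merely $G$-rings.

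Second, check that $\udl{\THH}$ is a finitary $G$-localizing invariant in the sense of \cref{defn:motivic_equivalence}. Finitarity reduces to the nonequivariant fact that $\THH$ commutes with filtered colimits, applied to each geometric fixed point or categorical fixed point. For localization I would argue via isotropy separation: a $G$-Karoubi sequence gives Karoubi sequences on each level. Geometric fixed points of $\udl{\THH}$ should be expressible through nonequivariant $\THH$ of (twisted) categories built from the levels, and nonequivariant $\THH$ is localizing. Since geometric fixed points jointly detect equivalences of $G$-spectra, this gives the fiber sequence.

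This is the step I expect to be the main obstacle. The concern is that it is not a priori clear that the \cite{HSS} construction is even a $G$-splitting invariant, let alone localizing. So I would first try to identify $\Phi^H\udl{\THH}$ with a twisted $\THH$ of $\sC_H$, which is localizing by the usual Morita-invariance argument, and then conclude by joint conservativity.

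Third, assemble the trace. By \cref{alphThm:K_is_G-laxsym}, $\udl{\THH}$ factors uniquely as a $G$-colimit-preserving $G$-symmetric monoidal functor $\Phi\colon\udl{\Mot}\loc_G\to\myuline{\spectra}_G$ composed with $\udl{\sU}\loc$. The unit map $\unit_{\spectra_G}\to\Phi(\unit)$ then induces
\[\udl{K}=\myuline{\mapsp}(\unit,-)\longrightarrow\myuline{\mapsp}(\Phi\unit,\Phi(-))\longrightarrow\myuline{\mapsp}(\unit_{\spectra_G},\Phi(-))=\udl{\THH}.\]
Each piece is $G$-lax symmetric monoidal: the corepresented functor of a unit is canonically so, and $\Phi$ is $G$-symmetric monoidal. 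Applying this to an $\mathbb E_1\otimes\mathcal O$-ring $R$, viewed as an $\mathcal O$-algebra in $\udl{\cat}\perfect_G$ via $\udl{\perfectCat}(R)$, gives the $\mathcal O$-ring map $K_G(R)\to\THH_G(R)$.
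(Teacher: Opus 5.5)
There is a genuine gap in your first step. The $G$-symmetric monoidal structure on $\udl{\THH}$ is the substantive content of the theorem, and \cite{HSS} does not supply it. That work is nonequivariant. What it provides is a trace construction $\cmonoid(\CAT_2)\to\func([1],\cmonoid(\cat))$: it sends a symmetric monoidal $2$-category to the symmetric monoidal dimension functor on its dualizable objects, naturally in symmetric monoidal $2$-functors. It builds no normed functor out of $\udl{\cat}\perfect_G$.

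Nor is a $G$-symmetric monoidal structure the same as pointwise equivalences $\norm^K_H\THH(\udl{\sC})\simeq\THH(\norm^K_H\udl{\sC})$. These equivalences must be coherent over $\spancategory(\finite_G)$. The missing idea is as follows.
\begin{enumerate}
\item Define $\udl{\THH}_G(\udl{\A})$ as the dimension of $\udl{\ind}(\udl{\A})$ in $\presentable^L_{G,\mathrm{st}}$. This requires $\udl{\ind}(\udl{\A})$ to be dualizable and $G$-exact functors to go to internal left adjoints (\cref{lm:Gdbl}, via \cref{thm:equivmod} and rigidity of $\spectra_G$).
\item Upgrade the $G$-symmetric monoidal structure on $\udl{\presentable}^L_{G,\mathrm{st}}$ to a Mackey functor valued in $\CAT_2$ (\cref{prop:mackey2}), so that restrictions and norms become symmetric monoidal $2$-functors.
\item Postcompose with the trace functor above. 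This yields $\udl{\THH}_G$ as a $G$-symmetric monoidal functor, with all coherences automatic.
\end{enumerate}
Without something of this kind, your third step has nothing to which \cref{thm:motnorm} can be applied.

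Second, you never verify $G$-semiadditivity. It is part of the definition of a $G$-localizing invariant and is needed to invoke \cref{thm:motnorm}, so levelwise localization is not enough. This, rather than localization, is where the paper uses isotropy separation. For $H<G$ proper, consider the comparison map $\induced_H^G\THH_H(\udl{\A})\to\THH_G(\induced_H^G\udl{\A})$.
\begin{itemize}
\item After restriction to proper subgroups it is an equivalence by induction, since restrictions are symmetric monoidal and hence preserve dimensions.
\item After $\Phi^G$ both sides vanish, because $\Phi^{\proper}$ is symmetric monoidal and $\Phi^{\proper}\induced_H^G\udl{\A}=0$.
\end{itemize}
Localization itself is immediate from \cite[Thm.~3.4]{HSS} once $\udl{\THH}_G$ is identified with the $\spectra_G$-linear trace. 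Your geometric-fixed-point route to localization would also work, using $\Phi^G\udl{\THH}\simeq\THH(\Phi^{\proper}-)$ (no twisting is needed) and the fact that $\Phi^{\proper}$ preserves Karoubi sequences by \cref{lem:fully_faithful_isotropy_separation}. However, that identification (\cref{prop:geometric_fixed_THH}) again comes from the trace description, which you have not set up.

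Your final step, producing the transformation from the corepresented functor $\myuline{\mapsp}(\unit,-)$ and specializing to $\udl{\perfectCat}(R)$, is fine and matches the paper.
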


We expect this to be a useful fundamental piece of structural theory when running a trace method argument in the equivariant setting in the same way that having the structure of a ring map on the Dennis trace is an indispensable part of the nonequivariant story.  As an illustration that the $G$-lax symmetric monoidality of the trace map potentially captures interesting and nontrivial structures, we note in \cref{obs:factoring_trace_map} that for every finite group $G$, the usual trace map $K(\sphere)\rightarrow \THH(\sphere)=\sphere$ admits a factorization $K(\sphere)\rightarrow \Phi^G\udl{K}(\sphere_G)\rightarrow  \THH(\sphere)=\sphere$ where $\Phi^G$ is the geometric fixed point functor. As further examples, we also point out lifts of the proper Tate construction on the Dennis trace along the Nikolaus--Scholze Tate--Frobenius map in two situations in \cref{example:frobenius_lift_THH_borel,example:frobenius_lift_THH_normed_ring}.

As complements to this construction of equivariant THH, we also provide several basic formal properties about it, for example, that it interacts nicely with geometric fixed points and that it is computed as a free loop space when applied to equivariant local systems. We also use it to split off the $G$-suspension spectrum of a $G$-space from a version of equivariant A-theory, which we expect to coincide with the one of Malkiewich--Merling \cite{monaCaryCobordism}.

\vspace{2mm}

Finally, we forget all matters about multiplicative norms and give an application of the mere construction of the category of equivariant localizing motives and the Dennis trace map from \cref{alphThm:dennis_trace}. Namely, with all the machinery set up, we may directly copy the proof of \cite[Thm. 1.6]{MotLoc} to obtain the following genuine equivariant generalization of the main theorem of \cite{RSW0} that every spectrum is the algebraic K-theory of some stable category.

\begin{alphThm}[cf. {\cref{thm:every_spectrum_is_K-theory}}]\label{alphThm:every_spectrum_is_K-theory}
    Let $X\in\spectra_G$. Then there exists $\udl{\sC}\in\cat\perfect_G$ and an equivalence $\udl{K}(\udl{\sC})\simeq X\in\spectra_G$.
\end{alphThm}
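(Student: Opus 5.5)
Following the strategy of \cite[Thm. 1.6]{MotLoc}, and hence of \cite{RSW0}, we proceed in three steps: realize $X$ as a filtered colimit of compact motives, pass from motives to $G$-stable categories, and then correct the answer using the Dennis trace.

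\emph{Step 1: reduce to realizing spectra by motives.} By \cref{alphThm:K_is_G-laxsym}, $\udl{K}(\udl{\sC})\simeq \mapsp(\unit,\udl{\sU}\loc(\udl{\sC}))$, where $\mapsp$ denotes the $G$-spectrum-valued mapping object in $\udl{\Mot}\loc_G$ and $\unit = \udl{\sU}\loc(\udl{\perfectCat}_G)$. The first goal is an object $M\in \Mot\loc_G$ of the form $\udl{\sU}\loc(\udl{\sC})$ with $\mapsp(\unit,M)\simeq X$. The tautological candidate is $M = \unit\otimes X$, using the $\spectra_G$-linear structure that $G$-stability and $G$-presentability of $\udl{\Mot}\loc_G$ provide. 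We will not try to compute $\udl{K}$ of this object; it only serves as a starting point.

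\emph{Step 2: every motive is a filtered colimit of motives of categories.} Since $\udl{\sU}\loc$ is a Dwyer--Kan localization and $\udl{\cat}\perfect_G$ has filtered colimits preserved by $\udl{\sU}\loc$ (finitarity), I would show that every object of $\Mot\loc_G$ is of the form $\udl{\sU}\loc(\udl{\sC})$. This is the equivariant analogue of the statement in \cite{MotLoc} that $\udl{\sU}\loc$ is essentially surjective. The argument there uses only the following facts:
\begin{enumerate}
\item motives of categories are closed under cofibers, via Verdier quotients, and under shifts, via Calkin or suspension constructions;
\item they are closed under filtered colimits;
\item they generate the category under colimits.
\end{enumerate}
For (1) one must check that the $G$-Calkin construction (equivariant Ind-objects modulo compacts) makes sense in $\cat\perfect_G$; I would take this from \cite{kaifNoncommMotives}. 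Granting this, $M=\unit\otimes X$ lifts to some $\udl{\sC}$.

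\emph{Step 3: the main obstacle.} Step 2 alone only controls $\udl{K}(\udl{\sC})$ up to the endomorphisms $\mapsp(\unit,\unit)=\udl{K}(\sphere_G)$, which is not $\sphere_G$. As in \cite{RSW0,MotLoc}, the fix is to use the Dennis trace of \cref{alphThm:dennis_trace}. Recall that $\THH$ is a localizing invariant with $\THH(\udl{\perfectCat}_G)=\sphere_G$ and unit map $\sphere_G\to\udl{K}(\sphere_G)\to\udl{\THH}=\sphere_G$ the identity. This exhibits $\sphere_G$ as a retract of $\udl{K}(\sphere_G)$ via motives, so there is an idempotent $e$ on $\unit$ in the stable category $\Mot\loc_G$ whose corepresented functor is $\sphere_G$ on the unit. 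We would then build a telescope or mapping-cone construction killing the complementary summand, again realized by a $G$-stable category using Step 2, and tensor with $X$ to get $\mapsp(\unit,M)\simeq X$.

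I expect the hardest part to be this last step. Splitting $e$ requires idempotent completeness of the motive category, which holds by stability and presentability, but it must also be compatible with identifying $\mapsp(\unit,-)$ on $\unit\otimes X$. Concretely, this means verifying the equivariant version of the computation $\mapsp(\unit,\unit\otimes X)\simeq \udl{K}(\sphere_G)\otimes X$ on compacts and extending by filtered colimits. One must check that this works genuinely, i.e. at all fixed points simultaneously, which should follow from $G$-presentability, since $\mapsp(\unit,-)$ preserves filtered colimits because $\unit$ is $G$-compact.
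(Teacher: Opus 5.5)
Your reduction is sound. By corepresentability, everything comes down to finding $M\in\Mot\loc_G$ with $\mapsp(\unit,M)\simeq X$. Step 2 is automatic and needs no Calkin or closure argument: $\udl{\Mot}\loc_G$ is defined as the Dwyer--Kan localization of $\udl{\cat}\perfect_G$, and a Dwyer--Kan localization is essentially surjective.

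\textbf{The gap is Step 3, which rests on a false premise.} The retraction $\sphere_G\to\udl{K}(\sphere_G)\xrightarrow{\mathrm{tr}}\sphere_G$ is a retraction of ring spectra, not of $\udl{K}(\sphere_G)$-modules. So it is not induced by an idempotent endomorphism of $\unit$ in $\Mot\loc_G$. Endomorphisms of $\unit$ act on $\mapsp(\unit,\unit)\simeq\udl{K}(\sphere_G)$ $K$-linearly, by multiplication. Hence a summand $e\unit$ has $\mapsp(\unit,e\unit)$ equal to a $K$-module summand of $\udl{K}(\sphere_G)$.

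Already for $G=e$ this is fatal. Here $\pi_0\map_{\Mot\loc}(\unit,\unit)\cong K_0(\spectra^\omega)\cong\mathbb{Z}$ has only the idempotents $0$ and $1$, so $\unit$ is indecomposable. Since $K(\sphere)\not\simeq\sphere$, none of its summands corepresents $\sphere$. There is therefore no complementary summand to kill, and idempotent-completeness is beside the point. The object you need is in general not a retract of $\unit$, nor of $\unit\otimes X$.

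\textbf{The missing idea, which is how the paper argues, is to use the trace purely as a ring map and to identify the part of $\Mot\loc_G$ generated by the unit with modules.}
\begin{enumerate}
\item Since $\mapsp(\unit,-)$ is lax symmetric monoidal, it lifts to a functor $\Mot\loc_G\to\module_{\udl{K}(\sphere_G)}(\spectra_G)$.
\item Since $\unit$ is compact (\cref{cor:compactness_of_unit_in_motloc}), enriched Schwede--Shipley gives a fully faithful embedding $\module_{\udl{K}(\sphere_G)}(\spectra_G)\hookrightarrow\Mot\loc_G$ onto the localizing subcategory generated by $\unit$. This embedding is a section of $\mapsp(\unit,-)$.
\item Restricting scalars along the $\mathbb{E}_\infty$-map $\mathrm{tr}\colon\udl{K}(\sphere_G)\to\sphere_G$ makes $X$ a $\udl{K}(\sphere_G)$-module whose underlying $G$-spectrum is $X$.
\item Its image $M$ under the embedding satisfies $\mapsp(\unit,M)\simeq X$.
\item Essential surjectivity of $\udl{\sU}\loc_G$ then produces $\udl{\sC}$ with $\udl{K}(\udl{\sC})\simeq X$.
\end{enumerate}
The object $M$ is built from $\unit$ by colimits, not by splitting an idempotent.
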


One might be tempted to try to \textit{deduce} the statement above from the nonequivariant result as follows: in \cite{RSW0}, it was shown that in fact $K\colon \cat\perfect\rightarrow \spectra$ admits a functorial section $s$, and so one could try to deduce that this induces also a functorial section to $K\colon \mackey_G(\cat\perfect)\rightarrow\mackey_G(\spectra)=\spectra_G$. However, the functor $s$ in fact does \textit{not} preserve finite products, and so it does not induce a functor backwards on Mackey functors. As such, it seems to us necessary to set up the theory of equivariant localizing motives in order to obtain  \cref{alphThm:every_spectrum_is_K-theory}. Moreover, the theorem above  also gives a  stronger conclusion since it asserts that every genuine $G$-spectrum may be realised as the K--theory spectrum of an object in the nonfull subcategory $\cat\perfect_G\subset \mackey_G(\spectra)$.

\subsection{Relation to other work} We only mention here the types of equivariant K-theory that are most directly related to what we consider in this paper. For a survey of other kinds of equivariant K-theory, see for example the introduction to \cite{kaifNoncommMotives}.

As far as we are aware, the first work to construct the multiplicative norms on  algebraic K-theory is Elmanto-Haugseng \cite[Thm. 4.3.7]{elmantoHaugseng}. More specifically, using the formalism of bispans and taking the deep work on polynomial functoriality of K-theory by Barwick-Glasman-Mathew-Nikolaus \cite{BGMN} as the K-theoretic input, they enhanced the equivariant algebraic K-theory \textit{space} with the structure of multiplicative norms. Later on, Cnossen--Haugseng--Lenz--Linskens \cite[Thm. B]{cnossennorms} proved an abstract result about product-preserving presheaves on bispans  of finite $G$-sets which enhances Elmanto-Haugseng's construction from the K-theory space to the connective K-theory \textit{spectrum}. 

In a separate vein, the first-named author \cite{kaifNoncommMotives} enhanced the connective algebraic K-theory spectrum with the structure of multiplicative norms in the special case of finite 2-groups directly without recourse to \cite{BGMN}. The approach there is to construct the $G$-category of equivariant splitting motives and perform an explicit equivariant cubical analysis in the case of $G=C_2$. The present work therefore finds its precursor in  \cite{kaifNoncommMotives} and completes the approach therein. Just as in \textit{loc. cit.}, our approach does \textit{not} depend on \cite{BGMN} and can, in fact, essentially reprove this result. However, unlike \textit{loc. cit.}, our main object of study are equivariant \textit{localizing} motives and \textit{nonconnective} K-theory, which as far as we know, has not been treated before. We then use our knowledge in the nonconnective setting to deduce  the connective version also. Furthermore, by constructing the category of localizing motives, we place the story in the context of universal properties and thus allows us, among other things, to view the equivariant Dennis trace map as a normed map. 

Since we mentioned that we could use our methods to construct an equivariant Dennis trace, we also note that \cite{chan2025trace} already constructed an equivariant Dennis trace at the point-set level. However, their Dennis trace has target Borel equivariant $\THH$ as opposed to the genuine equivariant $\udl{\THH}$ mentioned in \cref{alphThm:dennis_trace}. It would be beneficial to clarify the relationship between our construction and their work\footnote{They write ``...this approach does not refine all the way to a map of genuine G-spectra. Since the equivariant algebraic K-theory of a G-ring spectrum depends only on the category of perfect modules over the underlying ring with G-action'', which is certainly not the case for our version of equivariant algebraic $K$-theory.}.  We also mention that, in forthcoming work, Mark Gotliboym will also give a different construction of the equivariant Dennis trace map and also treat equivariant cyclotomic structures, which we do not do here.

Finally, in the operator algebraic context, we were informed by Julian Kranz that Ulrich Bunke proved a result  analogous to \cref{alphThm:K_is_G-laxsym} in the context of $C^*$-algebras and operator $K$-theory, see \cite[Prop. 2.19]{bunke} (a precursor of this can also be found in Izumi's \cite[Theorem 2.1]{izumiflip}). The results of \cite{MotLoc} are what allow us to do similar kinds of arguments in the categorical context as in the $C^*$-algebraic context, where the KK-category has been known to be a Dwyer--Kan localization of the category of certain $C^*$-algebras for a longer time.

\subsection{Organization of the paper}

We lay out the parametrized language and ingredients needed in \cref{section:elements}. After recalling and  recording some basic elements in \cref{subsec:prelims}, including the equivariant cubes, we recall in \cref{subsec:catperf} the notion of $G$-perfect-stable categories from \cite{kaifNoncommMotives} as well as the categorical isotropy separation from \cite{PD1}. With this latter construction, we provide an isotropy separation principle to check fully faithfulness in \cref{subsec:fully_faithful}. 

Next, we begin our K-theoretic work proper in \cref{section:GMot}. We define the $G$-category of $G$-lozalizing motives in \cref{subsec:motloc} and prove a Blumberg--Gepner--Tabuada-style universal property in \cref{thm:GMotGPres}. We then come to the heart of the article in \cref{subsec:GSymMonofMotLoc} where we prove \cref{alphThm:main} by performing various cubical isotropy separation arguments. Following that, we show in \cref{subsec:corepresented_K-theory} that equivariant nonconnective K-theory is corepresented by the unit in equivariant localizing motives, and thus complete the proof of \cref{alphThm:K_is_G-laxsym}.

In  \cref{section:applications}, we provide four applications of the preceding results. Using \cref{alphThm:main}, we consider a variant called Borel localizing motives in \cref{subsec:borel_motives} and prove \cref{alphCor:symmetric_powers}. We then consider $G$-splitting motives in \cref{subsec:splitting_motives} and prove \cref{alphThm:splitting_motives} pertaining to \textit{connective} equivariant algebraic K-theory. In \cref{subsec:THH}, we construct an equivariant THH and use our main theorem to construct a normed Dennis trace map as in \cref{alphThm:dennis_trace}. Lastly, we prove \cref{alphThm:every_spectrum_is_K-theory} that every genuine $G$-spectrum is the  K-theory of some $G$-stable category in \cref{subsec:every_spectrum}.

We end our work with four technical appendices which might be of independent interest. In \cref{appendix:fixCofreeAdjunction}, we supply an adjunction needed to prove the useful adjunction \cref{lem:fixmackadj} relating equivariant semiadditive categories and nonequivariant semiadditive categories via the Mackey functor construction. In \cref{appendix:modules_SpG}, we prove the general utility \cref{thm:equivmod} which says that in the presentable setting, a $G$-stable category is equivalently given by a $\spectra_G$-module in ordinary  categories. We then show that finite posets with $G$-actions are equivariantly finite in \cref{appendix:posets}, which is important in the general equivariant theory to insure that norms preserve equivariant compactness. For example, this is crucial\footnote{See the end of the introduction in \cite{kaifNoncommMotives} for more details on how this  fits into the general  foundations.} in constructing a $G$-symmetric monoidal structure on $\udl{\cat}\perfect_G$. Finally, we construct a 2-categorical enhancement of the $G$-symmetric monoidal structure on presentables in \cref{appendix:2cat} needed for our construction of equivariant THH.

\subsection{Acknowledgements}
We thank Thomas Blom for providing a reference about 2-categories, and Julian Kranz for bringing Bunke's and Izumi's works \cite{bunke,izumiflip} to our attention. KH is supported by the European Research Council (ERC) under Horizon Europe (GeoCats, ID: 101042990 and BorSym, ID: 101163408), and thanks the University of Bonn and the Max Planck Institute for Mathematics (MPIM) in Bonn for their conducive working environments. KH is also grateful to Germany's Excellence Strategy EXC 2044/2 –390685587, Mathematics Münster: Dynamics–Geometry–Structure for funding a visit to the University of M\"unster during which much of the final stages of this work was carried out.
MR is funded by the Deutsche Forschungsgemeinschaft (DFG, German Research Foundation) – Project-ID 427320536 – SFB 1442, as well as under Germany's Excellence Strategy EXC 2044/2 –390685587, Mathematics Münster: Dynamics–Geometry–Structure.

\addtocontents{toc}{\protect\setcounter{tocdepth}{2}} %subsections appear again in TOC

\section{Elements}\label{section:elements}

\subsection{Preliminaries}\label{subsec:prelims}

The basic materials here belong to the general theory of \textit{parametrized category theory} as introduced by Barwick-Dotto-Glasman-Nardin-Shah. As such, we refer the reader to \cite{expose1Elements,shahThesis,shahPaperII,nardinExposeIV,nardinThesis,nardinShah} for the source materials. The reader may also find \cite[\textsection 2.1 and 2.2]{kaifNoncommMotives} a convenient reference for a detailed collection of the  materials we will need in this work together with precise citations to the aforementioned body of work. As such, we will adopt a brisk pace in the coming recollections. The reader familiar with parametrized category theory should skip ahead to \cref{section:GMot} and return here as needed. We mention also that there is a more recent alternative, and in many (but not all) ways more general, treatment of parametrized category theory via topos theory due to Martini-Wolf, cf. \cite{Martini2022Yoneda,Martini2022Cocartesian,MartiniWolf2024,MartiniWolf2022Presentable}. We have opted to use the older theory since it is the one which affords the notions of $G$-stability and $G$-symmetric monoidality, both of which play a central role in the present work.

\subsubsection*{$G$-categories and $G$-symmetric monoidality}
Let $G$  be a finite group throughout the article.

\begin{recollect}[$G$-categories]
As  in the introduction, the category $\cat_G$ of $G$-categories is defined as $\func(\orbit_G\op,\cat)$  where $\orbit_G$ is the orbit category of $G$. We  denote a generic object therein by $\udl{\sC}$, and for $H\leq G$, we write $\sC_H = \udl{\sC}(G/H)\in\cat$ for its evaluation at $G/H$.

In fact, these categories assemble across the subgroups of $G$ to yield a large $G$-category $\udl{\cat}_G\in \widehat{\cat}_G\coloneqq \func(\orbit_G\op,\widehat{\cat})$ of small $G$-categories which sends $G/H$ to $\cat_H$, and for an inclusion $w\colon H\rightarrowtail K$ of subgroups of $G$, the restriction functor $w^*\colon \cat_K\rightarrow \cat_H$ is induced by the forgetful functor $w\colon \orbit_H\simeq (\orbit_K)_{/K/H}\rightarrow \orbit_K$. These functors admit left and right adjoints $w_!$ and $w_*$, respectively. They should be viewed as indexed coproducts and indexed products, i.e. coproducts and products indexed over (finite) $G$-sets instead of just over  (finite) sets; or alternatively as induction and coinduction functors. Moreover, since $\cat_G$ is cartesian closed, there are internal functor categories which we denote by $\udl{\func}(-,-)$. For $\udl{\sC},\udl{\D}\in\cat_G$ and $f\colon G/H\rightarrow G/G$ the unique map, evaluation of $\udl{\func}(\udl{\sC},\udl{\D})$ at $G/H$ yields the category $\func_H(f^*\udl{\sC},f^*\udl{\D})$ of $H$-functors from $f^*\udl{\sC}$ to $f^*\udl{\D}$.

In fact, we can (and it is convenient to be able to)  speak also of $V$-categories for any finite $G$-set $V = \coprod_iG/H_i$ as follows: via the fully faithful inclusion $\orbit_G\subseteq \finite_G$ into all finite $G$-sets, we may consider the slice category $(\orbit_G)_{/V}\simeq \coprod_i(\orbit_G)_{/G/H_i}$ and therefore also $\cat_V\coloneqq \func((\orbit_G)_{/V}\op,\cat)\simeq \prod_i\cat_{H_i}$. Restricting along the forgetful map $(\orbit_G)_{/V}\rightarrow \orbit_G$, we obtain a restriction functor $V^* \colon \cat_G\rightarrow \cat_V$, so that for a $G$-category $\udl{\sC}\in\cat_G$, $V^*\udl{\sC}\in\cat_V$ will denote its associated $V$-category. 
\end{recollect}

\begin{terminology}
    The notion of $G$-categories recalled above is more highly structured than merely being a category with $G$-actions. The latter notion does however embed in the former via the inclusion $b_* \colon \func(BG,\cat)\hookrightarrow \cat_G$ given by right Kan extending along the fully faithful inclusion $b\colon BG\subseteq \orbit_G$ of the full subcategory spanned by $G/e\in\orbit_G$. Following standard parlance, we will sometimes refer to an object in $\cat_G$ as a \textit{genuine} equivariant category and an object in $\func(BG,\cat)$ as a \textit{Borel} equivariant category, and extend the use of these adjectives to other situations where a similar distinction makes sense.
\end{terminology}

\begin{recollect}[Adjunctions and (co)limits]\label{recoll:adjunction_colimits}
    Since $\cat_G = \func(\orbit_G\op,\cat)$ is naturally an $(\infty,2)$-category, we may speak of adjunctions. It turns out that a $G$-functor has a left (resp. right) adjoint if and only if it has one levelwise and all the associated Beck--Chevalley squares commute (and in this case, the right, resp. left, adjoint is given by the levelwise adjoint).
    
    For $G$-categories $\udl{I}$ and $\udl{\sC}$, we say that $\udl{\sC}$ admits $\udl{I}$-indexed colimits (resp. limits) if the restriction $G$-functor $I^* \colon \udl{\sC}\rightarrow \udl{\func}(\udl{I},\udl{\sC})$ admits a left adjoint $I_!$ (resp. right adjoint $I_*$) in $\cat_G$. More generally, we can of course also speak of left/right Kan extensions along a map $\udl{I}\rightarrow \udl{J}$ of $G$-categories. An important special case is when we consider a map $w\colon W \rightarrow V$ in $\finite_G$ as a map $w\colon \udl{W}\rightarrow \udl{V} $ of $G$-categories. We say that $\udl{\sC}$ admits finite indexed coproducts (resp. products) if $w_!$ (resp. $w_*$) exists for all such $w$. In these cases, note also that $\udl{\func}(\udl{W},\udl{\sC})\simeq W_*W^*\udl{\sC}$, and for example when $V=G/G$, the restriction map $w^*\colon \udl{\sC}\rightarrow W_*W^*\udl{\sC}$ should be thought of as the equivariant diagonal map into the equivariant product $W_*$.

    For $I\in\cat$, we say that $\udl{\sC}$ admits (fiberwise) $I$-indexed (co)limits if it admits $\udl{\constant}I$-indexed (co)limits. It turns out that this is equivalent to requiring that $\sC_H$ admits $I$-indexed (co)limits for each $H\leq G$, and that all the restriction functors preserve these.

    We say that a $G$-category is $G$-cocomplete (resp. $G$-complete) if it admits left (resp. right) Kan extensions along $G$-functors $\udl{I}\rightarrow \udl{V}$ where $\udl{V}\in \finite_G$. A practically useful equivalent characterisation of this is that the $G$-category admits fiberwise colimits (resp. limits) and admits finite indexed coproducts (resp. products).

\end{recollect}

\begin{recollect}[Ind-completions and idempotent-completeness]\label{recollect:ind_completions}
    The notion of compact objects in a parametrized category is simply given by being fiberwise compact, cf. \cite[\textsection 5.1]{kaifPresentable}.
    The parametrized Ind-completion functor which freely adds (fiberwise) filtered colimits is simply given by performing fiberwise Ind-completions, cf. \cite{shahPaperII}. Furthermore, we say that a $G$-category is idempotent-complete if it is so fiberwise, cf. \cite[Def. 5.3.1]{kaifPresentable}.
\end{recollect}

\begin{recollect}[$G$-presentability]
    There is a good theory of $G$-presentable categories by \cite{kaifPresentable,MartiniWolf2022Presentable} and it is much the same as in the ordinary world, for instance, there is  the adjoint functor theorem in this context, cf. for example \cite[Thm. 6.2.1]{kaifPresentable}. The $G$-presentable $G$-categories may be characterised as those $G$-categories which are $G$-cocomplete and which are fiberwise presentable, cf. for example \cite[Thm. 6.1.2 (7)]{kaifPresentable}.
\end{recollect}

\begin{recollect}[$G$-symmetric monoidal structures]
    Just as in the nonequivariant situation where a symmetric monoidal category is equivalently described, cf. \cite[Thm. 2.3.9]{nardinShah}, as a finite-product-preserving functor from spans of finite sets to categories, a $G$-symmetric monoidal category is a $G$-Mackey functor valued in categories, i.e. an object $\udl{\sC}^{\otimes}$ in $\mackey_G(\cat)\coloneqq \func^{\times}(\spancategory(\finite_G),\cat)$ where the right hand side denotes the category of finite-product-preserving functors from the span category of $\finite_G$ to $\cat$. Such objects carry a lot of structures, which includes an underlying $G$-category $\udl{\sC}$. For any map $w\colon W \rightarrow V$ in $\finite_G$, the restriction functoriality $w^* \colon \sC_V\rightarrow \sC_W$ is endowed with a symmetric monoidal structure; furthermore, there are also symmetric monoidal functors $w_{\otimes}\colon \sC_W\rightarrow \sC_V$ which generalize the tensor products in the nonequivariant setting, to be thought of as the structure of tensors indexed by finite $G$-sets and which we shall refer to as the \textit{multiplicative norms} from now on. 
\end{recollect}

\begin{example}
    Since the $G$-category $\udl{\cat}_G$ admits indexed products, it may be endowed with the (unique) $G$-cartesian symmetric monoidal structure, cf. \cite[Thm. A']{natalieTensorEquivariant}.
\end{example}

There are infinite coherences governing the interaction between the restriction on the one hand and the indexed products and multiplicative norms on the other hand. The fundamental basechange\footnote{classically, these are also known as double-coset formulas.} relations are the following, which notations we will refer to throughout the article.

\begin{setting}\label{setting:pullback_notations}
The following are the standard pullback diagrams in $\finite_G$
\begin{center}
\begin{tikzcd}
    Z \rar["\overline{w}"] \dar["\overline{u}"']\ar[dr, phantom, "\lrcorner",very near start]\ar[dr, phantom, "{\text{(i)}}"] & U \dar["u"]\\
    W \rar["w"] & V
\end{tikzcd}
\hspace{10mm}
\begin{tikzcd}
    W \sqcup C \rar["\id\sqcup c"] \dar["\id\sqcup \overline{c}"']\ar[dr, phantom, "\lrcorner",very near start]\ar[dr, phantom, "{\text{(ii)}}"] & W \dar["w"]\\
    W \rar["w"] & V
\end{tikzcd}
\end{center}
where $w\colon W \rightarrow V$ is the direction we usually take $w_*$ or $w_{\otimes}$, and $u$ is the direction we usually take $u^*$. In these situations, we have equivalences
\[u^*w_*\simeq \overline{w}_*\overline{u}^*\quad u^*w_{\otimes}\simeq \overline{w}_{\otimes}\overline{u}^*\quad w^*w_*\simeq \id\times {c}_*\overline{c}^*\quad \overline{w}_{\otimes}\overline{u}^*\quad w^*w_\otimes\simeq \id\otimes {c}_\otimes\overline{c}^*.\]
When necessary, we write $Z=\coprod_iZ_i$ for the orbital decomposition of $Z$ and also write $\overline{u}=\sqcup_i\overline{u}_i$ and $\overline{w}=\sqcup_i\overline{w}_i$ for the associated decomposition.
\end{setting}

\subsubsection*{$G$-semiadditivity  and $G$-stability}
Next, we recall Nardin's \cite{nardinExposeIV} definition of $G$-semiadditive and $G$-stable $G$-categories.

\begin{recollect}[$G$-semiadditivity]
    Let $\udl{\sC}\in\cat_G$ be a pointed $G$-category, i.e. by virtue of the notion of fiberwise (co)limits from \cref{recoll:adjunction_colimits}, this means that  $\sC_H$ is a pointed category for each $H\leq G$, and the restriction functors preserve the zero objects. Suppose also that it admits finite indexed (co)products. Then, as in the nonequivariant situation, for each map $w\colon W\rightarrow V$ in $\finite_G$, there is a Beck-Chevalley-type transformation $w_!\Rightarrow w_*$ of functors $\udl{\func}(\udl{W},\udl{\sC})\rightarrow \udl{\func}(\udl{V},\udl{\sC})$  using the pointedness assumption. We say that $\udl{\sC}$ is $G$-semiadditive if all these transformations are equivalences.
\end{recollect}

Using this, we may then define Nardin's notion of $G$-stability.

\begin{defn}
    Let $\udl{\sC}\in\udl{\cat}$. We say that it is $G$-stable if it is fiberwise stable, that is, each $\sC_H$ is stable, and it is $G$-semiadditive. 
\end{defn}

\begin{example}
    The property of $G$-stability  axiomatizes the basic structures in genuine equivariant stable homotopy theory. As such, the basic examples of $G$-stable categories include that of genuine equivariant spectra $\myuline{\spectra}_G\coloneqq \{G/H\mapsto \spectra_H\}_{H\leq G}$ and more generally, for each $R\in \calg(\spectra_G)$, the $G$-categories $\udl{\module}_R(\myuline{\spectra}_G)\coloneqq \{G/H\mapsto \module_{\res^G_H}(\spectra_H)\}_{H\leq G}$. Another important source of examples are the Borel equivariant stable categories, i.e. the $G$-categories $\udl{\borel}_G(\sC)\coloneqq \{G/H\mapsto \sC^{hH}\}_{H\leq G}$ associated to any stable category $\sC$ equipped with a $G$-action.
\end{example}

There is  a general way of relating ordinary functors and $G$-functors in the semiadditive regime which is useful to connect equivariant statements to ordinary, nonequivariant ones.  Since the proof requires some general technical inputs, we have chosen to defer the proof to the end of \cref{appendix:fixCofreeAdjunction}. 

\begin{prop}\label{lem:fixmackadj}
There is an adjunction $$(-)^G: \cat^{\oplus}_G\rightleftarrows \cat^{\oplus} : \udl{\mackey}_G(-)$$ between the category of $G$-semiadditive $G$-categories and the category of semiadditive categories, where $(-)^G$ is the functor which evaluates a $G$-category at level $G/G$, and $\udl{\mackey}_G(-)$ sends an ordinary semiadditive category $\A$ to the $G$-category which at level $G/H$ is given by the $\A$-valued $H$-Mackey functors, i.e. $\mackey_H(\A)\coloneqq \func^{\times}(\spancategory(\finite_H),\A)$, the category of finite-product-preserving from the span category of $\finite_H$ to $\A$. 

The counit is given by the fixed point functor $\mackey_G(\sC)\xrightarrow{(-)^G} \sC$, and for $\udl{\A} \in \cat_G^\oplus$ and $K\leq H \leq G$, the composite $\A_H \xrightarrow{\textnormal{unit}^H} \mackey_H(\A_G) \xrightarrow{(-)^K} \A_G$ is given by $\coind^G_K\res^H_K\simeq \ind_K^G\res_K^H$. 
\end{prop}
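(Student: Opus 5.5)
The natural route is to identify $G$-semiadditive $G$-categories with product-preserving functors out of a span category, and then read the adjunction off a Kan extension. Write $\udl{\A}\in\cat^{\oplus}_G$ as a functor $\orbit_G\op\to\cat$ landing in semiadditive categories with product-preserving restrictions. $G$-semiadditivity says precisely that the indexed coproducts $w_!$ and indexed products $w_*$ agree. Hence $\udl{\A}$ extends uniquely to a functor $\finite_G\op\to\cat^{\oplus}$ that sends disjoint unions to products and whose restrictions $w^*$ admit biadjoints $w_!\simeq w_*$ satisfying basechange. By the standard span-category characterisation (as in Barwick's and Nardin's work), this is equivalent to a product-preserving functor $\spancategory(\finite_G)\to\cat^{\oplus}$ with the forward maps given by $w_!$. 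Call this the ``Mackey-object'' description of $\udl{\A}$, so that
\[\cat^{\oplus}_G\simeq \func^{\times}(\spancategory(\finite_G),\cat^{\oplus})^{\text{adj}},\]
where the superscript indicates the ambidextrous-adjointability condition. Evaluation at $G/G$ is then evaluation at the terminal object of $\finite_G$.

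Next, construct the right adjoint. Given $\A\in\cat^{\oplus}$, set $\udl{\mackey}_G(\A)(G/H)=\func^{\times}(\spancategory(\finite_H),\A)\simeq \func^{\times}(\spancategory(\finite_G)_{/(G/H)},\A)$. Restriction and induction come from composing with the span functors induced by $\finite_H\to\finite_K$ and its right adjoint. This object is $G$-semiadditive by the usual Wirthmüller argument: induction and coinduction of Mackey functors are both computed by precomposition with the same span functor, since in spans a map and its transpose both appear. Thus $\udl{\mackey}_G(\A)$ is the cotensor/``cofree'' object. For the adjunction, I would construct a $G$-functor $\udl{\A}\to\udl{\mackey}_G(\A_G)$ directly: at level $H$, an object $a\in\A_H$ is sent to the Mackey functor $(U\in\finite_H)\mapsto \pi^U_*\,\pi_U^*\, a$, where $\pi_U\colon U\to H/H$ and $\pi^U$ denotes pushing forward from $G\times_H U$ to $G/G$. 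Functoriality in spans uses the semiadditive identification of $w_!$ with $w_*$. Then I would verify the triangle identities, with the counit being evaluation $\mackey_G(\sC)\to\sC$ at $G/G$. Equivalently, I would prove that a semiadditive functor $\A_G\to\B$ extends uniquely to a $G$-semiadditive $G$-functor $\udl{\A}\to\udl{\mackey}_G(\B)$ by applying the Yoneda lemma in the span category. I expect this to be the main obstacle: making the span functoriality coherent rather than just objectwise. I would handle it by packaging everything as a right Kan extension along $\{G/G\}\hookrightarrow\spancategory(\finite_G)$ inside product-preserving functors, using the auxiliary fix/cofree adjunction deferred to \cref{appendix:fixCofreeAdjunction}.

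Finally, the formula for the composite follows by unwinding the unit. At $K\leq H$, evaluating the Mackey functor attached to $a\in\A_H$ at $H/K$ gives the coinduction to $G$ of the restriction of $a$ to $K$, namely $\coind^G_K\res^H_K a$. Semiadditivity of $\udl{\A}$ then identifies this with $\ind^G_K\res^H_K a$.
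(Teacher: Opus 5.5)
Your final unwinding of the unit is correct: it is the formula $w\mapsto v_*w_*w^*x$ of \cref{prop:fix_points_cofree_adjunction}. However, the adjunction itself is never produced, and the mechanism you propose for producing it computes the wrong right adjoint.

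The problem begins in your first step. $\cat^{\oplus}_G$ does not sit fully inside $\mackey_G(\cat^{\oplus})=\func^{\times}(\spancategory(\finite_G),\cat^{\oplus})$, so no objectwise ``adjointability'' condition makes your displayed equivalence true. A natural transformation of span functors only has to commute with $w^*$ and with $w_!$ separately. An indexed-coproduct-preserving $G$-functor must in addition have its $w_!$-commutation equal to the Beck--Chevalley mate of its $w^*$-commutation. The units and counits of $w_!\dashv w^*$ are invisible in the $1$-category of spans; the paper stresses the analogous nonfullness of $\cat\perfect_G\subset\mackey_G(\cat\perfect)$.

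Consequently, a right Kan extension along $\{G/G\}\hookrightarrow\spancategory(\finite_G)$ inside product-preserving functors, or a Yoneda argument in the span category, computes the right adjoint of evaluation for these larger morphism spaces. By the $\cmonoid$-enriched Yoneda lemma, that adjoint is $V\mapsto\A^{\map_{\spancategory(\finite_G)}(V,G/G)}$, the cotensor of $\A$ by the commutative monoid $((\finite_G)_{/V})^{\simeq}$. At $V=G/G$ this is $\prod_{(H)}\func(BW_GH,\A)$. Already for $G=C_2$ this value is $\A\times\func(BC_2,\A)$, with restriction $(x,y)\mapsto y$ and transfer $a\mapsto(a,C_2\otimes a)$, and these two functors are not adjoint. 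So the result is neither $\udl{\mackey}_G(\A)$ nor even in the image of $\cat^{\oplus}_G$. The step you flag as a coherence nuisance is exactly where the content lies, and your fix does not address it.

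What is missing is a way to build compatibility with the adjunctions into the universal property. The paper does this by factoring through $\cat^{\times}_G$:
\begin{itemize}
\item The inclusion $\cat^{\oplus}_G\subseteq\cat^{\times}_G$ is left adjoint to $\udl{\cmonoid}$ (Nardin, Cnossen--Lenz--Linskens).
\item $(-)^G\colon\cat^{\times}_G\to\cat^{\times}$ is left adjoint to $\udl{\cofree}$ by \cref{prop:fix_points_cofree_adjunction}. This is where the adjunction data is handled. Indexed products make $\sC_T\hookrightarrow\int\udl{\sC}$ a Bousfield colocalization with right adjoint $R$. Indexed-product-preserving $G$-functors $\udl{\sC}\to\udl{\cofree}(\D)$ are exactly the functors $\int\udl{\sC}\to\D$ that preserve fiberwise products and invert the cartesian edges $w_*x\to x$, i.e.\ those of the form $\overline{F}_T\circ R$.
\item $(-)^G$ of a $G$-semiadditive category is semiadditive, and $\cat^{\oplus}\subseteq\cat^{\times}$ is full. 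So the composite adjunction restricts to one with right adjoint $\udl{\cmonoid}\circ\udl{\cofree}$, which is $\udl{\mackey}_G$ by Nardin's identification (or Hilman's Thm.~D).
\end{itemize}
Your appeal to Appendix~A points in this direction. On its own, though, it only gives $\udl{\cofree}(\A)=\{V\mapsto\func((\orbit_G)_{/V}\op,\A)\}$, which is not $G$-semiadditive. The $\udl{\cmonoid}$-adjunction and the identification $\udl{\cmonoid}\,\udl{\cofree}\simeq\udl{\mackey}_G$ are the missing inputs. Once the adjunction exists, your unit and counit computations go through as stated.
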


\subsubsection*{$G$-distributivity}

Equivariant distributivity is a notion  which generalizes  the idea of tensor products which are bicocontinuous into the equivariant setting. This was first defined by Nardin in his thesis \cite{nardinThesis}, but see also \cite[Def. 3.2.3]{nardinShah}  where the theory was further developed. This concept will play a central role in our main results and we recall it now.
\begin{defn}
    Let $w\colon W \rightarrow V $ be a map in $\finite_G$. A $V$-functor $w_*\udl{\sC}\rightarrow \udl{\D}$ is said to be $w$-\textit{distributive} if  for any map $u\colon U\rightarrow V$ in $\finite_G$ and using the notations as in the pullback square (i) in \cref{setting:pullback_notations}, and a $Z$-colimit diagram $\partial \colon \underline{K}\tcocone\rightarrow \overline{u}^*\underline{\sC}$, the diagram
    \[(\overline{w}_*\underline{K})\tcocone\xlongrightarrow{\canonical} \overline{w}_*(\underline{K}\tcocone) \xlongrightarrow{\overline{w}_*\partial} \overline{w}_*\overline{u}^*\underline{\sC} \simeq u^*{w}_*\underline{\sC} \xlongrightarrow{u^*\partial} {u}^*\underline{\D}\]
    is a $U$--colimit diagram in $\underline{\D}$.
\end{defn}

\noindent This  allows us to define the equivariant analogue of distributive symmetric monoidal structures.

\begin{defn}
    Let $\udl{\sC}$ be a $G$-cocomplete $G$-category. A $G$-symmetric monoidal structure on $\udl{\sC}$ is said to be a \textit{distributive $G$-symmetric monoidal structure} if for all maps $w\colon W \rightarrow V$ in $\finite_G$, the $V$-functor $w_{\otimes}\colon w_*w^*V^*\udl{\sC}\rightarrow V^*\udl{\sC}$ is $w$-distributive. 
\end{defn}

\begin{example}
    Nardin constructed\footnote{It has been noticed that there are some gaps in his proof; these are addressed in forthcoming work \cite{brankoKaifNatalie}.} in \cite{nardinThesis} a $G$-symmetric monoidal structure on $\udl{\presentable}^L$ analogous to the Lurie tensor product, and objects in $\calg_G(\udl{\presentable}^L)$ are $G$-presentable categories equipped with distributive $G$-symmetric monoidal structures. Examples of such objects are the $G$-categories of $G$-spaces $\udl{\spc}$ and genuine $G$-spectra $\myuline{\spectra}$. As in the nonequivariant situation, we will sometimes call these $G$-presentably symmetric monoidal categories.
\end{example}

We will mainly be concerned with the interactions of distributive multiplicative norms with two kinds of colimits, namely filtered colimits and cofiber sequences. We record these  now.

\begin{rmk}\label{rmk:sifted_constant_diagrams}
    Let $I\in\cat$ be a sifted category. Then note that the constant $G$-category $\udl{\constant}I$ satisfies an equivariant siftedness property, i.e. for every map $w\colon W \rightarrow V$ in $\finite_G$, the diagonal $V$-functor $\Delta\colon \udl{\constant}I\rightarrow w_*w^*\udl{\constant}I$ is $G$-colimit cofinal. This is because  $\Delta$ is given fiberwise by the usual diagonal functor of $I$ to a finite product of itself, and equivariant colimit cofinality is checked fiberwise by \cite[Thm. 6.7]{shahThesis}.
\end{rmk}

\begin{obs}\label{rmk:distributivity_over_sifted}
    Let $\udl{\sC}$ be a $G$-cocomplete $G$-category equipped with a distributive $G$–symmetric monoidal structure and $w\colon W \rightarrow V$ a map in $\finite_G$. Then the functor $w_{\otimes}\colon \sC_W \rightarrow \sC_V$ given by the $G$-symmetric monoidal structure preserves sifted colimits. To see this, given any sifted colimit diagram $I\cone\rightarrow \sC_W$, we get an induced $G$-colimit diagram $\udl{\constant}I\cone \rightarrow W^*\udl{\sC}$. Hence, by distributivity, the diagram $(w_*\udl{\constant}I)\cone \rightarrow w_*(\udl{\constant}I\cone)\rightarrow w_*w^*V^*\udl{\sC}\xrightarrow{w_{\otimes}} V^*\udl{\sC}$ is a $G$-colimit diagram. Thus, by \cref{rmk:sifted_constant_diagrams}, so is the restricted diagram $\udl{\constant}I\cone\xrightarrow{\Delta}(w_*\udl{\constant}I)\cone \rightarrow w_*(\udl{\constant}I\cone)\rightarrow w_*w^*V^*\udl{\sC}\xrightarrow{w_{\otimes}} V^*\udl{\sC}$. Evaluating this composite $V$-functor at level $V$ now gives the desired conclusion.
\end{obs}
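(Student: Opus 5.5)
The statement is \cref{rmk:distributivity_over_sifted}: for a sifted $I$ and a distributive $G$-symmetric monoidal structure, $w_\otimes\colon \sC_W\to\sC_V$ preserves $I$-indexed colimits. The plan is to reduce ordinary sifted colimits to parametrized colimits over constant $G$-categories, apply distributivity, and then use the equivariant siftedness of \cref{rmk:sifted_constant_diagrams} to restrict along the diagonal.

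The first step is to promote the diagram. Given a colimit diagram $p\colon I\cone\to\sC_W$, we view it as a $W$-functor $\udl{\constant}I\cone\to W^*\udl{\sC}$. Here we use that $\udl{\sC}$ is $G$-cocomplete, so the restriction functors preserve fiberwise colimits. For this to be a $W$-colimit diagram, we need the fiberwise criterion of \cref{recoll:adjunction_colimits}: a fiberwise colimit over $\udl{\constant}I$ is computed levelwise and is compatible with restrictions. Concretely, $p$ determines the diagram at the top level of each orbit of $W$, and the values at lower levels come from restricting. Because restrictions preserve $I$-colimits, the diagram is a colimit at every level.

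The second step applies distributivity. Take $u=\id_V$ in the definition of $w$-distributivity; then $Z=W$, $\overline{w}=w$ and $\overline{u}=\id$. The definition then tells us that
\[(w_*\udl{\constant}I)\cone\to w_*(\udl{\constant}I\cone)\to w_*W^*\udl{\sC}\xrightarrow{w_\otimes}V^*\udl{\sC}\]
is a $V$-colimit diagram.

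The third step precomposes with the cone $\Delta\cone\colon\udl{\constant}I\cone\to(w_*\udl{\constant}I)\cone$ on the diagonal. By \cref{rmk:sifted_constant_diagrams}, $\Delta$ is $G$-colimit cofinal, so the restricted diagram is still a $V$-colimit diagram. The point to verify is that the composite $\udl{\constant}I\cone\to w_*(\udl{\constant}I\cone)$ agrees with the diagonal of $\udl{\constant}I\cone$. This holds because the canonical comparison map $(w_*K)\cone\to w_*(K\cone)$ sends cone points to cone points and is natural in $K$.

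Finally, we evaluate at level $V$. A $V$-colimit over a constant diagram is in particular a colimit in $\sC_V$. At level $V$, the composite is $i\mapsto w_\otimes(w^*\!\text{-diagonal of } p(i))$, and this is exactly $w_\otimes\circ p$ under the identification $\sC_W\simeq(w_*W^*\udl{\sC})_V$. Hence $w_\otimes p$ is a colimit diagram. The main obstacle is bookkeeping in the first step and in this last identification: checking that the level-$V$ value of $w_*$ applied to the promoted diagram is exactly the original diagram $p$, up to the equivalence $\func(\udl{W},\udl{\sC})_V\simeq\sC_W$. Everything else is formal.
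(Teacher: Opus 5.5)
Your proposal is correct and follows the paper's argument step for step. You promote $p$ to a $W$-colimit diagram indexed by $\udl{\constant}I\cone$, apply $w$-distributivity with $u=\id_V$, restrict along the $G$-colimit cofinal diagonal of \cref{rmk:sifted_constant_diagrams}, and evaluate at level $V$. The bookkeeping you flag is left implicit in the paper and you handle it correctly: that the composite $\udl{\constant}I\cone\to w_*(\udl{\constant}I\cone)$ is the diagonal (which follows from the triangle identities, since the comparison map is adjoint to the cone on the counit $w^*w_*\udl{K}\to\udl{K}$), and that the level-$V$ value is $w_\otimes\circ p$.
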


The case of cofiber sequences is  a bit trickier to deal with. To properly deal with these, we will need to recall the notion of equivariant cubes. Model categorical precursors to our considerations here have appeared for example in \cite[App. A.3]{HHR} and in \cite{dottoExcision}. However, many point-set arguments there do not work directly in the fully $\infty$-categorical context; we provide model-independent and more general arguments for the next two lemmas, building on the $\infty$-categorical treatment in \cite[\textsection 3]{kaifNoncommMotives}.

\begin{recollect}[Equivariant cubes]
    Let $w\colon W \rightarrow V$ be a map in $\finite_G$. We then obtain a $V$-category $w_*[1]\in \cat_V$ by applying the indexed product functor $w_* \colon \cat_W\rightarrow \cat_V$ on the constant $W$-category $[1]$. These are fiberwise given by cubes of various dimensions, cf. \cite[Prop. 3.1.2]{kaifNoncommMotives}. Moreover, they are in fact even equivariantly finite categories by \cref{prop:finite_posets}.

    In the special case of the unique map $w\colon W \rightarrow G/G$ in $\finite_G$, it is easy to describe $w_*[1]\in \cat_G$ concretely: it may equivalently be viewed as the $G$-category of subsets of the finite $G$-set $W$ which at level $K\leq G$ is given by the full subposet of subsets of $W$ which are $K$-invariant under the  action of $K$ on $W$.
\end{recollect}

The following result is the first technical result we need regarding the interaction of norms and cofibers -- it gives us a specific way of understanding the norm of the cofiber of a map $x\to y$ in terms of $x$ and $y$. 

\begin{lem}\label{prop:cubical_cofibs}
    Let $\udl{\sC}$ be a pointed $W$-category and $\udl{\D}$ a be pointed $V$-category, with finite $W$-, resp. $V$-colimits, and let $w\colon W \rightarrow V$ be a map in $\finite_G$.  Let $N\colon w_*\udl{\sC}\to \udl{\D}$ an  $w$-distributive functor and $x\to y\to z$ a cofiber sequence in $\udl{\sC}$. The induced maps fit in a cofiber sequence
    \[\colim_{w_*[1]\setminus 1}N(x\rightarrow y)\longrightarrow N(y)\rightarrow N(z)\] where we have written $N(x\rightarrow y)$ for the functor $ w_*[1]\xrightarrow{w_*(x\rightarrow y)}w_*\udl{\sC}\xrightarrow{N}\udl{\D}$.
\end{lem}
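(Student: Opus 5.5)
The plan is to present the cofiber $z$ as a colimit and push it through $N$ using $w$-distributivity. Write the cofiber sequence as a $W$-colimit diagram: the span $0\leftarrow x\rightarrow y$ (indexed by the constant $W$-category $\Lambda=\{a\leftarrow b\rightarrow c\}$) has colimit $z$. Equivalently, the square $[1]\times[1]\to\udl{\sC}$ with $x\mapsto y$, $x\mapsto 0$, and terminal vertex $z$ is a $W$-pushout. Apply the definition of $w$-distributivity with $u=\id_V$, so that $Z=W$ and $\overline{w}=w$. It gives that
\[
(w_*\Lambda)\cone\rightarrow w_*(\Lambda\cone)\rightarrow w_*\udl{\sC}\xrightarrow{N}\udl{\D}
\]
is a $V$-colimit diagram. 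In particular $N(z)\simeq\colim_{w_*\Lambda}N(\text{span})$.

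The key step is a cofinality and left Kan extension computation on the equivariant poset $w_*\Lambda$. Fiberwise, this is a product of copies of $\Lambda$ indexed by the orbits of the relevant fiber, with the appropriate twisting. The aim is to split $w_*\Lambda$ into two pieces. The first is the subcategory $P$ of tuples avoiding the cone point $c$, i.e. $w_*\{a\leftarrow b\}$, which is equivalent to $w_*[1]$ via $b\mapsto 0$, $a\mapsto 1$. The second is the rest. Concretely:
\begin{itemize}
\item Tuples that involve the vertex $a$ (value $0$) in some coordinate map under $N$ to objects involving a factor $0$. Here I will use that $N$ is pointed multilinearly: distributivity over the empty colimit gives $N(\dots,0,\dots)\simeq 0$, equivariantly via the indexed-product description.
\item Restrict the diagram to $w_*\{b\to c\}\simeq w_*[1]$, which is $N(x\rightarrow y)$, and to its complement.
\end{itemize}
The cleanest formulation is the pushout decomposition $w_*\Lambda = w_*(\{b\to c\})\cup_{\ast} (\dots)$. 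I would instead use the standard identity that $\colim_{w_*\Lambda}$ of the span diagram equals the pushout of
\[
0\leftarrow \colim_{w_*[1]\setminus 1}N(x\to y)\rightarrow N(y).
\]
To get this, left Kan extend along the $V$-functor $w_*\Lambda\to \Lambda$ that sends a tuple to $c$ if all coordinates are $c$, to $a$ if it lies in the subposet generated by tuples avoiding only-$c$ with some $a$ coordinate, and so on. Then check pointwise, with fibers computed via Shah's cofinality criterion checked fiberwise, as in \cref{rmk:sifted_constant_diagrams}.

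Concretely, there are three checks:
\begin{itemize}
\item The full subcategory of $w_*\Lambda$ of tuples not equal to the all-$c$ tuple receives a cofinal map from the pushout of $w_*[1]\setminus 1$ (tuples in $\{b,c\}$ not all $c$) along the inclusion into tuples containing $a$.
\item The diagram on the tuples containing some $a$ is left Kan extended from those of the form "replace $c$'s" and has colimit $0$, by the vanishing of $N$ on $0$-entries. Since $0$ is initial, the diagram is constantly $0$ there.
\item Hence the colimit over $w_*\Lambda$ is $\cofib(\colim_{w_*[1]\setminus1}N(x\to y)\to N(y))$, identified with $N(z)$. Here $N(y)$ is the all-$c$ vertex and the terminal vertex of $w_*[1]$.
\end{itemize}

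The main obstacle is doing this cofinality argument genuinely equivariantly rather than fiberwise. Colimits over $G$-posets like $w_*\Lambda$ are $V$-colimits, not fiberwise colimits. The plan is to express each decomposition as a $V$-pushout of $V$-subposets that are themselves of the form $w_*(-)$ or sub-$V$-posets thereof, so that every Kan extension is along a $V$-functor. I would then verify the pointwise formulas by evaluating at each level, where the relevant slice categories are ordinary posets and the statements reduce to the nonequivariant cube lemma.
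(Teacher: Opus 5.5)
Your strategy is the right one, and it is essentially the argument the paper invokes: the proof of \cite[Prop.~3.2.8]{kaifNoncommMotives}, with ``tensoring with $0$ gives $0$'' replaced by the vanishing of $N$ on tuples with a zero entry. You correctly extract that vanishing from distributivity applied to a $Z$-diagram that is empty on one orbit of $Z$. Your $V$-functor $\phi\colon w_*\Lambda\to\Lambda$ (all-$c$ tuple to $c$, tuples with some $a$ to $a$, the rest to $b$) is well defined. The pointwise formula then turns $\phi_!D$ into the span
\[\colim_{w_*\Lambda\setminus 1}D\longleftarrow\colim_{w_*[1]\setminus 1}N(x\to y)\longrightarrow N(y),\]
since $\{\phi\le b\}=w_*[1]\setminus 1$, and $\{\phi\le c\}=w_*\{b\to c\}\simeq w_*[1]$ has terminal object $1$. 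Note that the relevant copy of $w_*[1]$ is $w_*\{b\to c\}$, not $w_*\{a\leftarrow b\}$ as in your first description.

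The gap is the vanishing of the left vertex, which is the only non-formal point. Let $S\subseteq w_*\Lambda$ be the tuples having some coordinate equal to $a$. Your second check only gives $\colim_S D\simeq 0$. But $\{\phi\le a\}$ is all of $P'=w_*\Lambda\setminus 1$, which is the union of $S$ and $w_*[1]\setminus 1$, and $D$ is not zero on the latter (e.g.\ it is $N(x)$ at the bottom). What you need is that $S\hookrightarrow P'$ is $V$-cofinal. Your first check does not say this: there is no inclusion of $w_*[1]\setminus 1$ into $S$, and a cofinal map from something containing $w_*[1]\setminus 1$ would not force vanishing. Appealing to ``the nonequivariant cube lemma'' does not supply it either.

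Here is how to fill it. By Shah's criterion it suffices to check levelwise, where $w_*\Lambda$ is $\Lambda^I$ for $I$ the orbits of $W\times_V U$. Restrictions preserve $S$, $\{b,c\}^I$ and the all-$c$ tuple, because the maps of orbit sets are surjective. For $p\in S$, the poset $\{s\in S: s\ge p\}$ has initial object $p$. For $p\in\{b,c\}^I$ not all $c$, you must show $\{s\in S: s\ge p\}$ is weakly contractible. Write $B=\{i:p_i=b\}\neq\emptyset$; this poset is $\{t\in\Lambda^B:\ t_i=a\text{ for some }i\}$. The monotone map replacing each $c$ by $b$ is $\le\mathrm{id}$ and lands in $\{a,b\}^B\setminus\{(b,\dots,b)\}$, which has the maximum $(a,\dots,a)$. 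Hence the poset is contractible, $\colim_{P'}D\simeq\colim_S D\simeq 0$, and your argument is complete.
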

\begin{proof}
    The proof  works exactly as in \cite[Prop. 3.2.8]{kaifNoncommMotives} where the case of $N$ being part of a parametrized symmetric monoidal structure was treated. Here, we just replace the statement that tensoring with zero objects gives the zero object with the fact that distributive multivariable functors evaluate to zero when one of the inputs is the zero object. 
\end{proof}

The following cubical filtration will serve as the combinatorial backbone of our main result.

\begin{nota}\label{nota:filtration_of_cubes}
    Let $w\colon G/H\rightarrow G/G$ be the unique map in $\finite_G$.  Viewing $w_*[1]$ as the $G$-category of subsets of $G/H$, we write $(w_*[1])_{\leq i}\subseteq w_*[1]$ for the full $G$-subcategory of subsets of size at most $i$ where $1\leq i\leq |G/H|$.
\end{nota}

\begin{lem}\label{lem:inducedness_of_cube_cofibers}
    Let $\udl{\D}$ be a be pointed $G$-category with finite $G$-colimits, $w\colon G/H\rightarrow G/G$ be the unique map, and let $F\colon w_*[1]  \rightarrow \udl{\D}$ be a diagram. For every $1\leq i\leq |G/H|$, the map
        
        \[\colim_{{(w_*[1])}_{\leq i-1}\backslash 1}F \longrightarrow \colim_{{(w_*[1])}_{\leq i}\backslash 1}F\] has cofiber given by a finite coproduct of properly induced objects in $\udl{\D}$.
\end{lem}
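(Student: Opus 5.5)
We compare the two colimits by a left Kan extension argument. Write $P_{i}\coloneqq (w_*[1])_{\leq i}\setminus 1$; here $1$ denotes the top element, the full subset $G/H$. Then $P_{i-1}\subseteq P_i$ is a full $G$-subcategory and also a sieve: it is downward closed, since subsets of a set of size at most $i-1$ again have size at most $i-1$. The complement $P_i\setminus P_{i-1}$ consists, at each level $K\leq G$, of the $K$-invariant subsets $S\subseteq G/H$ with $|S|=i$, excluding $S=G/H$ when $i=|G/H|$. Since $P_{i-1}$ is a sieve, the colimit over $P_i$ can be computed by first left Kan extending $F|_{P_{i-1}}$ to $P_i$ and then mapping to $F|_{P_i}$. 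The cofiber of the map in the statement is therefore $\colim_{P_i}\cofib(\mathrm{Lan}\,F|_{P_{i-1}}\to F|_{P_i})$, using that colimits commute with cofibers.

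This cofiber diagram vanishes on $P_{i-1}$. On an element $S$ of size $i$ it takes the value $\cofib(\colim_{T\subsetneq S}F(T)\to F(S))$, the total cofiber of the punctured cube under $S$. The elements of size $i$ are pairwise incomparable. A diagram on $P_i$ supported on these maximal, mutually incomparable objects has colimit equal to the $G$-colimit over the $G$-subcategory $P_i\setminus P_{i-1}$, which is the discrete $G$-category. To justify this, I would show that the diagram is left Kan extended from the discrete sub-$G$-category: every $T$ of size less than $i$ lies below no such maximal object relevant to the extension, so the extension is $0$ there. The main care point is making this parametrized, for which I would use the fiberwise description of $G$-colimits.

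Concretely, the discrete $G$-category $P_i\setminus P_{i-1}$ is the $G$-set $\underline{X_i}$, where $X_i$ is the finite $G$-set of size-$i$ subsets of $G/H$ (with the whole set removed when $i=|G/H|$). It decomposes as $X_i=\coprod_j G/L_j$, with $L_j$ the stabilizer of a chosen subset $S_j$. A $G$-colimit over $\underline{X_i}$ is an indexed coproduct, namely $\coprod_j \ind^G_{L_j}(c_j)$ with $c_j$ the cofiber at $S_j$ in $\D_{L_j}$. It remains to check that each $L_j$ is proper. If $L_j=G$, then $S_j$ is $G$-invariant, but $G/H$ is transitive, so $S_j$ would be empty or all of $G/H$. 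The empty set has size $0<1\leq i$, and the whole set is excluded from $P_i$. Hence every orbit is properly induced.

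The main obstacle is the second step: identifying the colimit over $P_i$ of a diagram supported on the top layer with an indexed coproduct in the parametrized setting. To make this rigorous, I would first reduce via cofinality. The inclusion of the discrete $G$-category $\underline{X_i}$ into $P_i$, together with the vanishing on $P_{i-1}$, exhibits the diagram as a $G$-left Kan extension along $\underline{X_i}\hookrightarrow P_i$. I would verify this fiberwise via the pointwise formula, using that the slices $(\underline{X_i})_{/T}$ are empty for $T\in P_{i-1}$ and terminal for $T$ of size $i$. Transitivity of colimits along Kan extensions then gives the result.
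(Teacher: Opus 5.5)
Your proposal is correct and is essentially the paper's proof: you take the cofiber as $\colim_{P_i}$ of $K=\cofib(\mathrm{Lan}\,F|_{P_{i-1}}\to F|_{P_i})$, observe that $K$ vanishes on the full subcategory $P_{i-1}$ and is therefore left Kan extended from the cosieve $P_i\setminus P_{i-1}$, and note that this cosieve is a finite $G$-set whose orbits are all proper because $G/H$ is transitive. One wording slip: the relevant condition is not that smaller $T$ lie below no size-$i$ object, but that no size-$i$ object lies below $T$, i.e.\ $(\udl{X_i})_{/T}=\emptyset$ for $T\in P_{i-1}$, and this is exactly the fact you correctly use in your final paragraph.
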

\begin{proof}
    Let $\myuline{Q}\coloneqq w_*[1]\backslash 1$ and $\myuline{Q}_i\coloneqq (w_*[1])_{\leq i}\backslash 1$ to ease notation and we write $\myuline{Q}_{i-1}\xhookrightarrow{a} \myuline{Q}_i\xhookrightarrow{b}\myuline{Q}$ and $\udl{Z}_i\coloneqq \myuline{Q}_{i}\backslash \myuline{Q}_{i-1}\xhookrightarrow{z} \myuline{Q}_i$ for the inclusions. Observe that $\udl{Z}_i\in\finite_G$ because all objects in $\udl{Z}_i$ have the same cardinality, and so there are no nonidentity morphisms in the $G$-poset $\myuline{Q}_i$. In fact, $\udl{Z}_i$ can only have proper orbits since the only objects in $w_*[1]$ on which $G$-acts trivially are the empty subset and the whole subset, neither of which are possible elements of $\udl{Z}_i$.
    
    Write $M\coloneqq b^*F\in \udl{\func}(\myuline{Q}_i,\udl{\D})$. Using these notations, the statement may then be rewritten to assert that the cofiber  of the counit map
    \[\myuline{Q}_{(i-1)!}a^*M \longrightarrow \myuline{Q}_{i!}M\] is a finite coproduct of properly induced objects in $\udl{\D}$.    Now, by definition $a: \myuline{Q}_{i-1}\subseteq \myuline{Q}_i$ is a full subcategory inclusion, and so the counit map $a_!a^*M\to M$ is an equivalence on $\myuline{Q}_{i-1}$, more precisely upon applying $a^*$. Setting $K\coloneqq\cofib(a_!a^*M\rightarrow M)$, we thus find that $a^*K=0$. Since $\udl{Z}_i\subset \myuline{Q}_i$ is a cosieve, it follows that the map $z_!z^*K\rightarrow K$ is an equivalence, and so $\myuline{Q}_{i!}K\simeq \udl{Z}_{i!}z^*K$ is a coproduct of properly induced objects of $\udl{\D}$ by the last sentence of the first paragraph. Since $\myuline{Q}_{i!}K$ is the cofiber  $\myuline{Q}_{(i-1)!}a^*M\to \myuline{Q}_{i!}M$, this proves the claim.
\end{proof}

\subsubsection*{Dwyer-Kan localizations}

We now explain some equivariant aspects of Dwyer-Kan localizations. As usual, this is the procedure by which we formally invert arrows belonging to a wide subcategory, to be thought of as the ``weak equivalences'' in the classical setting of model categories.  It turns out that it is given in a fiberwise fashion on a $V$-category for any $V\in\finite_G$, cf. for example the proof of \cite[Prop. 2.3.4]{kaifNoncommMotives}.

\begin{recollect}[Symmetric monoidality]\label{recollect:monoidal_DK_localization}
    Let $T\in\finite_G$, $\udl{\sC}$ be a $T$-symmetric monoidal category, and $\udl{S}\subseteq \udl{\sC}$ a wide $T$-subcategory. Suppose that $\udl{S}$ is closed under the multiplicative norms, i.e. for all morphisms $w\colon W \rightarrow V$ in $(\finite_{G})_{/V}$ and all morphisms $f\colon x\rightarrow y$ in $S_W$, the morphism $w_{\otimes}f\colon w_{\otimes}x\rightarrow w_{\otimes}y$ is in $S_V$. Then by \cite[Prop. 2.3.4]{kaifNoncommMotives}, we see that the Dwyer-Kan localization $\udl{\sC}\rightarrow \udl{\sC}[\udl{S}^{-1}]$ naturally refines to a $V$-symmetric monoidal functor.
\end{recollect}

Next, we show that Dwyer-Kan localizations interact nicely with semiadditivity.

\begin{lem}\label{lm:Gaddloc}
     Let $\udl{\A}$ be a $G$-category, and $\udl{S}\subset\udl{\A}$ a wide $G$-subcategory.      If $\udl{\A}$ admits finite indexed coproducts and $\udl{S}$ is stable under those, then $\udl{\A}[\udl{S}^{-1}]$ admits finite indexed coproducts and the localization functor preserves them. Furthermore, if $\udl{\A}$ is $G$-semiadditive, then so is $\udl{\A}[\udl{S}^{-1}]$. Finally, if $F\colon \udl{\A}\rightarrow \udl{\sC}$ preserves finite indexed coproducts and sends $\udl{W}$ to equivalences, then the unique factorization $\overline{F}\colon \udl{\A}[\udl{S}^{-1}]\rightarrow\udl{\sC}$ also preserves finite indexed coproducts.
 \end{lem}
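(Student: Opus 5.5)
The plan is to reduce everything to the fiberwise description of Dwyer--Kan localizations recalled above, which says $\udl{\A}[\udl{S}^{-1}]$ is computed levelwise. In other words, $(\udl{\A}[\udl{S}^{-1}])_H \simeq \A_H[S_H^{-1}]$, with restriction functors induced from those of $\udl{\A}$; these make sense because restrictions preserve $\udl{S}$, since $\udl{S}$ is a $G$-subcategory. By \cref{recoll:adjunction_colimits}, finite indexed coproducts are left adjoints $w_!\colon \udl{\func}(\udl{W},-)\to\udl{\func}(\udl{V},-)$ to $w^*$. It therefore suffices to work one map $w\colon W\to V$ at a time, and, decomposing $V$ into orbits, to take $V=G/K$ and $W=\coprod_j G/H_j$. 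Concretely, this amounts to the functors $\ind_{H}^{K}\colon \A_H\to\A_K$ left adjoint to $\res^K_H$, together with finite fiberwise coproducts, subject to the Beck--Chevalley conditions.

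\textbf{Step 1: adjunctions descend.} The key step is the standard fact that if $L\dashv R$ with $L\colon \A_H\to\A_K$ and $R\colon\A_K\to\A_H$, and both $L$ and $R$ preserve the respective wide subcategories of weak equivalences, then the adjunction descends to the localizations. The induced $\overline{L}$ and $\overline{R}$ are adjoint, because the unit and counit, being natural transformations, descend along localizations (using $\func(\A[S^{-1}],\B)\subseteq\func(\A,\B)$ fully faithful), and the triangle identities persist. Here $R=\res$ preserves $S$ automatically, and $L=\ind$ preserves $S$ by hypothesis. The same argument applies to fiberwise finite coproducts, viewed as left adjoints to diagonals $\A_H\to\A_H^{\times n}$, using that $\A_H^{\times n}[S^{\times n,-1}]\simeq \A_H[S^{-1}]^{\times n}$ for finite products of localizations. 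The Beck--Chevalley squares still commute, because their mate transformations are built from units and counits which are images of the original ones under the localization, and the localization functor is essentially surjective. This gives the first claim, including that the localization functor preserves the coproducts: it commutes with $\ind$ by construction.

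\textbf{Step 2: semiadditivity.} If $\udl{\A}$ is pointed, then so is the localization, since zero objects map to zero objects; this holds because it is a localization and $\A_H[S^{-1}]$ receives an essentially surjective functor, with initial and terminal objects preserved by Step 1 applied to empty coproducts and products. For $G$-semiadditivity I would argue that indexed products in the localization are computed by the same objects. The norm map $w_!\Rightarrow w_*$ in $\udl{\A}$ is an equivalence, so $w_!$ is also right adjoint to $w^*$. Since $w_!$ preserves $\udl{S}$, Step 1 applied to the adjunction $w^*\dashv w_!$ shows that $\overline{w_!}$ is also right adjoint to $\overline{w^*}$. The canonical transformation in the localization is the image of the original one, hence an equivalence. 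The point requiring care is checking that the comparison map is indeed the image, which follows by naturality of its construction from units, counits and zero maps.

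\textbf{Step 3: the factorization.} The map $\overline{F}\circ\overline{w_!}\to\overline{w_!}\circ\overline{F}$ (compared via mates) precomposed with the localization is the corresponding map for $F$, which is an equivalence. By full faithfulness of restriction along localizations, it is therefore an equivalence. I expect the main obstacle to be the bookkeeping in Step 1 that Beck--Chevalley conditions are preserved. The fallback is to check those mates after precomposition with the localization, exactly as in Step 3.
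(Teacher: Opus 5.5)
Your proposal is correct and is essentially the paper's argument. You descend the adjunctions along the localization because both adjoints preserve $\udl{S}$, obtain indexed products from $w_!\simeq w_*$, deduce semiadditivity from naturality of the norm map plus essential surjectivity of the localization, and check the comparison map for $\overline{F}$ after precomposing with the localization. The only difference is packaging: the paper descends the single parametrized adjunction between $\udl{\A}$ and $w_*w^*\udl{\A}$ at once, using horizontal left adjointability of the localized square together with $(w_*\udl{\A})[w_*\udl{S}^{-1}]\simeq w_*(\udl{\A}[\udl{S}^{-1}])$, so the Beck--Chevalley conditions you verify by hand on mates come for free.
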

 \begin{proof}
     Let $w\colon W \rightarrow V$ be a map in $\finite_G$. By basechanging first to $V$-categories if necessary, we may without loss of generality assume that $V=G/G$. Recall from \cref{recoll:adjunction_colimits} that $\udl{\A}$ admits $W$-indexed coproducts is equivalent to the diagonal functor $\udl{\A}\to W_*W^* \udl{\A}$ admitting a left adjoint.      If this left adjoint preserves $w_*\udl{S}$, then it descends to the localization and the following diagram is horizontally left adjointable, see \cite[Prop. 7.1.14]{cisinski}: 
\begin{equation}\label{eqn:left_adjointable}
\begin{tikzcd}
	{\udl{\A}} & {w_*\udl{\A}} \\
	{\udl{\A}[\udl{S}^{-1}]} & {(w_*\udl{\A})[w_*\udl{S}^{-1}]}
	\arrow[from=1-1, to=1-2]
	\arrow[from=1-1, to=2-1]
	\arrow[from=1-2, to=2-2]
	\arrow[from=2-1, to=2-2]
\end{tikzcd}\end{equation}
Using the equivalence $(w_*\udl{\A})[w_*\udl{S}^{-1}]\simeq w_*(\udl{\A}[\udl{S}^{-1}])$ (which can be verified fiberwise), we conclude that $\udl{\A}$ has $w$-indexed coproducts and the localization preserves them. 

Furthermore, the dual statement shows that if $\udl{\A}$ admits $w$-indexed products that preserve $\udl{S}$, so does $\udl{\A}[\udl{S}^{-1}]$ and the localization preserves them. In fact, if $\udl{\A}$ is $G$-semiadditive, that $w$-indexed coproducts preserve $\udl{S}$ implies that $w$-indexed products also do. In total, if $\udl{\A}$ is $G$-semiadditive and $\udl{S}$ is closed under $w$-indexed coproducts, then $\udl{\A}[\udl{S}^{-1}]$ admits $w$-indexed products and coproducts and the localization functor preserves both of them. By naturality of the norm map $w_!\to w_*$ along functors preserving $w$-(co)products, and by essential surjectivity of $\udl{\A}\to \udl{\A}[\udl{S}^{-1}]$, we deduce that $\udl{\A}[\udl{S}^{-1}]$ is also $G$-semiadditive. 

Lastly, the statement that $\overline{F}$ preserves finite indexed coproducts is a straightforward consequence of the fact that the square \cref{eqn:left_adjointable} is horizontally left adjointable and that the vertical arrow ${w_*\udl{\A}}\rightarrow {(w_*\udl{\A})[w_*\udl{S}^{-1}]}$ is essentially surjective.
 \end{proof}

\subsection{Perfect \texorpdfstring{$G$}--stable categories and Karoubi sequences}\label{subsec:catperf}

We now recall the substrate on which our notion of equivariant motives will be built, namely, an equivariant generalization of $\cat\perfect$. The basic reference for these materials is \cite[\textsection 2.5]{kaifNoncommMotives}.

\begin{recollect}
    We let $\udl{\cat}\perfect_G\subset \udl{\cat}_G$ be the nonfull $G$-subcategory of small $G$-stable categories which are idempotent-complete and $G$-exact functors.  As in the nonequivariant setting, there is an equivalence 
    \begin{equation}\label{eqn:ind_omega_equivalence}
        \ind\colon \udl{\cat}\perfect_G\simeq \udl{\presentable}^L_{G,\mathrm{st},\omega}\cocolon (-)^{\omega}
    \end{equation} of $G$-categories, cf. \cite[Nota. 2.1.38]{kaifNoncommMotives}, where the right hand side is the nonfull $G$-subcategory of the $G$-category $\udl{\presentable}^L_{G,\mathrm{st}}$ of $G$-presentable-stable categories spanned by the compactly-generated ones and compact-preserving functors. In fact, there are $G$-symmetric monoidal structures on both sides and this equivalence can be made compatible with these structures by \cite{brankoKaifNatalie}.    Moreover, by \cite[Thm. 2.5.11]{kaifNoncommMotives}, the inclusion $\udl{\cat}\perfect_G\subset \udl{\cat}_G$ induces a nonfull inclusion
    \begin{equation}\label{eqn:inclusion_catperf_into_mackey}
        \udl{\cat}\perfect_G\subset \udl{\mackey}_G({\cat}\perfect)
    \end{equation}
    which inherits all $G$-(co)limits.
\end{recollect}

\begin{example}[Modules]\label{example:algebra_objects_catperfGI}
For any normed $G$-ring spectrum $\udl{R}\in\calg_G(\myuline{\spectra})$, the $G$-category of  $\udl{R}$-modules $\udl{\module}_{\udl{R}}(\myuline{\spectra})$ canonically admits the structure of a $G$-commutative algebra in $\udl{\presentable}^L_G$, see \cite[\textsection 5]{phil}. In fact, more generally by \cite[Thm. A]{QuinnZhu}, for any $G$-operad $\udl{\orbit}$ and $R\in \algebraCategory_{\udl{\orbit}\otimes  \everythingAlgebra_1}(\myuline{\spectra})$, the $G$-category $\udl{\lmodule}_R(\myuline{\spectra})$ lifts to an object in $\algebraCategory_{\udl{\orbit}}(\udl{\presentable}^L_G)$. By the explicit formula for the norms on module categories from the cited references given by norming up using the structure on $\myuline{\spectra}$ and then basechanging the ring, and the fact that norming up in spectra preserves finite equivariant colimits by distributivity and the equivariant finiteness of equivariant cubes by \cref{prop:finite_posets}, we learn that  $\udl{\lmodule}_R(\myuline{\spectra})$ even lifts to an object in $\algebraCategory_{\udl{\orbit}}(\udl{\presentable}^L_{G,\omega})$. Thus, by the $G$-symmetric monoidal equivalence \cref{eqn:ind_omega_equivalence}, we get an object
\[\udl{\perfectCat}(\udl{R})\coloneqq \udl{\lmodule}_{\udl{R}}(\myuline{\spectra})^{\omega}\in \algebraCategory_{\udl{\orbit}}(\udl{\cat}\perfect)\] for every $R\in \algebraCategory_{\udl{\orbit}\otimes \everythingAlgebra_1}(\myuline{\spectra})$. We refer to \cite{QuinnZhu} for many examples of such ring $G$-spectra of current interest coming from the realm of  equivariant chromatic stable homotopy theory.
\end{example}

\begin{example}[Categories with actions]\label{example:algebra_objects_catperfGII}
    Let $\sC\in \calg(\cat\perfect)^{BG}$ be an ordinary stably symmetric monoidal category with a $G$-action. Then $\sC$ induces a Borel $G$-stable $G$-category $\udl{\borel}(\sC)=\{G/H\mapsto \sC^{hH}\}_{H\leq G}$, and it can be canonically promoted to a $G$-commutative algebra in $\udl{\cat}\perfect_G$.  This is because by \cite[Thm. 2.4.10 (2)]{kaifNoncommMotives}, the inclusion $\udl{\borel}(\cat\perfect)\hookrightarrow \udl{\cat}\perfect$ refines to a $G$-lax symmetric monoidal functor. Then, by applying $\calg_G(-)$ to this map and invoking  \cite[Thm. 2.4.10 (3)]{kaifNoncommMotives}, we obtain an inclusion $\calg(\cat\perfect)^{BG}\hookrightarrow \calg_G(\udl{\cat}\perfect)$.
\end{example}

\begin{defn}
    A Karoubi sequence is a sequence $\udl{\A}\rightarrow \udl{\B}\rightarrow\udl{\sC}$ in $\udl{\cat}\perfect$ whose composite is null and is both a fiber and a cofiber sequence in $\udl{\cat}\perfect$.
\end{defn}

\begin{rmk}
    Since the inclusion \cref{eqn:inclusion_catperf_into_mackey} inherits all $G$-(co)limits, in particular, it inherits the (co)fiber sequences in Mackey functors which in turn are computed pointwise by \cite[Cor. 6.7.1]{barwick1}. This allows us to port many of the standard results about Karoubi sequences in the nonequivariant context, for instance, as recorded in \cite[App. A]{Hermitian2}. Furthermore, note that nullity of a map in $\udl{\cat}\perfect$ is a property, and so the definition above makes sense without  having to specify a commuting square involving the zero functor.
\end{rmk}

The $G$-category $\udl{\cat}\perfect_G$ is amenable to isotropy separation methods which allows us to perform proofs by inducting on the size of the finite group $G$. We record these points now.

\begin{cons}\label{cons:categorical_isotropy_separation}
    Let $\family$ be a family of subgroups in $G$. We now explain how to perform a categorical isotropy separation argument on $\udl{\cat}\perfect\simeq \udl{\presentable}^L_{\mathrm{st},\omega}$. By \cite{PD1}, for any $\udl{\sC}\in\cat\perfect_G$ and writing $\widetilde{\udl{\sC}}\coloneqq \ind\udl{\sC}$, we have a stable recollement 
    \begin{center}
        \begin{tikzcd}
             \widetilde{\udl{E\family}}\otimes \widetilde{\udl{\sC}}=\udl{\func}_*(\udl{\widetilde{E\family}},\udl{\widetilde{\sC}})\ar[rr, "p^*", hook] && \udl{\widetilde{\sC}} \ar[rr, "j^*", two heads]\ar[ll, "p_*"', two heads, bend left = 40]\ar[ll, "p_!"', two heads, bend right = 40]&& \udl{\func}(\udl{E\family},\udl{\widetilde{\sC}}) = {\udl{E\family}}_+\otimes \widetilde{\udl{\sC}}  \ar[ll, "j_!"', hook, bend right = 40]\ar[ll, "j_*"', hook, bend left = 40]
        \end{tikzcd}
    \end{center}
    associated to tensoring $\widetilde{\udl{\sC}}$ with the cofiber \begin{equation}\label{eqn:family_cofiber_sequence}
        E\family_+ \longrightarrow S^0\longrightarrow \widetilde{E\family}
    \end{equation}
    sequence in $\presentable^L_{G,\mathrm{st}}$. For any $y\in\widetilde{\udl{\sC}}$, the bifiber sequence $j_!j^*y\rightarrow y\rightarrow p^*p_!y$ may thus be identified with the bifiber sequence in $\widetilde{\udl{\sC}}$
    \[{E\family}_+\otimes y\longrightarrow y \longrightarrow \widetilde{E\family}\otimes y\] obtained by tensoring $y$ with the cofiber sequence.
    By the existence of the adjoints $p_*$ and $j_*$, the top  sequence of functors preserve compact objects and so  restricts  to a commuting diagram 
    \[
    \begin{tikzcd}
        \widetilde{\udl{E\family}}\otimes \widetilde{\udl{\sC}} & \widetilde{\udl{\sC}} \lar["p_!"',two heads]& {\udl{E\family}}_+\otimes \widetilde{\udl{\sC}} \lar["j_!"',hook]\\
        (\widetilde{\udl{E\family}}\otimes \widetilde{\udl{\sC}})^{\omega} \uar[hook, "\yoneda"]& \widetilde{\udl{\sC}}^{\omega} \lar["p_!"']\uar[hook, "\yoneda"]& ({\udl{E\family}}_+\otimes \widetilde{\udl{\sC}})^{\omega} \lar["j_!"']\uar[hook, "\yoneda"]
    \end{tikzcd}
    \] with the bottom sequence happening in $\cat\perfect_G$.

    It will be convenient to use the suggestive notation $\geomFix{\family}\colon \widetilde{\udl{\sC}}\twoheadrightarrow \geomFix{\family}\widetilde{\udl{\sC}}$ and $\geomFix{\family}\colon {\udl{\sC}}\rightarrow \geomFix{\family}{\udl{\sC}}$ to denote $p_!\colon \widetilde{\udl{\sC}}\twoheadrightarrow \widetilde{\udl{E\family}}\otimes \widetilde{\udl{\sC}}$ and $p_!\colon {\udl{\sC}}\twoheadrightarrow \widetilde{\udl{E\family}}\otimes {\udl{\sC}}$ respectively.
\end{cons}

\begin{obs}\label{obs:proper_isotropy_separation_generalities}
    Let $\proper$ be the family of proper subgroups. We may use this family to perform isotropy separations in an arbitrary $G$-stable presentable category as follows: let $\udl{\E} \in \presentable^L_{G,\mathrm{st}}$ and $x\in\udl{\E}$. Then $x=0$ if and only if for all $H\lneq G$, $\res^G_Hx=0\in\udl{\E}(G/H)$ and $\Phi^{\proper}x=0\in \Phi^{\proper}\udl{\E}\in\presentable^L_{\mathrm{st}}$. The forwards implication is clear and to see the converse, we consider the cofiber sequence 
    $E\proper_+\otimes x\longrightarrow x\longrightarrow \widetilde{E\proper}\otimes x$ in $\udl{\E}$ where $\widetilde{E\proper}\otimes x$ is the image of $\Phi^{\proper}x$ under the inclusion $\Phi^{\proper}\udl{\E}\subseteq \udl{\E}$. Since $\Phi^{\proper}x=0$, the map $E\proper_+\otimes x\rightarrow x$ is an equivalence. On the other hand, $E\proper_+\otimes  x$ is in the image of the inclusion $\udl{\func}(\udl{E\proper},\udl{E})\subseteq \udl{\E}$. Since $\coprod_{H\lneq G} G/H\rightarrow E\proper$ is an effective epimorphism, checking that $E\proper_+\otimes x\in \udl{\func}(\udl{E\proper},\udl{\E})$ is zero is equivalent to checking that $x$ goes to zero under the restriction ${\func}_G(\udl{E\proper},\udl{\E})\rightarrow \func_G(G/H,\udl{\E})\simeq \udl{\E}(G/H)$. But this is precisely supplied by the hypothesis that $\res^G_Hx=0$.
\end{obs}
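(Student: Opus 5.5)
The forward implication is immediate: $\res^G_H$ and $\Phi^{\proper}=p_!$ are exact functors, so they send $0$ to $0$. For the converse I would use the recollement of \cref{cons:categorical_isotropy_separation} for $\family=\proper$, applied directly to $\udl{\E}$; the construction there only uses that $\udl{\E}$ is $G$-presentable stable, so no $\ind$ is needed. It gives the bifiber sequence
\[j_!j^*x\simeq E\proper_+\otimes x\longrightarrow x\longrightarrow \widetilde{E\proper}\otimes x\simeq p^*p_!x .\]
The argument then has two steps.

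First I kill the right-hand term. The right-hand term is $p^*\Phi^{\proper}x$, and $\Phi^{\proper}x=0$ by hypothesis. Hence $j_!j^*x\to x$ is an equivalence. Since $j_!$ is fully faithful, it then suffices to show $j^*x=0$ in $\udl{\func}(\udl{E\proper},\udl{\E})$.

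Second, I show $j^*x=0$. I would use that the map $\coprod_{H\lneq G}G/H\to E\proper$ is an effective epimorphism of $G$-spaces, because $E\proper^K\simeq *$ for proper $K$ and is empty for $K=G$. Consequently $E\proper$ is the colimit of the Čech nerve of this map. Since $\udl{\func}(-,\udl{\E})$ turns colimits of $G$-spaces into limits, the restriction functor
\[\func_G(\udl{E\proper},\udl{\E})\longrightarrow \prod_{H\lneq G}\func_G(\udl{G/H},\udl{\E})\simeq\prod_{H\lneq G}\E(G/H)\]
is conservative on the limit, and in particular detects zero objects. It remains to check that the composite $\udl{\E}\xrightarrow{j^*}\udl{\func}(\udl{E\proper},\udl{\E})\to \E(G/H)$ is $\res^G_H$. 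This holds because $j^*$ is restriction along $\udl{E\proper}\to *$, so precomposing further with $\udl{G/H}\to\udl{E\proper}$ gives restriction along $\udl{G/H}\to *$, which is $\res^G_H$. The hypothesis then gives $j^*x=0$, hence $x\simeq j_!j^*x=0$.

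The only mildly delicate points are the two identifications in the second step. The first is the conservativity coming from the effective epimorphism, i.e. descent for $\udl{\func}(-,\udl{\E})$ along colimits of $G$-spaces. The second is that the composite restriction is $\res^G_H$. Both are formal consequences of the parametrized Yoneda lemma and of the fact that $\udl{\func}(-,\udl{\E})$ sends colimits to limits, so I expect no serious obstacle.
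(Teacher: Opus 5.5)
Your proposal is correct and follows the same route as the paper. You use the isotropy separation sequence for $\proper$, kill the $\widetilde{E\proper}$-term using $\Phi^{\proper}x=0$, and then use the effective epimorphism $\coprod_{H\lneq G}G/H\to E\proper$ to detect the vanishing of $j^*x$ through the restrictions $\res^G_H x$. Your version is slightly more explicit than the paper's: you separate $j^*x$ from $j_!j^*x$ and spell out why the composite restriction is $\res^G_H$.
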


\subsection{Isotropy separation for fully faithfulness}
\label{subsec:fully_faithful}

A the key point in our work is an analysis to prove fully faithfulness of certain functors. In this subsection, we provide a tool to detect fully faithfulness of $G$-exact functors between $G$-stable $G$-categories.  We do this by isotropy separation  in the $G$-category of $G$-stable $G$-categories. Since this $G$-category is itself not $G$-stable, isotropy separation methods do not come ``for free''.

\begin{lem}[Fully faithfulness isotropy detection]\label{lem:fully_faithful_isotropy_separation}
    Let $F\colon \udl{\sC}\rightarrow\udl{\D}$ be a morphism in $\cat\perfect_G$. Let $\family$ be a family, and suppose $\res^G_H F\colon \res^G_H\udl{\sC}\rightarrow \res^G_H\udl{\D}$ are fully faithful for all $H\in\family$. Then the functor $F$ is fully faithful if and only if  $\Phi^{\family} F\colon \Phi^{\family}\udl{\sC}\rightarrow\Phi^{\family}\udl{\D}$ is. 
\end{lem}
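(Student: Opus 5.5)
We reduce to a statement about mapping $G$-spectra and then run the recollement of \cref{cons:categorical_isotropy_separation}. Since $\udl{\sC},\udl{\D}$ are $G$-stable, for $x,y\in\sC_G$ we have a mapping $G$-spectrum $\udl{\map}_{\udl{\sC}}(x,y)\in\spectra_G$, whose $H$-fixed points are $\map_{\sC_H}(\res^G_Hx,\res^G_Hy)$. Then $F$ is fully faithful (as a $G$-functor, i.e. levelwise) if and only if for all $x,y$ the map $\udl{\map}_{\udl{\sC}}(x,y)\to\udl{\map}_{\udl{\D}}(Fx,Fy)$ is an equivalence of $G$-spectra. Here we use that every object at level $H$ may be handled by first restricting to $H$. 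Arguing by induction on $|G|$ is unnecessary: the hypothesis already covers $H\in\family$. Let $c$ be the cofiber of this map. By the genuine-$G$-spectrum analogue of \cref{obs:proper_isotropy_separation_generalities} (for a general family $\family$), $c=0$ if and only if $\res^G_Hc=0$ for $H\in\family$, which holds by hypothesis, and $\widetilde{E\family}\otimes c=0$.

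It therefore remains to identify $\widetilde{E\family}\otimes\udl{\map}_{\udl{\sC}}(x,y)$ with the mapping spectrum in $\Phi^\family\udl{\sC}$. Work in $\widetilde{\udl{\sC}}=\ind\udl{\sC}$, which is a $\spectra_G$-module by \cref{thm:equivmod}. Since $p^*$ is fully faithful with left adjoint $p_!=\widetilde{E\family}\otimes-$, we get
\[\map_{\Phi^\family\widetilde{\sC}}(p_!x,p_!y)\simeq\map_{\widetilde{\sC}}(x,\widetilde{E\family}\otimes y).\]
The key claim is the enriched formula
\[\udl{\map}(x,\widetilde{E\family}\otimes y)\simeq\widetilde{E\family}\otimes\udl{\map}(x,y)\]
for compact $x$. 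This holds because $\udl{\map}(x,-)$ is an exact $\spectra_G$-linear functor for compact $x$. Indeed, it commutes with filtered colimits and is $\spectra_G$-linear, since compact objects are dualizable relative to the module structure, or equivalently by writing $\widetilde{E\family}$ as a filtered colimit of finite $G$-spectra, on which linearity is automatic from exactness and $G$-semiadditivity. Combining these, the $G$-fixed points of $\widetilde{E\family}\otimes\udl{\map}(x,y)$ compute the mapping spectrum between $\Phi^\family x$ and $\Phi^\family y$ in $\Phi^\family\udl{\sC}$, naturally in $F$, because $F$ induces a $\spectra_G$-linear functor on Ind-completions commuting with $p_!$.

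\textbf{If direction.} Assume $\Phi^\family F$ is fully faithful. Then $(\widetilde{E\family}\otimes c)^K=0$ at every level $K$, applying the same argument at each level with $\res^G_K\family$. Hence $\widetilde{E\family}\otimes c=0$, and so $c=0$, giving full faithfulness of $F$.

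\textbf{Only if direction.} Assume $F$ is fully faithful. Then $c=0$, so $\widetilde{E\family}\otimes c=0$ by the identification above, and $\Phi^\family F$ is fully faithful on the image of $p_!$. It remains to pass from the image of $\udl{\sC}$ under $p_!$ to all of $\Phi^\family\udl{\sC}$, which is the idempotent completion of that image. Fully faithfulness extends to idempotent completions, so this step is fine.

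The main obstacle is the $\spectra_G$-linearity identity $\udl{\map}(x,\widetilde{E\family}\otimes y)\simeq\widetilde{E\family}\otimes\udl{\map}(x,y)$ for compact $x$, together with making the mapping $G$-spectrum natural enough that $F$ acts compatibly. I would deduce it from \cref{thm:equivmod} by writing $\widetilde{E\family}$ as a filtered colimit of finite $G$-spectra and using that $\udl{\map}(x,-)$ preserves filtered colimits and finite $G$-colimits, the latter by $G$-exactness.
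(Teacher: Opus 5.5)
Your proposal is correct and follows essentially the same route as the paper. Both arguments pass to mapping $G$-spectra in the Ind-completions and use compactness of $x$ to commute $E\family_+\otimes-$ and $\widetilde{E\family}\otimes-$ past $\myuline{\mapsp}(x,-)$; they then kill the $E\family_+$-part via the hypothesis on $\res^G_H F$ for $H\in\family$, and identify the $\widetilde{E\family}$-part with mapping spectra in $\Phi^{\family}\udl{\sC}$ through $p_!\dashv p^*$. The only differences are cosmetic: you work with the cofiber $c$ rather than a map of cofiber sequences, and you make explicit the extension from objects of the form $\Phi^{\family}x$ to all of $\Phi^{\family}\udl{\sC}$ (by thick-subcategory and idempotent-completion closure), which the paper leaves implicit.
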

\begin{proof}
     We write $\widetilde{\udl{\sC}}\coloneqq \udl{\ind}(\udl{\sC})$ and $\widetilde{\udl{\D}}\coloneqq \udl{\ind}(\udl{\D})$. Let $x,y\in\udl{\sC}$.  By the commuting square
   \[
   \begin{tikzcd}
       \udl{\sC}\rar["F"]\dar["\yoneda"', hook] & \udl{\D}\dar["\yoneda", hook]\\
       \widetilde{\udl{\sC}} \rar["F\coloneqq F_!"] & \widetilde{\udl{\D}},
   \end{tikzcd}
   \] we may equivalently analyse the map  $\myuline{\mapsp}_{\widetilde{\udl{\sC}}}(x,y)\longrightarrow \myuline{\mapsp}_{\widetilde{\udl{\D}}}(Fx,Fy)$.  Using the isotropy separation sequence in the $y$--variable and that $x\in \widetilde{\udl{\sC}}$, $Fx\in\widetilde{\udl{\D}}$ are compact objects, we obtain the map of fiber sequences in $\myuline{\spectra}$
   \begin{equation}\label{eqn:isotropy_separation_mapping_spectra}
   \begin{tikzcd}
       E\family_+\otimes \myuline{\mapsp}_{\widetilde{\udl{\sC}}}(x,y)\rar\dar["F", "\simeq"'] & \myuline{\mapsp}_{\widetilde{\udl{\sC}}}(x,y)\rar\dar["F"] & \myuline{\mapsp}_{\widetilde{\udl{\sC}}}(x,\widetilde{E\family}\otimes y)\dar["F"]\\
       E\family_+\otimes \myuline{\mapsp}_{\widetilde{\udl{\D}}}(Fx,Fy)\rar & \myuline{\mapsp}_{\widetilde{\udl{\D}}}(Fx,Fy)\rar & \myuline{\mapsp}_{\widetilde{\udl{\D}}}(Fx,\widetilde{E\family}\otimes Fy)
   \end{tikzcd}
   \end{equation}
   where we have also used that ${E\family}_+\otimes Fy\simeq F({E\family}_+\otimes y)$ and $\widetilde{E\family}\otimes Fy\simeq F(\widetilde{E\family}\otimes y)$ since $F\colon \widetilde{\udl{\sC}}\rightarrow\widetilde{\udl{\D}}$ preserves parametrized colimits. By hypothesis, the left vertical map is an equivalence. 
   
   Recall by definition that $\Phi^{\family}\udl{\sC}\coloneqq (\Phi^{\family} \widetilde{\udl{\sC}})^{\omega}$ and similarly for $\udl{\D}$. Hence, since the functor $\Phi^{\family}(-)=\widetilde{E\family}\otimes-\colon \widetilde{\udl{\sC}}\rightarrow\widetilde{\udl{\D}}$ preserves compact objects because its right adjoint has a further right adjoint, we obtain identifications
   \[\myuline{\mapsp}_{\widetilde{\udl{\sC}}}(x,\widetilde{E\family}\otimes y)\simeq \myuline{\mapsp}_{\Phi^{\family}\widetilde{\udl{\sC}}}(\widetilde{E\family}\otimes x,\widetilde{E\family}\otimes y) \simeq \myuline{\mapsp}_{\Phi^{\family}{\udl{\sC}}}(\Phi^{\family} x,\Phi^{\family} y)\] and similarly, $\myuline{\mapsp}_{\widetilde{\udl{\D}}}(Fx,\widetilde{E\family}\otimes Fy)\simeq\myuline{\mapsp}_{\Phi^{\family}{\udl{\D}}}(F\Phi^{\family} x,F\Phi^{\family} y)$. 
   Hence, $\Phi^{\family} F$ is an equivalence if and only if the right vertical map in \cref{eqn:isotropy_separation_mapping_spectra} is an equivalence if and only if the middle vertical map of \cref{eqn:isotropy_separation_mapping_spectra} is an equivalence. This completes the proof.
\end{proof}

We will also need the following fact for our analysis.

\begin{lem}\label{lem:norms_preserve_fully_faithfulness}
    Let $F\colon\udl{\sC}\rightarrow \udl{\D}$ be a fully faithful morphism in $\cat\perfect_W$. Then $w_{\otimes}F\colon w_{\otimes}\udl{\sC}\rightarrow w_{\otimes}\udl{\D}$ is a fully faithful morphism in $\cat\perfect_V$.
\end{lem}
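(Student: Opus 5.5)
We would prove this by reducing to the presentable setting and to generators. Via the equivalence \cref{eqn:ind_omega_equivalence}, which is compatible with the $G$-symmetric monoidal structures, the norm $w_\otimes\udl{\sC}$ is the compact part of $w_\otimes\ind\udl{\sC}$ in $\udl{\presentable}^L_{G,\mathrm{st}}$, and $w_\otimes F$ is the restriction to compacts of $w_\otimes F_!$. A functor in $\cat\perfect$ is fully faithful if and only if its Ind-extension is, so it suffices to show that $w_\otimes F_!$ is fully faithful. By basechange we may assume $V=G/G$, and decomposing $W$ into orbits (a norm over $W\sqcup W'$ is the tensor product of the two norms) we reduce to two cases. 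The first is the fiberwise binary tensor product $\udl{\sC}\otimes\udl{\sC}'\to\udl{\D}\otimes\udl{\D}'$, handled by factoring it as a composite of one-variable maps. The second is the norm $\norm_H^G$ for $w\colon G/H\to G/G$.

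The key input is a characterization of fully faithful functors between compactly generated $G$-stable categories. The functor $F_!$ is fully faithful if and only if the unit $\id\Rightarrow F^*F_!$ is an equivalence, where $F^*$ is the right adjoint; since $F$ preserves compacts, $F^*$ itself preserves colimits. Both sides of the unit are then colimit-preserving $G$-functors, so it suffices to check the unit on a set of compact generators. For $\norm_H^G\ind\udl{\sC}$, the generators are the induced objects $\ind_K^G(\cdots)$ from proper-isotropy restrictions together with the norms $\norm_H^G x$ for $x\in\sC_H$, which follows from the explicit description of the norm of presentable categories. We would then compute the mapping spectra directly:
\[\myuline{\mapsp}(\norm_H^G x,\norm_H^G y)\simeq \norm_H^G\myuline{\mapsp}(x,y),\]
the norm of mapping $H$-spectra. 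This formula comes from the formula for the norm on module categories, and more generally from the fact that the norm is a $G$-symmetric monoidal functor $\presentable^L_H\to\presentable^L_G$ compatible with the enrichment in $\myuline{\spectra}$. Under this identification, $w_\otimes F$ acts on mapping spectra by $\norm_H^G$ applied to the equivalences $\myuline{\mapsp}(x,y)\to\myuline{\mapsp}(Fx,Fy)$, so it is an equivalence.

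The main obstacle is making this mapping-spectrum formula precise at the $\infty$-categorical level and controlling the mixed generators. An alternative is to run an induction on $|G|$ using \cref{lem:fully_faithful_isotropy_separation} with the family $\proper$. Restricting to proper subgroups $K$, the double coset formula $\res^G_K\norm_H^G\simeq\bigotimes\norm^K_{K\cap gHg^{-1}}\res$ reduces fully faithfulness to smaller groups and to binary tensor products, which is the induction hypothesis. It then remains to check $\Phi^{\proper}$. Here $\Phi^{\proper}\norm_H^G\udl{\sC}\simeq\Phi^{\proper}$ of the norm is computed by a geometric-fixed-points formula: for $H$ proper it is the Tate-diagonal-type construction $\sC_H$ up to idempotent completion, and $\Phi^{\proper}\norm_H^G F$ is identified with the underlying map $F_H$. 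That map is fully faithful, since $F$ is fiberwise fully faithful. I would try this second route first if the direct mapping-spectrum formula proves cumbersome, since establishing the identification $\Phi^G\norm_H^G\simeq\res$ on mapping spectra is the delicate step.
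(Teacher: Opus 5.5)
Your first step (passing to $\ind$ and reducing to invertibility of the unit $\id\Rightarrow F^*F_!$) matches the paper. However, neither of your two routes closes the argument, because each rests on an input that is only asserted.

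In the first route, the load-bearing claim is an equivalence $\myuline{\mapsp}(\norm_H^G x,\norm_H^G y)\simeq\norm_H^G\myuline{\mapsp}(x,y)$, natural along $F$. The appeal to ``compatibility with the enrichment'' is not a proof. Even granting it, you would still need three further things:
\begin{itemize}
\item the generation statement;
\item control of the mixed pairs of generators ($\ind_K^G a$ against $\norm_H^G x$, in both orders);
\item fully faithfulness at every level $K<G$, since a $G$-functor is fully faithful only if it is so fiberwise.
\end{itemize}
The last point forces an induction that you have not set up in that route.

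In the second route, the geometric fixed points identification is false as stated. For the unit $\udl{\sC}=\myuline{\spectra}^\omega_H$ one has $\norm^G_H\udl{\sC}\simeq\myuline{\spectra}^\omega_G$, and $\Phi^{\proper}\myuline{\spectra}^\omega_G\simeq\spectra^\omega$. This is not $\sC_H=\spectra^\omega_H$ when $H\neq e$. The expected statement is the categorical analogue of $\Phi^G\norm^G_H\simeq\Phi^H$, namely $\Phi^{\proper_G}\norm^G_H\udl{\sC}\simeq\Phi^{\proper_H}\udl{\sC}$. From that, \cref{lem:fully_faithful_isotropy_separation} over $H$ would give fully faithfulness of $\Phi^{\proper_H}F$. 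But that formula, natural in $F$, is not available in the paper and is at least as hard as the lemma itself.

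The missing idea is one you nearly state yourself. Since $F^*$ preserves colimits, $F_!\dashv F^*$ is an adjunction inside the $2$-category $\boldPr^L_{W,\mathrm{st}}$, with invertible unit $\eta$. By \cref{prop:mackey2}, $w_\otimes$ is a $2$-functor $\boldPr^L_{W,\mathrm{st}}\to\boldPr^L_{V,\mathrm{st}}$. A $2$-functor carries adjunctions to adjunctions together with their units, so $w_\otimes F_!\dashv w_\otimes F^*$ with unit $w_\otimes(\eta)$, which is invertible. Hence $w_\otimes F_!\simeq\ind(w_\otimes F)$ is fully faithful.

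This is exactly the paper's proof. It needs no generators, no mapping-spectrum computation and no induction on $|G|$. Note also that the natural way to prove your mapping-spectrum formula is to apply this same $2$-functoriality to the adjunction $(-)\otimes x\dashv\myuline{\mapsp}(x,-)$ for compact $x$. This gives $w_\otimes\myuline{\mapsp}(x,-)\simeq\myuline{\mapsp}(\norm^G_H x,-)$. So your first route would establish its key input by the very argument that already proves the lemma directly.
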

\begin{proof}
    By the $G$-symmetric monoidal equivalence $\ind \colon \udl{\cat}\perfect_G\rightleftharpoons \udl{\presentable}^L_{\mathrm{st},\omega}\cocolon (-)^{\omega}$, we see that $\ind(w_{\otimes}F)\colon \ind(w_{\otimes}\udl{\sC})\rightarrow \ind(w_{\otimes}\udl{\D})$ identifies with $w_{\otimes}\ind F\colon w_{\otimes}\ind\udl{\sC}\rightarrow w_{\otimes}\ind\udl{\D}$. Hence, it suffices to prove that $w_{\otimes}\ind F$ is fully faithful.
    
    Now, $\ind F$ is fully faithful by hypothesis, and it admits a right adjoint $R$ which is also in $\presentable^L_{W,\mathrm{st}}$. In particular, the unit map $\id\to R\circ \ind F$ is an equivalence, and induces, by $2$-functoriality of $w_\otimes$ from \Cref{prop:mackey2}, an equivalence $\id \to w_\otimes R \circ w_\otimes \ind F$. Again by $2$-functoriality of $w_\otimes$, this equivalence is the unit of an adjunction $w_\otimes\ind F\dashv w_\otimes R$, and so we conclude that $w_\otimes \ind F$ is fully faithful, as was to be shown.  
\end{proof}

% In fact, one can prove the following finer statement and give a formula for the mapping spectra in categories which are normed up which might be of independent interest. This will not play a role in this paper, however.

% \begin{cor}\label{mappingSpectraOfNorms}
% For $x,c \in\udl{\sC}$, there is an equivalence
% \[\myuline{\mapsp}_{w_{\otimes}\underline{\sC}}(w_{\otimes} x, w_{\otimes} c) \simeq w_{\otimes}\myuline{\mapsp}_{\underline{\sC}}(x,c)\] which is natural in both the $x$ and $c$ variables. 
% \end{cor}
% \begin{proof}\todo[color=yellow!50]{K: make this into a nonmandatory remark for its own independent interest and cite the thing with branko for all the requisite ind stuff}
%     Since $i\colon \udl{\sC}\hookrightarrow \udl{\ind}(\udl{\sC})$ is given by sending $c$ to $\myuline{\mapsp}_{\udl{\sC}}(-,c)$, the commuting diagram from \cref{IndSymmetricMonoidality} gives an equivalence of functors
%     \[\myuline{\mapsp}_{w_{\otimes}\underline{\sC}}(-, w_{\otimes} c) \simeq w_{\otimes}\myuline{\mapsp}_{\underline{\sC}}(-,c) \colon (w_{\otimes}\udl{\sC})\vop\longrightarrow\myuline{\spectra}.\] Thus, precomposing with $w_*\udl{\sC}\vop\xrightarrow{\tau}(w_{\otimes}\udl{\sC})\vop$ and tracing where $x\in w_*\udl{\sC}$ gets sent, we obtain the claimed natural equivalence.
% \end{proof}

\section{Equivariant localizing motives}\label{section:GMot}

\subsection{Definitions and constructions}\label{subsec:motloc}
The goal of this section is to  define $G$-localizing motives, following the  perspective from \cite{MotLoc}, and prove \cref{thm:GMotGPres}.

\begin{defn}
A \textit{$G$-localizing invariant} is a $G$-semiadditive functor $E\colon\udl{\cat}\perfect_G\to \udl{\E}$, where $\udl{\E}$ is some $G$-stable $G$-category, such that  $E_H\colon {\cat}\perfect_H\to {\E}_H$  sends Karoubi sequences  to bifiber sequences for every subgroup $H\leq G$. More generally, for $V$ a finite $G$-set, a \textit{$V$-localizing invariant} is a $V$-semiadditive functor $E\colon\udl{\cat}\perfect_V\to \udl{\E}$ where $\udl{\E}$ is some $V$-stable $V$-category, and which sends fiberwise Karoubi sequences to levelwise bifiber sequences. 

A $G$- or more generally $V$-localizing invariant is said to be \textit{finitary} if it also preserves fiberwise filtered colimits. We denote by $\udl{\func}^{\mathrm{fin. loc.}}(\udl{\cat}\perfect_G,\udl{\E})$ the full $G$-subcategory of $\udl{\func}(\udl{\cat}\perfect_G,\udl{\E})$ which at level $H\leq G$ consists of the finitary $H$-localizing invariants.
\end{defn}

The following is the  ur-example that the above notions are supposed to axiomatize: 
\begin{example}\label{ex:KG}
Defined similarly to \cite[Cons. 4.2.12]{kaifNoncommMotives}, the equivariant (non-connective, algebraic) $K$-theory functor $\udl{K}_G:\udl{\cat}\perfect_G\to \myuline{\spectra}_G$ defined by $\udl{K}_G(\udl{\D})^H = K(\D^H)$ is a finitary localizing invariant. We will give a precise definition in \Cref{defn:officialdefnK}.
\end{example}
There are many examples one can construct from equivariant $K$-theory, or similarly from classical (finitary) localizing invariants by applying them pointwise to $\mackey_G(\cat\perfect)$ and restricting to the nonfull $G$-subcategory $\udl{\cat}\perfect_G\subset \udl{\mackey}_G(\cat\perfect)$. We will later construct an example that is not of this form\footnote{See \Cref{rmk:THHgenuine}.}, namely equivariant $\mathrm{THH}$:

\begin{example}\label{ex:TrG}
In \Cref{subsec:THH}, we will explain how to construct ``equivariant $\THH$'' as a $G$-symmetric monoidal finitary $G$-localizing invariant.  
\end{example}
And finally, we can introduce motivic equivalences as in \cite{MotLoc}. 
\begin{defn}\label{defn:motivic_equivalence}
    A map $\udl{\A}\to \udl{\B}$ in $\cat\perfect_G$ is called a \textit{motivic equivalence} if it is sent to an equivalence by every finitary $G$-localizing invariant. We also have an analogous notion of motivic equivalences in $\cat\perfect_V$ for any finite $G$-set $V$.    We let $\udl{W}_\mot$ denote the $G$-subcategory of $G$-motivic equivalences, whose $H$-fixed points is the category of $H$-motivic equivalences. 
\end{defn}

To see that $\udl{W}_\mot$ is well-defined, we need the following verification:
\begin{lem}\label{lm:weqsubGcat}
    $\udl{W}_\mot$ does define a $G$-subcategory of $\udl{\cat}\perfect_G$, i.e. restrictions of $G$-motivic equivalences are $H$-motivic equivalences, for every subgroup $H\leq G$. More generally, for any map $w:W\to V$ of finite $G$-sets, $w^*$ sends motivic equivalences of $V$-stable $V$-categories to motivic equivalences of $W$-stable $W$-categories. 
\end{lem}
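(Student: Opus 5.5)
The strategy is to pull finitary localizing invariants back along the left adjoint of restriction. Restriction has both adjoints (induction and coinduction), and because we work with semiadditive categories these two adjoints coincide. Concretely, let $f\colon \udl{\A}\to\udl{\B}$ be a $V$-motivic equivalence in $\cat\perfect_V$ and let $E\colon \udl{\cat}\perfect_W\to\udl{\E}$ be a finitary $W$-localizing invariant. We must show that $E(w^*f)$ is an equivalence, and we do so by building a finitary $V$-localizing invariant $E'$ with $E'(f)$ detecting $E(w^*f)$. The natural candidate is the ``indexed coproduct/coinduction'' of $E$ along $w$,
\[E'\colon \udl{\cat}\perfect_V\xrightarrow{\;w^*\;} w_*w^*\udl{\cat}\perfect_V \simeq w_*\udl{\cat}\perfect_W\xrightarrow{\;w_*E\;} w_*\udl{\E}.\]
Here $w^*$ is the unit of the adjunction $w^*\dashv w_*$ on $V$-categories. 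Since $\udl{\cat}\perfect$ is a $G$-category, its restriction to $W$ is $w^*$ of its restriction to $V$.

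The checks run in this order. First, $w_*\udl{\E}$ is $V$-stable: indexed products of $W$-stable categories are stable fiberwise, being finite products of the $\E_{H_i}$'s, and they are $V$-semiadditive. Second, $E'$ is $V$-semiadditive: both the unit $w^*$ and $w_*E$ preserve finite indexed (co)products, because $E$ does and $w_*$ is a right $2$-adjoint that preserves semiadditive functors.

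Third, $E'$ sends fiberwise Karoubi sequences to fiberwise bifiber sequences and preserves fiberwise filtered colimits. Evaluated at an orbit $U\to V$ (with $Z=U\times_V W$ as in \cref{setting:pullback_notations}), $E'$ is the composite $\cat\perfect_U\xrightarrow{\overline{u}^*\text{-type restriction}}\cat\perfect_Z\xrightarrow{E_Z}\E_Z$. Here $E_Z=\prod_i E_{Z_i}$ over the orbits of $Z$. Restrictions preserve Karoubi sequences because these are computed pointwise in Mackey functors by the remark after the definition of Karoubi sequences. Restrictions also preserve filtered colimits, which are computed fiberwise. The claimed properties then follow from those of $E$ levelwise.

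Finally, evaluating $E'(f)$ at $V$ itself gives $E_W(w^*f)$, and this is an equivalence because $f$ is motivic and $E'$ is a finitary $V$-localizing invariant. Hence $w^*f$ is a $W$-motivic equivalence. The special case $w\colon G/H\to G/G$ gives the statement that $\udl{W}_\mot$ is a $G$-subcategory. Closure under composition and identities is clear, since each $\udl{W}_\mot$ level is defined by inverting a class of functors.

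The main obstacle I expect is the bookkeeping in the second step. One must identify $w_*w^*\udl{\cat}\perfect_V$ with $w_*\udl{\cat}\perfect_W$. One must also verify that the unit $\udl{\cat}\perfect_V\to w_*w^*\udl{\cat}\perfect_V$ is genuinely $V$-semiadditive, i.e. compatible with indexed coproducts. This reduces, via the basechange formulas in \cref{setting:pullback_notations}, to the fact that restriction commutes with induction: $u^*w_!\simeq \overline{w}_!\overline{u}^*$ holds in the $G$-category $\udl{\cat}\perfect$.
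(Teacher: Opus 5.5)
Your proposal is correct and follows essentially the same route as the paper: it also forms the coinduced invariant $\udl{\cat}\perfect_G\xrightarrow{\eta}\coind^G_H\udl{\cat}\perfect_H\xrightarrow{\coind^G_H E}\coind^G_H\udl{\E}$, gets semiadditivity from this factorization, checks the Karoubi and filtered-colimit conditions levelwise by decomposing $(\coind^G_H\udl{\E})^K$ into restrictions of $E$, and then reads off $E(\res^G_H f)$ from the resulting equivalence. Two cosmetic slips: at level $U$ the unit is restriction along $\overline{w}\colon Z\to U$, not along $\overline{u}$; and your opening sentence about pulling back along the left adjoint of restriction misdescribes the construction, which is coinduction of $E$ along $w$.
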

\begin{proof}
      Given any $H$-localizing invariant $E: \udl{\cat}\perfect_H\to \udl{\E}$, by adjunction we get a $G$-functor $\coind_H^G E\colon  \udl{\cat}\perfect_G\to \coind_H^G\udl{\E}$. Note that if $E$ is finitary, then so is $\coind_H^GE$ by \cref{rmk:sifted_constant_diagrams}.  We claim now that $\coind_H^G E$ is a $G$-localizing invariant. 

      First, we note that it is clearly $G$-semiadditive if $E$ was $H$-semiadditive since $\coind^G_HE$ may be written as the composite
      \[\udl{\cat}\perfect_G\xlongrightarrow{\eta} \coind^G_H\res^G_H\udl{\cat}\perfect_G = \coind^G_H\udl{\cat}\perfect_H\xlongrightarrow{\coind^G_HE}\coind^G_H\udl{\E}\] and both maps preserve finite indexed products.

      For $f:\udl{\A}\to\udl{\B}$ a $G$-exact functor of $G$-stable $G$-categories, and $K\leq G$ a subgroup, $\coind_H^GE(f)^K\in (\coind_H^G \udl{\E})^K\simeq \prod_{r\in H\backslash G /K} \udl{\E}^{H\cap K^r}$ is given by the family of the $E^{H\cap K^r}(\res_{H\cap K^r}^Gf)$. From this the claim follows directly because restrictions of fiberwise Karoubi sequences are (by definition!) fiberwise Karoubi sequences.     Now if $f$ is a $G$-motivic equivalence, the above family is an equivalence and hence plugging in subgroups $K$ of $H$ we find that $E^K(\res_K^H\res_H^G f)$ is an equivalence, and so $f$ is an $H$-motivic equivalence.
 \end{proof}

 With this, we can now use Dwyer--Kan localizations to define equivariant localizing motives.

 \begin{defn}
    We let $\udl{\Mot}\loc_G := \udl{\cat}\perfect_G[\udl{W}^{-1}_\mot]$  denote the $G$-category of \textit{$G$-localizing motives}, and we let $\udl{\U}\loc_G: \udl{\cat}\perfect_G\to \udl{\Mot}\loc_G$ denote the induced localization functor.    More generally, for a finite $G$-set $V$, we have similarly $\udl{\Mot}\loc_V$ and $\udl{\U}\loc_V$. 
\end{defn}
\begin{rmk}
    Let us briefly discuss the role of the adjective ``finitary'' in the definion above. One could perfectly have defined all relevant notions here without this adjective, and in fact our proof ultimately works also in the context of non-finitary localizing motives. 

    The first role this adjective plays is an a priori size control: it is easier to show that the localization at ``finitary motivic equivalences'' is actually a locally small category than to show the same statement for the localization at ``absolute motivic equivalences'', though the latter is also true. This would maybe justify considering $\kappa$-finitary localizing invariants for arbitrary regular cardinals $\kappa$, rather than specifically $\kappa = \aleph_0$ -- again, our arguments carry over to this setting with no change.
    
    However, the point of restricting to $\kappa=\aleph_0$ is that for $\kappa >\aleph_1$, the relevant category of motives seems much harder to access, and in particular it seems difficult to say anything nontrivial about it, while for $\kappa=\aleph_1$, it turns out (miraculously) to agree with the case of $\kappa=\aleph_0$, see \cite[Thm.  1.8]{MotLoc}. Since virtually every localizing invariant one encounters is $\aleph_1$-finitary, the restriction to this version of localizing motives seems completely mild, and we work in this setting for convenience and in order to avoid overloading our statements with set-theoretic considerations. 
\end{rmk}

Before we go on, we make a small observation that will be useful throughout. Namely, we defined motivic equivalences in terms of $G$-localizing invariants, which are certain $G$-functors, and staring at the bare category ${\cat}\perfect_G$, it may not be clear what these correspond to without remembering all the $G$-structure. We now work towards proving \cref{cor:Mackeyweq=weq} that we can also do away with some of the ``$G$'' in this definition. This will be convenient for some proofs.

\begin{lem}\label{lm:Mackeylocimpliesloc}
    Let $E:{\cat}\perfect_G\to \E$ be an additive functor to a stable category sending Karoubi sequences to bifiber sequences. The $G$-semiadditive $G$-functor $\widetilde{E}: \udl{\cat}\perfect_G\to \udl{\mackey}_G(\E)$ corresponding to $E$ under the adjunction from \Cref{lem:fixmackadj} is a $G$-localizing invariant. If $E$ preserves filtered colimits, then $\tilde{E}$ preserves fiberwise filtered colimits. More generally, the statement holds when we replace $G$ by a finite $G$-set  $V$. 
\end{lem}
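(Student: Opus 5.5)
The plan is to use the explicit formula for the unit of the adjunction in \cref{lem:fixmackadj}. Write $\widetilde{E}$ as the composite
\[\udl{\cat}\perfect_G\xrightarrow{\eta}\udl{\mackey}_G(\cat\perfect_G)\xrightarrow{\udl{\mackey}_G(E)}\udl{\mackey}_G(\E).\]
This uses that $\cat\perfect_G$ is semiadditive, being the fixed points of the $G$-semiadditive $\udl{\cat}\perfect_G$. $G$-semiadditivity of $\widetilde{E}$ is automatic, since it is the adjunct of an additive functor. We then evaluate at level $H\leq G$ and at each $K\leq H$. By the last sentence of \cref{lem:fixmackadj}, the $K$-component of $\widetilde{E}_H(\udl{\A})$ is
\[E(\ind_K^G\res^H_K\udl{\A})\simeq E(\coind^G_K\res^H_K\udl{\A}).\]
Fiber and cofiber sequences in $\mackey_H(\E)$ are computed pointwise in $K$ (\cite[Cor. 6.7.1]{barwick1}). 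So it suffices to show the following: for a Karoubi sequence $\udl{\A}\to\udl{\B}\to\udl{\sC}$ in $\cat\perfect_H$, the sequence $\ind^G_K\res^H_K$ of it is again a Karoubi sequence in $\cat\perfect_G$. Once this holds, $E$ turns it into a bifiber sequence.

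For this step I would argue in two parts. First, $\res^H_K$ preserves Karoubi sequences; this is essentially by definition and was already used in \cref{lm:weqsubGcat}. Second, $\ind^G_K\colon\cat\perfect_K\to\cat\perfect_G$ is simultaneously a left and a right adjoint to restriction, by $G$-semiadditivity of $\udl{\cat}\perfect_G$. It therefore preserves both fiber sequences and cofiber sequences in $\cat\perfect$. Since a Karoubi sequence is precisely a null sequence that is both a fiber and a cofiber sequence, $\ind^G_K$ preserves Karoubi sequences. The main subtlety is making sure the identification of the $K$-component of the unit with $\ind_K^G\res^H_K$ is natural in $\udl{\A}$. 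This naturality is part of the statement of \cref{lem:fixmackadj}, so I take it as given.

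For the finitary claim, fiberwise filtered colimits in $\udl{\mackey}_G(\E)$ are computed pointwise. So I need $\ind^G_K\res^H_K$ to preserve filtered colimits. Restriction does. Induction agrees with coinduction, a finite indexed product, which commutes with filtered colimits in $\cat\perfect$ because filtered colimits there commute with finite products. Composing with $E$ then gives the claim. For a general finite $G$-set $V=\coprod G/H_i$, everything decomposes as a product over the orbits, and we run the same argument with $G$ replaced by $H_i$.
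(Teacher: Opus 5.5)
Your proposal is correct and follows the paper's proof. The paper likewise uses the unit formula from \cref{lem:fixmackadj} to identify the $K$-component of $\widetilde{E}_H$ with $E(\ind^G_K\res^H_K -)$, and then concludes; your argument that $\res^H_K$ and $\ind^G_K$ each preserve Karoubi sequences, being two-sided adjoints, just makes explicit what the paper calls ``clearly''. For finitariness, the detour through coinduction is unnecessary: $\ind^G_K$ preserves filtered colimits simply because it is a left adjoint.
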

\begin{proof}
    For fixed subgroups $K\leq H \leq G$, using the description of the unit and the counit from \Cref{lem:fixmackadj}, we find that the composite $\widetilde{E}^H :\cat\exact_H \to \mackey_H(\E)\xrightarrow{(-)^K} \E$ is given by $E(\induced_K^G\res_K^H -)$, which clearly sends Karoubi sequences to bifiber sequences if $E$ does.     The filtered colimits claim also follows from that analysis. 
\end{proof}

\begin{defn}
    A \textit{Mackey $G$-localizing invariant} is an additive functor $E:{\cat}\perfect_G\to \E$ to some stable category $\E$ sending Karoubi sequences to bifiber sequences.     It is finitary if it sends filtered colimits to filtered colimits. A \textit{Mackey motivic equivalence} is a $G$-exact functor $\udl{\A}\to\udl{\B}$ between small $G$-stable $G$-categories that is sent to an equivalence by every finitary Mackey $G$-localizing invariant. We have analogous definitions for finite $G$-sets $V$. 
\end{defn}
\begin{defn}\label{defn:officialdefnK}
    Ordinary (non-connective, algebraic) $K$-theory of $G$-fixed points evidently gives a Mackey $G$-localizing invariant $K((-)^G): \cat\perfect_G\to \spectra$. We define equivariant $K$-theory $\udl{K}_G$ to be the $G$-localizing invariant associated to it as in \Cref{lm:Mackeylocimpliesloc}.
\end{defn}
\begin{obs}
    The combination of \Cref{lem:fixmackadj} and \Cref{lm:Mackeylocimpliesloc} essentially states that (finitary) Mackey $G$-localizing invariants are a special case of $G$-localizing invariants, namely the special case where the target $\udl{\E}$ is\footnote{We do not believe that this is a property, rather it seems to be extra structure on $\udl{\E}$ to witness it as such.} of the form $\udl{\mackey}_G(\E_0)$ for some $\E_0$. 
\end{obs}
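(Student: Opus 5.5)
The plan is to make the observation precise as a statement about the adjunction of \cref{lem:fixmackadj}: for a fixed stable category $\E_0$, that adjunction gives an equivalence
\[\func^{\oplus}_G(\udl{\cat}\perfect_G,\udl{\mackey}_G(\E_0))\simeq\func^{\oplus}(\cat\perfect_G,\E_0).\]
Here the left side consists of $G$-semiadditive $G$-functors, and the equivalence sends $F$ to $(-)^G\circ F^G$. I will then check that this equivalence restricts to an equivalence between (finitary) $G$-localizing invariants on the left and (finitary) Mackey $G$-localizing invariants on the right. First, $\cat\perfect_G$ is semiadditive, since it is a category of $G$-semiadditive objects closed under finite products. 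Second, $\udl{\cat}\perfect_G$ is $G$-semiadditive, being the source of the adjunction in the intended sense, as implicitly used in \cref{lm:Mackeylocimpliesloc}. Granting these, both sides make sense. Since the localizing and finitary conditions are properties, it suffices to check that each direction of the equivalence preserves them.

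\textbf{From Mackey invariants to $G$-localizing invariants.} This direction is exactly \cref{lm:Mackeylocimpliesloc}. If $E$ is a (finitary) Mackey $G$-localizing invariant, then its adjoint $\widetilde{E}$ is a (finitary) $G$-localizing invariant. Nothing more needs to be done here.

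\textbf{From $G$-localizing invariants to Mackey invariants.} Let $F\colon\udl{\cat}\perfect_G\to\udl{\mackey}_G(\E_0)$ be a $G$-localizing invariant. Its adjoint is the composite $\cat\perfect_G\xrightarrow{F_G}\mackey_G(\E_0)\xrightarrow{(-)^G}\E_0$. Additivity holds because $F_G$ preserves finite products, as it is the level-$G/G$ part of a $G$-semiadditive functor, and $(-)^G$ is evaluation at $G/G$, which preserves finite products. For the localizing condition, let $\udl{\A}\to\udl{\B}\to\udl{\sC}$ be a Karoubi sequence. By assumption $F_G$ sends it to a bifiber sequence in $\mackey_G(\E_0)$. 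Finite limits and colimits there are computed pointwise, by \cite[Cor. 6.7.1]{barwick1}, since $\mackey_G(\E_0)\subseteq\func(\spancategory(\finite_G),\E_0)$ is closed under finite limits and colimits when $\E_0$ is stable. Hence evaluating at $G/G$ yields a bifiber sequence in $\E_0$. For finitaryness, filtered colimits in $\mackey_G(\E_0)$ are also computed pointwise, because finite-product preservation is stable under filtered colimits in a stable category. So $(-)^G$ preserves them, and a finitary $F$ has a finitary adjoint.

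\textbf{Main obstacle.} The only step that needs care is the pointwise computation of cofibers and filtered colimits in $\mackey_G(\E_0)$. This is standard, and I expect no real difficulty there. The word ``essentially'' in the observation reflects that the target is required to be presented as $\udl{\mackey}_G(\E_0)$, which is extra structure rather than a property. The argument above gives the equivalence of the two full subcategories of invariants once this presentation is fixed.
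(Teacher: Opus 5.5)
Your proposal is correct and follows the paper's reasoning: one direction is \cref{lm:Mackeylocimpliesloc}, and the converse is the fixed-point argument the paper uses in the proof of \cref{cor:Mackeyweq=weq}, where the adjoint of $F$ is $(-)^G\circ F_G$ by the counit description in \cref{lem:fixmackadj}. One small correction to a side remark: $\cat\perfect_G$ is not semiadditive because it is ``a category of $G$-semiadditive objects closed under finite products''; it is semiadditive because it is the $G$-fixed points of the $G$-semiadditive $G$-category $\udl{\cat}\perfect_G$.
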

\begin{cor}\label{cor:Mackeyweq=weq}
    Motivic equivalences and Mackey motivic equivalences agree. 
\end{cor}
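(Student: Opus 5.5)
We show that every $G$-motivic equivalence is a Mackey motivic equivalence and conversely, working at level $G/G$; for a finite $G$-set $V$ the argument is the same with $V$ in place of $G$.

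\emph{Motivic implies Mackey motivic.} Let $f\colon\udl{\A}\to\udl{\B}$ be a $G$-motivic equivalence and $E\colon\cat\perfect_G\to\E$ a finitary Mackey $G$-localizing invariant. By \cref{lm:Mackeylocimpliesloc}, the adjoint $\widetilde{E}\colon\udl{\cat}\perfect_G\to\udl{\mackey}_G(\E)$ from \cref{lem:fixmackadj} is a finitary $G$-localizing invariant. Hence $\widetilde{E}(f)$ is an equivalence. The counit description in \cref{lem:fixmackadj} says that $E$ is recovered as $(-)^G\circ\widetilde{E}^G$. Therefore $E(f)$ is an equivalence as well.

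\emph{Mackey motivic implies motivic.} Let $f$ be a Mackey motivic equivalence and $F\colon\udl{\cat}\perfect_G\to\udl{\E}$ a finitary $G$-localizing invariant. We must show that $F^H(\res^G_Hf)$ is an equivalence for every $H\leq G$.

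We first show $F^G(f)$ is an equivalence. The functor $F^G\colon\cat\perfect_G\to\E_G$ lands in a stable category. It sends Karoubi sequences to bifiber sequences by definition of a $G$-localizing invariant. It preserves filtered colimits by finitarity, since these are fiberwise. It is additive because a $G$-semiadditive functor preserves finite fiberwise products, in particular at level $G$. So $F^G$ is a finitary Mackey $G$-localizing invariant, and $F^G(f)$ is an equivalence.

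For a proper subgroup $H$, the plan is to write $F^H\circ\res^G_H$ as a Mackey $G$-localizing invariant composed with a detecting map. Take $E'\coloneqq F^G\circ\ind^G_H\res^G_H\colon\cat\perfect_G\to\E_G$. This is built from level-$G$ data: $F$ is $G$-semiadditive, so $F^G\circ\ind^G_H\simeq\ind^G_H\circ F^H$, where on the right $\ind^G_H$ is the induction $\E_H\to\E_G$. Hence $E'\simeq\ind^G_H F^H\res^G_H$. The functor $E'$ is again a finitary Mackey invariant, because $\res^G_H$ and $\ind^G_H$ preserve Karoubi sequences and filtered colimits. So $\ind^G_H F^H(\res^G_Hf)$ is an equivalence.

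The main obstacle is to conclude from this that $F^H(\res^G_Hf)$ itself is an equivalence. The plan is to apply $\res^G_H$ and use the double coset formula: $\res^G_H\ind^G_H X\simeq\bigoplus_{HgH}\ind^H_{H\cap{}^gH}c_g^*\res X$. The identity double coset contributes a natural retract $X$, split by the unit and counit of the ambidextrous adjunction $\ind\dashv\res\dashv\ind$. Since $F^H(\res^G_Hf)$ is a retract, in the arrow category, of an equivalence, it is itself an equivalence.

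Alternatively, one can avoid the retract argument: view $\E_H$ as a stable category and apply \cref{lm:Mackeylocimpliesloc} over the finite $G$-set $G/H$. Then Mackey $H$-invariants of $\res^G_Hf$ are reduced to Mackey $G$-invariants of $f$ by precomposing with $\res^G_H$, using the same induction trick together with the retract.
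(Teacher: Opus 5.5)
Your proposal is correct and follows the paper's proof. The forward direction uses \cref{lm:Mackeylocimpliesloc} and recovers $E$ as $(-)^G\circ\widetilde{E}^G$; the converse observes that the $G$-fixed points $F^G$ of a finitary $G$-localizing invariant is a finitary Mackey $G$-localizing invariant. Your extra work for proper subgroups $H$ is valid but unnecessary, so the ``main obstacle'' you identify is not an obstacle. Since $F$ is a $G$-functor, $F^H(\res^G_H f)\simeq \res^G_H F^G(f)$, which is an equivalence as soon as $F^G(f)$ is.
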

\begin{proof}
    By \Cref{lm:Mackeylocimpliesloc}, we find that every finitary Mackey $G$-localizing invariant $E:{\cat}\perfect_G\to \E$ gives rise to a finitary $G$-localizing invariant $\widetilde{\E}: \udl{\cat}\perfect_G\to \udl{\mackey}_G(\E)$, and $E$ can be recovered as the composite ${\cat}\perfect_G\xrightarrow{\widetilde{E}^G}\mackey_G(\E)\xrightarrow{(-)^G}\E$
    This directly implies that motivic equivalences are Mackey motivic equivalences.     Conversely, a  finitary $G$-localizing invariant $\udl{E}: \udl{\cat}\perfect_G\to \udl{\E}$ gives rise to a Mackey localizing invariant $E_G:{\cat}\perfect_G\to \E_G$ upon taking $G$-fixed points, and hence Mackey motivic equivalences are also motivic equivalences. 
\end{proof}

 Now as in \cite{MotLoc}, and based on \Cref{thm:equivmod}, we find: 
 
 \begin{thm}\label{thm:GMotGPres}
 The fiberwise localization $\udl{\cat}\perfect_G\to \udl{\Mot}\loc_G$ is the universal finitary $G$-localizing invariant. More precisely, $\udl{\Mot}\loc_G$ is $G$-presentable and $G$-stable, and for any $G$-presentable $G$-stable $G$-category $\udl{\E}$, restriction along this localization  $$(\udl{\sU}\loc_G)^*\colon \udl{\func}^L(\udl{\Mot}\loc_G, \udl{\E})\xlongrightarrow{\simeq}\udl{\func}^{\mathrm{fin. loc.}}(\udl{\cat}\perfect_G,\udl{\E})$$ is an equivalence. More generally, for any finite $G$-set $V$, the  localization $\udl{\sU}\loc_V\colon \udl{\cat}\perfect_V\to \udl{\Mot}\loc_V$ witnesses the target as the universal finitary $V$-localizing invariant.  
 \end{thm}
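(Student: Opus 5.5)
The plan is to reduce to the nonequivariant theorem of \cite{MotLoc}, applied to the single category $\cat\perfect_G$. The bridge is \Cref{cor:Mackeyweq=weq} together with the adjunction of \Cref{lem:fixmackadj}, and the final reassembly into a $G$-category uses \Cref{thm:equivmod}.

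\emph{Step 1: the $G$-fixed points.} I would first establish a Mackey version of the statement. The category $\cat\perfect_G$ is compactly generated, being the compact objects of $\presentable^L_{G,\mathrm{st},\omega}$. Its Karoubi sequences are computed pointwise in Mackey functors. I would run the argument of \cite{MotLoc} verbatim with $\cat\perfect$ replaced by $\cat\perfect_G$. Concretely, take the stable presentable category $\Mot_{G}^{\mathrm{Mack}}$ obtained by localizing $\func^{\times}(((\cat\perfect_G)^{\omega})\op,\spectra)$, the spectral presheaves that turn finite sums into products, at the Karoubi sequences between compact objects. The Yoneda-type functor $\cat\perfect_G\to\Mot^{\mathrm{Mack}}_G$ is then the universal finitary Mackey $G$-localizing invariant, in the usual BGT sense of colimit-preserving functors out of it. The key input from \cite{MotLoc} is that this functor is a Dwyer--Kan localization at the Mackey motivic equivalences. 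Their proof only uses formal properties of $\cat\perfect$: it is compactly generated, Karoubi sequences are stable under filtered colimits, and there are the ``Calkin''/suspension constructions $\A\mapsto \mathrm{Calk}(\A)$ realizing loops. I expect all of these to hold in $\cat\perfect_G$, because they can be constructed via $\ind$ and fiberwise. By \Cref{cor:Mackeyweq=weq}, these Mackey equivalences are exactly $(W_\mot)_G$, so $(\Mot\loc_G)_G\simeq \Mot^{\mathrm{Mack}}_G$ is stable and presentable.

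\emph{Step 2: $G$-structure.} By \Cref{lm:Gaddloc}, the fiberwise localization $\udl{\Mot}\loc_G$ is $G$-semiadditive and $\udl{\sU}\loc_G$ preserves finite indexed coproducts. For this I need $\udl{W}_\mot$ to be closed under indexed coproducts, i.e. under $\ind^K_H$. That follows from \Cref{lm:weqsubGcat}: if $E$ is a finitary $K$-localizing invariant, then $E\circ\ind^K_H$ is a finitary $H$-localizing invariant by semiadditivity (it equals $\ind^K_H\circ E$). The same argument applied at each $H$ shows that each fiber $(\Mot\loc_G)_H$ is stable presentable. The restrictions are induced from the exact, filtered-colimit-preserving restrictions on $\cat\perfect$, so they preserve colimits. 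Hence $\udl{\Mot}\loc_G$ is fiberwise presentable and $G$-cocomplete, so it is $G$-presentable and $G$-stable.

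\emph{Step 3: the universal property.} Let $\udl{\E}$ be $G$-presentable and $G$-stable. By \Cref{thm:equivmod}, $\udl{\E}$ corresponds to a $\spectra_G$-module. Using this together with the adjunction of \Cref{lem:fixmackadj}, $G$-colimit-preserving functors $\udl{\Mot}\loc_G\to\udl{\E}$ correspond to colimit-preserving functors $\Mot^{\mathrm{Mack}}_G\to \E_G$. Likewise, finitary $G$-localizing invariants into $\udl{\E}$ correspond to finitary Mackey invariants into $\E_G$, by \Cref{lm:Mackeylocimpliesloc} and the proof of \Cref{cor:Mackeyweq=weq}. The universal property of Step 1 then gives the equivalence on $G$-fixed points. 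Running the same argument over $H$ gives it at every level, which yields the parametrized statement. The $V$-version follows by decomposing $V$ into orbits, since $\cat_V\simeq\prod_i\cat_{H_i}$.

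\emph{Main obstacle.} The hard part is Step 1, namely checking that the \cite{MotLoc} identification of BGT-style motives with the Dwyer--Kan localization transfers to $\cat\perfect_G$. This requires a functorial cone/Calkin construction inside $\cat\perfect_G$ that is compatible with $G$-semiadditivity. My first attempt would be to define it via $\udl{\ind}$ and the fiberwise $\omega_1$-compact objects, and then verify the needed Karoubi sequences levelwise using \Cref{cons:categorical_isotropy_separation}-style pointwise descriptions.
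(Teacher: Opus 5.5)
Your Steps 1 and 2 follow the paper's route. You reduce levelwise to \cite{MotLoc}, and you get $G$-semiadditivity of $\udl{\Mot}\loc_G$ and of $\udl{\sU}\loc_G$ from \cref{lm:Gaddloc} once $\udl{W}_\mot$ is closed under induction. Your argument $E(\ind^K_H f)\simeq \ind^K_H E(f)$ for that closure is fine, although it has nothing to do with \cref{lm:weqsubGcat}. The genuine gap is Step 3: both correspondences you assert are false for a general $G$-presentable $G$-stable $\udl{\E}$.
\begin{itemize}
\item By \cref{thm:equivmod}, $\func^L_G(\udl{\Mot}\loc_G,\udl{\E})$ is the category of $\spectra_G$-\emph{linear} colimit-preserving functors $\Mot\loc_G\to \E_G$. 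Passing to $G$-fixed points forgets this linearity; already $\func^L_{\spectra_G}(\spectra_G,\spectra_G)\simeq\spectra_G$ is far smaller than $\func^L(\spectra_G,\spectra_G)$.
\item \cref{lm:Mackeylocimpliesloc} only produces $G$-localizing invariants whose targets have the form $\udl{\mackey}_G(\E_0)$. \cref{cor:Mackeyweq=weq} only says the two classes of invariants detect the same morphisms, not that they correspond.
\end{itemize}
Concretely, $\udl{\sC}\mapsto K(\sC_e)\otimes\sphere_G$ is a finitary Mackey $G$-localizing invariant $\cat\perfect_G\to\spectra_G$. It is not the $G$-fixed points of any $G$-localizing invariant into $\myuline{\spectra}_G$. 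By $G$-semiadditivity such an invariant would send $\ind^G_e\A$ to an induced $G$-spectrum, which is killed by $\Phi^G$. But here $\Phi^G\big(K(\A^{\oplus |G|})\otimes\sphere_G\big)\neq 0$ for $\A=\spectra^\omega$. So the fixed-point comparisons are not equivalences, and the universal property of Step 1 does not transfer to the parametrized statement in the way you describe.

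The repair uses only ingredients you already have, and it is what the paper does.
\begin{itemize}
\item Since $\udl{\sU}\loc_G$ is a Dwyer--Kan localization, $(\udl{\sU}\loc_G)^*$ is fully faithful onto $G$-functors inverting $\udl{W}_\mot$. It lands in finitary $G$-localizing invariants because $\udl{\sU}\loc_G$ is one, by Step 1 at each level together with Step 2.
\item A finitary $G$-localizing invariant $E$ inverts $\udl{W}_\mot$ by definition, so it factors through some $\overline{E}$. You then only need $\overline{E}$ to be $G$-cocontinuous.
\item $\overline{E}$ preserves finite indexed coproducts by the last clause of \cref{lm:Gaddloc}.
\item At each level $H$, $\overline{E}_H$ is the factorization through $\cat\perfect_H[W_{\mot,H}^{-1}]$ of the finitary Mackey $H$-localizing invariant $E_H$. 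Hence it preserves colimits by the nonparametrized universal property.
\end{itemize}
Separately, the obstacle you flag in Step 1 disappears if you use \cref{thm:equivmod} at the start rather than the end. Together with \cref{cor:Gmotivic}, it identifies $\cat\perfect_H$, along with its Karoubi sequences and filtered colimits, with $\module_{\spectra_H^\omega}(\cat\perfect)$. The linear version \cite[Thm. B.6 and Rmk. B.11]{MotLoc} then applies directly, so you need no equivariant Calkin construction and no isotropy-separation check.
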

Thus, $\udl{\sU}\loc_G\colon \udl{\cat}\perfect_G\rightarrow \udl{\Mot}\loc_G$ admits another universal property apart from the tautological one as  the initial functor which inverts motivic equivalences. This is analogous to the main result of \cite{MotLoc} and shows that the localization of $\udl{\cat}\perfect_G$ at motivic equivalences satisfies a Blumberg--Gepner--Tabuada-style universal property \cite{BGT}, that is, a universal property among presentable $G$-categories equipped with a finitary $G$-localizing invariant.

 \begin{proof}[Proof of \Cref{thm:GMotGPres}]
     We first show that $\udl{\Mot}\loc_G$ is $G$-semiadditive, and that the localization functor $\udl{\sU}\loc_G$ is $G$-semiadditive as well. By \Cref{lm:Gaddloc}, it suffices to show that induction $\ind_H^K: \cat\perfect_H\to\cat\perfect_K$ preserves motivic equivalences for all subgroups $H\leq K \leq G$. By \Cref{cor:Mackeyweq=weq}, it suffices to show that it preserves Mackey motivic equivalences, which is evident since it is finitary and preserves Karoubi sequences. 
     
     Next, we argue that $\udl{\Mot}\loc_G$ is $G$-presentable and $G$-stable, and that $\udl{\sU}\loc_G$ is a finitary $G$-localizing invariant. For $G$-presentable-stability, since we already know that $\udl{\Mot}\loc_G$ is $G$-semiadditive (which in particular means that the restriction functorialities preserve ordinary colimits), it thus suffices by \cite[Thm. 6.1.2 (7)]{kaifPresentable} to show that $\udl{\Mot}\loc_G$ is fiberwise stable and presentable. By \Cref{thm:equivmod} and \Cref{cor:Gmotivic}, the localization in question is fiberwise 
     equivalent to the localization of $\module_{\spectra_H^\omega}(\cat\perfect)$ at $\spectra_H^\omega$-motivic equivalences in the sense of \cite[App. B]{MotLoc}. Therefore, by \cite[Thm. B.6 and Rmk. B.11]{MotLoc}, it is fiberwise presentable and stable, and the localization functor $\cat\perfect_H\to \cat\perfect_H[W_{\mot,H}^{-1}]$ is a finitary Mackey $H$-localizing invariant. Hence, together the localization functors assemble into a finitary $G$-localizing invariant $\udl{\sU}\loc_G$.

     Finally, we show the universal property.  By construction, we have a fully faithful functor $(\udl{\U}\loc_G)^*\colon \udl{\func}^L(\udl{\Mot}\loc_G,\udl{\E}) \hookrightarrow \udl{\func}(\udl{\cat}\perfect_G,\udl{\E})$. This in fact lands in the full subcategory of finitary $G$-localizing invariants  since $\udl{\cat}\perfect_G\rightarrow \udl{\Mot}\loc_G$ was itself a $G$-localizing invariant by the previous paragraph. To see that this is essentially surjective on finitary $G$-localizing invariants $E\colon \udl{\cat}\perfect_G\rightarrow\udl{\E}$, first note that any such functor will in particular invert the motivic equivalences by definition, and therefore it factors uniquely through a functor $\overline{E}\colon \udl{\Mot}\loc_G\rightarrow \udl{\E}$. It remains to show that $\overline{E}$ preserves $G$-colimits. By \cref{lm:Gaddloc}, we know at least that it preserves finite indexed coproducts. On the other hand, by the previous paragraph and again \cite[Thm. B.6]{MotLoc}, $\overline{E}$ also preserves fiberwise colimits. Hence, $\overline{E}$ preserves arbitrary $G$-colimits, as required. 
 \end{proof}

We end this subsection by making contact with recent breakthroughs in K-theory. A remarkable result of Efimov \cite{efimovIII} states that $\Mot\loc$ is rigid in the sense of Gaitsgory--Rozenblyum \cite[Def. 9.1.2]{gaitsgoryrozenblyum}, which is an unexpected strong finiteness property. In fact, this was proved to such a level of generality that it applies directly to our setting to yield:

\begin{cor}\label{rmk:rigidityeh}\label{cor:rigidityeh}
The category $\Mot\loc_G$ is a rigid commutative algebra in $\presentable^L_{\mathrm{st}}$, as well as as a commutative algebra over $\spectra_G$, equivalently,\footnote{See \cite{leor} for a definition of rigidity in arbitrary symmetric monoidal $2$-categories.}  in $\presentable^L_{G,\mathrm{st}}$.
\end{cor}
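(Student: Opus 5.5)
The plan is to reduce to Efimov's general rigidity theorem by identifying $\Mot\loc_G$ with a category of motives in the module-theoretic sense of \cite[App. B]{MotLoc}. By \cref{thm:equivmod}, a presentable $G$-stable $G$-category is the same as a $\spectra_G$-module in $\presentable^L_{\mathrm{st}}$. At the level of small categories, $\cat\perfect_G$ is then identified with $\module_{\spectra_G^\omega}(\cat\perfect)$. This is exactly the identification already used in the proof of \cref{thm:GMotGPres}, via \cref{cor:Gmotivic}. Under it, $G$-motivic equivalences (equivalently Mackey motivic equivalences, by \cref{cor:Mackeyweq=weq}) correspond to $\spectra_G^\omega$-motivic equivalences. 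Hence $\Mot\loc_G\simeq \Mot\loc_{\spectra_G^\omega}$, the localizing motives of small stable categories linear over the rigid symmetric monoidal category $\spectra_G^\omega$.

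First, I will record that $\spectra_G$ is rigid, since it is compactly generated by dualizable objects, namely the $\Sigma^\infty_+ G/H$ and their shifts, and its unit is compact. Second, I will invoke Efimov's theorem \cite{efimovIII} in the generality in which it is stated: for a rigid base $\mathcal{R}$ (or for any base of the kind covered there), the category of $\mathcal{R}$-linear localizing motives is rigid. This has two parts.
\begin{enumerate}
\item It is rigid as a commutative algebra in $\presentable^L_{\mathrm{st}}$.
\item It is rigid over $\ind(\mathcal{R})$, i.e.\ as a commutative algebra in $\module_{\ind\mathcal R}(\presentable^L_{\mathrm{st}})$.
\end{enumerate}
Taking $\mathcal R=\spectra_G^\omega$ gives both claims. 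The final "equivalently" follows by transporting along \cref{thm:equivmod}, which identifies $\module_{\spectra_G}(\presentable^L_{\mathrm{st}})$ with $\presentable^L_{G,\mathrm{st}}$ symmetric monoidally.

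Third, I need the symmetric monoidal structure on $\Mot\loc_G$ to be the expected one. It should be the one inherited from the $\spectra_G^\omega$-linear tensor product on $\cat\perfect_G$, i.e.\ the underlying fiberwise symmetric monoidal structure of the $G$-symmetric monoidal structure of \cref{alphThm:K_is_G-laxsym}. Both are obtained by Dwyer–Kan localization of the same structure on $\cat\perfect_G$, so they agree.

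The main obstacle is checking that Efimov's hypotheses apply verbatim. This means confirming that his relative rigidity statement covers bases like $\spectra_G$ and yields rigidity over the base, and not only absolutely. If only the absolute statement were available, I would deduce the relative one from it: a commutative algebra over a rigid base that is rigid in $\presentable^L_{\mathrm{st}}$ is rigid over that base. This holds because the multiplication over $\spectra_G$ is a quotient of the absolute multiplication, so its right adjoint remains colimit-preserving and linear.
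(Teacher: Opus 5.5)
Your proposal is correct and follows essentially the same route as the paper. You identify $\Mot\loc_G$ with Efimov's $\Mot_{\spectra_G}$ via \cref{thm:equivmod} and \cref{cor:Gmotivic}, get absolute rigidity from Efimov's theorem, upgrade it to rigidity over $\spectra_G$, and transport the result along \cref{thm:equivmod}. The absolute-to-relative upgrade is exactly what the paper takes from \cite[Cor.~4.46]{ramzirigid}. Your sketch of that step needs two additions: it uses rigidity of $\spectra_G$ (so that $\Mot\loc_G\otimes\Mot\loc_G\to\Mot\loc_G\otimes_{\spectra_G}\Mot\loc_G$ has a conservative colimit-preserving right adjoint), and it must also handle the unit map $\spectra_G\to\Mot\loc_G$.
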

\begin{proof}
    The proof of \cref{thm:GMotGPres}  shows that $\Mot\loc_G$ is equivalent to $\Mot_{\spectra_G}$ with the notation from \cite{efimovIII}.      In particular, Theorem 0.3 in \textit{loc. cit.} applies, and we find that $\Mot\loc_G$ is indeed rigid.       Furthermore, as the $G$-fixed points of a $G$-presentable $G$-stable symmetric monoidal $G$-category, \Cref{thm:equivmod} allows us to view $\Mot\loc_G$ as a commutative $\spectra_G$-algebra. By \cite[Cor. 4.46]{ramzirigid}, $\Mot\loc_G$ is also rigid as a commutative $\spectra_G$-algebra.  By \Cref{thm:equivmod}, the claim about rigidity in $\presentable^L_{G,\mathrm{st}}$ follows at once.
\end{proof}

\subsection{\texorpdfstring{$G$}--symmetric monoidality of localizing motives}\label{subsec:GSymMonofMotLoc}
We now come to the heart of the matter, and prove in this subsection the following, which is the main  result of this paper.

\begin{thm}\label{cor:Nmweq}
    Let $w:W\to V$ be a map of finite $G$-sets. The functor $f_\otimes: \cat\perfect_W\to \cat\perfect_V$ sends motivic equivalences to motivic equivalences (cf. \cref{defn:motivic_equivalence}). 
\end{thm}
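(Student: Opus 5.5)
Reductions first. By \cref{lm:weqsubGcat} and basechange along $V$, and since $w_\otimes$ for $W=\coprod_i W_i$ factors as a tensor product of the norms along the individual orbits, the general case splits into two parts. One is the nonequivariant-style binary tensor product $\otimes$, which preserves motivic equivalences in each variable separately because it preserves Karoubi sequences and filtered colimits in each variable. The other is norms along orbit maps $G/H\to G/G$, that is, $\norm_H^G\colon \cat\perfect_H\to\cat\perfect_G$. By \cref{cor:Mackeyweq=weq}, it then suffices to show: for every finitary Mackey $G$-localizing invariant $E$, the functor $E\circ \norm_H^G$ inverts $H$-motivic equivalences. The natural strategy is to show that $E\norm_H^G$ is itself ``localizing enough''. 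Since $\norm^G_H$ preserves filtered colimits (\cref{rmk:distributivity_over_sifted}), one would like to show that it sends Karoubi sequences $\udl{\A}\to\udl{\B}\to\udl{\sC}$ to sequences which $E$ turns into something controlled by localizing invariants on smaller groups. I would proceed by induction on $|G|$, with the statement for all proper subgroups $K<G$ and all $H\le K$ as inductive hypothesis.

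The key step runs as follows. Given a Karoubi sequence, \cref{prop:cubical_cofibs} applied to $N=\norm_H^G$ (distributive, by \cref{recollect:monoidal_DK_localization}'s setting in $\udl{\presentable}^L$ after passing through $\ind$) produces a cofiber sequence $\colim_{w_*[1]\setminus 1}\norm(\udl{\A}\to\udl{\B})\to \norm\udl{\B}\to\norm\udl{\sC}$ in $\cat\perfect_G$. One must show two things. First, the colimit term maps fully faithfully into $\norm\udl{\B}$, so that this is a Karoubi sequence. For this I would use \cref{lem:fully_faithful_isotropy_separation} with the family $\proper$: after restriction to proper subgroups the norm decomposes by the double coset formula into products of norms along smaller groups, and the inductive hypothesis, together with \cref{lem:norms_preserve_fully_faithfulness}, handles those; on geometric fixed points $\Phi^{\proper}$, the norm becomes the diagonal-type functor $\udl{\A}^H\mapsto\Phi^{\proper}$ and the cube colimit collapses to its fixed vertices, leaving $\Phi^{\proper}\norm\udl{\A}\to\Phi^{\proper}\norm\udl{\B}$, which is fully faithful by \cref{lem:norms_preserve_fully_faithfulness}. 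Second, one filters the cube colimit using \cref{nota:filtration_of_cubes}. By \cref{lem:inducedness_of_cube_cofibers} its successive layers, from $\norm\udl{\A}$ (the size $0$/full level after accounting) up to the whole colimit, are finite sums of properly induced objects. Their $E$-values, by $G$-semiadditivity and \cref{lm:Mackeylocimpliesloc}, are localizing invariants on proper subgroups evaluated on products of norms of $\udl{\A}$'s and $\udl{\B}$'s.

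To conclude, take a motivic equivalence $f$ in $\cat\perfect_H$. Consider the class of $f$ for which $\norm_H^G f$ is a motivic equivalence. I would show that this class is saturated in the sense of \cite{MotLoc}: it contains the generating motivic equivalences, namely maps $\udl{\B}\to\udl{\B}'$ comparing the cofiber of $\udl{\A}\to \udl{\B}$ in a Karoubi square, and filtered colimits thereof. That would follow by comparing the cubical cofiber sequences above functorially in $f$, using the five lemma on $E$ and the inductive hypothesis for the induced layers. The main obstacle I expect is the fully faithfulness of the cube colimit into $\norm\udl{\B}$ at the geometric-fixed-point level, and the identification of $\Phi^{\proper}$ of the cube colimit. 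The naive model statement $B^{\otimes n}/A^{\otimes n}$ fails, and one must carefully use that the filtration layers are induced, so that $\Phi^{\proper}$ kills them. I would attack this step first by carefully computing $\Phi^{\proper}$ of induced objects as zero, via \cref{obs:proper_isotropy_separation_generalities}.
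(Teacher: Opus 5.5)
Your reductions and your cubical analysis track the paper closely. Showing that $\colim_{f_*[1]\setminus 1}\norm(\udl{\A}\to\udl{\B})\to\norm\udl{\B}$ is fully faithful by isotropy separation, and filtering by cube size with properly induced layers, is essentially \cref{prop:cubical_inclusions}. You omit one point: the filtration inclusions themselves must be fully faithful (part (2) there), otherwise $E$ does not turn their cofibers into cofibers.

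The genuine gap is the concluding step. You fix an arbitrary finitary Mackey invariant $E$ with values in a \emph{nonequivariant} stable category. Then $E\circ\norm_H^G$ is not localizing, because the induced layers obstruct it, so you cannot conclude from the definition of motivic equivalence. You instead appeal to a ``saturation'' description of $W_{\mot}$ by generators, and no such description is available. Motivic equivalences are defined semantically, as maps inverted by all finitary localizing invariants. Neither this paper nor \cite{MotLoc} characterizes them as the closure of an explicit class under 2-out-of-3, filtered colimits, and similar operations. Your proposed generators are also not well defined as stated: a Karoubi sequence is already a cofiber sequence in $\cat\perfect_H$, so a map ``comparing the cofiber'' is just an equivalence. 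Proving that some such closure exhausts $W_{\mot}$ amounts to showing that localizing at the closure already yields a finitary localizing invariant into a stable category, i.e.\ re-doing the structural theorem of \cite{MotLoc}. You would also have to check every closure property for $\{f : \norm f\in W_{\mot}\}$ separately, which is not formal since $\norm$ is nonlinear.

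The missing idea is to do isotropy separation on the \emph{target} of the universal invariant rather than working with a general $E$. Take $\U\loc_V$, which lands in the $G$-stable $G$-presentable $\udl{\Mot}\loc_V$ (\cref{thm:GMotGPres}), and show directly that $\U\loc_V(w_\otimes f)$ is an equivalence. By \cref{obs:proper_isotropy_separation_generalities}, it suffices to check restrictions to proper subgroups and $\Phi^{\proper}$.
\begin{itemize}
\item Restrictions are handled by induction, since $u^*\U\loc_V(w_\otimes-)\simeq\U\loc_U(\overline{w}_\otimes\overline{u}^*-)$ and $\overline{u}^*$ preserves motivic equivalences (\cref{lm:weqsubGcat}).
\item On geometric fixed points, $\Phi^{\proper}$ kills motives of properly induced categories. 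Your cube analysis therefore shows that $\Phi^{\proper}\U\loc_V(w_\otimes-)$ is an honest finitary Mackey $W$-localizing invariant (\cref{cor:Geometriclocalizing}). It then inverts motivic equivalences by definition together with \cref{cor:Mackeyweq=weq}.
\end{itemize}
This is exactly where ``$\Phi^{\proper}$ kills the induced layers'' does its work. In your setup it remains only a heuristic, because a general $E$ into a nonequivariant stable category has no geometric fixed points to apply.
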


Given this, it is then rather immediate to obtain the following:

\begin{thm}\label{thm:motnorm}
    There exists a unique $G$-symmetric monoidal structure on $\udl{\Mot}\loc_G$ compatible with the norm $G$-symmetric monoidal structure on $\udl{\cat}\perfect_G$ under the localization $G$-functor $\udl{\U}\loc_G$.    Furthermore, for any $G$-symmetric monoidal $G$-presentable $G$-stable $G$-category $\udl{\E}$, restriction along $\U\loc_G$ induces a equivalences \small $$\udl{\func}^{L,\otimes}(\udl{\Mot}\loc_G, \udl{\E})\xrightarrow{\simeq}\udl{\func}^{\mathrm{fin. loc.}, \otimes}(\udl{\cat}\perfect_G,\udl{\E}),\quad  \udl{\func}^{L,\otimes\mathrm{-lax}}(\udl{\Mot}\loc_G, \udl{\E})\xrightarrow{\simeq}\udl{\func}^{\mathrm{fin. loc.}, \otimes\mathrm{-lax}}(\udl{\cat}\perfect_G,\udl{\E})$$ \normalsize between the $G$-category of $G$-symmetric monoidal (resp. $G$-lax symmetric monoidal), $G$-colimit-preserving functors  and the $G$-category of $G$-symmetric monoidal (resp. $G$-lax symmetric monoidal) finitary $G$-localizing invariants. 
\end{thm}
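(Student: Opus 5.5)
The plan is to deduce this formally from \cref{cor:Nmweq}, with the Dwyer--Kan localization machinery of \cref{recollect:monoidal_DK_localization} doing the structural work and \cref{thm:GMotGPres} handling the identification of colimit-preserving functors with finitary localizing invariants.

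\emph{Step 1 (closure of the weak equivalences).} First I would check that the wide $G$-subcategory $\udl{W}_\mot\subseteq \udl{\cat}\perfect_G$ is closed under all multiplicative norms $w_\otimes$ for maps $w\colon W\to V$ in $\finite_G$ (or in $(\finite_G)_{/V}$ after basechange). This is precisely \cref{cor:Nmweq}. It includes fiberwise tensor products, namely the fold maps $V\sqcup V\to V$, and trivially the units. Compatibility with restrictions is \cref{lm:weqsubGcat}, so $\udl{W}_\mot$ is a $G$-symmetric monoidal wide subcategory.

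\emph{Step 2 (existence and uniqueness).} By \cref{recollect:monoidal_DK_localization}, i.e.\ \cite[Prop.\ 2.3.4]{kaifNoncommMotives}, the localization $\udl{\U}\loc_G\colon\udl{\cat}\perfect_G\to\udl{\Mot}\loc_G$ refines to a $G$-symmetric monoidal functor, where the target carries the localized Mackey functor $\spancategory(\finite_G)\to\cat$, $V\mapsto \cat\perfect_V[W_{\mot,V}^{-1}]$.

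For uniqueness, I would use that a Dwyer--Kan localization is an epimorphism in $\cat$, applied levelwise on $\spancategory(\finite_G)$. Concretely, any $G$-symmetric monoidal structure making $\udl{\U}\loc_G$ $G$-symmetric monoidal must have its norm functors $\udl{\Mot}\loc_W\to\udl{\Mot}\loc_V$ determined, up to contractible choice, by the norms on $\cat\perfect$ precomposed with the localization. Hence the space of such structures is contractible.

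\emph{Step 3 (universal property).} I would proceed in two layers.
\begin{itemize}
\item The symmetric monoidal Dwyer--Kan localization has the universal property that $G$-symmetric monoidal functors out of $\udl{\Mot}\loc_G$ correspond to $G$-symmetric monoidal functors out of $\udl{\cat}\perfect_G$ whose underlying functor inverts $\udl{W}_\mot$. Similarly, $G$-lax symmetric monoidal functors out of $\udl{\Mot}\loc_G$ correspond to $G$-lax ones out of $\udl{\cat}\perfect_G$ inverting $\udl{W}_\mot$.
\item Intersecting with the full subcategory of functors whose underlying $G$-functor is $G$-colimit preserving, \cref{thm:GMotGPres} identifies these with the finitary $G$-localizing invariants. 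Such invariants automatically invert $\udl{W}_\mot$ by definition. This gives both displayed equivalences at level $G/G$; the other levels follow by basechanging to $H$, using the $V$-versions of all statements.
\end{itemize}

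\emph{Main obstacle.} The strong monoidal case is formal, since a levelwise functor category into a localization is a full subcategory. I expect the main obstacle to be the lax case. There one needs that the $G$-operad underlying $\udl{\Mot}\loc_G$ is the operadic localization of that of $\udl{\cat}\perfect_G$, i.e.\ that inverting $\udl{W}_\mot$ in the total category of the $G$-$\infty$-operad (over $\spancategory(\finite_G)$ or $\underline{\finite}_{G,*}$) gives the same result. My first attempt would be a parametrized version of Hinich's result: since $\udl{W}_\mot$ is closed under all norms (Step 1), the localization of the cocartesian fibration is computed fiberwise and remains cocartesian. Then lax maps, which are maps of fibrations preserving inert cocartesian edges, out of it are exactly lax maps out of $\udl{\cat}\perfect_G$ inverting fiberwise motivic equivalences.
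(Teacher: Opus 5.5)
Your proposal is correct and follows the paper's argument: existence and uniqueness come from \cref{cor:Nmweq} together with \cref{recollect:monoidal_DK_localization}, and both displayed equivalences come from combining the (lax) symmetric monoidal universal property of the Dwyer--Kan localization with \cref{thm:GMotGPres}. The parametrized Hinich-type statement that you flag as the main obstacle in the lax case is not proved in the paper; it is imported directly as \cite[Lem. 5.28 (3)]{quigleyShahParametrisedTate}.
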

\begin{proof}
    The first statement follows immediately from \Cref{cor:Nmweq} and \cref{recollect:monoidal_DK_localization}. The second statement for $G$-symmetric monoidal (resp. $G$-lax symmetric monoidal) functors  is then a combination of   \cite[Lem. 5.28 (3)]{quigleyShahParametrisedTate} with \cref{thm:GMotGPres}.
\end{proof}

Before embarking on the proof proper, let us first give a sketch of the strategy thereof. Suppose we are given a motivic equivalence $\udl{\sC}\rightarrow \udl{\D}$ in $\cat\perfect_H$. Our task is to prove that $\norm^G_H\udl{\sC}\rightarrow\norm^G_H\udl{\D}$ is a motivic equivalence in $\cat\perfect_G$, i.e. we need to show that $\udl{\sU}\loc_G\norm^G_H\udl{\sC}\rightarrow \udl{\sU}\loc_G\norm^G_H\udl{\D}$ is an equivalence in $\Mot\loc_G$. By the isotropy separation from \cref{obs:proper_isotropy_separation_generalities} using the family $\proper$ of proper subgroups of $G$ and inducting on the size of the group, we are essentially reduced to proving that $\Phi^{\proper}\udl{\sU}\loc_G\norm^G_H\udl{\sC}\rightarrow \Phi^{\proper}\udl{\sU}\loc_G\norm^G_H\udl{\D}$ is an equivalence in $\Phi^{\proper}\udl{\Mot}\loc_G$  for the inductive step. We will in fact show that the functor $\Phi^{\proper}\udl{\sU}\loc_G\norm^G_H\colon \cat\perfect_H\rightarrow \Phi^{\proper}\udl{\Mot}\loc_G$ of nonequivariant categories is a Mackey $H$-localizing invariant. 

In turn, by virtue of the behaviour of distributive functors with respect to cofiber sequences as in \cref{prop:cubical_cofibs}, most of the work for this will go into proving (i) that, upon applying $\norm^G_H$ to a $H$-Karoubi sequence, the first map from the cube in the lemma is fully faithful to verify that it is a $G$-Karoubi sequence; and (ii) that the functor $\Phi^{\proper}\udl{\sU}\loc_G\norm^G_H$ erases the undesirable cubical colimits. We formulate these steps precisely in the following key proposition, for which we recall  \cref{nota:filtration_of_cubes} for the filtration of equivariant cubes.

\begin{prop}\label{prop:cubical_inclusions} Consider the map $f:G/H\to G/G$. 
    Suppose we have a Karoubi sequence $\udl{\A}\rightarrow\udl{\B}\rightarrow\udl{\sC}$ in $\cat\perfect_H$. Then:
    \begin{enumerate}[label=(\arabic*)]
        \item the induced functor
    \[\colim_{f_*[1]\setminus  1}f_{\otimes}(\udl{\A}\rightarrow\udl{\B})\longrightarrow f_{\otimes}\udl{\B}\] is fully faithful, with cofiber $f_{\otimes}\udl{\sC}$;
        \item for every $1\leq i\leq |G/H|$, the map 
        \[\colim_{(f_{*}[1])_{\leq i-1}\setminus 1}f_{\otimes}(\udl{\A}\rightarrow\udl{\B}) \longrightarrow \colim_{(f_{*}[1])_{\leq i}\setminus 1}f_{\otimes}(\udl{\A}\rightarrow\udl{\B})\] is fully faithful with cofiber given by a finite sum of properly induced categories.
    \end{enumerate}
\end{prop}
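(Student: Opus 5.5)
We argue by strong induction on $|G|$; the statement for proper subgroups of $G$ is then available, in all the variants for $H'\leq K'\lneq G$. The cofiber statements come directly from earlier results. The norm $f_\otimes\colon f_*\udl{\cat}\perfect_H\to \udl{\cat}\perfect_G$ is $f$-distributive, since the $G$-symmetric monoidal structure on $\udl{\cat}\perfect_G\simeq\udl{\presentable}^L_{\mathrm{st},\omega}$ is distributive. Hence \cref{prop:cubical_cofibs}, applied to the cofiber sequence $\udl{\A}\to\udl{\B}\to\udl{\sC}$ in $\cat\perfect_H$, identifies the cofiber in (1) with $f_\otimes\udl{\sC}$. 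Likewise, \cref{lem:inducedness_of_cube_cofibers}, applied in $\udl{\D}=\udl{\cat}\perfect_G$ (pointed, with finite $G$-colimits), identifies the cofiber in (2) with a finite sum of properly induced categories. What remains is full faithfulness.

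We prove full faithfulness of the maps in (1) and (2) together, using \cref{lem:fully_faithful_isotropy_separation} with the family $\proper$. There are two things to check.

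\emph{Restrictions to proper subgroups $K\lneq G$.} Restriction commutes with colimits and with the cube construction by the basechange formula $u^*f_\otimes\simeq \overline{f}_\otimes\overline{u}^*$ of \cref{setting:pullback_notations}. After restriction, $f_*[1]$ becomes a product of cubes $\prod_i (f_i)_*[1]$ indexed by the $K$-orbits $K/K_i$ of $G/H$. The norm becomes an external tensor product $\bigotimes_i (f_i)_\otimes$ of norms along $K/K_i\to K/K$, each applied to the restricted Karoubi sequence.

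For (1), the punctured product cube is handled by the usual product-cube argument. A product of fully faithful maps with Verdier quotients is again of this form, since $\otimes$ in $\cat\perfect_K$ preserves fully faithfulness and Karoubi sequences in each variable (\cref{lem:norms_preserve_fully_faithfulness} and exactness of $\otimes$). Each factor map $\colim_{(f_i)_*[1]\setminus 1}\to (f_i)_\otimes\udl{\B}$ is fully faithful by induction.

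For (2), the restricted size-filtration is a filtration of this product cube. Each of its stages is reached by finitely many pushouts along the smaller filtrations treated by induction. So full faithfulness follows by induction together with the fact that pushouts along fully faithful functors in $\cat\perfect$ remain fully faithful when the cofibers behave.

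\emph{Geometric fixed points $\Phi^{\proper}$.} Since $\Phi^{\proper}$ is a left adjoint, it preserves colimits. It kills properly induced categories, because $\widetilde{E\proper}\otimes G_+\otimes_H(-)=0$. By (2), applying $\Phi^{\proper}$ to every filtration step gives an equivalence. Hence $\Phi^{\proper}\colim_{f_*[1]\setminus 1}$ agrees with $\Phi^{\proper}$ of the $(\leq 0)$-stage, namely $\Phi^{\proper}f_\otimes\udl{\A}$. For (2) we must actually show that the steps are fully faithful, so we argue directly: $\Phi^{\proper}$ of an inclusion whose cofiber is properly induced is an equivalence, because $\Phi^{\proper}$ preserves Karoubi sequences. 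But this presupposes that the step is already a Karoubi inclusion, which is circular.

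To avoid the circularity, I would prove (2) first via the Ind-level description of \cref{lem:inducedness_of_cube_cofibers}: the colimit over $\myuline{Q}_i$ is a left Kan extension. Passing to $\ind$, the map $\myuline{Q}_{(i-1)!}a^*M\to\myuline{Q}_{i!}M$ can be analysed levelwise. Its fiberwise full faithfulness reduces to the fact that the transition maps $M(S)\to M(S')$, which are tensor products of copies of $\udl{\A}\to\udl{\B}$, are fully faithful. The $\Phi^{\proper}$-part of the isotropy criterion then holds because $\Phi^{\proper}$ of both sides agree: $\Phi^{\proper}$ annihilates the induced new cells. For (1), $\Phi^{\proper}$ of the map becomes $\Phi^{\proper}f_\otimes\udl{\A}\to\Phi^{\proper}f_\otimes\udl{\B}$, which is fully faithful by \cref{lem:norms_preserve_fully_faithfulness}.

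I expect the main obstacle to be the full faithfulness in (2). There the colimits are computed in $\cat\perfect_G$ rather than in $\ind$, and one must make sure that gluing fully faithful pieces along induced cells stays fully faithful. I would try to settle this by working in $\udl{\presentable}^L_{\mathrm{st},\omega}$, where colimits are computed as limits along right adjoints.
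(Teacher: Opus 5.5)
Your treatment of the cofibers and the overall isotropy-separation induction match the paper. The geometric fixed points step, however, has a genuine gap, and your proposed escape from the circularity does not work.

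You compute $\Phi^{\proper}$ of the colimits through the cofibers of the filtration steps. As you note, this presupposes the full faithfulness you are trying to prove, since a functor in $\cat\perfect$ with zero cofiber need not be an equivalence. Your fix is to prove (2) directly, ``levelwise'', by reducing to full faithfulness of the transition maps $M(S)\to M(S')$. That rests on a false principle: fully faithful transition maps do not make the comparison map out of a colimit fully faithful. Take the square with $M(\emptyset)=0$ and $\spectra^\omega$ at the other three vertices, with zero inclusions and identities. Every transition map is fully faithful, yet $\colim_{[1]^2\setminus 1}M\simeq\spectra^\omega\times\spectra^\omega\to\spectra^\omega$ is the fold map.

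This is why even the case $G=e$ needs the Beck--Chevalley condition of \cref{lem:adjointability_implies_fully_faithful}, which \cref{prop:nonequivariant_cube_fully_faithful} checks by tensoring adjunctions. There is also no levelwise reduction: a $G$-colimit evaluated at level $K$ is a $K$-colimit involving inductions, not a colimit of the levels. The same unverified adjointability sits behind your ``usual product-cube argument'' and ``pushouts \ldots\ when the cofibers behave'' in the restriction step. Exactness of $\otimes$ in each variable does not supply it on its own.

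The missing idea is that $\Phi^{\proper}$ of the $G$-colimits can be computed at the level of categories, before knowing anything about full faithfulness. The paper uses \cite[Obs.~4.2.2]{PD1}: $\Phi^{\proper}$ of a $G$-colimit over $\udl{I}$ is the colimit over the fixed points $I^G$. Equivalently, by the proof of \cref{lem:inducedness_of_cube_cofibers}, the $(\leq i)$-stage is a \emph{pushout} of the $(\leq i-1)$-stage along a map between properly induced objects, namely $\udl{Z}_{i!}$ of the latching objects and of $M$. Since $\Phi^{\proper}$ preserves pushouts and kills properly induced objects, $\Phi^{\proper}$ of each step is an equivalence outright; pushouts, unlike cofibers, give this without any full faithfulness.

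For $H\lneq G$, $\emptyset$ is the only $G$-fixed element of $f_*[1]\setminus 1$ and of its truncations. So $\Phi^{\proper}$ turns the map in (1) into $\Phi^{\proper}f_\otimes\udl{\A}\to\Phi^{\proper}f_\otimes\udl{\B}$. This is fully faithful by \cref{lem:norms_preserve_fully_faithfulness} and because $\Phi^{\proper}$ preserves full faithfulness (\cref{lem:fully_faithful_isotropy_separation}). It turns the map in (2) into the identity of $\Phi^{\proper}f_\otimes\udl{\A}$.

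Combine this with the inductive hypothesis on restrictions. As in the paper, phrase it for general $w\colon W\to V$ so that the restricted non-orbit cubes are covered, with \cref{prop:nonequivariant_cube_fully_faithful} as base case. Then \cref{lem:fully_faithful_isotropy_separation} gives full faithfulness in (1) and (2) simultaneously, and no direct analysis of the filtration steps is needed.
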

Together, these claims imply that \textit{modulo motives of properly induced categories}, $$f_\otimes \udl{\A}\to f_\otimes \udl{\B} \to f_\otimes \udl{\sC}$$
is a motivic cofiber sequence, thus proving that $\Phi^\mathcal P\udl{\U}\loc\norm^G_H=\Phi^\mathcal P\udl{\U}\loc f_\otimes$ is a localizing invariant (its finitariness being automatic). In more precise terms, we obtain:
\begin{cor}\label{cor:Geometriclocalizing}
    Let $f\colon W \rightarrow V$ be a map between orbits in $\finite_{G}$ and $\udl{\A}\rightarrow\udl{\B}\rightarrow\udl{\sC}$ a Karoubi sequence in $\cat\perfect_W$. Then $f_{\otimes}\udl{\A}\rightarrow f_{\otimes}\udl{\B}\rightarrow f_{\otimes}\udl{\sC}$ is sent to a cofiber sequence in $\Phi^\mathcal P\underline{\Mot}\loc_V$.     In particular, $\Phi^\mathcal P\udl{\U}\loc_Vf_\otimes(-): \cat\perfect_W\to \Phi^\mathcal P\udl{\Mot}\loc_V$ is a finitary Mackey $W$-localizing invariant. 
\end{cor}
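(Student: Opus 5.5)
We deduce \cref{cor:Geometriclocalizing} from \cref{prop:cubical_inclusions}. First we reduce to $V=G/G$ and $W=G/H$: basechange along $V\simeq G/K$ identifies $\cat\perfect_V$ with $\cat\perfect_K$, and $f$ becomes $K/H\to K/K$. Write $Q=f_*[1]\setminus 1$ and $Q_{\leq i}$ as in \cref{nota:filtration_of_cubes}. Since $\udl{\sU}\loc_G$ is a $G$-localizing invariant by \cref{thm:GMotGPres}, it sends Karoubi sequences in $\cat\perfect_G$ to cofiber sequences in $\Mot\loc_G$. The functor $\Phi^{\proper}\colon \Mot\loc_G\to\Phi^{\proper}\Mot\loc_G$ is exact, being a left adjoint between stable categories.

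By part (1) of \cref{prop:cubical_inclusions}, $\colim_Q f_\otimes(\udl{\A}\to\udl{\B})\to f_\otimes\udl{\B}\to f_\otimes\udl{\sC}$ is a Karoubi sequence (fully faithful with the stated cofiber). Applying $\Phi^{\proper}\udl{\sU}\loc_G$ gives a cofiber sequence. It therefore remains to show that the map $f_\otimes\udl{\A}\to\colim_Q f_\otimes(\udl{\A}\to\udl{\B})$ becomes an equivalence after applying $\Phi^{\proper}\udl{\sU}\loc_G$.

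Here $f_\otimes\udl{\A}$ is the value at the empty subset, i.e.\ $\colim_{Q_{\leq 0}}$. By part (2), each inclusion $\colim_{Q_{\leq i-1}}\to\colim_{Q_{\leq i}}$ is fully faithful with cofiber a finite sum of properly induced categories $\ind^G_L\udl{\E}$ with $L\lneq G$. Being fully faithful with a cofiber in $\cat\perfect_G$, each such inclusion is a Karoubi sequence, so $\udl{\sU}\loc_G$ sends it to a cofiber sequence. By $G$-semiadditivity of $\udl{\sU}\loc_G$ (\cref{thm:GMotGPres}), $\udl{\sU}\loc_G\ind^G_L\udl{\E}\simeq \ind^G_L\udl{\sU}\loc_L\udl{\E}$. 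This object is killed by $\Phi^{\proper}$, since it lies in the image of $E\proper_+\otimes-$; equivalently, $\widetilde{E\proper}\otimes G/L_+\simeq 0$. Inducting on $i$ up to $|G/H|-1$ (where $Q_{\leq |G/H|-1}=Q$), every step is an equivalence after $\Phi^{\proper}$, which yields the first claim.

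For the second claim, we first check additivity, i.e.\ that $\Phi^{\proper}\udl{\sU}\loc f_\otimes$ preserves finite sums. This follows from the cofiber-sequence statement applied to the split Karoubi sequence $\udl{\A}\to\udl{\A}\oplus\udl{\B}\to\udl{\B}$, whose cofiber sequence is split after applying $f_\otimes$ via the retraction. Alternatively, one can expand $f_\otimes(\udl{\A}\oplus\udl{\B})$ with the distributivity formula: the cross terms are properly induced and hence vanish under $\Phi^{\proper}$. Finitarity follows because $f_\otimes$ preserves filtered colimits by \cref{rmk:distributivity_over_sifted}, while $\udl{\sU}\loc$ (by \cref{thm:GMotGPres}) and $\Phi^{\proper}$ preserve them as well. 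The main subtlety is the vanishing of induced motives under $\Phi^{\proper}$, which we handle via the isotropy separation of \cref{obs:proper_isotropy_separation_generalities}.
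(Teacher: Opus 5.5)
Your proposal is correct and follows the paper's proof essentially step by step. You use the Karoubi sequence from \cref{prop:cubical_inclusions}~(1), then the cubical filtration from part~(2) whose properly induced cofibers are killed by $\Phi^{\proper}$, and you get finitarity from \cref{rmk:distributivity_over_sifted}; your explicit additivity check via the split Karoubi sequence $\udl{\A}\to\udl{\A}\oplus\udl{\B}\to\udl{\B}$ is a small addition that the paper leaves implicit.
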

\begin{proof}
 By \cref{prop:cubical_inclusions} (1), we get a Karoubi sequence
    $$\colim_{f_*[1]\setminus 1}f_{\otimes}(\udl{\A}\rightarrow\udl{\B})\longrightarrow f_{\otimes}\udl{\B} \longrightarrow f_{\otimes}\udl{\sC}$$ and hence, an equivalence in $\Mot\loc_G$ 
    \[\frac{\udl{\U}\loc_V(f_{\otimes}\udl{\B})}{\udl{\U}\loc_V(\colim_{f_*[1]\setminus  1}f_{\otimes}(\udl{\A}\rightarrow\udl{\B}))} \xlongrightarrow{\simeq} \udl{\U}\loc_V(f_{\otimes}\udl{\sC}).\]  
    
    We claim now that the map \begin{equation}\label{eqn:inductive_equivalence_cube_filter}
    \Phi^\mathcal P\udl{\U}\loc_V(\colim_{(f_{\otimes}[1])_{\leq i-1}\setminus 1}f_{\otimes}(\udl{\A}\rightarrow\udl{\B})) \longrightarrow \Phi^\mathcal P\udl{\U}\loc_V(\colim_{(f_{\otimes}[1])_{\leq i}\setminus 1}f_{\otimes}(\udl{\A}\rightarrow\udl{\B}))
    \end{equation} is an equivalence for all $1\leq i\leq |G/H|$.   Given this, we thus see that $\Phi^\mathcal P\udl{\U}\loc_V(f_{\otimes}\udl{\A})\rightarrow \Phi^\mathcal P\udl{\U}\loc_V(\colim_{f_*[1]\setminus  1}f_{\otimes}(\udl{\A}\rightarrow\udl{\B}))$ is an equivalence, so that $\Phi^\mathcal P\udl{\U}\loc_V(f_{\otimes}\udl{\B})/\Phi^\mathcal P\udl{\U}\loc_V(f_{\otimes}\udl{\A})\rightarrow \Phi^\mathcal P\udl{\U}\loc_V(f_{\otimes}\udl{\sC})$ is indeed an equivalence in $\Phi^\mathcal P\udl{\Mot}\loc_V$, hence proving that $\Phi^\mathcal P\udl{\U}\loc_V f_\otimes$ is a Mackey $G$-localizing invariant -- that it is finitary is immediate from $f_\otimes$ preserving filtered colimits by \cref{rmk:distributivity_over_sifted}, and that the functors $\Phi^{\proper}$ and $\udl{\sU}\loc_V$ do too.

    To prove the claim we note that since $\udl{\U}\loc_V$ is a $V$-localizing invariant, \cref{prop:cubical_inclusions} (2) implies that the cofiber of $\Phi^\mathcal P$ applied to \cref{eqn:inductive_equivalence_cube_filter} is $\Phi^\mathcal P\udl{\U}\loc_V$ applied to a properly induced object, and therefore is $0$. This completes the proof.
\end{proof}

Let us now work towards proving \cref{prop:cubical_inclusions}. The main tool for this will be the isotropy separation criterion \cref{lem:fully_faithful_isotropy_separation} to check fully faithfulness of functors. As such, the proof will be inductive and we first establish it in the nonequivariant base case when $G=e$. This will use the following abstract categorical lemma.

\begin{lem}\label{lem:adjointability_implies_fully_faithful}
Let $I$ be a poset with binary joins, and let $F:I^\triangleright\to \presentable^L$ be a cocone such that for each $i\in I, F(i)\to F(\infty)$ is fully faithful and has a colimit-preserving right adjoint. If for each $i,j\in I$ and $k\in I^{\triangleright}, k\geq i,j$, the following square is vertically right adjointable: 
\[\begin{tikzcd}
	{F(i\wedge j)} & {F(j)} \\
	{F(i)} & {F(k)} 
	\arrow[from=1-1, to=1-2]
	\arrow[from=1-1, to=2-1]
	\arrow[from=1-2, to=2-2]
	\arrow[from=2-1, to=2-2]
\end{tikzcd}\]
then the induced map $\colim_I F(i)\to F(\infty)$ is fully faithful. 
\end{lem}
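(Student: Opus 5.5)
The approach is to pass to right adjoints and check the unit of the induced adjunction on generators. Write $F_{a\to b}\colon F(a)\to F(b)$ for the transition functors and $R_{b\to a}$ for their right adjoints. I will use two standard facts about $\presentable^L$.
\begin{itemize}
\item Colimits in $\presentable^L$ are computed as limits in $\presentable^R$ along the right adjoints, so $\colim_I F\simeq \lim_{I\op} F$ along the $R$'s.
\item Since $I$ has binary joins, it is filtered (assuming it is nonempty; otherwise the statement is trivial). The images of the insertions $\iota_j\colon F(j)\to \colim_I F$ therefore generate the colimit under colimits.
\end{itemize}
Let $L\colon\colim_I F\to F(\infty)$ be the induced map. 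Its right adjoint is $R\colon x\mapsto (R_{\infty\to i}x)_i$. Because the $R_{\infty\to i}$ are assumed colimit-preserving, $R$ is colimit-preserving. Hence $RL$ is colimit-preserving, and it suffices to show that the unit $\id\to RL$ is an equivalence on the objects $\iota_j x$ for $x\in F(j)$.

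The first key step is to describe $\iota_j x$ in the limit model. Its component at $i$ is computed by the usual formula for filtered colimits in $\presentable^L$:
\[(\iota_j x)_i\simeq \colim_{k\geq i,j} R_{k\to i}F_{j\to k}x.\]
By the vertical right adjointability of the square for $i,j\leq k$, each term is identified with $F_{i\wedge j\to i}R_{j\to i\wedge j}x$. These identifications are compatible as $k$ varies, so the filtered colimit is constant and
\[(\iota_j x)_i\simeq F_{i\wedge j\to i}R_{j\to i\wedge j}x.\]
I expect this to be the main obstacle: one has to be careful that the Beck--Chevalley identifications assemble coherently in $k$, and that the formula for the components of an insertion into a filtered colimit in $\presentable^L$ is correct. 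I would first try to cite it from Lurie's treatment of filtered colimits in $\presentable^L$, in which the insertion's right adjoint composed with the inclusion is $\colim_k R_{k}F_k$.

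Next, $L\iota_j x\simeq F_{j\to\infty}x$, so $RL\iota_j x=(R_{\infty\to i}F_{j\to\infty}x)_i$. Applying the adjointability hypothesis now with $k=\infty$ gives
\[R_{\infty\to i}F_{j\to\infty}x\simeq F_{i\wedge j\to i}R_{j\to i\wedge j}x.\]
This agrees componentwise with $(\iota_j x)_i$. It then remains to check that the unit map is the one realizing this agreement. Both identifications are built from Beck--Chevalley maps: those for $k$ and for $\infty$ are related by pasting with the square for $k\leq\infty$. The unit is therefore identified with the canonical comparison between them, which is an equivalence.

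Finally, since $\id\to RL$ is an equivalence on a generating set and both sides preserve colimits, the unit is an equivalence everywhere. Hence $L$ is fully faithful, which is the claim.
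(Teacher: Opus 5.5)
Read literally, ``binary joins'' makes $I$ filtered, and under that assumption your argument is fine. But filteredness is not what the lemma is about. In the hypothesis, $i\wedge j$ maps to both $F(i)$ and $F(j)$, so it is a \emph{meet}, and ``joins'' is a slip for ``meets''. This is also why the hypothesis quantifies over $k\in I^{\triangleright}$ with $k\geq i,j$ rather than using $i\vee j$.

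The paper applies the lemma (in \cref{prop:nonequivariant_cube_fully_faithful}) to $I=[1]^n_{\leq m}$ with $m\leq n-1$, the subsets of size at most $m$. This poset is closed under intersections but not under unions, and it is not filtered: for $n=2$, $m=1$ it is the span $\{1\}\leftarrow\emptyset\rightarrow\{2\}$, and $\colim_I F$ is a pushout. There your key step, $(\iota_jx)_i\simeq\colim_{k\geq i,j}R_{k\to i}F_{j\to k}x$, is unavailable, since there need be no $k\in I$ above both $i$ and $j$. For such a diagram there is no comparably simple formula for $\iota_i^R\iota_j$.

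The rest of your argument survives. The insertions generate $\colim_IF$ for any $I$, because the projections $\iota_j^R$ out of $\lim_{I\op}F$ are jointly conservative. The final comparison with $k=\infty$ is exactly the paper's.

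What is missing is a way to compute $\iota_j^R\iota_i$ from meets alone. The paper proves that the square with $\colim_IF$ in place of $F(k)$ is itself vertically right adjointable, i.e.\ $\iota_j^R\iota_i\simeq F_{i\wedge j\to j}R_{i\to i\wedge j}$. This goes as follows.
\begin{itemize}
\item The functor $j\mapsto i\wedge j$ is right adjoint to the inclusion $I_{\leq i}\subseteq I$, hence cofinal. So $\colim_{j\in I}F(i\wedge j)\simeq F(i)$, and $\iota_i$ is the map on colimits induced by the transformation $F(i\wedge -)\Rightarrow F$.
\item The naturality squares of this transformation (for $j'\leq j$) are instances of your hypothesis with $k=j\in I$, hence right adjointable.
\item So in the model of colimits as limits along right adjoints, the induced functor $\lim_{j}F(i\wedge j)\to\lim_{j}F(j)$ is given componentwise by $F_{i\wedge j\to j}$. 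Projecting to the $j$-th component gives the formula.
\end{itemize}
Substituting this for your filtered-colimit computation, the remainder of your proof goes through unchanged.
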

Note that the adjointability condition for a general $k\in I^\triangleright$ follows from the case of $k=\infty$. Thus, the assumptions are slightly redundant. 
\begin{proof}
We will show that the following square is also vertically right adjointable: 
\[\begin{tikzcd}
	{F(i\wedge j)} & {F(j)} \\
	{F(i)} & {\colim_I F} 
	\arrow[from=1-1, to=1-2]
	\arrow[from=1-1, to=2-1]
	\arrow[from=1-2, to=2-2]
	\arrow[from=2-1, to=2-2]
\end{tikzcd}\]
Given this, we can argue as follows: let $L: \colim_I F\to F(\infty)$ denote the induced functor, and let $\iota_i: F(i)\to\colim_I F$ denote the  inclusions and $f_i: F(i)\to F(\infty)$ the maps induced by $F$. We  prove that the unit $\id \to L^RL$ is an equivalence. The images of $\iota_i:F(i)\to \colim_IF , i\in I$ jointly generate $\colim_I F$ under colimits, so it suffices to prove that $\iota_j^R\iota_i\to \iota_j^R L^R L\iota_i$ is an equivalence for all $i,j\in I$. But $L\iota_i = f_i$ by design, so this is the canonical map $\iota_j^R \iota_i\to f_j^R f_i$, which is an equivalence by adjointability of the two squares, since they are both equivalent to $F(i\wedge j \to j)\circ F(i\wedge j\to i)^R$ (and we can check compatibility of the maps).

In turn, the adjointability of the square with the colimit follows from our adjointability  assumption and the description of colimits in $\presentable^L$ as limits along the right adjoints: consider the transformation $F(i\wedge -)\to F$ of functors $I\rightarrow \presentable^L$ and the induced map on colimits $\colim_{j\in I} F(i\wedge j)\to \colim_{j\in I} F(j)$. By adjointability, we may view this map between colimits as a map $\lim_{j\in I\op} F(i\wedge j)\to \lim_{j\in I\op} F(j)$ in $\presentable^L$ which  is again given levelwise by $F(i\wedge j)\to F(j)$, natural in $j\in I\op$. In particular, this naturality gives us the outer commuting diagrams
\[
\begin{tikzcd}
    F(i) \ar["\simeq", d] \ar[dr]\\
    \lim_{j\in I\op} F(i\wedge j) \rar\dar& \lim_{j\in I\op} F(j)\dar\\
    F(i\wedge j) \rar& F(j)
\end{tikzcd}
\] which were the squares sought after.
\end{proof}

Using this, we can now prove the nonequivariant base case, which is a stable analogue of \cite[Cor. 5.9]{maximePushout}:
\begin{prop}\label{prop:nonequivariant_cube_fully_faithful}
    Let $n\geq 1$ be an integer, and consider a family $f_i:\A_i\to \B_i$ of fully faithful exact functors between stable categories. Consider the induced cube $$C_f: [1]^n \xrightarrow{(f_i)_i} (\cat\perfect)^{\times n} \xrightarrow{\bigotimes_n}\cat\perfect,$$ and for $i\leq n$, let $[1]^n_{\leq i}\subset [1]^n$ denote the full subposet spanned by sequences with a number of $1$'s smaller or equal to $i$. 

For all $i\leq n-1$, the map $\colim_{[1]^n_{\leq i}} C_f\to \colim_{[1]^n_{\leq i+1}} C_f$ is fully faithful. In particular, for $i=n-1$, the map $\colim_{[1]^n\setminus \{1\}} C_f\to \bigotimes_j \B_j$ is fully faithful. 
\end{prop}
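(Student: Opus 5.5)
We pass to Ind-completions and apply \cref{lem:adjointability_implies_fully_faithful}. Write $\widetilde{\A}_j=\ind\A_j$ and $\widetilde{\B}_j=\ind\B_j$. The functors $\ind f_j$ are fully faithful, compact-preserving, and have colimit-preserving right adjoints $R_j$. Since $\ind$ is symmetric monoidal and preserves colimits from $\cat\perfect$ to $\presentable^L_{\mathrm{st},\omega}$, and fully faithfulness of a functor in $\cat\perfect$ can be checked after applying $\ind$, it suffices to prove the corresponding statement for the cube $\widetilde{C}_f\colon [1]^n\to\presentable^L_{\mathrm{st}}$, $S\mapsto \bigotimes_j X_j^S$. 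Here $X_j^S=\widetilde{\B}_j$ if $j\in S$ and $X_j^S=\widetilde{\A}_j$ otherwise. Every edge of this cube is a tensor product of some $\ind f_j$ with identities. Each such functor is fully faithful (the unit of $\ind f_j\dashv R_j$ is an equivalence, and tensoring preserves this since $-\otimes-$ is $2$-functorial on $\presentable^L_{\mathrm{st}}$), and its right adjoint $\bigotimes R_j$ preserves colimits.

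For the step $[1]^n_{\leq i}\to[1]^n_{\leq i+1}$, I would not apply the lemma directly to $I=[1]^n_{\leq i}$, since that poset lacks binary joins. Instead, take $I=[1]^n_{\leq i+1}$ in one go, or rather use the fact that both $\colim_{[1]^n_{\leq i}}$ and $\colim_{[1]^n_{\leq i+1}}$ map to $\bigotimes\widetilde{\B}_j$. Concretely:

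\textbf{(a) Full faithfulness into the top vertex.} Show that $\colim_{P}\widetilde{C}_f\to\bigotimes_j\widetilde{\B}_j$ is fully faithful for every downward closed $P\subseteq[1]^n$ (which includes each $[1]^n_{\leq i}$). The lemma requires binary joins in the indexing poset. Note, however, that in the lemma's proof the only thing used is that, for $i,j\in P$, the meet $i\wedge j$ (set intersection) lies in $P$, together with the cocone to $F(\infty)$; $P$ is down-closed, so it is closed under meets. The proof of the lemma goes through with "binary joins" replaced by "binary meets", which is what the displayed squares actually use. I would make this adaptation explicit, applying it with $F(\infty)=\bigotimes\widetilde{\B}_j$.

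\textbf{(b) Two-out-of-three.} Given (a), the map $\colim_{[1]^n_{\leq i}}\to\colim_{[1]^n_{\leq i+1}}$ composed with the fully faithful map to the top vertex is fully faithful. The first map is therefore fully faithful, since $g$ is fully faithful whenever $h\circ g$ and $h$ are.

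The remaining hypothesis is the vertical right adjointability of the squares
\[
\widetilde{C}_f(S\cap T)\to\widetilde{C}_f(T),\quad \widetilde{C}_f(S)\to\widetilde{C}_f(U),\qquad U\supseteq S\cup T.
\]
This is the main computation, and I expect it to be the only real obstacle. The square is an external tensor product over the coordinates $j$ of one-variable squares. Each of these is either trivial (all identities, or two parallel copies of $\ind f_j$), or the square
\[
\begin{array}{ccc}
\widetilde{\A}_j & \to & \widetilde{\A}_j\\
\downarrow & & \downarrow\\
\widetilde{\A}_j & \to & \widetilde{\B}_j
\end{array}
\]
for $j\in U\setminus(S\cup T)$, or the square
\[
\begin{array}{ccc}
\widetilde{\A}_j & \to & \widetilde{\B}_j\\
\downarrow & & \downarrow\\
\widetilde{\A}_j & \to & \widetilde{\B}_j
\end{array}
\]
for $j\in T\setminus S$ (and symmetrically). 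The right adjointability of each factor reduces to $R_j\circ\ind f_j\simeq\id$, which holds by full faithfulness.

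Right adjointability is then preserved under tensor products of squares, because the Beck--Chevalley map of the tensor square is the tensor product of the Beck--Chevalley maps. This uses the $2$-functoriality of $\otimes$ on $\presentable^L_{\mathrm{st}}$ restricted to functors with colimit-preserving right adjoints, which is where some care is needed.
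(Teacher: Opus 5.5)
Your proposal is correct and follows essentially the same route as the paper: pass to $\ind$, apply \cref{lem:adjointability_implies_fully_faithful} to $I=[1]^n_{\leq i}$ with cocone point $\bigotimes_j\ind\B_j$, verify adjointability of the squares using that tensor products in $\presentable^L_{\mathrm{st}}$ preserve internal adjunctions, and get the step maps by two-out-of-three. Your remark that the lemma only needs closure under binary meets (which is what its proof and the squares $F(i\wedge j)$ use, and what $[1]^n_{\leq i}$ has) is accurate, and checking adjointability coordinatewise for all $U\supseteq S\cup T$ is a harmless variant of the paper's check, which writes the square as the tensor of one vertical and one horizontal map.
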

\begin{proof}
    Since fully faithfulness may also be checked after taking Ind-completions, we consider the composite $C_f\colon [1]^n\rightarrow \cat\perfect \xrightarrow{\ind} \presentable^L_{\mathrm{st}}$ landing in presentable categories, where the functor $\ind$ is symmetric monoidal. Note that the value of $C_f$ at a morphism $S\rightarrow T$ in $[1]^n$, i.e. an inclusion of subsets of $\udl{n}=\{1,\ldots,n\}$, is given by $\bigotimes_{\overline{s}\in \udl{n}\backslash S}\ind\A_{\overline{s}}\otimes \bigotimes_{s\in S}\ind\B_s\rightarrow \bigotimes_{\overline{t}\in \udl{n}\backslash T}\ind\A_{\overline{t}}\otimes \bigotimes_{t\in T}\ind\B_t$. Since $\ind$ preserves fully faithful functors and sends exact functors between stable categories to morphisms in $\presentable^L$ with colimit-preserving right adjoints, we see that $C_f\colon [1]^n \rightarrow \presentable^L_{\mathrm{st}}$ lands in fully faithful functors which admit colimit-preserving right adjoints. Furthermore, for $S,T\subseteq \udl{n}$, we claim that the square
    \[
    \begin{tikzcd}
        \bigotimes_{\overline{j}\in \udl{n}\backslash (S\cap T)}\ind\A_{\overline{j}}\otimes \bigotimes_{j\in S\cap T}\ind\B_j\rar[hook] \dar[hook]& \bigotimes_{\overline{t}\in \udl{n}\backslash T}\ind\A_{\overline{t}}\otimes \bigotimes_{t\in T}\ind\B_t \dar[hook]\\
        \bigotimes_{\overline{s}\in \udl{n}\backslash S}\ind\A_{\overline{s}}\otimes \bigotimes_{s\in S}\ind\B_s \rar[hook]& \bigotimes_{i\in\udl{n}}\ind\B_i
    \end{tikzcd}
    \]
    is vertically right adjointable. To see this, observe that this square is obtained by tensoring the two morphisms \scriptsize
    \[
    \begin{tikzcd}
        \bigotimes_{S\backslash T}\ind\A\dar[hook]   & \bigotimes_{(S\cup T)^c}\ind\A \otimes \bigotimes_{T\backslash S}\ind\A \otimes \bigotimes_{S\cap T}\ind\B \rar[hook]  & \bigotimes_{(S\cup T)^c}\ind\A \otimes \bigotimes_{T\backslash S}\ind\B \otimes \bigotimes_{S\cap T}\ind\B\\
        \bigotimes_{S\backslash T}\ind\B 
    \end{tikzcd}
    \]\normalsize
    where both (and in particular, the vertical) morphisms have colimit-preserving right adjoints. Since the tensor product in $\presentable^L_{\mathrm{st}}$ preserves internal adjunctions, the vertically right adjointed square is simply obtained by tensoring the horizontal morphism above with the right adjoint of the vertical morphism, and so the adjointed square commutes.
    
    Therefore, by setting $I=[1]^n_{\leq i}$ in \cref{lem:adjointability_implies_fully_faithful}  for $i\leq n-1$, we see that $\colim_{[1]^n_{\leq i}}C_f\rightarrow \bigotimes_{j}\ind \B_j$ is fully faithful for all $i\leq n-1$. Consequently also, by considering the composites $\colim_{[1]^n_{\leq i}}C_f\rightarrow \colim_{[1]^n_{\leq i+1}}C_f\rightarrow  \bigotimes_{j}\ind \B_j$, we see also that the maps $\colim_{[1]^n_{\leq i}}C_f\rightarrow \colim_{[1]^n_{\leq i+1}}C_f$ are fully faithful for $i\leq n-1$. This completes the proof. 
\end{proof}

We are now ready for the proof of the proposition.

\begin{proof}[Proof of \Cref{prop:cubical_inclusions}]
The fully faithfulness in each statement will be done by induction on the order of $G$ and we prove each in turn. To see (1), the statement about the cofiber is immediate from \cref{prop:cubical_cofibs}. We will prove fully faithfulness by a stratification induction via \cref{lem:fully_faithful_isotropy_separation}. The base case of $G=e$ is known by \cref{prop:nonequivariant_cube_fully_faithful}. We use the family of proper subgroups to apply \cref{lem:fully_faithful_isotropy_separation}. By induction, we know the statement to be true upon restriction to all proper subgroups of $G$. 

To wit, let $w\colon W\rightarrow V$ and $u\colon U \rightarrow V$ be  morphisms in $\finite_{G}$, where $u$ is not an equivalence. And let $\udl{\E}$ be an arbitrary $G$-cocomplete $G$-symmetric monoidal category (which we will later specialise to the case of $\udl{\cat}\perfect_G$). Let $F\colon x\rightarrow y$ be a morphism in $\udl{\E}(W)$ and consider the pullback $Z=W\times_VU$ in $\finite_G$ as in \cref{setting:pullback_notations}  with $\overline{w}\colon Z\to U$, $ \overline{u}\colon Z\to W$. Then $u^*$ applied to  $\colim_{w_*[1]-1}w_{\otimes}(x\rightarrow y)\rightarrow w_{\otimes}y$ in $\udl{\sC}(V)$ yields the analogous morphism
    \[\colim_{\overline{w}_{*}[1]-1}\overline{w}_{\otimes}(\overline{u}^*x\rightarrow\overline{u}^*y)\longrightarrow \overline{w}_{\otimes}\overline{u}^*y.\] Specialising to the case $\udl{\E}=\udl{\cat}\perfect_G$, $x= \udl{\A}$, and $y=\udl{\B}$,  this shows the statement upon restriction to proper subgroups by the inductive hypothesis.
    
    Next, to see the statement on geometric fixed points, note by \cite[Obs. 4.2.2]{PD1} that applying $\Phi^{\proper}$ to the map of interest yields the map
    \[\Phi^{\proper}w_{\otimes}\udl{\A}=\colim_{(w_*[1]-1)^G}\Phi^{\proper}w_{\otimes}(\udl{\A}\rightarrow \udl{\B})\longrightarrow \Phi^{\proper}w_{\otimes}\udl{\B}.\] By \cref{lem:norms_preserve_fully_faithfulness}, $w_{\otimes}\udl{\A}\rightarrow w_{\otimes}\udl{\B}$ is fully faithful. By \cref{lem:fully_faithful_isotropy_separation}, we know that $\Phi^{\proper}$ preserves fully faithfulness, whence the desired statement. This completes the proof of (1).

    Point (2) may be proved by a similar isotropy separation inductive proof using instead as an input \cref{lem:inducedness_of_cube_cofibers}, where this time applying $\Phi^{\proper}$ yields the map $\Phi^{\proper}w_{\otimes}\udl{\A}\longrightarrow \Phi^{\proper}w_{\otimes}\udl{\A}$ which is of course an equivalence.
\end{proof}

Armed with  the requisite preparations, we  finally come to the proof of the theorem.

\begin{proof}[Proof of \cref{cor:Nmweq}.]
Since motivic equivalences in $\cat\perfect_V$ are exactly the preimage under $\U\loc_V$ of equivalences, we may instead prove that $\U\loc_V(w_\otimes-)$ inverts motivic equivalences (be warned that it is however \textit{not} a localizing invariant!). 

\textbf{Reduction to orbits:} We observe as a general claim that if $w_i\colon  W_i\to V$ satisfy this conclusion for $i$ in some finite set $I$, then so does $\coprod_i w_i\colon  \coprod_i W_i\to V$. Indeed, in this case $(\coprod_i w_i)_\otimes \simeq \bigotimes_i (w_i)_\otimes$, so it suffices to show that ordinary tensor products preserve motivic equivalences. The proof of this fact is the same as in \cite[Lem. 5.5, Thm. 5.8]{BGTMult} (see also \cite[Prop. 4.2]{MotLoc}) and simply follows from the fact that for any $V$-stable $V$-category $\sC$, $\sC\otimes -\colon  \udl{\cat}\perfect_V\to \udl{\cat}\perfect_V$ preserves Karoubi sequences. Thus, without loss of generality, we may assume $V$ is an orbit, and we work by descending induction on the size of $V$ (i.e. by induction on the size of stabilizers in $V$). 

\textbf{Induction hypothesis:} We assume the result holds for every map $w\colon  W\to V'$ where $V'$ is a strictly larger orbit than $V$ (i.e. strictly smaller isotropy subgroup), and we prove it for our given map $w\colon W\to V$.  We may then write $W$ as a coproduct of orbits, and by reducing to orbits as above, we may also assume that $W$ is an orbit.  We now argue by isotropy separation:
 
 \textbf{Geometric fixed points: } By \Cref{cor:Geometriclocalizing},  $\Phi^\mathcal P\U\loc_V(w_\otimes -)$ is a finitary Mackey $W$-localizing invariant, and so it sends Mackey motivic equivalences to equivalences. By \Cref{cor:Mackeyweq=weq}, it therefore sends motivic equivalences to motivic equivalences.   Thus it suffices to prove that for every proper orbit map $u\colon U\to V$, $u^*\U\loc_V(f_\otimes -)$ inverts motivic equivalences. 
    
\textbf{Proper subgroups:}
    Letting $Z$ denote the pullback $W\times_V U$, with $\overline{w}\colon Z\to U$ and $ \overline{u}\colon Z\to W$ as in \cref{setting:pullback_notations}, we find $u^*\U\loc_V(f_\otimes -)\simeq \U\loc_U(u^*f_\otimes -)\simeq \U\loc_U(\overline{w}_\otimes \overline{u}^*-)$.    Now $U\to V$ is proper and hence $U$ is a strictly larger orbit, so by induction, $\U\loc_U (\overline{w}_\otimes -)$ inverts motivic equivalences and since $\overline{u}^*$ preserves motivic equivalences, we are done. 
\end{proof}

\begin{rmk}
    The presentably symmetric monoidal category $\Phi^{\proper}\udl{\Mot}\loc_G$ has played only an auxiliary role in the story so far, but we think that it is a category that deserves further attention in matters pertaining to isotropy separation methods for equivariant algebraic K-theory. For example, by the universal property of the map $\udl{\Mot}\loc_G\rightarrow \Phi^{\proper}\udl{\Mot}\loc_G$ from \cref{cons:categorical_isotropy_separation}, we immediately get from \cref{thm:GMotGPres} that
    \[(\Phi^{\proper}\udl{\sU}\loc_G)^*\colon \udl{\func}^L(\Phi^{\proper}\udl{\Mot}\loc_G, \udl{\E})\xlongrightarrow{\simeq}\udl{\func}^{\mathrm{fin. loc.}}(\udl{\cat}\perfect_G,\udl{\E})\] for all $\udl{\E}\in \presentable^L_{G,\mathrm{st}}$ in the image of the inclusion $\presentable^L_{\mathrm{st}}\subseteq \presentable^L_{G,\mathrm{st}}$, i.e. $\Phi^{\proper}\udl{\Mot}\loc_G$ is the receptacle of the universal finitary $G$-localizing invariant which vanishes on properly induced categories.
    
    While it might appear totally uncontrollable, observe nonetheless, for example, that \cref{thm:motnorm} implies that it has \textit{another} structure of a $\Mot\loc$--commutative algebra in $\presentable^L_{\mathrm{st}}$ apart from the one coming from the universal property of $\Mot\loc$ as in \cref{lm:adjmotGmot}.    To wit, the multiplicative norm yields the symmetric monoidal functor 
    \begin{equation}\label{eqn:motloc_to_geom_fix_point}
        \Mot\loc\xlongrightarrow{\norm^G_e} \Mot\loc_G \xlongrightarrow{\Phi^{\proper}} \Phi^{\proper}\udl{\Mot}\loc_G.
    \end{equation} This preserves all colimits by applying \cref{cor:Geometriclocalizing} in the case of $f\colon G/e\rightarrow G/G$, the universal property of $\Mot\loc$, and that $\udl{\sU}\loc_G$ commutes with the norms by \cref{thm:motnorm}.
    
    By \cref{cor:corep_verdier_quot} below, the tensor unit of $\Phi^{\proper}\udl{\Mot}\loc_G$ corepresents $\Phi^GK_G(-)$, and so the map \cref{eqn:motloc_to_geom_fix_point} may be viewed as a categorification of the map $K((-)^e)\rightarrow \Phi^GK_G(-)$ of functors $\calg_G(\udl{\cat}\perfect_G)\rightarrow \calg(\spectra)$ coming from applying $\Phi^G$ to $\norm^G_e\res^G_eK_G(-)\rightarrow K_G(-)$ and recalling that $\Phi^G\norm^G_e\simeq \id$.
\end{rmk}

\subsection{Corepresentedness of localizing equivariant K--theory}\label{subsec:corepresented_K-theory}

We start by recording the equivariant analogue of \cite[Prop. 9.26]{BGT}, namely corepresentability of equivariant $K$-theory in $G$-motives, specifically by the symmetric monoidal unit. Since \Cref{thm:motnorm} provides $\udl{\Mot}\loc_G$ with a $G$-symmetric monoidal structure, this will canonically endow $K$-theory with a $G$-lax symmetric monoidal structure. 

\begin{prop}\label{prop:corep}
    The functor $\udl{K}_G\colon  \udl{\cat}\perfect_G \to \myuline{\spectra}_G$ given in \Cref{ex:KG} is equivalent to the mapping $G$-spectrum functor $$\myuline{\mapsp}_{\udl{\Mot}\loc_G}(\udl{\U}\loc_G(\myuline{\spectra}^\omega_G), \udl{\U}\loc_G(-))\simeq \myuline{\mapsp}_{\udl{\Mot}\loc_G}(\one_{\udl{\Mot}\loc_G},\udl{\U}\loc_G(-))$$
    where the equivalence is since $\myuline{\spectra}^\omega_G$ is the unit in ${\cat}\perfect_G$. In particular,  for any $\udl{\A},\udl{\B}\in\udl{\cat}\perfect_G$ with $\udl{\A}$ being dualizable, we have an identification 
    \[\myuline{\mapsp}_{\udl{\Mot}\loc_G}(\udl{\U}\loc_G(\udl{\A}), \udl{\U}\loc_G(\udl{\B}))\simeq \udl{K}_G(\udl{\func}\exact(\udl{\A},\udl{\B})).\]
\end{prop}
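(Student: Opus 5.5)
The plan is to reduce to the nonequivariant corepresentability theorem of \cite{MotLoc} (equivalently \cite[Prop. 9.26]{BGT}) applied in the module-theoretic setting used in the proof of \cref{thm:GMotGPres}, and then assemble the answer into a $G$-spectrum using the Mackey adjunction of \cref{lem:fixmackadj}.

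\textbf{Step 1: fixed points.} By \cref{thm:equivmod} and the proof of \cref{thm:GMotGPres}, the category $\Mot\loc_H$ is the localization of $\module_{\spectra_H^\omega}(\cat\perfect)$ at $\spectra_H^\omega$-motivic equivalences, in the sense of \cite[App. B]{MotLoc}. The unit there is $\spectra_H^\omega$. The corepresentability statement of \cite[App. B]{MotLoc} then gives
\[\map_{\Mot\loc_H}(\U\loc(\spectra^\omega_H),\U\loc(\sC))\simeq K(\func_{\spectra_H^\omega}(\spectra_H^\omega,\sC))\simeq K(\sC^H).\]
If that reference only gives the connective or mapping-space version, I would rerun the BGT argument directly. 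Since $\U\loc_H$ is the universal finitary Mackey localizing invariant, the functor $\map(\one,\U\loc_H(-))$ is a finitary Mackey $H$-localizing invariant. Nonconnective $K$-theory is the initial such invariant under $(-)^\simeq$ from the corepresented functor, and both sides agree with $\Omega^\infty K$ after passing to Yoneda. This yields a natural equivalence of spectrum-valued functors $\map(\one,\U\loc_H(-))\simeq K((-)^H)$, natural in $H$ under restriction.

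\textbf{Step 2: the $G$-spectrum.} The $G$-functor $\myuline{\mapsp}(\one,\udl{\U}\loc_G(-))\colon\udl{\cat}\perfect_G\to\myuline{\spectra}_G$ is $G$-semiadditive, since both $\udl{\U}\loc_G$ and corepresentation preserve finite indexed products. It is therefore determined by its $G$-fixed-point component $\cat\perfect_G\to\spectra_G=\mackey_G(\spectra)$ via \cref{lem:fixmackadj}. Evaluating at $K\le G$, the $K$-fixed points of $\myuline{\mapsp}(\one,\U(\udl{\sC}))$ are $\map_{\Mot\loc_K}(\one,\U\res^G_K\udl{\sC})\simeq K(\sC^K)$, using $\res$ of the unit and Step 1. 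This is precisely the description of $\udl{K}_G$ obtained from \cref{defn:officialdefnK} and the unit/counit formula in \cref{lem:fixmackadj}. To get an equivalence of $G$-functors rather than levelwise, I would build the comparison as a map of Mackey $G$-localizing invariants into $\spectra$, namely $K((-)^G)\to\map_{\Mot\loc_G}(\one,\U(-))$, and then apply the adjunction. This map is induced by $(-)^\simeq\to\Omega^\infty\map(\one,\U(-))$, which exists because a functor to $\cat\perfect$ sends $\one$ to the object in question. By \cref{lm:Mackeylocimpliesloc} its adjoint is a map of $G$-localizing invariants, and it is an equivalence because it is so on all fixed points.

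\textbf{Step 3: dualizable source.} If $\udl{\A}$ is dualizable in $\cat\perfect_G$, then $\udl{\U}\loc_G(\udl{\A})$ is dualizable with dual $\udl{\U}\loc_G(\udl{\A}^\vee)$, by the $G$-symmetric monoidality of \cref{thm:motnorm}. So
\[\myuline{\mapsp}(\U\udl{\A},\U\udl{\B})\simeq\myuline{\mapsp}(\one,\U(\udl{\A}^\vee\otimes\udl{\B})),\]
and $\udl{\A}^\vee\otimes\udl{\B}\simeq\udl{\func}\exact(\udl{\A},\udl{\B})$. The first part then finishes the argument.

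The main obstacle is Step 1: checking that the appendix of \cite{MotLoc} really yields nonconnective $K$-theory of the $\spectra_H^\omega$-linear endomorphism category, and making the identification natural in $H$ so that Step 2 assembles into a $G$-functor rather than a levelwise equivalence.
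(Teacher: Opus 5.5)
\textbf{There is a genuine gap in Step 1, which carries the whole content of the proposition.} You have not established $\map_{\Mot\loc_H}(\one,\U\loc_H(-))\simeq K((-)^H)$.

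\begin{itemize}
\item Nothing in the paper supplies a linear corepresentability statement from \cite[App. B]{MotLoc}. That appendix is used only for presentability, stability and the universal property.
\item Your fallback is circular. By spectral Yoneda, $\map_{\Mot\loc_H}(\one,\U\loc_H(-))$ is tautologically the initial localizing invariant (factoring through $\Mot\loc_H$) equipped with a point of its value at $\myuline{\spectra}^\omega_H$. Equivalently, it is initial under $(-)^\simeq$. So the claim that $K((-)^H)$ is ``the initial such invariant under $(-)^\simeq$'' is precisely what has to be proved; it cannot be used as an input.
\item The same problem recurs in Step 2. Inducing $K((-)^G)\to\map(\one,\U\loc_G(-))$ from $(-)^\simeq\to\Omega^\infty\map(\one,\U\loc_G(-))$ presupposes that universal property of $K((-)^G)$. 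The only map you get for free is the Yoneda map in the other direction, $\map(\one,\U\loc_G(-))\to K((-)^G)$.
\item Asserting that $\map(\one,\U\loc_H(-))$ is finitary needs compactness of the unit. The paper only deduces this afterwards, from this very proposition, in \cref{cor:compactness_of_unit_in_motloc}.
\end{itemize}

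\textbf{The missing idea is \cref{lm:adjmotGmot}.} The symmetric monoidal adjunction $\myuline{\spectra}^\omega_G\otimes-\dashv(-)^G$ between $\cat\perfect$ and $\cat\perfect_G$ descends to an adjunction $\U\loc_G(\myuline{\spectra}^\omega_G)\otimes-\dashv\U\loc((-)^G)$ on motives.
\begin{itemize}
\item Both functors preserve motivic equivalences. If $E$ is a finitary Mackey $G$-localizing invariant, then $E(\myuline{\spectra}^\omega_G\otimes-)$ is a finitary localizing invariant. Conversely, if $E$ is finitary localizing, then $E((-)^G)$ is finitary Mackey $G$-localizing. Now apply \cref{cor:Mackeyweq=weq}.
\item The resulting square is horizontally right adjointable: the induced right adjoint on motives is compatible with $(-)^G$ under the localizations.
\end{itemize}
Since the left adjoint sends $\U\loc(\spectra^\omega)$ to $\U\loc_G(\myuline{\spectra}^\omega_G)$, you get, naturally in $\udl{\sC}\in\cat\perfect_G$,
\[\map_{\Mot\loc_G}(\U\loc_G(\myuline{\spectra}^\omega_G),\U\loc_G(\udl{\sC}))\simeq\map_{\Mot\loc}(\U\loc(\spectra^\omega),\U\loc(\sC^G))\simeq K(\sC^G).\]
The last step is the nonequivariant theorem \cite[Prop. 9.26]{BGT}. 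Then \cref{lem:fixmackadj} upgrades this single $G$-level identification to the equivalence of $G$-functors. So your worry about naturality in $H$, and the levelwise check in Step 2, never arise.

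You could equally repair your own fallback with the same adjunction at the level of invariants:
\[\mathrm{Nat}(K\circ(-)^G,E)\simeq\mathrm{Nat}(K,E(\myuline{\spectra}^\omega_G\otimes-))\simeq\Omega^\infty E(\myuline{\spectra}^\omega_G),\]
again by BGT. Your Step 3, the dualizable case, agrees with the paper.
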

For the proof, it is convenient to construct the following general piece of structure: 
\begin{lem}\label{lm:adjmotGmot}
    The symmetric monoidal adjunction  $\myuline{\spectra}_G^\omega \otimes - \colon \cat\perfect \rightleftarrows {\cat}\perfect_G \cocolon (-)^G$ afforded by \cref{thm:equivmod} induces an adjunction between colimit-preserving functors $$\U\loc_G(\myuline{\spectra}^\omega_G)\otimes - \colon \Mot\loc \rightleftarrows \Mot\loc_G \cocolon \U\loc((-)^G)$$ which participates in a commuting square of symmetric monoidal categories
    \[
    \begin{tikzcd}
        \cat\perfect \ar[rr,"\myuline{\spectra}_G^\omega \otimes - "]\dar["\sU\loc"'] & &\cat\perfect_G\dar["\sU\loc_G"] \\
        \Mot\loc \ar[rr,"\U\loc_G(\myuline{\spectra}^\omega_G)\otimes -"] && \Mot\loc_G
    \end{tikzcd}
    \]
    whose underlying square of categories is horizontally right adjointable.
\end{lem}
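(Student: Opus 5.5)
The plan is to check that both functors in the adjunction $\myuline{\spectra}_G^\omega\otimes -\dashv (-)^G$ preserve motivic equivalences. Then both descend to the Dwyer--Kan localizations, and the unit and counit descend with them.

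First I treat the left adjoint $L\coloneqq\myuline{\spectra}_G^\omega\otimes-\colon\cat\perfect\to\cat\perfect_G$. It is symmetric monoidal and preserves filtered colimits. It also preserves Karoubi sequences: under \cref{thm:equivmod} it is base change along $\spectra\to\spectra_G$, and tensoring with a fixed compactly generated category preserves Karoubi sequences. Hence for any finitary Mackey $G$-localizing invariant $E$, the composite $E\circ L$ is a finitary localizing invariant, so $L$ sends motivic equivalences to Mackey motivic equivalences. By \cref{cor:Mackeyweq=weq} these are exactly the $G$-motivic equivalences. By the universal property of the symmetric monoidal Dwyer--Kan localization (\cref{recollect:monoidal_DK_localization}, nonequivariant case), $L$ descends to a symmetric monoidal functor $\overline{L}\colon\Mot\loc\to\Mot\loc_G$ making the square commute. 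Its value on $\sU\loc(\mathcal{C})$ is $\sU\loc_G(\myuline{\spectra}_G^\omega)\otimes\sU\loc_G(\mathcal{C})$, and $\overline{L}$ is the base change of the symmetric monoidal functor out of the unit. It preserves colimits by the universal property of \cref{thm:GMotGPres} nonequivariantly, that is \cite{MotLoc}: $\sU\loc_G\circ L$ is a finitary localizing invariant $\cat\perfect\to\Mot\loc_G$, so its factorization is colimit-preserving.

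Next the right adjoint. The functor $(-)^G\colon\cat\perfect_G\to\cat\perfect$ preserves Karoubi sequences, because these are computed pointwise in Mackey functors, and it preserves filtered colimits. So for a finitary localizing invariant $E'$ on $\cat\perfect$, the composite $E'\circ(-)^G$ is a finitary Mackey $G$-localizing invariant. Thus $(-)^G$ sends motivic equivalences to motivic equivalences, and $\sU\loc\circ(-)^G$ descends to $R\colon\Mot\loc_G\to\Mot\loc$. The same argument via \cref{thm:GMotGPres}, at level $G$, shows that $R$ is colimit-preserving.

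For the adjunction, I descend the unit and counit. They are natural transformations between functors that invert motivic equivalences, and $\func(\sC[W^{-1}],\D)\subseteq\func(\sC,\D)$ is fully faithful. So the unit $\id\Rightarrow(-)^G\circ L$ and counit $L\circ(-)^G\Rightarrow\id$, after composing with the localizations, induce transformations $\id\Rightarrow R\overline{L}$ and $\overline{L}R\Rightarrow\id$. The triangle identities also hold after localization, again by full faithfulness of restriction along localizations.

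Horizontal right adjointability means that the mate $\sU\loc\circ(-)^G\to R\circ\sU\loc_G$ is an equivalence. This holds by construction, since $R$ was defined as the factorization of $\sU\loc\circ(-)^G$ and the mate is built from the descended unit and counit; tracing through, it is the identity. I expect the main obstacle to be this coherence check: identifying the mate with the defining equivalence of $R$ requires careful bookkeeping of 2-cells, but it is formal. The substantive inputs are only that both functors preserve Karoubi sequences and filtered colimits.
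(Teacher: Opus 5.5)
Your proposal is correct and follows essentially the same route as the paper. Both arguments show that $\myuline{\spectra}_G^\omega\otimes-$ and $(-)^G$ preserve motivic equivalences by composing with finitary (Mackey) localizing invariants and invoking \cref{cor:Mackeyweq=weq}, then obtain colimit-preservation from the universal properties of the motive categories and symmetric monoidality of the square from that of $\sU\loc_G\circ(\myuline{\spectra}_G^\omega\otimes-)$; the paper leaves the descent of the unit and counit and the mate computation implicit, and your explicit treatment of these steps is correct.
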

\begin{proof}
    We only have to show that the functors $\myuline{\spectra}^{\omega}_G\otimes-$ and $(-)^G$ preserve motivic equivalences. The functors in the induced adjunction preserve colimits by the universal properties of $\Mot\loc$ and $\Mot\loc_G$ among presentable categories, and that we obtain a square of symmetric monoidal functors is since the composite $\sU\loc_G\circ \myuline{\spectra}^{\omega}_G\otimes(-)$ has a symmetric monoidal structure. 
    
    If $E$ is a (finitary) Mackey $G$-localizing invariant, then $E(\myuline{\spectra}_G^\omega\otimes -)$ is a (finitary) localizing invariant, so the left adjoint sends motivic equivalences of stable categories to Mackey motivic equivalences of $G$-stable $G$-categories, and hence, by \Cref{cor:Mackeyweq=weq}, to motivic equivalences of $G$-stable $G$-categories. Conversely, if $E$ is a (finitary) localizing invariant, $E((-)^G)$ is a Mackey $G$-localizing invariant, so that $(-)^G$ sends Mackey motivic equivalences of $G$-stable $G$-categories to motivic equivalences of stable categories, and hence by \Cref{cor:Mackeyweq=weq}, sends motivic equivalences of $G$-stable $G$-categories to motivic equivalences of stable categories. 
\end{proof}

\begin{proof}[Proof of \Cref{prop:corep}]
By \Cref{lem:fixmackadj}, and since $\myuline{\spectra}_G= \udl{\mackey}_G(\spectra)$, it suffices to prove that the mapping spectrum functor $\mapsp_{\Mot\loc_G}(\U\loc_G(\myuline{\spectra}_G^\omega), \U\loc_G(-))\colon \cat\perfect_G\to \spectra $ is equivalent to $K((-)^G)$. But then we have  equivalences
\[\mapsp_{\Mot\loc_G}(\U\loc_G(\myuline{\spectra}_G^\omega), \U\loc_G(-))\simeq \mapsp_{\Mot\loc}(\U\loc({\spectra}^\omega), \U\loc((-)^G)) \simeq K((-)^G)\]
where the first step is by  the adjunction and symmetric monoidality of $\U\loc_G(\myuline{\spectra}_G^\omega)\otimes-$ from \cref{lm:adjmotGmot}, and the second  is by \cite[Prop. 9.26]{BGT}. This completes the first part.

Lastly, since $\udl{\sU}\loc_G$ is symmetric monoidal,  we get equivalences $\udl{\sU}\loc_G(\udl{\func}\exact(\udl{\A},\udl{\B}))\simeq \udl{\sU}\loc_G(\udl{\A}^{\vee}\otimes\udl{\B})\simeq \udl{\sU}\loc_G(\udl{\A})^{\vee}\otimes\udl{\sU}\loc_G(\udl{\B})$. Hence, we get $\myuline{\mapsp}_{\udl{\Mot}\loc_G}(\udl{\U}\loc_G(\udl{\A}), \udl{\U}\loc_G(\udl{\B}))\simeq \myuline{\mapsp}_{\udl{\Mot}\loc_G}(\udl{\U}\loc_G(\myuline{\spectra}^{\omega}), \udl{\sU}\loc_G(\udl{\A})^{\vee}\otimes\udl{\U}\loc_G(\udl{\B}))\simeq \udl{K}_G(\udl{\func}\exact(\udl{\A},\udl{\B}))$, which completes the proof.
\end{proof}
We thus obtain the remaining part in \cref{alphThm:K_is_G-laxsym} announced in the introduction:
\begin{cor}\label{cor:consKlaxG}
    The (nonconnective) equivariant algebraic $K$-theory functor $\udl{K}_G: \udl{\cat}\perfect_G\to \myuline{\spectra}_G$ has a unique  $G$-lax symmetric monoidal refinement. Furthermore, the canonical map $(-)^\simeq \to \Omega^\infty \udl{K}_G$ has a unique lax $G$-symmetric monoidal refinement. 
\end{cor}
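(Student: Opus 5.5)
The first part of the statement should follow formally from corepresentability. By \cref{prop:corep}, $\udl{K}_G\simeq \myuline{\mapsp}_{\udl{\Mot}\loc_G}(\one,\udl{\U}\loc_G(-))$. By \cref{thm:motnorm}, $\udl{\U}\loc_G$ is $G$-symmetric monoidal. The $G$-functor $\myuline{\mapsp}(\one,-)\colon \udl{\Mot}\loc_G\to\myuline{\spectra}_G$ is right adjoint to the unit map $\myuline{\spectra}_G\to\udl{\Mot}\loc_G$, $X\mapsto X\otimes\one$. This unit map is the unique $G$-symmetric monoidal $G$-colimit-preserving functor out of the initial object $\myuline{\spectra}_G$ of $\calg_G(\udl{\presentable}^L_{G,\mathrm{st}})$, or by \cref{thm:equivmod}. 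The right adjoint of a $G$-symmetric monoidal functor acquires a canonical $G$-lax symmetric monoidal structure by parametrized doctrinal adjunction, e.g.\ via \cite{quigleyShahParametrisedTate}. Composing gives a $G$-lax refinement of $\udl{K}_G$.

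For uniqueness, I would use the second equivalence of \cref{thm:motnorm}. The space of $G$-lax symmetric monoidal refinements of the finitary $G$-localizing invariant $\udl{K}_G$ is equivalent to the space of $G$-lax symmetric monoidal refinements of the colimit-preserving functor $\overline{K}\simeq\myuline{\mapsp}(\one,-)$ on $\udl{\Mot}\loc_G$. A $G$-lax symmetric monoidal structure on a right adjoint $R$ of a $G$-symmetric monoidal left adjoint $L$ is equivalently an oplax structure on $L$. Here $L$ is the unit functor out of $\myuline{\spectra}_G$, which is the unit of $\udl{\presentable}^L_{G,\mathrm{st}}$. Oplax $G$-symmetric monoidal colimit-preserving structures on a functor out of the unit are forced to be strong and unique, since $\myuline{\spectra}_G$ is generated under $G$-colimits by the unit. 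Alternatively, I would argue that $\myuline{\mapsp}(\one,-)$ is the terminal $G$-lax symmetric monoidal functor to $\myuline{\spectra}_G$ lying over the given underlying functor. This step, where uniqueness has to be made precise parametrically, is the main obstacle. My plan is to reduce it to the nonequivariant statement of \cite{MotLoc} and \cite{BGTMult} fiberwise, plus the norm compatibilities, which are controlled by the uniqueness in \cref{thm:motnorm}.

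For the second claim, I would first identify the domain. The functor $(-)^\simeq\colon\udl{\cat}\perfect_G\to\udl{\spc}_G$ is corepresented by the unit $\myuline{\spectra}^\omega_G$ in $\udl{\cat}\perfect_G$. That is, $\udl{\sC}^\simeq\simeq\myuline{\map}(\myuline{\spectra}^\omega_G,\udl{\sC})$, so it has a canonical $G$-lax structure by the same adjunction argument applied to $\udl{\spc}_G\to\udl{\cat}\perfect_G$, $X\mapsto X\otimes\unit$. The map $(-)^\simeq\to\Omega^\infty\udl{K}_G$ is then the map on mapping $G$-spaces induced by $\udl{\U}\loc_G$ at the unit, and $\udl{\U}\loc_G$ is $G$-symmetric monoidal, so the map inherits a lax refinement. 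For uniqueness, I would apply a $G$-Yoneda argument. $(-)^\simeq$ is the initial $G$-lax symmetric monoidal functor $\udl{\cat}\perfect_G\to\udl{\spc}_G$, being corepresented by the unit. Hence $G$-lax transformations out of it into any $G$-lax functor $F$ correspond to $G$-$\mathbb{E}_\infty$ points of $F(\unit)$, i.e.\ to maps $\unit\to F(\unit)$ of $G$-commutative algebras. Since $\unit$ is initial in $\calg_G(\udl{\spc}_G)$, this space is contractible, which gives uniqueness.
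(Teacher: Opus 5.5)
The gap is in the uniqueness half of the first claim, which you yourself flag as the main obstacle. Your existence argument there (doctrinal adjunction for the unit functor $L\colon\myuline{\spectra}_G\to\udl{\Mot}\loc_G$, then precompose with $\udl{\U}\loc_G$) is a legitimate alternative. Your treatment of the second claim is essentially the paper's: both the structure and the map come from $(-)^\simeq\simeq\myuline{\map}(\one,-)$ and the $G$-symmetric monoidality of $\udl{\U}\loc_G$.

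The reason you give for why oplax structures on $L$ are ``forced to be strong and unique'' is that $\myuline{\spectra}_G$ is generated by the unit under $G$-colimits. That only shows an oplax $G$-symmetric monoidal structure on the colimit-preserving $L$ is determined by its components at the unit. In other words, it amounts to a $G$-cocommutative coalgebra structure on $\one_{\udl{\Mot}\loc_G}$. You still need an argument that every such structure is the trivial one, for instance counitality plus Eckmann--Hilton in the commutative monoid $\pi_0\mathrm{End}(\one)$, fiberwise. You also rely on a parametrized mate correspondence between lax structures on $R$ and oplax structures on $L$, which you have not secured.

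Your fallbacks do not close this gap. The corepresented-by-unit functor is \emph{initial}, not terminal, in the relevant universal property. A fiberwise reduction to the nonequivariant statements does not by itself control the norm comparison maps $\norm^K_H\udl{K}(\udl{\A})\to\udl{K}(\norm^K_H\udl{\A})$, which are genuinely non-fiberwise data. Finally, the uniqueness in \cref{thm:motnorm} concerns the $G$-symmetric monoidal structure on $\udl{\Mot}\loc_G$, not lax structures on functors out of it.

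The missing idea is the one you already use for $(-)^\simeq$. After transporting along the second equivalence of \cref{thm:motnorm}, $G$-lax structures on $\overline{K}\simeq\myuline{\mapsp}(\one,-)$ are $G$-commutative algebra structures for parametrized Day convolution on $G$-colimit-preserving functors $\udl{\Mot}\loc_G\to\myuline{\spectra}_G$. The functor $\myuline{\mapsp}(\one,-)$ is the unit of that Day convolution, hence the initial $G$-commutative algebra. This is exactly the paper's proof, and it gives existence and uniqueness at once, so neither doctrinal adjunction nor any fiberwise reduction is needed. If you prefer your oplax route, it dualizes to the statement that $\one$ is the terminal $G$-cocommutative coalgebra, which requires the same kind of argument.
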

\begin{proof}
    By \cref{thm:motnorm}, lax $G$-symmetric monoidal structures on $\udl{K}_G$ as a functor out of $\udl{\cat}\perfect_G$ are the same as those as a functor out of $\udl{\Mot}\loc_G$. By \cref{prop:corep}, the associated functor out of motives is $\myuline{\mapsp}_{\udl{\Mot}\loc_G}(\one_{\udl{\Mot}\loc_G},-)$, which is the  initial $G$–commutative algebra in $\udl{\func}\exact(\udl{\Mot}\loc,\myuline{\spectra})$ under Day convolution, whence the uniqueness statement.
    
    The last claim follows as this map is of the form $\myuline{\map}_{\udl{\A}}(\one,-)\to \myuline{\map}_{\udl{\B}}(\one,-) $ for some $G$-symmetric monoidal functor between $G$-symmetric monoidal $G$-categories $\udl{\A}\to\udl{\B}$, namely the $G$-symmetric monoidal functor $\udl{\sU}\loc_G\colon \udl{\cat}\perfect_G\rightarrow \udl{\Mot}\loc_G$.
\end{proof}

\begin{example}
    By  \cref{example:algebra_objects_catperfGI}, for any $G$-operad $\udl{\orbit}$ and any $R\in \algebraCategory_{\udl{\orbit}\otimes \everythingAlgebra_1}(\myuline{\spectra})$, the $G$-spectrum $\{K(\perfectCat_{\res^G_HR}(\spectra_H))\}_{H\leq G}$ refines to a $\udl{\orbit}$-ring $G$-spectrum.
    
    By  \cref{example:algebra_objects_catperfGII}, for any $\sC\in\calg(\cat\perfect)$ equipped with a symmetric monoidal $G$-action, the $G$-spectrum $\udl{K}_G(\udl{\borel}(\sC))=\{G/H\mapsto K(\sC^{hH})\}_{H\leq G}$ refines to a normed $G$-ring spectrum, i.e. an object in $\calg_G(\myuline{\spectra})$.
\end{example}

Another useful consequence of \cref{prop:corep} is the following:

\begin{cor}\label{cor:compactness_of_unit_in_motloc}
    The unit $\unit_{\udl{\Mot}\loc_G}=\udl{\sU}\loc_G(\myuline{\spectra}^{\omega})\in\udl{\Mot}\loc_G$ is compact.
\end{cor}
% \begin{proof}
%     This is a fiberwise statement, and so without loss of generality, we just need to show that $\mapsp_{\Mot\loc_G}(\udl{\sU}\loc_G(\myuline{\spectra}^{\omega}),-)\colon \Mot\loc_G\rightarrow \spectra$ preserves filtered colimits. By the adjunction from \Cref{lm:adjmotGmot}, this is equivalent to $\mapsp_{\Mot\loc}({\sU}\loc({\spectra}^{\omega}),\sU\loc_G((-)^G))$ where $\sU\loc_G((-)^G)\colon \Mot\loc_G\rightarrow\Mot\loc$ preserves colimits. By \cite[Theorem 9.36]{BGT}, $\mapsp_{\Mot\loc}({\sU}\loc({\spectra}^{\omega}),-)$ preserves filtered colimits, and so we are done. 
% \end{proof}
\begin{proof}
    By definition, we have a fully faithful embedding $$(\udl{\sU}\loc_G)^*\colon \udl{\func}(\udl{\Mot}\loc_G,\myuline{\spectra}_G)\hookrightarrow\udl{\func}(\udl{\cat}\perfect_G,\myuline{\spectra}_G)$$ which, by  \cref{thm:GMotGPres}, restricts to an equivalence 
    $$(\udl{\sU}\loc_G)^*\colon \udl{\func}^L(\udl{\Mot}\loc_G,\myuline{\spectra}_G)\xrightarrow{\simeq }\udl{\func}^{\mathrm{fin.loc.}}(\udl{\cat}\perfect_G,\myuline{\spectra}_G).$$ Since $\udl{K}_G $ is a finitary $G$-localizing invariant and since $\udl{K}_G(-)\simeq~(\udl{\U}\loc_G)^*\myuline{\mapsp}_{\udl{\Mot}\loc_G}(\udl{\U}\loc_G(\myuline{\spectra}^\omega_G), -)$ by \cref{prop:corep}, we thus see that $\myuline{\mapsp}_{\udl{\Mot}\loc_G}(\udl{\U}\loc_G(\myuline{\spectra}^\omega_G), -)$ preserves $G$-colimits, and in particular, $\udl{\U}\loc_G(\myuline{\spectra}^\omega_G)$ is compact.
\end{proof}

\begin{cor}\label{cor:corep_verdier_quot}
    The functor $\Phi^GK_G\colon \cat\perfect_G\rightarrow\spectra_G\rightarrow\spectra$ is corepresented by the tensor unit of $\Phi^{\proper}\udl{\Mot}\loc_G$. In particular,  for any $\udl{\A},\udl{\B}\in\udl{\cat}\perfect_G$ with $\udl{\A}$ being dualizable, we have an identification $\myuline{\mapsp}_{\Phi^{\proper}\udl{\Mot}\loc_G}(\Phi^{\proper}\udl{\U}\loc_G(\udl{\A}), \Phi^{\proper}\udl{\U}\loc_G(\udl{\B}))\simeq \Phi^G\udl{K}_G(\udl{\func}\exact(\udl{\A},\udl{\B}))$.
\end{cor}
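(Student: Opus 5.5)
The plan is to deduce this from \cref{prop:corep} together with the recollement of \cref{cons:categorical_isotropy_separation}, applied now to the $G$-presentable $G$-stable category $\udl{\Mot}\loc_G$ itself. Since $\udl{\Mot}\loc_G$ is $G$-presentable and $G$-stable by \cref{thm:GMotGPres}, the construction gives a smashing localization $\Phi^{\proper}=\widetilde{E\proper}\otimes-\colon \udl{\Mot}\loc_G\to\Phi^{\proper}\udl{\Mot}\loc_G$. The fully faithful right adjoint identifies the target with the full subcategory of $\widetilde{E\proper}$-local objects. This localization is symmetric monoidal, so it sends the unit $\one=\udl{\U}\loc_G(\myuline{\spectra}^\omega_G)$ to the unit of $\Phi^{\proper}\udl{\Mot}\loc_G$.

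For any $M\in\Mot\loc_G$, adjunction gives
\[\mapsp_{\Phi^{\proper}\udl{\Mot}\loc_G}(\Phi^{\proper}\one,\Phi^{\proper}M)\simeq \mapsp_{\Mot\loc_G}(\one,\widetilde{E\proper}\otimes M).\]
The next step is to rewrite the right-hand side. By \cref{cor:compactness_of_unit_in_motloc}, the unit is compact, and it is in fact $G$-compact, with $\myuline{\mapsp}(\one,-)$ preserving $G$-colimits. The cofiber sequence $E\proper_+\to S^0\to\widetilde{E\proper}$ is built from $G$-colimits of spaces. It follows that $\myuline{\mapsp}(\one,\widetilde{E\proper}\otimes M)\simeq \widetilde{E\proper}\otimes\myuline{\mapsp}(\one,M)$ in $\myuline{\spectra}_G$, using the $\spectra_G$-linearity of $G$-colimit-preserving functors from \cref{thm:equivmod}. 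Taking $G$-fixed points yields $(\widetilde{E\proper}\otimes X)^G\simeq\Phi^GX$. Setting $M=\udl{\U}\loc_G(\udl{\B})$ and applying \cref{prop:corep} then gives $\Phi^G\udl{K}_G(\udl{\B})$, naturally in $\udl{\B}$. This proves the first claim.

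For the dualizable statement, I would argue as in the last paragraph of the proof of \cref{prop:corep}. Since $\Phi^{\proper}\circ\udl{\U}\loc_G$ is symmetric monoidal, it preserves duals, so
\[\mapsp(\Phi^{\proper}\udl{\U}\loc_G\udl{\A},\Phi^{\proper}\udl{\U}\loc_G\udl{\B})\simeq\mapsp(\one,\Phi^{\proper}\udl{\U}\loc_G(\udl{\A}^\vee\otimes\udl{\B})).\]
Using $\udl{\A}^\vee\otimes\udl{\B}\simeq\udl{\func}\exact(\udl{\A},\udl{\B})$ and the first part then finishes the argument.

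The main obstacle is the step that commutes $\widetilde{E\proper}\otimes-$ past the mapping $G$-spectrum. Ordinary compactness of the unit is not enough here; what is needed is the genuinely parametrized statement that $\myuline{\mapsp}(\one,-)$ is $G$-colimit-preserving, hence $\spectra_G$-linear. I would obtain this directly from the proof of \cref{cor:compactness_of_unit_in_motloc}, which shows exactly that this functor preserves all $G$-colimits.
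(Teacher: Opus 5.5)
Your proposal is correct and is essentially the paper's argument. The paper likewise writes $\Phi^G\udl{K}_G$ as $[\widetilde{E\proper}\otimes\myuline{\mapsp}(\unit,\udl{\U}\loc_G(-))]^G$ via \cref{prop:corep}, commutes $\widetilde{E\proper}\otimes-$ past the mapping $G$-spectrum using the $G$-colimit preservation from \cref{cor:compactness_of_unit_in_motloc}, and concludes with the Bousfield localization adjunction for $\Phi^{\proper}$. Your treatment of the dualizable statement, which mirrors the end of the proof of \cref{prop:corep} using symmetric monoidality of $\Phi^{\proper}\circ\udl{\U}\loc_G$, fills in a step the paper leaves implicit.
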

\begin{proof}
    We simply compute that
    \begin{equation*}
        \begin{split}
            \Phi^GK_G(-) &= \big[\widetilde{E\proper}\otimes \myuline{\mapsp}_{\udl{\Mot}\loc_G}(\unit_{\udl{\Mot}\loc_G},\udl{\sU}\loc_G(-))\big]^G\\
            &\xrightarrow{\simeq} \myuline{\mapsp}_{\udl{\Mot}\loc_G}(\unit_{\udl{\Mot}\loc_G},\widetilde{E\proper}\otimes\udl{\sU}\loc_G(-))^G\\
            &{\simeq}\myuline{\mapsp}_{\Phi^{\proper}\udl{\Mot}\loc_G}(\unit_{\Phi^{\proper}\udl{\Mot}\loc_G},\widetilde{E\proper}\otimes\udl{\sU}\loc_G(-))^G 
        \end{split}
    \end{equation*}
    where the first equivalence is by \cref{prop:corep} and the second by \cref{cor:compactness_of_unit_in_motloc} which implies that the mapping $G$-spectrum out of $\unit_{\udl{\Mot}\loc_G}$ commutes with all equivariant colimits, and the last equivalence is since $\Phi^{\proper}\colon \udl{\Mot}\loc_G\rightarrow \Phi^{\proper}\udl{\Mot}\loc_G$ is a Bousfield localization.
\end{proof}

\section{Applications}\label{section:applications}

\subsection{Borel localizing motives and tensor powers}\label{subsec:borel_motives}

\begin{defn}
    A functor $\func(BG,\cat\perfect)\rightarrow \E$ to a stable category $\E$ is a \textit{Borel $G$-localizing invariant} if it is additive and it sends Karoubi sequences to bifiber sequences. It is furthermore said to be \textit{finitary} if it preserves filtered colimits. A morphism in $\func(BG,\cat\perfect)$ is said to be a \textit{Borel motivic equivalence} if it is sent to an equivalence by all finitary Borel $G$-localizing invariants out of $\func(BG,\cat\perfect)$. 
\end{defn}

\begin{obs}\label{obs:borelification_preserves_moteq}
    The Borelification functor $b^* \colon \cat\perfect_G\rightarrow \func(BG,\cat\perfect)$ sends Mackey motivic equivalences to Borel motivic equivalences since it clearly sends Karoubi sequences to Karoubi sequences and it preserves filtered colimits.
\end{obs}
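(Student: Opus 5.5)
The plan is to reduce to the standard pullback-along-a-functor criterion. A map is a Borel motivic equivalence exactly when every finitary Borel $G$-localizing invariant inverts it. A map is a Mackey motivic equivalence exactly when every finitary Mackey $G$-localizing invariant inverts it. So fix a Mackey motivic equivalence $f\colon \udl{\A}\to\udl{\B}$ in $\cat\perfect_G$ and a finitary Borel $G$-localizing invariant $E\colon \func(BG,\cat\perfect)\to\E$. It then suffices to check that the composite $E\circ b^*\colon \cat\perfect_G\to\E$ is a finitary Mackey $G$-localizing invariant. Once this is shown, $E(b^*f)$ is an equivalence by definition of Mackey motivic equivalences, and we are done.

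The composite $E\circ b^*$ must satisfy three properties.

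\begin{enumerate}
\item \emph{Additivity.} The functor $b^*$ evaluates a $G$-category at $G/e$ and remembers the residual $G$-action. Finite products, equivalently biproducts, in $\cat\perfect_G$ and in $\func(BG,\cat\perfect)$ are computed pointwise. Hence $b^*$ preserves them, and additivity of $E$ gives additivity of the composite.

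\item \emph{Karoubi sequences.} A Karoubi sequence in $\cat\perfect_G$ is computed pointwise in Mackey functors, via the inclusion \cref{eqn:inclusion_catperf_into_mackey}. Evaluating at $G/e$ therefore yields a Karoubi sequence of underlying stable categories. Karoubi sequences in $\func(BG,\cat\perfect)$ are detected on underlying categories, since the forgetful functor to $\cat\perfect$ creates limits and colimits. So $b^*$ preserves Karoubi sequences, and $E$ then turns them into bifiber sequences.

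\item \emph{Filtered colimits.} Filtered colimits in $\cat\perfect_G$ are again computed pointwise. This uses that the inclusion into $\mackey_G(\cat\perfect)$ inherits colimits and that filtered colimits of product-preserving functors are computed pointwise, because filtered colimits commute with finite products in $\cat\perfect$. Restriction to $G/e$ thus preserves them, and the forgetful functor from $\func(BG,\cat\perfect)$ detects them.
\end{enumerate}

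The only point requiring some care is item 2. There one must make sure that the fiber/cofiber conditions in $\cat\perfect_G$ really restrict to the underlying level, rather than merely the $G/G$ level. This follows from the remark recalled after the definition of Karoubi sequences, together with the fact that restriction functors of $\udl{\cat}\perfect_G$ preserve all the relevant (co)limits. Everything else is formal.
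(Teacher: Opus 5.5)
Your proposal is correct and follows the paper's own justification. Precomposing a finitary Borel $G$-localizing invariant with $b^*$ gives a finitary Mackey $G$-localizing invariant, because $b^*$ preserves finite products, Karoubi sequences and filtered colimits. You simply make explicit the pointwise computations that the paper calls clear, including the additivity check, which the paper leaves implicit.
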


\begin{obs}\label{obs:orbits_preserves_moteq}
    The functor $(-)_{hG}\colon \func(BG,\cat\perfect)\rightarrow \cat\perfect$ sends Karoubi sequences to Karoubi sequences and it preserves filtered colimits. To see this, since $(-)_{hG}$ is a left adjoint, it of course preserves cofiber sequences and filtered colimits. To see that it additionally takes Verdier  sequences to fiber sequences, let $\A \rightarrow \B\rightarrow \sC$ be a Karoubi sequence. Since $\A_{hG} \rightarrow \B_{hG}\rightarrow \sC_{hG}$ is still a cofiber sequence, we only have to show that the functor $\A_{hG} \rightarrow \B_{hG}$ is fully faithful. To see this, simply note that under the equivalence $\ind\colon \cat\perfect\rightleftharpoons \presentable^L_{\mathrm{st},\omega}\cocolon (-)^{\omega}$, the fact that the inclusion $\presentable^L_{\mathrm{st},\omega}\subset \presentable^L_{\mathrm{st}}$ preserves colimits, and that $\presentable^L_{\mathrm{st}}$ is $\infty$-semiadditive, that the map $\A_{hG} \rightarrow \B_{hG}$ is identified with $((\ind\A)^{hG})^{\omega} \rightarrow ((\ind\B)^{hG})^{\omega}$. And this functor is certainly fully faithful since $(\ind\A)^{hG}\rightarrow (\ind\B)^{hG}$ is so.
\end{obs}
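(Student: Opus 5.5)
The plan is to separate the statement into a formal part and one genuine check. The formal part is that $(-)_{hG}$ preserves cofiber sequences and filtered colimits. The genuine check is that it preserves fully faithfulness of the first map in a Karoubi sequence.

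\textbf{Formal part.} First I would note that $(-)_{hG}\colon \func(BG,\cat\perfect)\to\cat\perfect$ is left adjoint to the constant-diagram functor. Hence it commutes with all colimits, and since colimits commute with colimits:
\begin{enumerate}
\item applying $(-)_{hG}$ to a Karoubi sequence $\A\to\B\to\sC$ gives a cofiber sequence $\A_{hG}\to\B_{hG}\to\sC_{hG}$ in $\cat\perfect$;
\item $(-)_{hG}$ preserves filtered colimits.
\end{enumerate}
A cofiber sequence in $\cat\perfect$ whose first map is fully faithful is a Karoubi sequence (the standard characterisation, cf. \cite[App. A]{Hermitian2}). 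So it remains to show that $\A_{hG}\to\B_{hG}$ is fully faithful.

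\textbf{Passing to Ind.} For fully faithfulness, I would pass through $\ind\colon\cat\perfect\simeq\presentable^L_{\mathrm{st},\omega}$. Since the inclusion $\presentable^L_{\mathrm{st},\omega}\subset\presentable^L_{\mathrm{st}}$ preserves colimits, $\ind(\A_{hG})$ is the colimit of $\ind\A$ over $BG$ computed in $\presentable^L_{\mathrm{st}}$. Such colimits are computed as limits in $\widehat{\cat}$ along the right adjoints. Because $G$ acts through equivalences, the right adjoints are just the inverse actions, so this limit is $(\ind\A)^{hG}$. This is the $\infty$-semiadditivity of $\presentable^L_{\mathrm{st}}$, or more concretely the colimit–limit identification for diagrams of left adjoints.

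This gives $\A_{hG}\simeq((\ind\A)^{hG})^{\omega}$, naturally in $\A$. The induced map $\A_{hG}\to\B_{hG}$ is then the restriction to compact objects of $(\ind\A)^{hG}\to(\ind\B)^{hG}$.

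\textbf{Conclusion and main obstacle.} Since $\ind\A\to\ind\B$ is fully faithful and homotopy fixed points are a limit, the induced functor on fixed points is fully faithful: mapping spaces in a limit are limits of mapping spaces. Its restriction to compact objects is then fully faithful as well. The main point needing care is the naturality of the colimit–limit identification, namely that under it the map induced on colimits really is the map induced on limits. I would handle this by noting that the identification comes from passing to right adjoints functorially in the diagram, that is, the equivalence $\presentable^L\simeq(\presentable^R)\op$, which is natural in the diagram.
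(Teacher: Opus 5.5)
Your proposal is correct and is essentially the paper's own argument. Left-adjointness of $(-)_{hG}$ gives the cofiber sequence and preservation of filtered colimits, and fully faithfulness follows by identifying $\A_{hG}\to\B_{hG}$ with $((\ind\A)^{hG})^{\omega}\to((\ind\B)^{hG})^{\omega}$ via $\ind$ and the colimit--limit identification in $\presentable^L_{\mathrm{st}}$.

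On the naturality point you flag, passing to right adjoints only shows that the colimit map is the left adjoint of $R^{hG}$, where $R$ is right adjoint to $F=\ind(\A\to\B)$. You still need that this left adjoint is $F^{hG}$, which holds because $(-)^{hG}$ preserves the equivariant adjunction $F\dashv R$. The paper's appeal to $\infty$-semiadditivity avoids this extra step: the norm map $\colim_{BG}\Rightarrow\lim_{BG}$ is natural, and limits in $\presentable^L$ are computed in $\widehat{\cat}$, so the induced map is literally $F^{hG}$.
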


\begin{prop}\label{cor:tensor_power_preserve_moteq}
    Let $X\in\finite_G$. The functor $(-)^{\otimes X}\colon \cat\perfect\rightarrow \func(BG,\cat\perfect)$ sends motivic equivalences to Borel motivic equivalences. Consequently, the endofunctor $((-)^{\otimes X})_{hG}\colon \cat\perfect\rightarrow \cat\perfect$ preserves motivic equivalences.
\end{prop}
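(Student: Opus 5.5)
The plan is to factor $(-)^{\otimes X}$ through the genuine norm and then Borelify. Write $f\colon X\to G/G$ for the unique map in $\finite_G$. For $\A\in\cat\perfect$, let $\A^{\mathrm{triv}}\in\cat\perfect_X$ be the $X$-stable category obtained by restricting $\myuline{\spectra}_G^\omega\otimes\A\in\cat\perfect_G$ along $X^*$; concretely this is $X^*$ of the left adjoint of \cref{lm:adjmotGmot}. Then $f_\otimes(\A^{\mathrm{triv}})\in\cat\perfect_G$ is a $G$-stable category whose Borelification $b^*f_\otimes(\A^{\mathrm{triv}})$ is $\A^{\otimes X}$ with its permutation $G$-action. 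To see this, use the basechange formula $u^*f_\otimes\simeq\overline{f}_\otimes\overline{u}^*$ from \cref{setting:pullback_notations} with $u\colon G/e\to G/G$. The pullback $G/e\times X$ is a free $G$-set, so the underlying category is the $|X|$-fold tensor power of $\A$, and the $G$-action is by permutation. I would state this identification carefully as a natural equivalence of functors $\cat\perfect\to\func(BG,\cat\perfect)$, namely
\[(-)^{\otimes X}\simeq b^*\circ f_\otimes\circ X^*\circ(\myuline{\spectra}^\omega_G\otimes -).\]

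Given this factorization, each piece preserves the relevant equivalences.
\begin{enumerate}
\item $\myuline{\spectra}^\omega_G\otimes-$ sends motivic equivalences to $G$-motivic equivalences, as shown in the proof of \cref{lm:adjmotGmot}.
\item $X^*$ sends $G$-motivic equivalences to $X$-motivic equivalences by \cref{lm:weqsubGcat}.
\item $f_\otimes$ sends $X$-motivic equivalences to $G$-motivic equivalences by \cref{cor:Nmweq}.
\item $b^*$ sends these, which are Mackey motivic equivalences by \cref{cor:Mackeyweq=weq}, to Borel motivic equivalences by \cref{obs:borelification_preserves_moteq}.
\end{enumerate}
Composing gives the first claim.

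For the consequence, let $E\colon\cat\perfect\to\E$ be a finitary localizing invariant. By \cref{obs:orbits_preserves_moteq}, $E\circ(-)_{hG}$ is a finitary Borel $G$-localizing invariant; additivity follows because $(-)_{hG}$ is a left adjoint and so preserves finite sums. Hence $(-)_{hG}$ sends Borel motivic equivalences to motivic equivalences. Composing with the first claim shows that $((-)^{\otimes X})_{hG}$ preserves motivic equivalences.

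The main obstacle is the identification $b^*f_\otimes X^*(\myuline{\spectra}^\omega_G\otimes\A)\simeq\A^{\otimes X}$, naturally in $\A$ and compatibly with the $G$-action. I would try to deduce it from the $G$-symmetric monoidal compatibility of the Borel inclusion from \cite[Thm. 2.4.10]{kaifNoncommMotives}. Under that compatibility, the norm $f_\otimes$ of a constant object, restricted to the underlying level, is the Borel indexed tensor power. One also has to check that $X^*(\myuline{\spectra}^\omega_G\otimes\A)$ has underlying category $\A$ at the free level, which is immediate since $\res^G_e\myuline{\spectra}^\omega_G=\spectra^\omega$.
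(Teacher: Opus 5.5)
Your proof is correct, but it uses a different genuine lift than the paper.

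\textbf{The paper's route.} The paper first reduces to an orbit $X=G/H$, using that tensoring in $\func(BG,\cat\perfect)$ preserves Borel motivic equivalences in each variable. It then writes $(-)^{\otimes G/H}\simeq b^*\circ\norm^G_H\circ b_!\circ\const_H$. This identification uses $b^*\norm^G_H\simeq\norm^G_H b^*$ from \cite[Thm. 2.4.10 (2)]{kaifNoncommMotives} together with $b^*b_!\simeq\id$. The preservation statements it needs are:
\begin{enumerate}
\item $\const_H$ sends motivic equivalences to Borel motivic equivalences;
\item $b_!$, being levelwise given by homotopy orbits, sends Borel motivic equivalences to Mackey motivic equivalences via \cref{obs:orbits_preserves_moteq}.
\end{enumerate}

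\textbf{Your route.} You lift instead by $X^*(\myuline{\spectra}^\omega_G\otimes\A)$. For this lift, preservation of motivic equivalences is already available from the proof of \cref{lm:adjmotGmot} together with \cref{lm:weqsubGcat}. You also apply \cref{cor:Nmweq} directly to the non-orbit map $X\to G/G$. The reduction to orbits is thereby absorbed into that theorem, whose proof performs the same tensor-product reduction.

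\textbf{The price.} With $b_!$, the identity $b^*b_!\simeq\id$ comes for free. You instead must know that the Borelification of $\myuline{\spectra}^\omega_H\otimes\A$ is $\A$ with \emph{trivial} $H$-action, naturally in $\A$. Your final sentence only checks that its underlying category is $\A$. This is easy to supply. The composite $b^*\circ(\myuline{\spectra}^\omega_H\otimes-)\colon\cat\perfect\to\func(BH,\cat\perfect)$ has right adjoint $\mathcal{D}\mapsto(b_*\mathcal{D})^H\simeq\mathcal{D}^{hH}$. This is also the right adjoint of $\const_H$, so the two left adjoints agree.

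The essential inputs are the same in both arguments: \cref{cor:Nmweq} and the compatibility of Borelification with norms. The treatment of $(-)_{hG}$ is also the same.
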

\begin{proof}
    Since $X$ is a finite coproduct $\coprod_iG/H_i$ of orbits, we know that $(-)^{\otimes X}\simeq \bigotimes_i(-)^{\otimes G/H_i}$. Hence, it suffices to deal with the case when $X= G/H$ for some subgroup $H\leq G$. In this case, first note that there is a factorization 
    \[(-)^{\otimes G/H}\colon \cat\perfect \xrightarrow{\const_H} (\cat\perfect)^{BH} \xhookrightarrow{b_!} \cat\perfect_H \xrightarrow{\norm^G_H} \cat\perfect_G \xrightarrow{b^*}(\cat\perfect)^{BG}.\] This is because \cite[Thm. 2.4.10 (2)]{kaifNoncommMotives} ensures that the composite $\cat\perfect_H \xrightarrow{\norm^G_H} \cat\perfect_G \xrightarrow{b^*}(\cat\perfect)^{BG}$ is equivalent to $\cat\perfect_H \xrightarrow{b^*} (\cat\perfect)^{BH} \xrightarrow{\norm^G_H}(\cat\perfect)^{BG}$, and $b^*b_!\simeq \id$.
    
    Next, note that the constant functor $\const_H\colon \cat\perfect \rightarrow \func(BH,\cat\perfect)$ sends motivic equivalences to Borel motivic equivalences since it sends Karoubi sequences to Karoubi sequences and it preserves filtered colimits. Similarly, since $b_!\colon (\cat\perfect)^{BH}\hookrightarrow \cat\perfect_H\subset \mackey_H(\cat\perfect)$ is levelwise given by taking the appropriate homotopy orbits,  $b_!$ sends Borel motivic equivalences to Mackey motivic equivalences by \cref{obs:orbits_preserves_moteq}. Moreover, by \cref{obs:borelification_preserves_moteq}, $b^*\colon \cat\perfect_G\rightarrow (\cat\perfect)^{BG}$ sends Mackey motivic equivalences to Borel motivic equivalences. Hence,  together with \cref{cor:Nmweq}, we learn that $(-)^{\otimes G/H}\colon \cat\perfect\rightarrow (\cat\perfect)^{BG}$ sends motivic equivalences to Borel motivic equivalences, as required.

    The last statement about $((-)^{\otimes X})_{hG}$ preserving motivic equivalences is now an immediate combination of the first statement together with the fact that $(-)_{hG}\colon \func(BG,\cat\perfect)\rightarrow \cat\perfect$ sends Borel motivic equivalences to motivic equivalences since $(-)_{hG}$ preserves Karoubi sequences by \cref{obs:orbits_preserves_moteq}.
\end{proof}

\begin{warning}
    While it was true that $(-)^{\otimes X}$ and $((-)^{\otimes X})_{hG}$ preserved the appropriate notions of motivic equivalences, it is \textit{not} true that $((-)^{\otimes X})^{hG}$ does since $(-)^{hG}$ does not preserve Karoubi sequences, or filtered colimits. On the other hand, Efimov shows in \cite[Cor. 3.30]{efimovII} that this can be remedied by considering instead \textit{dualizable} fixed points. Since $BG$ is countable, we find that in conjunction with \cite[Thm. 4.26]{efimovIII}, $((-)^X)^{hG,\mathrm{dual}}$ does preserve motivic equivalences.   
\end{warning}

While this will not strictly be needed in the paper, we nevertheless clarify the relationship between the genuine equivariant localizing motives $\Mot\loc_G$ that we have been considering so far with the notion of \textit{Borel} localizing motives, i.e. the category $\Mot^{\mathrm{loc},\mathrm{bor}}_G$ defined as the Dwyer-Kan localization of $\func(BG,\cat\perfect)$ against the Borel motivic equivalences.

\begin{prop}\label{prop:borelification_motives_square}
    The category $\Mot^{\mathrm{loc},\mathrm{bor}}_G$ is a Dwyer--Kan localization of $\Mot\loc_G$, and this localization fits into a commuting square of Dwyer--Kan localizations
    \[
    \begin{tikzcd}
        \cat\perfect_G \rar\dar& \Mot\loc_G\dar\\
        \func(BG,\cat\perfect)\rar & \Mot^{\mathrm{loc},\mathrm{bor}}_G.
    \end{tikzcd}
    \]
\end{prop}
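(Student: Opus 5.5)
Both localization maps in the square are Dwyer--Kan localizations, so it suffices to build a functor $\Mot\loc_G\to\Mot^{\mathrm{loc},\mathrm{bor}}_G$ filling the square and then show that it is itself a Dwyer--Kan localization.

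\textbf{Constructing the square.} By \cref{obs:borelification_preserves_moteq} together with \cref{cor:Mackeyweq=weq}, the Borelification $b^*\colon\cat\perfect_G\to\func(BG,\cat\perfect)$ sends motivic equivalences to Borel motivic equivalences. Hence the composite $\cat\perfect_G\to\func(BG,\cat\perfect)\to\Mot^{\mathrm{loc},\mathrm{bor}}_G$ inverts $W_{\mot}$ and factors uniquely through $\Mot\loc_G$. This gives the right vertical map $\beta$ and the commuting square.

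\textbf{Showing $\beta$ is a localization.} A convenient route is a section. The functor $b_!\colon\func(BG,\cat\perfect)\to\cat\perfect_G$ sends Borel motivic equivalences to Mackey motivic equivalences, by \cref{obs:orbits_preserves_moteq} as used in the proof of \cref{cor:tensor_power_preserve_moteq}. Since $b^*b_!\simeq\id$, we get an induced functor $\overline{b_!}\colon\Mot^{\mathrm{loc},\mathrm{bor}}_G\to\Mot\loc_G$ with $\beta\circ\overline{b_!}\simeq\id$.

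Let $S$ be the class of maps in $\Mot\loc_G$ inverted by $\beta$. I would then show that $\Mot\loc_G[S^{-1}]\to\Mot^{\mathrm{loc},\mathrm{bor}}_G$ is an equivalence. Its inverse is the composite of $\overline{b_!}$ with the localization $\Mot\loc_G\to\Mot\loc_G[S^{-1}]$. One composite is the identity via $\beta\overline{b_!}\simeq\id$. For the other, we need the counit $b_!b^*\udl{\sC}\to\udl{\sC}$ (or the corresponding unit-type map) to be natural and to become invertible after $\beta$. It is inverted by $\beta$ because $b^*$ of it is an equivalence, by the triangle identity and $b^*b_!\simeq\id$.

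The naturality is what makes the two composites agree on the localization. With it, $\Mot\loc_G[S^{-1}]$ and $\Mot^{\mathrm{loc},\mathrm{bor}}_G$ are mutually inverse.

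A cleaner alternative avoids the explicit inverse. By the universal properties, functors out of $\Mot^{\mathrm{loc},\mathrm{bor}}_G$ are functors out of $\func(BG,\cat\perfect)$ inverting Borel equivalences. Functors out of $\Mot\loc_G[S^{-1}]$ are functors out of $\cat\perfect_G$ inverting $W_\mot$ and $S$, and these correspond, by precomposing with $b^*$ and $b_!$, to the former. The adjunction $b_!\dashv b^*$ with invertible unit makes $b^*$ itself a localization of $\cat\perfect_G$ at the maps it inverts. Every map inverted by $b^*$ is inverted by $\beta\circ\U\loc_G$.

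\textbf{Main obstacle.} The delicate point is the adjunction in the second argument. Here $b_!$ is left adjoint to $b^*$ on $\cat_G$, but we need to know that it restricts to $\cat\perfect$-level functors, i.e.\ lands in $\cat\perfect_H$; the paper asserts this inclusion in the proof of \cref{cor:tensor_power_preserve_moteq}. Given that, $b^*$ is a Bousfield-type localization, hence a Dwyer--Kan localization at the maps it inverts. The square is then a composite of localizations: $\cat\perfect_G\to\func(BG,\cat\perfect)\to\Mot^{\mathrm{loc},\mathrm{bor}}_G$ is a localization, and it equals $\beta\circ\U\loc_G$. Since $\U\loc_G$ is a localization, and a composite $\beta\circ L$ with both the composite and $L$ localizations forces $\beta$ to be one (localizing at the images of the maps inverted by the composite), $\beta$ is a Dwyer--Kan localization.
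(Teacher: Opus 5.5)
Your proposal is correct, and your concluding argument is the paper's. The square comes from the universal property of $\Mot\loc_G$, and $\beta$ is a Dwyer--Kan localization because both $\U\loc_G$ and $\cat\perfect_G\xrightarrow{b^*}\func(BG,\cat\perfect)\to\Mot^{\mathrm{loc},\mathrm{bor}}_G$ are; the latter holds because $b^*$ is itself a localization, which the paper asserts without comment and which you correctly reduce to $b^*$ having a fully faithful adjoint.

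One correction to how you justify that last point. The left Kan extension along $b$ on $\cat_G$ is extension by the empty category at levels $H\neq e$, so it does not restrict to $\cat\perfect$; the functor the paper calls $b_!$ is a different one, namely levelwise homotopy orbits. The cleanest justification is that the Borel construction $\udl{\borel}$, i.e.\ the restriction of the right Kan extension $b_*$, is a fully faithful right adjoint to $b^*\colon\cat\perfect_G\to\func(BG,\cat\perfect)$. This works because a $G$-functor from a $G$-stable category into $\udl{\borel}(\D)$ is automatically $G$-exact once its underlying functor is exact.
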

\begin{proof}
    The existence of the square is clear by the universal property of $\Mot\loc_G$ and the fact that $\cat\perfect_G\rightarrow \func(BG,\cat\perfect)$ preserves Karoubi sequences and filtered colimits. The fact that the morphism $\Mot^{\mathrm{loc}}_G\rightarrow \Mot^{\mathrm{loc},\mathrm{bor}}_G$ is a Dwyer--Kan localization is now also clear, since the functor $\cat\perfect_G\to \func(BG,\cat\perfect)$ is.
\end{proof}
\begin{rmk}
Another perspective here, coming from \Cref{thm:equivmod}, is that while $\cat\perfect_G$ can be described as the category of compactly generated modules over $\spectra_G$, $\func(BG,\cat\perfect)$ can be described as the category of compactly generated $\spectra^{BG}$-modules. The localization functor $\Mot\loc_G\to \Mot^{\mathrm{loc},\mathrm{bor}}_G$ can therefore be viewed as given by basechange along the Borelification functor $\spectra_G\to \spectra^{BG}$. 
\end{rmk}

\begin{warning}
    Be warned however that $\Mot^{\mathrm{loc},\mathrm{bor}}_G$ is \textit{not} the Borelification of $\Mot\loc_G$. By the square from \cref{prop:borelification_motives_square} and the fact that $\func(BG,\cat\perfect)\rightarrow\func(BG,\Mot\loc)$ is a Borel $G$-localizing invariant, the Bousfield localization $\Mot\loc_G\rightarrow \func(BG,\Mot\loc)$ \textit{does} however factor uniquely through $\Mot^{\mathrm{loc},\mathrm{bor}}_G$. 
\end{warning}

\begin{rmk}
    In fact, the square \cref{prop:borelification_motives_square} can be assembled to a square of objects in $\calg_G(\udl{\presentable}^L_{G,\mathrm{st}})$. Implicit in this claim is that $\udl{\Mot}^{\mathrm{loc},\mathrm{bor}}$ admits the structure of multiplicative norms (which is a straightforward consequence of \cref{cor:tensor_power_preserve_moteq}). The remarks in the preceding warning  then amounts to the statement that the map $\udl{\Mot}\loc\rightarrow \udl{\Mot}^{\mathrm{loc},\mathrm{bor}}$ induces an equivalence $\udl{\borel}(\udl{\Mot}\loc)\rightarrow \udl{\borel}(\udl{\Mot}^{\mathrm{loc},\mathrm{bor}})$. We have  chosen not to present this subsection as such since we would like to emphasize  that our main theorem has consequences also in the unparametrized and nonequivariant setup.
\end{rmk}

\subsection{\texorpdfstring{$G$}--symmetric monoidality of splitting motives}\label{subsec:splitting_motives}
In this section, we explain how to \textit{deduce} from our results the existence of a $G$-symmetric monoidal structure on the equivariant version of splitting motives. 
\begin{rmk}\label{rmk:counterintuitive_splitting}
It seems somewhat counterintuitive to prove a result about localizing motives and deduce one about splitting motives, since the latter are usually thought of as easier. However, the key technical input in our proof, namely \Cref{prop:cubical_inclusions}, seems harder to establish in the split case: one would have to prove that the relevant maps are not only fully faithful, but also have adjoints (upon assuming, of course, that the map $\udl{\A}\to\udl{\B}$ has the relevant adjoint). This is more tedious than establishing fully faithfulness. Therefore, while it is possible to give a direct proof, we opt for the path of least resistance and simply deduce the splitting results from the localizing results. 
\end{rmk}
The basic idea for the deduction is that splitting motives (up to finitary-ness questions) are obtained from stable categories by inverting so-called ``universal $K$-theory equivalences'', cf. \cite[Def. 4.4]{BGMN}. Our construction of norms provides, inter alia, a map $$N_H^GK_H(\udl{\func}_H\exact(\udl{\A},\udl{\B})) \to K_G(\udl{\func}_G\exact(N_H^G\udl{\A},N_H^G\udl{B}))$$ compatible with compositions and natural in $\udl{\A},\udl{\B}$. This will directly imply that $N_H^G$ preserves universal $K$-theory equivalences, so that the general strategy from the end of the previous section will simply work here. So, all in all, the desired construction will be a direct corollary of the fact that the connective cover of nonconnective $K$-theory is connective $K$-theory. 

We now give more details, starting with variants of splitting motives. 

\begin{defn}\label{defn:splitting_invariants}
    A \textit{$G$-splitting invariant} is a $G$-semiadditive $G$-functor $\udl{E}: \udl{\cat}\perfect_G\to \udl{\E}$ to some $G$-additive $G$-category  $\udl{\E}$ sending split Karoubi sequences to products - more precisely, so that for every split Karoubi sequence $\udl{\A}\rightleftharpoons \udl{\B}\rightleftharpoons\udl{\D}$, the induced map $\udl{E}(\udl{\B})\to \udl{E}(\udl{\A})\times \udl{E}(\udl{\D})$ is an equivalence. 

    A \textit{Mackey splitting invariant} is a functor $E: {\cat}\perfect_G\to \E$ to some additive category sending split Karoubi sequences to products. 

    A \textit{universal $K_G$-theory} equivalence is a map in ${\cat}\perfect_G$ sent to an equivalence by any $G$-splitting invariant.

    We have the obvious analogous definitions for finite $G$-sets $V$. 
\end{defn}

\begin{rmk}\label{rmk:nomackeymotivicsplit}
 Note that the proof of \Cref{cor:Mackeyweq=weq} applies here too and so universal $K_G$-theory are equivalently those maps sent to an equivalence by any Mackey splitting invariant. The proof of \Cref{lm:weqsubGcat} also applies and so universal $K_G$-theory equivalences are preserved by restriction functors.
\end{rmk}

We now define two $G$-categories of splitting motives, one of which is some kind of absolute variant with no finitary-ness or cocompleteness condition of Blumberg--Gepner--Tabuada's original definition of ``additive motives'', and the other one is a  variant which recovers an additive (as opposed to stable) variant of Blumberg--Gepner--Tabuada's original definition. There are many other variants, including stable variants (we only impose additivity), but the proof we give generalizes to these other variants quite easily.  
\begin{defn}
    The $G$-category of \textit{absolute splitting $G$-motives} $\udl{\Mot}_G^{\spl,\abs}$ is the localization of the $G$-category $\udl{\cat}\perfect_G$ at the class of universal $K_G$-equivalences. The localization functor is denoted $\udl{\U}_G^{\spl,\abs}$.     The $G$-category of \textit{finitary splitting $G$-motives} $\udl{\Mot}_G^{\spl,\finiteSmall}$ is defined as the sifted cocompletion $\udl{\presheaf}_{\Sigma}(\udl{{\Mot}}_G^{\spl,\finiteSmall,\omega})$ where $\udl{{\Mot}}_G^{\spl,\finiteSmall,\omega}$ is the localization of the $G$-category $(\udl{\cat}\perfect_G)^{\omega}$ at the class of universal $K_G$-equivalences. The  functor $\udl{\cat}\perfect_G\simeq \ind\big((\udl{\cat}\perfect_G)^{\omega}\big)\rightarrow \udl{\Mot}^{\spl,\finiteSmall}_G$ induced by the composition $(\udl{\cat}\perfect_G)^{\omega}\xrightarrow{\mathrm{loc.}}\udl{{\Mot}}_G^{\spl,\finiteSmall,\omega}\xhookrightarrow{\yoneda}\udl{{\Mot}}_G^{\spl,\finiteSmall}$ is denoted $\udl{\U}_G^{\spl,\finiteSmall}$.     There are obvious analogous definitions of absolute/finitary splitting $V$-motives for $V$ a finite $G$-set. 
\end{defn}

\begin{rmk}
    By construction and \Cref{lm:Gaddloc}, these $G$-categories satisfy the expected universal properties which we collect at the end of the subsection. 
\end{rmk}

As before, to construct a $G$-symmetric monoidal category on absolute splitting $G$-motives, it suffices to prove that universal $K$-theory equivalences are stable under norm functors. We do this below after observing the compatibility of our definition of universal $K_G$-equivalence with the classical one from \cite{BGMN} (thus also justifying the name):
\begin{lem}\label{lm:univKeqdesc}
Let $V$ be a finite $G$-set. 
    A map ${f}\colon \udl{\A}\to \udl{\B}$ of $V$-stable $V$-categories is a universal $K_V$-equivalence if and only if there exists a ${g}\colon \udl{\B}\to \udl{\A}$ such that \[[{g}\circ {f}] = [\id_{\udl{\A}}] \in \pi_0(K_V(\udl{\func}_V\exact(\udl{\A},\udl{\A})))\quad\text{and}\quad[{f}\circ {g}] = [\id_{\udl{\B}}] \in \pi_0(K_V(\udl{\func}_V\exact(\udl{\B},\udl{\B}))).\]    
\end{lem}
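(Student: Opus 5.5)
We prove both directions by a Yoneda-type argument, following \cite[Prop. 4.5]{BGMN}. By \cref{rmk:nomackeymotivicsplit} we may test universal $K_V$-equivalences against Mackey splitting invariants, and after basechanging we may assume $V=G/G$.

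For the ``if'' direction, let $E\colon \cat\perfect_G\to\E$ be a Mackey splitting invariant. We claim that any such $E$ factors its action on morphisms through $\pi_0K_G(\udl{\func}\exact_G(-,-))$. The plan is to use the standard \emph{additivity} trick.
\begin{enumerate}
\item Given a cofiber sequence of exact functors $F'\to F\to F''\colon \udl{\A}\to\udl{\B}$, consider the split Karoubi sequence $\udl{\B}\rightleftharpoons \udl{\func}([1],\udl{\B})\rightleftharpoons\udl{\B}$.
\item Since $E$ sends it to a product, we get $E(F)\simeq E(F')+E(F'')$ in $\pi_0\map(E\udl{\A},E\udl{\B})$.
\item It follows that the assignment $F\mapsto [E(F)]$ is additive on the groupoid core of $\func\exact_G(\udl{\A},\udl{\B})$ with respect to cofiber sequences.
\end{enumerate}
Note that this uses only fixed points: $\func\exact_G(\udl{\A},\udl{\B})=\udl{\func}\exact(\udl{\A},\udl{\B})^G$, whose $K_0$ is $\pi_0K_G(\udl{\func}\exact(\udl{\A},\udl{\B}))$ by \cref{defn:officialdefnK}. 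Since $K_0$ is universal for such additivity (the additive invariant $E(\udl{\A}\otimes -)$ evaluated on $\udl{\func}\exact(\udl{\A},\udl{\B})$ being the cleanest way to make this rigorous), we obtain a homomorphism $\pi_0K_G(\udl{\func}\exact(\udl{\A},\udl{\B}))\to\pi_0\map(E\udl{\A},E\udl{\B})$. This homomorphism is compatible with composition. Hence the hypothesised $g$ gives a two-sided inverse of $E(f)$ in the homotopy category.

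For the ``only if'' direction, we use corepresentability. The functor $\udl{\D}\mapsto \udl{K}\cn_G(\udl{\func}\exact(\udl{\A},\udl{\D}))$, or more simply the Mackey functor $\D\mapsto K\cn(\func\exact_G(\udl{\A},\udl{\D}))$, is a Mackey splitting invariant in $\udl{\D}$, because $\udl{\func}\exact(\udl{\A},-)$ preserves split Karoubi sequences. Thus if $f$ is a universal $K_G$-equivalence, then $f_*\colon K_0(\func\exact_G(\udl{\A},\udl{\B}))\to K_0(\func\exact_G(\udl{\A},\udl{\A}))$ is a bijection, where the map runs in the $\udl{\A}\to\udl{\B}$ direction. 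Choose a preimage $g$ of $[\id]$ under the analogous map for the target $\udl{\B}$ with source $\udl{\B}$, that is, $f_*\colon K_0\func\exact_G(\udl{\B},\udl{\A})\to K_0\func\exact_G(\udl{\B},\udl{\B})$ hits $[\id_{\udl{\B}}]$ via some class $\gamma$. Then $f\gamma=[\id_{\udl{\B}}]$. Moreover, $f(\gamma f)=[f]=f\,[\id_{\udl{\A}}]$, so injectivity of $f_*$ on $K_0\func\exact_G(\udl{\A},-)$ gives $\gamma f=[\id_{\udl{\A}}]$.

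We expect the main obstacle to be that $\gamma$ is only a $K_0$-class, which is a formal difference $[g_1]-[g_2]$, rather than an actual functor $g$. We would handle this with the standard trick from \cite{BGMN}:
\begin{enumerate}
\item Replace $g_1,g_2$ by $g_1\oplus\Sigma g_2$, using $[\Sigma g_2]=-[g_2]$.
\item Since $\udl{\func}\exact$ is stable, this turns every class into an honest functor $g$.
\end{enumerate}
We also need to check that the pre/postcomposition maps on $K_0$ are bilinear and associative, so that the cancellation arguments above are legitimate. This follows from exactness of composition in each variable.
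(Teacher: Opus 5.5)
Your proposal is correct and follows essentially the same route as the paper. For the ``if'' direction, the paper obtains the factorization of a Mackey splitting invariant's action on $\pi_0$ of endomorphisms through $K_0$ of the functor category by citing \cite[Lem. 4.1.12]{kaifNoncommMotives}, where you re-derive it via the additivity argument; for the ``only if'' direction, it uses exactly your surjectivity-then-injectivity argument with the splitting invariants $\pi_0K_V(\udl{\func}\exact_V(\udl{\B},-))$ and $\pi_0K_V(\udl{\func}\exact_V(\udl{\A},-))$, and your $g_1\oplus\Sigma g_2$ step just spells out what its ``lift of that preimage'' uses implicitly. One slip: your first displayed $f_*$ has its source and target swapped, although you state the correct direction in words.
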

\begin{proof}
By \cite[Lem. 4.1.12]{kaifNoncommMotives}, if $E:\cat\perfect_V\to \E$ is any $V$-Mackey splitting invariant, the induced map $\pi_0\map_{\cat\perfect_V}(\udl{\A},\udl{\A})\to \pi_0\map_{\E}(E(\udl{\A}),E(\udl{\A}))$ factors through $\pi_0\map_{\cat\perfect_V}(\udl{\A},\udl{\A})\to \pi_0(K_V(\udl{\func}\exact_V(\udl{\A},\udl{\A})))\cong K_0(\func\exact_V(\udl{\A},\udl{\A}))$ (and similarly for $\udl{\B}$). Therefore the existence of ${g}$ as indicated guarantees that $E({f})$ is an equivalence with inverse $E({g})$. This proves that ${f}$ is a universal $K_V$-equivalence, cf. \Cref{rmk:nomackeymotivicsplit}. 

Conversely, let ${f}$ be a Mackey motivic equivalence. Now, $\pi_0(K_V(\udl{\func}_V\exact(\udl{\B},-)))\colon \cat\perfect_V \to\mathrm{Ab}$ is a $V$-Mackey splitting invariant, and hence by assumption ${f}$ induces an equivalence \[\pi_0(K_V(\udl{\func}_V\exact(\udl{\B},\udl{\A})))\to \pi_0(K_V(\udl{\func}_V\exact(\udl{\B},\udl{\B}))).\]
Picking a preimage of $[\id_{\udl{\B}}]$ and a lift of that preimage to $\func\exact_V(\udl{\B},\udl{\A})^\simeq$ provides a ${g}$ with $[{f}\circ {g}] = [\id_{\udl{\B}}]$. Now, $\pi_0(K_V(\udl{\func}_V\exact(\udl{\A},-)))\colon \cat\perfect_V \to\mathrm{Ab}$ is also a $V$-Mackey splitting invariant, so that \[\pi_0(K_V(\udl{\func}_V\exact(\udl{\A},\udl{\A})))\to \pi_0(K_V(\udl{\func}_V\exact(\udl{\A},\udl{\B})))\]
is also an isomorphism. In particular, since $[{g}\circ {f}]$ and $[\id_{\udl{\A}}]$ have the same image under this map, we find that they must be equal. This proves the claim overall.
\end{proof}

\begin{cor}\label{cor:norm_of_K-equiv}
    Let $w: W\to V$ be a map of finite $G$-sets. The functor $w_\otimes: \cat\perfect_W\to \cat\perfect_V$ sends universal $K_W$-equivalences to universal $K_V$-equivalences. 
\end{cor}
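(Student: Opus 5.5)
We will use the characterization of universal $K$-equivalences from \cref{lm:univKeqdesc}. This reduces the statement to producing, for all $\udl{\A},\udl{\B}\in\cat\perfect_W$, a map of abelian groups
\[\pi_0K_W(\udl{\func}_W\exact(\udl{\A},\udl{\B}))\longrightarrow \pi_0K_V(\udl{\func}_V\exact(w_\otimes\udl{\A},w_\otimes\udl{\B})).\]
We want this map to send $[h]$ to $[w_\otimes h]$ for every exact functor $h$, to be compatible with composition, and to send $[\id]$ to $[\id]$. Granting this, let $f$ be a universal $K_W$-equivalence with quasi-inverse $g$ as in \cref{lm:univKeqdesc}. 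Then $w_\otimes g$ satisfies
\[[w_\otimes g\circ w_\otimes f]=[w_\otimes(g\circ f)]=[\id_{w_\otimes\udl{\A}}],\]
and symmetrically $[w_\otimes f\circ w_\otimes g]=[\id_{w_\otimes\udl{\B}}]$. Applying \cref{lm:univKeqdesc} in the converse direction for $V$ then shows that $w_\otimes f$ is a universal $K_V$-equivalence.

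To construct the map, we will use the $G$-symmetric monoidal structure on $\udl{\Mot}\loc_G$ from \cref{thm:motnorm}, together with corepresentability from \cref{prop:corep}. We first reduce, as in the proof of \cref{cor:Nmweq}, to the case where $V$ is an orbit; the indexed-product decomposition $\cat\perfect_V\simeq\prod_i\cat\perfect_{H_i}$ makes this harmless. The $G$-symmetric monoidal structure supplies a lax-monoidal norm map on mapping $G$-spectra. Concretely, $w_\otimes$ on $\udl{\Mot}\loc$ is a functor $\Mot\loc_W\to\Mot\loc_V$, and it induces maps on mapping spaces
\[\map_{\Mot\loc_W}(\udl{\U}\loc\udl{\A},\udl{\U}\loc\udl{\B})\to \map_{\Mot\loc_V}(\udl{\U}\loc w_\otimes\udl{\A},\udl{\U}\loc w_\otimes\udl{\B}).\]
These maps are compatible with composition and identities simply by functoriality. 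Since $\udl{\U}\loc$ commutes with $w_\otimes$, the class of $\udl{\U}\loc(h)$ is sent to that of $\udl{\U}\loc(w_\otimes h)$.

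It remains to identify the relevant groups. When $\udl{\A}$ is dualizable (for instance when it is compact in the appropriate sense), \cref{prop:corep} identifies $\pi_0$ of these mapping spaces with $\pi_0$ of the $K$-theory of the exact-functor categories. The fixed points of the mapping $G$-spectrum give nonconnective $K$-theory, and its $\pi_0$ agrees with that of connective $K$-theory. The map $\pi_0\map(\A,\B)\to\pi_0K$ is the canonical one $[h]\mapsto[h]$, compatible with composition via \cref{cor:consKlaxG}.

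The main obstacle is that $\udl{\A}$ need not be dualizable, so \cref{prop:corep} does not directly give the identification of $\pi_0$ mapping spaces in motives with $K_0$ of functor categories. There are two ways we would try to get around this. The first is to avoid the identification altogether: in the first direction of \cref{lm:univKeqdesc}, we only need that the classes $[g\circ f]$ and $[\id]$ become equal after mapping to motives. We would therefore argue directly that the map of functor categories
\[w_\otimes\colon \func\exact(\udl{\A},\udl{\B})\to\func\exact(w_\otimes\udl{\A},w_\otimes\udl{\B})\]
induces a map on $K_0$. Its multiplicativity on $K_0$ reduces to the statement that $w_\otimes$ of a cofiber sequence of functors is filtered, via \cref{prop:cubical_cofibs}, with graded pieces that are norms or induced terms. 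In $K_0$ this yields the needed polynomial compatibility, as in the classical Tate/norm formula. The second, which we would try first, is to pass to $\ind$ and large exact functor categories, where the dualizability issue disappears, and to use the $G$-symmetric monoidal structure on $\udl{\presentable}^L_{\mathrm{st},\omega}$ from \cref{eqn:ind_omega_equivalence}.
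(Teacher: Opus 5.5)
Your reduction matches the paper's. You want a set map $\pi_0K_W(\udl{\func}\exact_W(\udl{\A},\udl{\B}))\to\pi_0K_V(\udl{\func}\exact_V(w_\otimes\udl{\A},w_\otimes\udl{\B}))$ with $[h]\mapsto[w_\otimes h]$, respecting identities and composition, and then you apply \cref{lm:univKeqdesc} in both directions. The gap is in constructing this map.

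Functoriality of $w_\otimes$ on localizing motives only acts on $\pi_0\map_{\Mot\loc_W}(\udl{\U}\loc\udl{\A},\udl{\U}\loc\udl{\B})$. \cref{prop:corep} identifies this with $K_0$ of the functor category only when $\udl{\A}$ (and, for $g$, also $\udl{\B}$) is dualizable in $\cat\perfect_W$, which is a smooth-and-proper type condition; compactness does not suffice. Without that identification, $\udl{\U}\loc(w_\otimes g\circ w_\otimes f)=\udl{\U}\loc(\id)$ only shows that $w_\otimes f$ is a motivic equivalence. \cref{cor:Nmweq} already gives this, and it is strictly weaker than being a universal $K_V$-equivalence, because splitting invariants need not factor through $\Mot\loc_V$. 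Switching to splitting motives would be circular, since their norms in \cref{cor:norms_on_splitting_motives} are deduced from this very corollary.

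Neither of your workarounds repairs this. The $\ind$ route fails outright for three reasons:
\begin{itemize}
\item $K$-theory of large presentable categories vanishes by an Eilenberg swindle.
\item The compact objects of $\func^L(\ind\A,\ind\B)$ are not $\func\exact(\A,\B)$.
\item Dualizability of $\ind\A$ in $\presentable^L_{\mathrm{st}}$ says nothing about dualizability of $\A$ in $\cat\perfect\simeq\presentable^L_{\mathrm{st},\omega}$.
\end{itemize}
The direct ``polynomial norm on $K_0$'' route could perhaps be made to work, using Thomason's criterion for equality in $K_0$, the filtration of \cref{lem:inducedness_of_cube_cofibers}, and induction on $|G|$. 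As written, though, it is only a gesture.

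The missing idea is to norm points of the $K$-theory of the functor category, viewed as an object of $\cat\perfect_W$, rather than morphisms $\udl{\U}\loc\udl{\A}\to\udl{\U}\loc\udl{\B}$. The paper proceeds in three steps.
\begin{enumerate}
\item The lax $G$-symmetric monoidal structure of \cref{cor:consKlaxG} gives a map $w_\otimes\udl{K}_W(\udl{\func}\exact_W(\udl{\A},\udl{\B}))\to\udl{K}_V(w_\otimes\udl{\func}\exact_W(\udl{\A},\udl{\B}))$.
\item This is composed with $\udl{K}_V$ of the functor $w_\otimes\udl{\func}\exact_W(\udl{\A},\udl{\B})\to\udl{\func}\exact_V(w_\otimes\udl{\A},w_\otimes\udl{\B})$ adjoint to the normed evaluation map.
\item Composing the non-additive norm $\pi_0^WX\to\pi_0^V(w_\otimes X)$ of genuine spectra with the induced map on $\pi_0^V$ gives a well-defined function, which is all \cref{lm:univKeqdesc} needs.
\end{enumerate}
The lax $G$-symmetric monoidal transformation $(-)^\simeq\to\Omega^\infty\udl{K}$ then shows that this function sends $[h]$ to $[w_\otimes h]$ and respects composition and identities.

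In your motivic language, use $\map_{\Mot\loc_W}(\one,\udl{\U}\loc\udl{\func}\exact_W(\udl{\A},\udl{\B}))$ instead of $\map_{\Mot\loc_W}(\udl{\U}\loc\udl{\A},\udl{\U}\loc\udl{\B})$. The first part of \cref{prop:corep} identifies the former with $K$-theory of the functor category with no dualizability hypothesis. Since $w_\otimes\one\simeq\one$, the norm on motives acts on it.
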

\begin{proof}
    Consider the following composite in $\spectra_V$, natural in $\udl{\A},\udl{\B}\in \cat\perfect_W$: 
    \begin{equation}\label{eqn:map_on_K-theory_of_functors}
        w_\otimes\udl{K}_W(\udl{\func}\exact_W(\udl{\A},\udl{\B})) \to \udl{K}_V(w_\otimes\udl{\func}\exact_W(\udl{\A},\udl{\B}))\to \udl{K}_V(\udl{\func}\exact_V(w_\otimes\udl{\A}, w_\otimes\udl{\B})).
    \end{equation}

    Here, the first map comes from the lax $G$-symmetric monoidal structure on $\udl{K}$, constructed in \Cref{cor:consKlaxG}, while the second map comes from adjointing over the normed evaluation map $w_\otimes\udl{\A}\otimes w_\otimes\udl{\func}\exact_W(\udl{\A},\udl{\B})\simeq w_\otimes(\udl{\A}\otimes \udl{\func}\exact_W(\udl{\A},\udl{\B}))\rightarrow w_\otimes\udl{\B}$. 

    The  map \cref{eqn:map_on_K-theory_of_functors} is compatible with compositions: since we have a map $(-)^{\simeq}\rightarrow \udl{\mathcal{K}}\coloneqq\loops\udl{K} $ of $G$-lax symmetric monoidal functors $\udl{\cat}\perfect\rightarrow \udl{\spc}$ by \cref{cor:consKlaxG}, we get a  diagram
    \[
    \begin{tikzcd}
        w_*\udl{\func}\exact_W(\udl{\A},\udl{\B})^{\simeq}\times w_*\udl{\func}\exact_W(\udl{\B},\udl{\sC})^{\simeq} \rar\dar["\circ"] & \loops\Big(w_\otimes\udl{K}_W(\udl{\func}\exact_W(\udl{\A},\udl{\B}))\otimes w_\otimes\udl{K}_W(\udl{\func}\exact_W(\udl{\B},\udl{\sC}))\Big)\dar["\mathrm{lax}"]\\
        w_*\udl{\func}\exact_W(\udl{\A},\udl{\sC})^{\simeq} \dar["\mathrm{lax}"]& \loops\udl{K}_V(w_\otimes\udl{\func}\exact_W(\udl{\A},\udl{\B})\otimes w_\otimes\udl{\func}\exact_W(\udl{\A},\udl{\sC}))\dar["\circ"]\\
        \big(w_{\otimes}\udl{\func}\exact_W(\udl{\A},\udl{\sC})\big)^{\simeq}\rar \dar& \loops\udl{K}_V(w_\otimes\udl{\func}\exact_W(\udl{\A},\udl{\sC}))\dar\\
        \udl{\func}\exact_V(w_{\otimes}\udl{\A},w_{\otimes}\udl{\sC})^{\simeq}\rar & \loops\udl{K}_V(\udl{\func}\exact_V(w_{\otimes}\udl{\A},w_{\otimes}\udl{\sC}))
    \end{tikzcd}
    \]
    whence the compatibility with compositions. Furthermore, for $\udl{\A}= \udl{\B}$, the map \cref{eqn:map_on_K-theory_of_functors} sends the identity to the identity  because the composite
    \small\[\map_W(\myuline{\spectra}^{\omega},\udl{\func}\exact_W(\udl{\A},\udl{\A}))\xrightarrow{w_{\otimes}}\map_V(\myuline{\spectra}^{\omega},w_{\otimes}\udl{\func}\exact_W(\udl{\A},\udl{\A}))\rightarrow \map_V(\myuline{\spectra}^{\omega},\udl{\func}\exact_V(w_{\otimes}\udl{\A},w_{\otimes}\udl{\A}))\]\normalsize may easily be seen to identify with the map $w_{\otimes}\colon \map_W(\udl{\A},\udl{\A})\rightarrow \map_V(w_{\otimes}\udl{\A},w_{\otimes}\udl{\A})$. 
    
    It thus follows directly that if ${f},{g}$ are maps of $W$-stable $W$-categories as in \Cref{lm:univKeqdesc}, then  $w_\otimes {f}, w_\otimes {g}$ are maps of $V$-stable $V$-categories , so that, by \Cref{lm:univKeqdesc}, if $f$ is a universal $K_W$-equivalence, then $w_\otimes f$ is a universal $K_V$-equivalence. 
\end{proof}
To state the main result of this subsection, it will be convenient to use the following notations: let $\udl{\A}$ be a $G$-additive $G$-category which admits filtered colimits; we write $\udl{\func}^{\spl}(\udl{\cat}\perfect_G,\udl{\A}), \udl{\func}^{\spl, \finiteSmall}(\udl{\cat}\perfect_G,\udl{\A})\subseteq \udl{\func}(\udl{\cat}\perfect_G,\udl{\A})$ for the full subcategories of splitting invariants and finitary splitting invariants (i.e. those that additionally preserve filtered colimits), respectively. We also write $\udl{\func}^{\times}\subseteq \udl{\func}$ for the full subcategory of functors preserving finite indexed products.

\begin{cor}\label{cor:norms_on_splitting_motives}
     The $G$-category $\udl{\Mot}^{\spl,\abs}_G$ (resp. $\udl{\Mot}^{\spl,\finiteSmall}_G$) is the universal target for absolute (resp. finitary) $G$-splitting invariants. That is, it is $G$-additive (resp. and also $G$-presentable), and for any $G$-additive categories $\udl{\A}, \udl{\B}$ where $\udl{\B}$ is also $G$-cocomplete, the maps 
    \[(\udl{\U}_G^{\spl,\abs})^*\colon \udl{\func}^{\times}(\udl{\Mot}^{\spl,\abs}_G,\udl{\A}) \xlongrightarrow{\simeq } \udl{\func}^{\spl}(\udl{\cat}\perfect_G,\udl{\A})\]
    \[(\udl{\U}_G^{\spl,\finiteSmall})^*\colon \udl{\func}^{L}(\udl{\Mot}^{\spl,\finiteSmall}_G,\udl{\B}) \xlongrightarrow{\simeq } \udl{\func}^{\spl, \finiteSmall}(\udl{\cat}\perfect_G,\udl{\B})\] are equivalences.  

     Furthermore, there exists a unique $G$-symmetric monoidal structure on $\udl{\Mot}^{\spl,\abs}_G$ (resp. $\udl{\Mot}^{\spl,\finiteSmall}_G$) compatible with the norm $G$-symmetric monoidal structure on $\udl{\cat}\perfect_G$ (resp. $(\udl{\cat}\perfect_G)^{\omega}$) under the localization $G$-functor $\udl{\U}^{\spl,\abs}_G$ (resp. $\udl{\U}^{\spl,\finiteSmall}_G$).
\end{cor}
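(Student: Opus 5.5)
The proof has two halves: the universal properties, and the $G$-symmetric monoidal structures. For the absolute universal property, I would apply \cref{lm:Gaddloc} to the wide $G$-subcategory $\udl{W}_K$ of universal $K_G$-equivalences in $\udl{\cat}\perfect_G$. Three facts feed into it. First, $\udl{W}_K$ is a $G$-subcategory, since it is closed under restriction by \cref{rmk:nomackeymotivicsplit}. Second, it is stable under finite indexed coproducts $\ind_H^K$. To see this, use \cref{rmk:nomackeymotivicsplit} to reduce to Mackey splitting invariants; precomposing such an invariant with $\ind_H^K$ gives another one, because induction preserves split Karoubi sequences. Third, $\udl{\cat}\perfect_G$ is $G$-semiadditive. \cref{lm:Gaddloc} then shows that $\udl{\Mot}^{\spl,\abs}_G$ is $G$-semiadditive and that both $\udl{\U}^{\spl,\abs}_G$ and any induced factorization preserve finite indexed (co)products.

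For $G$-additivity, the extra ingredient is group-like $\pi_0$ of mapping monoids in each fiber. I would derive this from the usual argument that $\id\oplus\Sigma$ on a split Karoubi sequence shows $[\Sigma]=-[\id]$ after applying a splitting invariant. Concretely, in the localization the sum of $\id$ and the suspension functor on any object equals zero. This is the standard Blumberg--Gepner--Tabuada trick, and it can also be read off from \cref{lm:univKeqdesc}.

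It remains to check that the localization functor is itself a splitting invariant. I would reduce this, via \cref{lm:univKeqdesc}, to the fact that for a split Karoubi sequence $\udl{\A}\rightleftharpoons\udl{\B}\rightleftharpoons\udl{\D}$ the map $\udl{\A}\oplus\udl{\D}\to\udl{\B}$ has an inverse up to $K_0$ of functor categories, by additivity in $K$-theory. With that, the universal property is formal. A $G$-splitting invariant into a $G$-additive target inverts universal $K_G$-equivalences by definition, so it factors uniquely, and the factorization preserves finite indexed products by the last part of \cref{lm:Gaddloc}.

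For the finitary variant, I would run the same argument on $(\udl{\cat}\perfect_G)^{\omega}$ and then pass to $\udl{\presheaf}_{\Sigma}$. This uses that $\udl{\cat}\perfect_G\simeq\ind((\udl{\cat}\perfect_G)^{\omega})$, that finitary splitting invariants are left Kan extended from compacts, and that $\udl{\presheaf}_\Sigma$ of a $G$-semiadditive category is $G$-presentable with the expected universal property among $G$-cocomplete targets. One also needs split Karoubi sequences between compacts to be filtered colimits of split Karoubi sequences of compacts; I would take this from the nonequivariant analogue applied fiberwise. Identifying finite indexed-product-preserving functors out of the compacts with $G$-colimit-preserving functors out of $\udl{\presheaf}_\Sigma$ is the parametrized sifted-cocompletion universal property.

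For the $G$-symmetric monoidal structure, \cref{cor:norm_of_K-equiv} shows that $w_\otimes$ preserves universal $K$-equivalences, so \cref{recollect:monoidal_DK_localization} gives the structure on $\udl{\Mot}^{\spl,\abs}_G$, unique because the localization is essentially surjective. In the finitary case, I would first check that the norms preserve compactness in $\udl{\cat}\perfect_G$; this uses \cref{prop:finite_posets}, as in the introduction. The structure then descends to $\udl{\Mot}^{\spl,\finiteSmall,\omega}_G$ and extends to $\udl{\presheaf}_\Sigma$ by the parametrized Day convolution extension, because norms are distributive and preserve sifted colimits (\cref{rmk:distributivity_over_sifted}).

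I expect the main obstacle to be this last extension step, together with compactness of norms, since it requires a parametrized version of monoidal sifted cocompletion. I would try to quote \cite{brankoKaifNatalie} or the analogue of \cite[Prop. 2.3.4]{kaifNoncommMotives} for it.
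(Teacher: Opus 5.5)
Your proposal is correct and follows essentially the same route as the paper. The shared steps are \cref{lm:Gaddloc} applied to universal $K_G$-equivalences (closed under restriction and induction via \cref{rmk:nomackeymotivicsplit}), the standard group-likeness argument that the paper cites as \cite[Lem.~6.6]{MotLoc}, factoring splitting invariants through the Dwyer--Kan localization, \cite[Thm.~E]{shahPaperII} for the sifted cocompletion, and \cref{cor:norm_of_K-equiv} with \cref{cor:closure_under_norms_of_compacts} and \cref{recollect:monoidal_DK_localization} for the norms; the extension of the monoidal structure to $\udl{\presheaf}_\Sigma$ that you flag is treated as immediate in the paper.

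One justification needs replacing: the claim that every split Karoubi sequence is a filtered colimit of split Karoubi sequences of compact objects does not follow from the nonequivariant statement applied fiberwise. Passing to fibers only reduces it to the same statement for compact $H$-stable $H$-categories in $\cat\perfect_H$, and levelwise presentations of the $\A_K$ in $\cat\perfect$ need not assemble into $H$-stable $H$-categories, so you should cite the equivariant \cite[Cor.~4.1.9]{kaifNoncommMotives} as the paper does. Likewise, uniqueness of the monoidal structure comes from the universal property of the symmetric monoidal Dwyer--Kan localization, not from essential surjectivity.
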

\begin{proof}
    From \cref{cor:norm_of_K-equiv} and the fact that the inclusion $(\udl{\cat}\perfect_G)^{\omega}\subseteq \udl{\cat}\perfect_G$ inherits the $G$-symmetric monoidal structure by \cref{cor:closure_under_norms_of_compacts} in the finitary case, we immediately deduce the symmetric monoidality statement. By the same argument as in the proof of \cref{thm:GMotGPres}, using \cref{rmk:nomackeymotivicsplit} and that $\ind^K_H$ preserves filtered colimits and split Karoubi sequences, we see that both the motivic $G$-categories are $G$-semiadditive. That they are furthermore additive may be checked fiberwise and proceeds exactly as in \cite[Lem. 6.6]{MotLoc}. Furthermore, the $G$-presentability of $\udl{\Mot}^{\spl,\finiteSmall}_G$ is also immediate by its construction as a sifted cocompletion.
    
    We are left with showing the first statement about universal properties.  We only deal with the finitary case since the other case is similar but easier. Now, we have a fully faithful functor $(\udl{\U}_G^{\spl,\finiteSmall,\omega})^*\colon \udl{\func}^{\times}(\udl{\Mot}^{\spl,\finiteSmall,\omega}_G,\udl{\A}) \hookrightarrow \udl{\func}((\udl{\cat}\perfect_G)^{\omega},\udl{\A})\simeq \udl{\func}^{\finiteSmall}(\udl{\cat}\perfect_G,\udl{\A})$ which lands in the full subcategory of splitting invariants since $(\udl{\cat}\perfect_G)^{\omega}\rightarrow \udl{\Mot}^{\spl,\finiteSmall,\omega}_G$ was itself a splitting invariant, and\footnote{This is the crucial difference with the localizing case.} any split Karoubi sequence is a filtered colimit of split Karoubi sequences between compact $G$-stable $G$-categories by \cite[Cor. 4.1.9]{kaifNoncommMotives}. To see that this is essentially surjective on splitting invariants $(\udl{\cat}\perfect_G)^{\omega}\rightarrow\udl{\A}$, simply note that any such splitting invariant will in particular invert the universal $K_G$-theory equivalences by definition, and therefore it factors through a functor $\udl{\Mot}^{\spl,\finiteSmall,\omega}_G\rightarrow \udl{\A}$ which preserves finite indexed (co)products by  \cref{lm:Gaddloc}. Together with the universal property $\udl{\func}^{L}(\udl{\Mot}^{\spl,\finiteSmall}_G,\udl{\A})\simeq \udl{\func}^{\sqcup}(\udl{\Mot}^{\spl,\finiteSmall,\omega}_G,\udl{\A})\simeq  \udl{\func}^{\times}(\udl{\Mot}^{\spl,\finiteSmall,\omega}_G,\udl{\A})$  of sifted cocompletions by \cite[Thm. E]{shahPaperII}, this completes the proof.
\end{proof}

\subsection{Equivariant THH and the Dennis trace}\label{subsec:THH}
In this section, we explain how to define equivariant $\THH$, and how to equip it with a $G$-symmetric monoidal structure. This will, in particular, automatically render the Dennis trace $G$-symmetric monoidal as well. 

\begin{lem}\label{lm:Gdbl}
    Let $\udl{\A}$ be a $G$-stable $G$-category. The $G$-presentable $G$-stable $G$-category $\udl{\ind}(\udl{\A})$ is dualizable in $\presentable^L_{G,\mathrm{st}}$ for the tensor product from \cite[Prop. 2.2.19]{kaifNoncommMotives}.

    Furthermore, for a $G$-exact functor $\A\to \B$, the induced functor $\udl{\ind}(\A)\to\udl{\ind}(\B)$ is an internal left adjoint, i.e. its right adjoint is $G$-colimit preserving.
\end{lem}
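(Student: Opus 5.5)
I would prove dualizability by exhibiting an explicit dual, namely $\udl{\ind}(\udl{\A}\op)$, together with evaluation and coevaluation maps, mimicking the nonequivariant argument for compactly generated stable categories. The cleanest route is through \cref{thm:equivmod}. Under this result, $G$-presentable $G$-stable categories are the same as $\spectra_G$-modules in $\presentable^L_{\mathrm{st}}$, compatibly with the tensor product. Under this identification, $\udl{\ind}(\udl{\A})$ at level $G/G$ is the compactly generated category $\ind(\A_G)$ with its $\spectra_G$-action. A compactly generated $\spectra_G$-module whose action preserves compact objects (equivalently, whose compact part is a $\spectra_G^\omega$-module in $\cat\perfect$) is dualizable as a $\spectra_G$-module. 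Indeed, it is already dualizable in $\presentable^L_{\mathrm{st}}$ with dual $\ind(\A_G\op)$, and dualizability over a commutative algebra $\spectra_G$ follows because $\ind(\A_G)\otimes_{\spectra_G}-$ is computed as a geometric realization of the bar construction. Equivalently, one takes $\func^{\mathrm{ex}}_{\spectra_G^\omega}(\A_G\op,\spectra)\simeq\ind(\A_G)$ and the relative dual $\ind(\A_G\op)$.

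The steps are as follows. First, construct the coevaluation $\spectra_G\to \ind(\A_G\op)\otimes_{\spectra_G}\ind(\A_G)\simeq \ind(\A_G\op\otimes_{\spectra_G^\omega}\A_G)$, sending the unit to the $\spectra_G$-enriched mapping bimodule $\myuline{\mapsp}_{\udl{\A}}(-,-)$. Second, construct the evaluation $\ind(\A_G)\otimes_{\spectra_G}\ind(\A_G\op)\to\spectra_G$ as the left Kan extension of the $G$-mapping spectrum pairing $\A\op\times\A\to\myuline{\spectra}_G$ evaluated at $G/G$. Third, check the triangle identities. These reduce, via the Yoneda lemma on generators, to the enriched co-Yoneda formula $\int^{a}\myuline{\mapsp}(x,a)\otimes\myuline{\mapsp}(a,y)\simeq\myuline{\mapsp}(x,y)$. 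The parametrized formulation then follows from \cref{thm:equivmod}, or can equivalently be verified levelwise since restriction $\res^G_H$ is symmetric monoidal and preserves all the structure.

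For the second claim, the right adjoint of $F_!\colon\udl{\ind}(\udl{\A})\to\udl{\ind}(\udl{\B})$ is restriction along $F$ on $\myuline{\spectra}_G$-valued presheaves, $\udl{\func}\exact(\udl{\B}\op,\myuline{\spectra})\to\udl{\func}\exact(\udl{\A}\op,\myuline{\spectra})$. This restriction is computed pointwise, so it preserves fiberwise colimits. It also preserves finite indexed coproducts, since these coincide with indexed products by $G$-semiadditivity, and a right adjoint preserves products. Hence it is $G$-colimit preserving.

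The main obstacle I expect is the middle step: making the $\spectra_G$-enrichment and the relative tensor product $\otimes_{\spectra_G}$ precise enough to identify $\ind(\A_G\op)\otimes_{\spectra_G}\ind(\A_G)$ with $\ind$ of the relative tensor product of compacts. I would handle this by citing the general module-theoretic fact from Lurie that a compactly generated $R$-module in $\presentable^L_{\mathrm{st}}$ with compact-preserving action is dualizable over $R$, for $R=\spectra_G$ rigid (in fact compactly generated by dualizable objects). I would then transport this across \cref{thm:equivmod} rather than building the coherences by hand.
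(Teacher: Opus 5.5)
Your proposal is correct and, once you invoke rigidity of $\spectra_G$ at the end, it follows essentially the paper's argument: reduce via \cref{thm:equivmod} to dualizability of $\ind(\A_G)$ over $\spectra_G$, use rigidity of $\spectra_G$ to reduce to the classical dualizability of compactly generated categories in $\presentable^L_{\mathrm{st}}$, and for the second part observe that the right adjoint preserves fiberwise colimits and finite indexed (co)products. One caution: your earlier bar-construction remark does not by itself justify passing from dualizability in $\presentable^L_{\mathrm{st}}$ to dualizability over $\spectra_G$, since over a general commutative algebra this implication fails; rigidity of $\spectra_G$ is the real input, which also makes the hand-built evaluation and coevaluation maps unnecessary.
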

\begin{proof}
    Indeed, by \Cref{thm:equivmod} and since $\ind$ is fiberwise, it suffices to prove that $\ind(\A_G)$ is dualizable over $\spectra_G$. Since the latter is rigid over $\spectra$ in the sense of \cite[Def. 4.34]{ramzirigid} (see Example 4.40 in \textit{loc. cit.}), we see by \cite[Prop. 4.17]{ramzirigid} that it suffices to prove that $\ind(\A_G)$ is dualizable in $\presentable^L_{\mathrm{st}}$. In turn, this is a classical fact recalled, e.g. in \cite[Prop. D.7.2.3]{lurieSAG}. 

The second part is easy to verify: the left adjoint preserves fiberwise compacts so the right adjoint preserves fiberwise filtered colimits, and it is already  $G$-exact. 
\end{proof}
 It follows that the formalism of \cite[\textsection 2]{HSS} provides the following: 
 \begin{cons}
   We obtain a symmetric monoidal functor $\Tr_G: {\cat}\perfect_G \to \spectra_G$, see \cite[Def.'s 2.9 \& 2.11]{HSS} (in fact, with a refinement to $\spectra_G^{BS^1}$ see Theorem 2.14 in \textit{loc. cit.}) which, on objects, is given by the symmetric monoidal dimension.

Since the construction from \cite[\textsection 2]{HSS} is natural in symmetric monoidal $2$-functors, and the restriction maps $\presentable^L_{G,\mathrm{st}} \to \presentable^L_{H,\mathrm{st}}$ are such functors, we obtain a symmetric monoidal $G$-functor $$\udl{\Tr}_G: \udl{\cat}\perfect_G\to \myuline{\spectra}_G$$ 
which we also denote by $\udl{\mathrm{THH}}_G$ and call ``equivariant topological Hochschild homology'', or equivariant $\mathrm{THH}$ for short. 

 \end{cons}
\begin{prop}
    $\udl{\THH}_G: \udl{\cat}\perfect_G\to \myuline{\spectra}_G$ is a finitary $G$-localizing invariant. 
\end{prop}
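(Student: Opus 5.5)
We verify the three defining properties of a finitary $G$-localizing invariant in turn: $G$-semiadditivity, Karoubi sequences going to bifiber sequences levelwise, and preservation of fiberwise filtered colimits. Throughout, we reduce to fiberwise statements. The restriction functors $\presentable^L_{G,\mathrm{st}}\to\presentable^L_{H,\mathrm{st}}$ are symmetric monoidal $2$-functors and the construction of $\udl{\Tr}_G$ is natural in them. So the value of $\udl{\THH}_G$ at level $G/H$ is $\Tr_H$, i.e. the dimension of $\udl{\ind}(\udl{\A})$ in $\presentable^L_{H,\mathrm{st}}$. We will also use \Cref{thm:equivmod} to view this as the $\spectra_H$-linear trace of the dualizable $\spectra_H$-module $\ind(\A_H)$.

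\textbf{Filtered colimits and Karoubi sequences.} For filtered colimits, we use the standard formula for traces of compactly generated categories. For $\udl{\A}\in\cat\perfect_H$, the coevaluation and evaluation of $\udl{\ind}(\udl{\A})$ are induced by the mapping $H$-spectra functor $\udl{\A}\op\otimes\udl{\A}\to\myuline{\spectra}_H$. Hence $\Tr_H(\udl{\A})$ is the coend of $\myuline{\mapsp}_{\udl{\A}}(-,-)$ over $\udl{\A}$, which visibly commutes with filtered colimits of small categories. For Karoubi sequences, let $\udl{\A}\to\udl{\B}\to\udl{\sC}$ be one. Its image under $\udl{\ind}$ is a recollement-type sequence $\udl{\ind}\udl{\A}\to\udl{\ind}\udl{\B}\to\udl{\ind}\udl{\sC}$ in which every functor is an internal left adjoint by \Cref{lm:Gdbl}; moreover the fully faithful inclusion and the localization satisfy unit/counit equivalences. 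We then invoke the additivity/localization property of traces from \cite[\textsection 2]{HSS}, or its nonequivariant analogue for localization sequences of dualizable categories, applied in the symmetric monoidal $2$-category $\presentable^L_{H,\mathrm{st}}$. The identity of $\udl{\ind}\udl{\B}$ fits into a cofiber sequence of endofunctors $i i^R\to\id\to p^Rp$, and traces are additive on such sequences of endomorphisms, being exact in the endomorphism variable. Cyclic invariance of traces identifies $\Tr(ii^R)\simeq\Tr(i^Ri)=\Tr(\id_{\udl{\ind}\udl{\A}})$ and $\Tr(p^Rp)\simeq\Tr(pp^R)=\Tr(\id_{\udl{\ind}\udl{\sC}})$. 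This gives the bifiber sequence $\THH(\udl{\A})\to\THH(\udl{\B})\to\THH(\udl{\sC})$ in $\spectra_H$.

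\textbf{Semiadditivity (main obstacle).} The remaining condition is $G$-semiadditivity, i.e. that $\udl{\THH}_G$ preserves finite indexed products, equivalently $\THH_K(\ind^K_H\udl{\A})\simeq\ind^K_H\THH_H(\udl{\A})$ compatibly. Symmetric monoidality alone does not give this, and it is the step I expect to be hardest. My first approach is the following. Induction $\ind^K_H\colon\presentable^L_{H,\mathrm{st}}\to\presentable^L_{K,\mathrm{st}}$ is both left and right adjoint to restriction (by $G$-semiadditivity of $\udl{\presentable}^L_{\mathrm{st}}$). It is $\presentable^L_{K,\mathrm{st}}$-linear via the projection formula $\ind(\udl{\E}\otimes\res\udl{\mathcal F})\simeq\ind\udl{\E}\otimes\udl{\mathcal F}$. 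Hence it sends duals to duals, and by the Beck--Chevalley/ambidexterity formalism the trace of an induced object is the induction of the trace. Concretely, the coevaluation of $\ind^K_H\udl{\E}$ is $\myuline{\spectra}_K\to\ind\res\myuline{\spectra}_K\to\ind(\udl{\E}\otimes\udl{\E}^\vee)\simeq\ind\udl{\E}\otimes\ind\udl{\E}^\vee$, and the evaluation is defined dually. The composite is then $\myuline{\spectra}_K\xrightarrow{\text{unit}}\ind^K_H\myuline{\spectra}_H\xrightarrow{\ind\Tr}\ind^K_H\myuline{\spectra}_H\xrightarrow{\text{counit}}\myuline{\spectra}_K$. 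This evaluates at the unit to the transfer $\ind^K_H\Tr_H(\udl{\E})$, since the composite of the two adjunction maps is the Wirthmüller-type induction on $\spectra$. Checking this identification naturally may instead be done by reducing via \Cref{lem:fixmackadj}: it suffices to compare $G$-fixed points, and to verify that the $K$-fixed points of $\THH$ applied to $\ind^K_H$ reproduce $\ind^K_H$ on Mackey data, which amounts to the coend formula for $\ind^K_H\udl{\A}$ splitting over double cosets.
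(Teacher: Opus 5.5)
\textbf{The semiadditivity step has a gap.} Your arguments for Karoubi sequences and for filtered colimits are fine and match the paper, which simply cites \cite[Thm.~3.4, Cor.~4.25]{HSS}; the proof there is the additivity-plus-cyclic-invariance argument you give. The problem is $G$-semiadditivity, which you rightly flag as the crux but do not close. There are two issues.
\begin{enumerate}
\item The equivalence $\induced^K_H(\udl{\E}\otimes\udl{\E}^\vee)\simeq\induced^K_H\udl{\E}\otimes\induced^K_H\udl{\E}^\vee$ used to write down your coevaluation is false for $H\neq K$. By the projection formula the right side is $\induced^K_H(\udl{\E}\otimes\res^K_H\induced^K_H\udl{\E}^\vee)$. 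By the double coset formula this has extra summands indexed by the nontrivial double cosets. So your coevaluation only lands in a summand, and you would still have to verify the triangle identities and show that the off-diagonal summands contribute nothing to the trace. Also, $\presentable^L_{K,\mathrm{st}}$-linearity alone does not give preservation of dualizable objects; you need ambidexterity together with the projection formula.
\item More seriously, even a corrected computation only gives an abstract equivalence $\THH_K(\induced^K_H\udl{\A})\simeq\induced^K_H\THH_H(\udl{\A})$. Semiadditivity requires that the canonical comparison map $\induced^K_H\THH_H(\udl{\A})\to\THH_K(\induced^K_H\udl{\A})$ be an equivalence. Your proposed identification via \Cref{lem:fixmackadj} is circular: that adjunction only concerns $G$-semiadditive functors, so you may pass to $G$-fixed points only once you already know $\udl{\THH}_G$ is $G$-semiadditive. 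Nor does the coend split over double cosets termwise. The cyclic bar construction of the $\spectra_K$-enriched category $\induced^K_H\udl{\A}$ contains tensor products $\induced^K_H M_0\otimes\cdots\otimes\induced^K_H M_n$, which again have cross terms that only cancel after realization.
\end{enumerate}

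\textbf{The missing idea is the paper's isotropy separation argument,} which shows the canonical map is an equivalence without computing any duality data. Induct on $|G|$. By the inductive hypothesis only the case $K=G$, $H\lneq G$ is new. By isotropy separation it suffices to check that $\induced^G_H\THH_H(\udl{\A})\to\THH_G(\induced^G_H\udl{\A})$ becomes an equivalence after restriction to every proper $L\lneq G$, and after applying $\Phi^G$.
\begin{itemize}
\item Restriction is a symmetric monoidal $2$-functor, so it commutes with dimensions. By the double coset formula, the restricted map is then a semiadditivity comparison map for $\udl{\THH}_L$, which is an equivalence by induction.
\item After $\Phi^G$ both sides vanish. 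The source vanishes because $H$ is proper. The target vanishes because $\Phi^{\proper}\colon\presentable^L_{G,\mathrm{st}}\to\presentable^L_{\mathrm{st}}$ is also a symmetric monoidal $2$-functor, so $\Phi^G\THH_G(\induced^G_H\udl{\A})\simeq\THH(\Phi^{\proper}\induced^G_H\udl{\A})=\THH(0)=0$.
\end{itemize}
Fiberwise additivity then follows from the localization property, applied to split Karoubi sequences.
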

\begin{proof}
    That it sends Karoubi sequences to bifiber sequences follows from \cite[Thm. 3.4]{HSS}, and that it is finitary is standard, see also \cite[Cor. 4.25]{HSS}.

    We now prove $G$-semiadditivity  by induction via  isotropy separation. So suppose the restriction of $\udl{\THH}_G$ to any proper subgroup $K\lneq G$ is $K$-semiadditive, and consider, for a proper subgroup $H$ and an $H$-stable $H$-category $\udl{\A}$, the canonical map $$\induced_H^G\udl{\THH}_H(\udl{\A})\to \udl{\THH}_G(\induced_H^G\udl{\A})$$
(note that it is sufficient to prove that this map is an equivalence, since $\udl{\THH}_G$ is clearly fiberwise semiadditive)

Since restriction to proper subgroups is a strong symmetric monoidal functors, it therefore commutes with symmetric monoidal dimensions, so that for any proper subgroup $K<G$, $\res_K^G$ applied to the above map is a special case of a $K$-semiadditivity comparison map for $\udl{\THH}_K$, which is therefore an equivalence by assumption. Thus, to conclude, we may take geometric fixed points of either side. 

For the left hand side, the result is $0$ by definition of geometric fixed points and since $H<G$ is a proper subgroup, while for the right hand side, it is also $0$: this is because geometric fixed points is also a strong symmetric monoidal functor and hence also commutes with symmetric monoidal dimensions, so that we are left with calculating $\Phi^{\proper}\ind_H^G\udl{\A}=  0$, which is again true by construction since $H< G$ was proper (cf. \cref{cons:categorical_isotropy_separation,obs:proper_isotropy_separation_generalities}). 
\end{proof} 
\begin{rmk}\label{rmk:THHgenuine}
    It is not hard to see that $(\THH_G)^G: \cat\perfect_G\to \spectra$ is not of the form $E((-)^G)$ for any ordinary localizing invariant $E: \cat\perfect\to \spectra$. Indeed, $E$ can be recovered from $E((-)^G)$ by plugging in induced $G$-stable $G$-categories, so by $G$-semiadditivity of $\udl{\THH}_G$ we would have $E(\sC) \simeq E((\induced_e^G \sC)^G) \simeq (\THH_G(\induced_e^G\sC))^G \simeq (\induced_e^G \THH(\sC))^G\simeq \THH(\sC)$, i.e. $E\simeq\THH$. 

    However, $\THH_G(\spectra_G)\simeq \sphere_G$ and the $G$-fixed points of the latter satisfies a tom Dieck splitting with terms of the form $\Sigma^\infty_+ BW_G(H)$. On the other hand, one can show that $\THH(\spectra_G)$ satisfies a tom Dieck splitting with terms of the form $\Sigma^\infty_+LBW_G(H)$. We explain this for $G=C_p$, and refer to the proof of \cite[Thm. 1.6]{KSpGMaxime} for the general case. 

    For $G=C_p$, we use the isotropy separation localization sequence $\perfectCat(\sphere[C_p])\to\spectra^\omega_{C_p}\to \spectra^\omega$ to deduce a bifiber sequence of spectra $$\THH(\sphere[C_p])\to \THH(\spectra^\omega_{C_p})\to \THH(\spectra^\omega).$$ This bifiber sequence splits via the unit map $\spectra^\omega\to\spectra^\omega_{C_p}$, and combining it with the usual formula $\THH(\sphere[G]) \simeq \Sigma^\infty_+ LBG$, we obtain the desired formula: $$\THH(\spectra^\omega_{C_p})\simeq \sphere\oplus\Sigma^\infty_+LBC_p.$$
\end{rmk}
\begin{rmk}
    Our functor $\udl{\THH}_G$ is a reasonable object: when $R$ is a connective ring $G$-spectrum, $\pi_0^G\THH_G(R)$ is given by the quotient of $\udl{\pi}_0(R)$ by the commutator \textit{Mackey subfunctor} (as opposed to the levelwise commutator sub-group). 

    Indeed, combining \cite[Prop. 4.47]{ccryTraces} with \Cref{thm:equivmod} we find that $\THH_G(R)\simeq R\otimes_{R\otimes R\op}R$ in $\spectra_G$. Since $R$ is connective, we deduce that $\udl{\pi}_0\THH_G(R)\cong \udl{\pi}_0(R)\otimes_{\udl{\pi_0}(R)\otimes\udl{\pi}_0(R)\op}\udl{\pi}_0(R)$ in Mackey functors. The latter is a coequalizer $$\udl{\pi}_0(R)\otimes\udl{\pi}_0(R)\otimes\udl{\pi}_0(R)\op\otimes\udl{\pi}_0(R)\rightrightarrows \udl{\pi}_0(R)\otimes\udl{\pi}_0(R)$$ of the two maps informally described by $x\otimes a\otimes b\otimes y\mapsto bxa \otimes y$, resp. $x\otimes ayb$ by definition. From this, the description as $\udl{\pi}_0(R)$ modulo the commutator Mackey subfunctor is clear. 
\end{rmk}

Next, we upgrade the construction of $\udl{\THH}_G$ to make it $G$-symmetric monoidal. We again take advantage of the naturality of the construction of \cite[\textsection 2]{HSS} to reduce to one simple claim: that $\udl{\presentable}^L_{G,\mathrm{st}}$ can be viewed as a Mackey functor in (very large) $2$-categories, with emphasis on ``$2$'' (indeed, in 
\cite[Prop. 3.25]{nardinThesis} and \cite{brankoKaifNatalie}, it is already constructed as a $G$-symmetric monoidal $G$-category, i.e. as a Mackey functor in (very large) categories, cf. \cite[Thm. 2.3.9]{nardinShah}). Since this is a technical point, we have deferred its precise statement and proof to the appendix as \cref{prop:mackey2}.

 Informally, this construction makes the norm functors $\norm_H^G: \presentable^L_{H,\mathrm{st}}\to \presentable^L_{G,\mathrm{st}}$ into symmetric monoidal $2$-functors. The advantage of this is that this automatically implies their compatibility with symmetric monoidal traces, functorially. This is ultimately what we leverage below:
\begin{cor}\label{cor:dennis_trace}
    The finitary $G$-localizing invariant $\udl{\THH}_G$ can be upgraded to a $G$-symmetric monoidal $G$-functor $\udl{\cat}_G\perfect\to \myuline{\spectra}^{BS^1}_G$. 

    In particular, it factors uniquely as a $G$-symmetric monoidal $G$-colimit-preserving functor $\udl{\THH}_G: \udl{\Mot}_G\to \myuline{\spectra}^{BS^1}_G$, and therefore receives a unique ($S^1$-equivariant) $G$-symmetric monoidal natural transformation from equivariant $K$-theory as $G$-functors $\udl{\cat}\perfect_G\rightarrow\myuline{\spectra}_G$, which we call the equivariant Dennis trace map $$\udl{K}_G\to \udl{\THH}_G.$$
\end{cor}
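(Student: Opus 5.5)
The plan is to obtain the $G$-symmetric monoidal structure on $\udl{\THH}_G$ from the naturality of the trace construction of \cite[\textsection 2]{HSS} with respect to symmetric monoidal $2$-functors, combined with the $2$-categorical Mackey structure of \cref{prop:mackey2}. The Dennis trace then comes for free from the universal properties \cref{thm:motnorm} and \cref{prop:corep}.

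\textbf{Step 1: the $G$-symmetric monoidal structure.} By \cref{prop:mackey2}, $\udl{\presentable}^L_{G,\mathrm{st}}$ is a Mackey functor in symmetric monoidal $2$-categories. Its structure maps are the restrictions $\res^K_H$ and the norms $\norm^K_H$, all of which are symmetric monoidal $2$-functors, together with coherent compatibilities. The construction of \cite[Def.'s 2.9 \& 2.11, Thm. 2.14]{HSS} sends a symmetric monoidal $2$-category $\mathcal{X}$ to a symmetric monoidal functor $\Tr\colon (\mathcal{X}^{\dual})^{\mathrm{iL}}\to \Omega\mathcal{X}^{BS^1}$. Here the source is dualizable objects and internal left adjoints, and the target is endomorphisms of the unit. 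This construction is functorial in symmetric monoidal $2$-functors. Applying it pointwise to the Mackey functor in $2$-categories therefore yields a natural transformation of Mackey functors in categories.
\begin{itemize}
\item The source is the Mackey functor $H\mapsto ((\presentable^L_{H,\mathrm{st}})^{\dual})^{\mathrm{iL}}$. It receives a $G$-symmetric monoidal map from $\udl{\cat}\perfect_G$ via $\udl{\ind}$. This uses \cref{lm:Gdbl} (dualizability and internal left adjoints) and the $G$-symmetric monoidality of \cref{eqn:ind_omega_equivalence}.
\item The target is the Mackey functor $H\mapsto \Omega\presentable^L_{H,\mathrm{st}}\simeq \spectra_H$, with $S^1$-action. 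Its norms are the HHR norms, so this is $\myuline{\spectra}_G^{BS^1}$ with its usual $G$-symmetric monoidal structure.
\end{itemize}
Composing gives the $G$-symmetric monoidal $G$-functor $\udl{\cat}\perfect_G\to \myuline{\spectra}_G^{BS^1}$. On underlying $G$-functors it agrees with the previously constructed $\udl{\THH}_G$, since restriction to the restriction part of the Mackey structure recovers the earlier construction.

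\textbf{Step 2: factoring through motives.} The preceding proposition shows that $\udl{\THH}_G$ is a finitary $G$-localizing invariant. Forgetting the $S^1$-action is conservative and preserves colimits, so this also holds for the $\myuline{\spectra}^{BS^1}_G$-valued functor, since $\myuline{\spectra}^{BS^1}_G$ is $G$-presentable and $G$-stable. Then \cref{thm:motnorm}, applied with $\udl{\E}=\myuline{\spectra}^{BS^1}_G$, factors it uniquely through a $G$-symmetric monoidal $G$-colimit-preserving functor out of $\udl{\Mot}\loc_G$.

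\textbf{Step 3: the Dennis trace.} By \cref{prop:corep} and \cref{cor:consKlaxG}, $\udl{K}_G$ corresponds to $\myuline{\mapsp}(\one,-)$ on $\udl{\Mot}\loc_G$. This functor is the initial $G$-lax symmetric monoidal exact functor to $\myuline{\spectra}_G$, being the unit for Day convolution. The $G$-symmetric monoidal functor $\udl{\THH}_G$, viewed as a lax one, therefore receives a unique $G$-lax symmetric monoidal transformation from it. Concretely, this is the map $\myuline{\mapsp}(\one,X)\to\myuline{\mapsp}(\THH(\one),\THH(X))\simeq \THH(X)$.

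For $S^1$-equivariance, I would run the same argument in $\func(BS^1,-)$. The key observation is that $\udl{K}_G$ with the trivial action is still initial among lax functors to $\myuline{\spectra}^{BS^1}_G$, since the unit of Day convolution with values in $\myuline{\spectra}_G^{BS^1}$ is the trivial-action corepresented functor. Uniqueness then follows as in \cref{cor:consKlaxG}.

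\textbf{Main obstacle.} I expect the main obstacle to be Step 1: ensuring that the HSS construction is functorial enough to be applied to a Mackey functor of $2$-categories, i.e.\ to spans, including the norm direction, rather than only restrictions. I would first try to package the HSS trace as a functor from symmetric monoidal $(\infty,2)$-categories to arrows of symmetric monoidal categories that preserves finite products. Postcomposing the product-preserving functor of \cref{prop:mackey2} with it would then produce Mackey functors automatically. The remaining step is to identify the target Mackey functor with $\myuline{\spectra}_G$ with its norms, by comparing its underlying $G$-symmetric monoidal structure with that of \cite{nardinThesis}.
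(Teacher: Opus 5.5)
Your proposal is correct and follows essentially the same route as the paper. Step 1 is the same construction: compose the product-preserving functor of \cref{prop:mackey2} with the trace functor $\cmonoid(\CAT_2)\to\func([1],\cmonoid(\cat))$ from \cite[Eq. 2.10, Def. 2.11, Thm. 2.14]{HSS}, then precompose with $\udl{\ind}$, which lands in the dualizable objects by \cref{lm:Gdbl}; Steps 2 and 3 match the paper's one-line appeal to the symmetric monoidal Dwyer--Kan localization and to corepresentability of $\udl{K}_G$ by the unit. The step you flag as the main obstacle is taken directly from HSS in the paper, and your remarks on identifying the target norms with the HHR norms and on $S^1$-equivariance (the trivial-action corepresented functor being initial) spell out points the paper leaves implicit.
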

\begin{proof}
Since $\udl{\cat}\perfect_G\to \udl{\Mot}_G$ is a symmetric monoidal Dwyer--Kan localization whose unit corepresents equivariant $K$-theory, all the statements follow  from the first one. 

For this statement, we note that \cite[Eq. 2.10, Def. 2.11, Thm. 2.14]{HSS} constructs a finite-product-preserving functor $\cmonoid(\CAT_2)\rightarrow \func([1],\cmonoid(\cat))$ which assigns to $B\in \commmonoid(\CAT_2)$ the transformation $\Tr_B\colon B^\dual\to \Endomorphism(\one_B)^{BS^1}$. On the other hand, by \cref{prop:mackey2}, we have a finite-product-preserving functor $(\udl{\boldPr}^L_{G,\mathrm{st}})^\otimes \colon \spancategory(\finite_G)\to \commmonoid(\CAT_2)$. Note here that for each $H\leq G$, the tensor unit in $\presentable^L_{H,\mathrm{st}}$ is $\spectra_H$. Hence, we may compose these two functors to obtain a finite-product-preserving functor $\spancategory(\finite_G)\rightarrow \func([1],\cmonoid(\cat))$, i.e. a $G$-symmetric monoidal functor $\Tr_G\colon (\udl{\presentable}^L_{G,\mathrm{st}})^\dual\rightarrow \myuline{\spectra}_G^{BS^1}$.

Next, recall that by \Cref{lm:Gdbl}, the nonfull $G$-symmetric monoidal functor $\ind\colon \udl{\cat}\perfect_G\to \udl{\presentable}^L_{G,\mathrm{st}}$ lifts to a fully faithful $G$-symmetric monoidal functor $\ind\colon \udl{\cat}\perfect_G\to (\udl{\presentable}^L_{G,\mathrm{st}})^\dual$. Therefore, all in all, we obtain a $G$-symmetric monoidal functor 
\[\udl{\cat}\perfect_G\xhookrightarrow{\ind} (\udl{\presentable}^L_{G,\mathrm{st}})^\dual\xrightarrow{\Tr_G} \myuline{\spectra}_G^{BS^1}\] whose restriction to $\finite_G\op$ is $\THH_G$ as constructed in the beginning of the section. 
\end{proof}

There are a few versions of equivariant THH in the literature which we expect to be distinct from ours, cf. for instance \cite{angeltveitTCNorms,realTHH,chan2025trace}. We close this subsection by highlighting several attractive and naturally desirable formal features of the present construction. For one, it interacts pleasantly with  geometric fixed points, as encapsulated by the following:

\begin{prop}\label{prop:geometric_fixed_THH}
   There is a natural  equivalence of symmetric monoidal functors
   \[\Phi^G\udl{\THH}(-)\simeq \THH(\Phi^{\proper}-)\colon \cat\perfect_G\longrightarrow \spectra^{BS^1}.\] 
   Furthermore, for any operad $\mathcal{O}$ and $R\in \algebraCategory_{\mathcal{O}\otimes\everythingAlgebra_1}(\spectra_G)$, there is an equivalence 
    \[\Phi^G\udl{\THH}(R)\simeq \THH(\Phi^GR)\] of $\mathcal{O}$-algebras in spectra with $S^1$-action.
\end{prop}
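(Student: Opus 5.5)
The plan is to use the naturality of the Hoyois--Scherotzke--Sibilla trace construction in symmetric monoidal $2$-functors, exactly as in the construction of $\udl{\THH}_G$ and in \cref{cor:dennis_trace}.

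\textbf{Step 1: a symmetric monoidal $2$-functor.} Consider the geometric fixed points functor
\[\Phi^{\proper}\colon \presentable^L_{G,\mathrm{st}}\longrightarrow \presentable^L_{\mathrm{st}},\qquad \udl{\E}\mapsto \big(\widetilde{\udl{E\proper}}\otimes\udl{\E}\big)^G,\]
that is, the $G$-fixed points of the recollement quotient from \cref{cons:categorical_isotropy_separation}. Equivalently, under \cref{thm:equivmod}, this is base change $\module_{\spectra_G}(\presentable^L_{\mathrm{st}})\to \module_{\spectra}(\presentable^L_{\mathrm{st}})$ along the symmetric monoidal localization $\Phi^G\colon \spectra_G\to\spectra$. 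I will use this base-change description because base change along a map of commutative algebras is manifestly a symmetric monoidal $2$-functor: it preserves adjunctions, and hence internal left adjoints and dualizable objects. It restricts to compactly generated objects. On $\ind\udl{\sC}$ it gives $\ind(\Phi^{\proper}\udl{\sC})$, by the definition $\Phi^{\proper}\udl{\sC}=(\Phi^{\proper}\ind\udl{\sC})^{\omega}$. It sends the unit $\spectra_G$ to $\spectra$. Finally, the induced map on endomorphisms of the unit, $\spectra_G\simeq \mathrm{End}(\spectra_G)\to \mathrm{End}(\spectra)\simeq\spectra$, is $\Phi^G$; this is checked on the generator $\sphere_G$ together with colimit preservation.

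\textbf{Step 2: apply naturality of traces.} Combine this with \cref{lm:Gdbl} and the HSS functor $\cmonoid(\CAT_2)\to\func([1],\cmonoid(\cat))$. Evaluating it on the morphism $\Phi^{\proper}$ gives a commuting square of symmetric monoidal functors
\[\Tr_{\spectra}\circ\Phi^{\proper}\simeq \Phi^G\circ\Tr_G\colon (\presentable^L_{G,\mathrm{st}})^{\dual}\to \spectra^{BS^1}.\]
Precomposing with $\ind\colon\cat\perfect_G\to(\presentable^L_{G,\mathrm{st}})^{\dual}$ and using Step 1 yields the first equivalence $\Phi^G\udl{\THH}(-)\simeq\THH(\Phi^{\proper}-)$, naturally and symmetric monoidally.

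\textbf{Step 3: the ring statement.} A symmetric monoidal equivalence of functors induces an equivalence on $\mathcal O$-algebras. The functor $R\mapsto \udl{\perfectCat}(R)$ sends $\mathcal O\otimes\everythingAlgebra_1$-algebras in $\spectra_G$ to $\mathcal O$-algebras in $\cat\perfect_G$ (\cref{example:algebra_objects_catperfGI}, restricted to the underlying operad). It then remains to identify $\Phi^{\proper}\perfectCat(R)\simeq\perfectCat(\Phi^GR)$ as $\mathcal O$-monoidal categories. This holds because base change of module categories gives $\spectra\otimes_{\spectra_G}\lmodule_R(\spectra_G)\simeq\lmodule_{\Phi^GR}(\spectra)$, via the standard Lurie base-change formula for module categories, and this identification is symmetric monoidal in $R$.

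\textbf{Main obstacle.} I expect the hardest step to be the coherent identification in Step 1 of the parametrized-recollement $\Phi^{\proper}$ with base change along $\Phi^G$ as a symmetric monoidal $2$-functor, including the compatibility of the map on $\mathrm{End}(\one)$ with $\Phi^G$ as a symmetric monoidal functor. I would try to obtain this by realizing $\widetilde{E\proper}\otimes-$ as tensoring with the idempotent algebra $\widetilde{E\proper}$ in $\spectra_G$, whose $G$-fixed-point module category is $\spectra$ via $\Phi^G$.
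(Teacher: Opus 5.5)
Your proposal is correct and follows essentially the same route as the paper. The paper likewise treats $\Phi^{\proper}\colon \presentable^L_{G,\mathrm{st}}\to\presentable^L_{\mathrm{st}}$ as a map of commutative $\presentable^L_{\mathrm{st}}$-algebras, hence a symmetric monoidal $2$-functor, and applies naturality of the HSS trace to get the commuting square; it then identifies $\Phi^{\proper}\lmodule_R(\spectra_G)\simeq \lmodule_{\Phi^GR}(\spectra)$. The only difference is cosmetic: the paper justifies this last identification through the compact generators $\{\ind^G_H\res^G_H R\}_{H\leq G}$ and the universal property of the Bousfield localization that kills the proper orbits, whereas you use Lurie's base-change formula for module categories.
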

 \begin{proof}
 For the first equivalence, we simply note that $\Phi^G:\presentableGstable\to \presentablestable$ is a morphism of commutative $\presentablestable$-algebras, and hence a symmetric monoidal $2$-functor. Thus, by naturality of traces as constructed in \cite[\textsection 2]{HSS}, we have a commutative diagram in $\cmonoid(\cat)$

 \[\begin{tikzcd}
	{(\presentableGstable)^\dual} & {(\presentablestable)^\dual} \\
	{\spectra_G^{BS^1}} & {\spectra^{BS^1}}
	\arrow["{\Phi^\proper}", from=1-1, to=1-2]
	\arrow["{\udl{\THH}}"', from=1-1, to=2-1]
	\arrow["\THH", from=1-2, to=2-2]
	\arrow["{\Phi^G}", from=2-1, to=2-2]
\end{tikzcd}\]
as desired.    For the other equivalence, we observe that the functor $\Phi^G\colon \lmodule_R(\spectra_G)\rightarrow \lmodule_{\Phi^GR}(\spectra)$ identifies $\Phi^{\proper}\lmodule_R(\spectra_G) $ as $\lmodule_{\Phi^GR}(\spectra)$. This is because $\lmodule_R(\spectra_G)$ is compactly generated by the set $\{\ind^G_H\res^G_HR\}_{H\leq G}$, and $\Phi^G\colon \lmodule_R(\spectra_G)\rightarrow \lmodule_{\Phi^GR}(\spectra)$ is the Bousfield localization which precisely kills the orbits $\{\ind^G_H\res^G_HR\}_{H\lneq G}$ and sends the remaining generator $R\in\spectra_G$ to $\Phi^GR\in\spectra$. Hence, $\lmodule_{\Phi^GR}(\spectra)$ satisfies the universal property of $\Phi^{\proper}\lmodule_R(\spectra_G)$, whence the observation. Combining with the commuting square above, we obtain the second equivalence.
\end{proof}

On the geometric side, let us record the following expected result. It further illustrates a certain ``regularity'' of the $\udl{\THH}_G$ construction which is not enjoyed by the pointwise THH functor $\cat\perfect_G\subset \mackey_G(\cat\perfect)\xrightarrow{\THH}\mackey_G(\spectra)=\spectra_G$.

\begin{prop}\label{prop:free_loop_space}
    Let $X\in\spc_G$. Then $\udl{\THH}_G(\udl{\func}(\udl{X},\myuline{\spectra})^{\omega})\simeq \sphere_G[\mathcal{L}X]\in\spectra_G$ where $\mathcal{L}X\in\spc_G$ is the free loop $G$-space $\map_{\spc_G}(S^1,X)\in\spc_G$ on $X$ (with the trivial $G$-action on $S^1$).
\end{prop}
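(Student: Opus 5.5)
We identify $\udl{\func}(\udl{X},\myuline{\spectra})$ with a module category so that its trace can be computed by the $2$-categorical formalism of \cite[\textsection 2]{HSS}. The starting point is that $\udl{\func}(\udl{X},\myuline{\spectra})$ is the $G$-presentable $G$-stable category $\udl{X}_+\otimes\myuline{\spectra}$. Equivalently, it is the cotensor $\myuline{\spectra}^{\udl{X}}$ in $\presentable^L_{G,\mathrm{st}}$, which by ambidexterity of $G$-presentable stable categories along $G$-spaces is also the colimit $\colim_{\udl{X}}\myuline{\spectra}$ of the constant diagram. Its compact objects form exactly $\udl{\func}(\udl{X},\myuline{\spectra})^{\omega}$. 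By \cref{lm:Gdbl} and the definition of $\udl{\THH}_G$, the quantity $\udl{\THH}_G(\udl{\func}(\udl{X},\myuline{\spectra})^{\omega})$ is therefore the symmetric monoidal dimension of $\udl{X}_+\otimes\myuline{\spectra}$ in $\presentable^L_{G,\mathrm{st}}$.

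The key step is a general formula: for a dualizable object $\sC$ in a symmetric monoidal $2$-category which is tensored over $G$-spaces, one wants $\dim(\udl{X}_+\otimes\sC)\simeq \Sigma^\infty_+\mathcal{L}X\otimes\dim(\sC)$. With $\sC=\myuline{\spectra}_G$ (of dimension $\sphere_G$) this gives the claim. To prove the formula, I would argue as follows.
\begin{enumerate}
\item First, $\udl{X}_+\otimes\myuline{\spectra}$ is self-dual, with evaluation and coevaluation induced by the diagonal $X\to X\times X$ together with the ambidextrous (co)unit maps $X\to \ast$. This is parametrized Atiyah duality for $G$-spaces in $\presentable^L_{G,\mathrm{st}}$, using the Wirthm\"uller isomorphisms built into $G$-stability.
\item Next, the trace is then the composite $\myuline{\spectra}_G\to (X\times X)_+\otimes\myuline{\spectra}\to\myuline{\spectra}_G$ obtained by pulling back and pushing forward along the span $\ast\leftarrow X\xrightarrow{\Delta} X\times X\xleftarrow{\Delta} X\to\ast$.
\item Finally, base change, i.e. the Beck--Chevalley equivalences for the pullback square $X\times_{X\times X}X\simeq\mathcal{L}X$, computes this composite as $\Sigma^\infty_+\mathcal{L}X$.
\end{enumerate}
Here $\mathcal{L}X=\map(S^1,X)$ appears precisely as the self-intersection of the diagonal, with trivial action on $S^1$.

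As a check and as a possible alternative route, one can reduce to the nonequivariant statement by isotropy separation. Restrictions to $H\leq G$ commute with $\udl{\THH}$ because $\udl{\THH}$ is natural in symmetric monoidal $2$-functors, and they also commute with the free loop space. By \cref{prop:geometric_fixed_THH}, geometric fixed points satisfy $\Phi^G\udl{\THH}\simeq\THH(\Phi^{\proper}-)$. Moreover $\Phi^{\proper}(\udl{X}_+\otimes\myuline{\spectra})\simeq \func(X^G,\spectra)$, and $(\mathcal{L}X)^G=\mathcal{L}(X^G)$. Together with the classical computation $\THH(\func(Y,\spectra)^\omega)\simeq\Sigma^\infty_+\mathcal{L}Y$, this matches the answer on all geometric fixed points. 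However, matching on geometric fixed points alone does not yield an equivalence without a comparison map. So I would still first construct a natural map, most easily via the duality description above, and then check it on geometric fixed points.

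The main obstacle is the careful construction of the duality data and the base change identification in step (3) in the parametrized setting, in particular keeping track of the fact that the norm map (ambidexterity) is used to turn colimits over $\udl{X}$ into limits. If that is too cumbersome, I would instead write $X$ as a colimit of orbits $G/H$ and use that both sides commute with colimits in $X$ appropriately. This fails on the nose for $\mathcal{L}$, however, so the direct duality computation seems preferable.
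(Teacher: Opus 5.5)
Your argument is correct and is essentially the paper's proof. The paper packages your steps (1)--(3) by noting that $X\mapsto\udl{\func}(\udl{X},\myuline{\spectra})$, with left Kan extension functoriality, is a symmetric monoidal bivariant theory in the sense of \cite{ccryTraces}: Beck--Chevalley holds since $\myuline{\spectra}$ is $G$-cocomplete, and symmetric monoidality is \cite[Cor. 2.2.20]{kaifNoncommMotives}. It then invokes \cite[Cor. 3.12]{ccryTraces}, which is exactly your diagonal self-duality followed by the base change $X\times_{X\times X}X\simeq\mathcal{L}X$. One small point: step (1) uses no Wirthm\"uller isomorphisms or $G$-semiadditivity, only the adjunctions $f_!\dashv f^*$ and base change, so the ambidexterity bookkeeping you flag as the main obstacle is not actually needed.
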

\begin{proof}
    We simply follow \cite[\textsection 4.3]{ccryTraces}: note first that $\udl{\func}(-,\myuline{\spectra})\colon \spc_G \rightarrow \presentable^L_{G,\mathrm{st}}$ with left Kan extension functoriality is a bivariant theory in the sense of Definition 3.2 from \textit{loc. cit.} since the Beck-Chevalley basechange property is satisfied because $\myuline{\spectra}$ is $G$-cocomplete. By \cite[Cor. 2.2.20]{kaifNoncommMotives} for example, this functor also enhances to a symmetric monoidal functor. By \cite[Cor. 3.12]{ccryTraces}, we thus deduce the desired conclusion.
\end{proof}

\begin{rmk}
    From the formula above, we see in particular that 
    \[\Phi^G\udl{\THH}_G(\udl{\func}(\udl{X},\myuline{\spectra})^{\omega})\simeq {\THH}({\func}({X}^G,{\spectra})^{\omega})\] since the left hand side is equivalent to $\Phi^G\sphere_G[\mathcal{L}X]\simeq \sphere[(\mathcal{L}X)^G]\simeq \sphere[\mathcal{L}(X^G)]$ and this is $\THH(\func(X^G,\spectra)^{\omega})$. Alternatively, this can also be deduced directly from \cref{prop:geometric_fixed_THH} using that $\Phi^{\proper}\udl{\func}(\udl{X},\myuline{\spectra})\simeq \udl{\func}(\udl{X},\udl{\func}_*(\udl{E\proper},\myuline{\spectra}))\simeq \udl{\func}(\udl{X},\Phi^{\proper}\myuline{\spectra})$, which at level $G$, evaluates to $\func(X^G,\spectra)$.
\end{rmk}

As in the nonequivariant setting, using the free loop space formula, we may prove an equivariant version of A-theory splitting first studied in \cite{monaCaryCobordism}.\footnote{We do not claim that the version of equivariant A-theory we write here agrees with theirs, although we certainly expect this to be the case.}

\begin{prop}\label{prop:A-theory_splitting}
    Let $X\in\spc_G$. Then $\sphere_G[X]$ is a direct summand of $\udl{K}_G(\udl{\func}(\udl{X},\myuline{\spectra})^{\omega})$, functorial in $X$.
\end{prop}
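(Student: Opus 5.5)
We follow the nonequivariant strategy: build a map $\sphere_G[X]\to\udl{K}_G(\udl{\func}(\udl{X},\myuline{\spectra})^{\omega})$, compose with the Dennis trace of \cref{cor:dennis_trace}, and identify the composite with the inclusion of constant loops $\sphere_G[X]\to\sphere_G[\mathcal{L}X]$, which is split by the map induced by evaluation at the basepoint $\mathcal{L}X\to X$.

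\textbf{The assembly map.} Let $A(X)\coloneqq\udl{\func}(\udl{X},\myuline{\spectra})^{\omega}$, functorial in $X\in\spc_G$ via left Kan extension. For a point $x\colon G/H\to X$ we have the corepresentable object $x_!\sphere$. This is natural in the point, so it gives a map of $G$-spaces $X\to\Omega^\infty\udl{K}_G(A(X))$; we would construct it as the composite $X\to A(X)^{\simeq}\to\Omega^\infty\udl{K}_G(A(X))$, where the second map is the canonical map $(-)^\simeq\to\Omega^\infty\udl{K}_G$ from \cref{cor:consKlaxG}. To get this parametrically, one uses the parametrized Yoneda functor $\udl{X}\to\udl{\func}(\udl{X},\myuline{\spectra})$ restricted to cores, natural in $X$. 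Adjoining over gives a natural map $\sphere_G[X]\to\udl{K}_G(A(X))$ in $\spectra_G$. Alternatively, and more cleanly, both sides are functors $\spc_G\to\spectra_G$; the source is the free colimit-preserving functor, so the map is determined on orbits $G/H$. There $A(G/H)\simeq\ind^G_H\myuline{\spectra}^\omega_H$, and we take the unit class in $K$.

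\textbf{Composite with the trace.} Postcompose with $\udl{K}_G\to\udl{\THH}_G$ and use \cref{prop:free_loop_space}: $\udl{\THH}_G(A(X))\simeq\sphere_G[\mathcal{L}X]$, naturally in $X$. This naturality comes from the bivariant-theory formalism of \cite{ccryTraces}, which is functorial in $X$. The resulting natural transformation $\sphere_G[X]\to\sphere_G[\mathcal{L}X]$ of functors $\spc_G\to\spectra_G$ is determined by its value on the point-objects. By Yoneda it is therefore a map of $G$-spaces $X\to\Omega^\infty\sphere_G[\mathcal{L}X]$, natural in $X$. Evaluating at orbits $X=G/H$, it corresponds to the trace of the identity of the unit $\myuline{\spectra}_H^\omega$, namely the unit $1\in\pi_0\THH_H(\myuline{\spectra}^\omega_H)$. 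Under $\mathcal{L}(G/H)=G/H$, this is the point, i.e. the constant-loop inclusion.

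\textbf{Splitting.} Composing further with $\sphere_G[\mathcal{L}X]\to\sphere_G[X]$ induced by $\mathrm{ev}_1\colon\mathcal{L}X\to X$ gives an endomorphism of $\sphere_G[X]$. This endomorphism is the identity because $X\to\mathcal{L}X\to X$ is the identity, naturally in $X$. Hence $\sphere_G[X]$ is a natural retract of $\udl{K}_G(A(X))$.

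\textbf{Main obstacle.} The hardest step is identifying the Dennis trace composite with the constant-loop map naturally in $X$, rather than merely objectwise. We would handle it by reducing, via the universal property of $\sphere_G[-]$ as the free colimit-preserving functor $\spc_G\to\spectra_G$, to the restriction to orbits $\orbit_G$. There everything is computed by units and the trace of the identity of $\myuline{\spectra}^\omega_H$, using the unit-preservation of the lax $G$-symmetric monoidal Dennis trace.
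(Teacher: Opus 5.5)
Your proposal is correct and is essentially the paper's argument. Your map $X\to(\udl{\func}(\udl{X},\myuline{\spectra})^{\omega})^{\simeq}\to\Omega^\infty\udl{K}_G(\udl{\func}(\udl{X},\myuline{\spectra})^{\omega})$ is adjoint to the paper's composite of the ring unit $\sphere_G\to\udl{K}_G(\myuline{\spectra}_G)$ with the colimit-interchange assembly map, and you then follow it, as the paper does, by the Dennis trace, the identification of \cref{prop:free_loop_space}, and evaluation at a point of $S^1$.

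The only difference is that the paper does not need your ``main obstacle'', namely a natural identification of the trace composite with the constant-loop inclusion. Your componentwise computation on orbits would not actually give that identification, because it ignores higher coherences. Instead, the paper only observes that the full composite is a natural endomorphism of the colimit-preserving functor $\sphere_G[-]$. Such an endomorphism is an equivalence as soon as its components on generators are equivalences (the point of each $\spc_H$ in the paper, equivalently your orbits $G/H$), and that is exactly what your orbit computation establishes.
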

\begin{proof}
    For this, consider the composition of natural transformations 
    \[\udl{\sphere}_G[-]\longrightarrow \udl{K}_G(\myuline{\spectra}_G)\otimes(-) \xlongrightarrow{\mathrm{assem.}} \udl{K}_G(\udl{\func}(\myuline{(-)},\myuline{\spectra})^{\omega}) \xlongrightarrow{\mathrm{trace}}  \udl{\sphere}_G[\mathcal{L}-] \xlongrightarrow{\eval} \udl{\sphere}_G[-]\]
    of $G$-functors $\udl{\spc}_G\rightarrow\myuline{\spectra}_G$, where the first map is given by the ring unit, the second map is the assembly map, i.e. the colimit interchange map (using that $\udl{\func}(\udl{X},\myuline{\spectra}_G)^{\omega}$ is the colimit in $\udl{\cat}\perfect_G$ of the constant diagram indexed over $\udl{X}$ with value $\myuline{\spectra}_G$), the third map is the Dennis trace together with the identification from \cref{prop:free_loop_space}, and the last map is evaluating the free loop at a point on $S^1$. All in all, this yields a natural transformation $\alpha\colon \udl{\sphere}_G[-]\rightarrow \udl{\sphere}_G[-]$ of $G$-functors $\udl{\spc}_G\rightarrow\myuline{\spectra}_G$ which preserve all $G$-colimits. Evaluating at $\ast\in \spc_H$ clearly gives the identity on $\sphere_H$ for all $H\leq G$, and so since $\udl{\spc}_G$ is generated under $G$-colimits by the point, we get that $\alpha$ is in a natural equivalence, as required.
\end{proof}

As  illustrations of the foregoing developments, let us record some immediate consequences of all the structures developed thus far.

\begin{example}\label{obs:factoring_trace_map}
    We consider the effect of the equivariant Dennis trace map when the input is the $G$-sphere spectrum $\sphere_G$. Since the trace map is a transformation of $G$-lax symmetric monoidal functors, we in particular have a commuting square
    \[
    \begin{tikzcd}
        \norm^G_eK(\sphere) \rar\dar & \norm^G_e\THH(\sphere) \simeq \sphere_G\dar["\simeq"]\\
        \udl{K}(\sphere_G) \rar & \udl{\THH}(\sphere_G)\simeq \sphere_G
    \end{tikzcd}
    \]
    in $\calg(\spectra_G)$, where the identification of the right hand terms with $\sphere_G$ is since $\udl{\THH}$ is a $G$-symmetric monoidal functor. Thus, by applying the symmetric monoidal functor $\Phi^G\colon \spectra_G\rightarrow \spectra$ which satisfies $\Phi^G\norm^G_e\simeq \id$, we obtain a factorization in $\calg(\spectra)$
    \[K(\sphere)\longrightarrow \Phi^G\udl{K}(\sphere_G)\longrightarrow \THH(\sphere)\simeq \sphere\]
    of the nonequivariant Dennis trace $K(\sphere)\rightarrow \THH(\sphere)\simeq \sphere$. 
\end{example}

\begin{example}\label{example:frobenius_lift_THH_borel}
    Let $\sC\in\calg(\cat\perfect)$ equipped with the trivial $G$-action. By considering the effect of the Dennis trace on $\udl{\borel}(\sC)\in\calg_G(\udl{\cat}\perfect_G)$ from \cref{example:algebra_objects_catperfGII} and using as above that $\Phi^G\norm^G_e\simeq \id$, we get the following diagram in $\calg(\spectra)$
    \[
    \begin{tikzcd}
         K(\sC) \rar\dar["\mathrm{tr}"] & \Phi^GK_G(\udl{\borel}(\sC)) \rar \dar["\mathrm{tr}"]& K(\sC)^{t_{\proper}G}\dar["\mathrm{tr}"]\\
         \THH(\sC) \rar & \THH(\stmodSmall_G^{\proper}(\sC)) \rar & \THH(\sC)^{t_{\proper}G}
    \end{tikzcd}
    \] where the horizontal composites are the proper Tate--Frobenius maps (which is the usual Tate--Frobenius map when $G=C_p$ from \cite{nikolausScholze}),  $\stmodSmall_G^{\proper}(\sC)$ is the proper stable module category of $\sC$ (cf. for instance \cite[Cons. 5.1.2]{PD1}. In the case of $G=C_p$, this reduces to the stable module category as defined in \cite[Def. 4.1]{krause2020picard}), and the middle bottom term is by \cref{prop:geometric_fixed_THH}. The middle vertical map should be viewed as a lift of the proper Tate construction of the Dennis trace map, and we speculate that this diagram might be useful in probing the object $\Phi^GK_G(\udl{\borel}(\sC))$, which is an interesting and hard problem related to Swan equivariant K-theory.
\end{example}

\begin{example}\label{example:frobenius_lift_THH_normed_ring}
    Similarly, by \cref{example:algebra_objects_catperfGI}, for $R\in\calg_G(\myuline{\spectra}_G)$ a normed ring $G$-spectrum, the various Dennis trace maps participate in the following diagram in $\calg(\spectra)$
    \[
    \begin{tikzcd}
         K(R^e) \rar\dar["\mathrm{tr}"] & \Phi^GK_G(R) \rar \dar["\mathrm{tr}"]& K(R^e)^{t_{\proper}G}\dar["\mathrm{tr}"]\\
         \THH(R^e) \rar & \THH(\Phi^GR) \rar & \THH(R^e)^{t_{\proper}G}.
    \end{tikzcd}
    \]
\end{example}

\subsection{Every \texorpdfstring{$G$}--spectrum is the K--theory of a  \texorpdfstring{$G$}--stable category}
\label{subsec:every_spectrum}

The following mimics the proof of \cite[Thm. 1.6]{MotLoc} word for word.

\begin{thm}\label{thm:every_spectrum_is_K-theory}
    The functor $\udl{K}\colon \udl{\cat}\perfect\rightarrow\myuline{\spectra}$ is essentially surjective. In particular, every genuine $G$-spectrum is the K--theory of a perfect $G$-stable category.
\end{thm}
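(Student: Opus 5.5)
We follow the strategy of \cite[Thm. 1.6]{MotLoc}, replacing the nonequivariant inputs by their equivariant analogues established above. The key players are \cref{prop:corep}, which identifies $\udl{K}_G$ with $\myuline{\mapsp}_{\udl{\Mot}\loc_G}(\unit,\udl{\sU}\loc_G(-))$; \cref{cor:compactness_of_unit_in_motloc}, the compactness of the unit; and \cref{thm:GMotGPres}, which says $\udl{\sU}\loc_G$ is a Dwyer--Kan localization and thus essentially surjective. The basic observation is that every object of $\Mot\loc_G$ is of the form $\sU\loc_G(\udl{\sC})$. So it suffices to show that for each $X\in\spectra_G$ there is a motive $M\in\Mot\loc_G$ with $\mapsp^{G}(\unit,M)\simeq X$ as genuine $G$-spectra, where $\mapsp^G$ denotes the $G$-spectrum-valued mapping object.

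By the adjunction of \cref{lm:adjmotGmot}, together with \cref{thm:equivmod}, the functor $\myuline{\mapsp}(\unit,-)\colon \Mot\loc_G\to\spectra_G$ is the right adjoint of the unique colimit-preserving $\spectra_G$-linear functor $\spectra_G\to\Mot\loc_G$ sending $\sphere_G\mapsto\unit$. Call this left adjoint $L=\unit\otimes-$. I would then try to produce $M$ as follows.

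First, I would check that the unit map $\sphere_G\to \myuline{\mapsp}(\unit,\unit)=\udl{K}_G(\myuline{\spectra}_G^\omega)$ is $(-1)$-connected in each fixed-point level. This amounts to checking it is an iso on $\pi_0$ of the relevant Mackey functors, i.e. that $\udl{\pi}_0$ of the Burnside Mackey functor maps isomorphically to $K_0$ of $\spectra_H^\omega$, and that negative K-groups vanish.

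Next, following \cite{MotLoc}, I would build $M$ for arbitrary $X$ by a cell-by-cell (Postnikov/cellular) construction. Present $X$ as a colimit of cells $\Sigma^n\sphere_G[G/H]$, and inductively kill homotopy by attaching cells in $\Mot\loc_G$. This uses that $\myuline{\mapsp}(\unit,-)$ preserves all $G$-colimits by compactness, so attaching cells in motives attaches the expected cells after applying $\udl{K}_G$, up to the error terms coming from $\udl{K}_G(\myuline{\spectra}^\omega_G)$ not being $\sphere_G$.

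The main obstacle is controlling these error terms, since $\mathrm{End}(\unit)=\udl{K}_G(\sphere_G)$ is not $\sphere_G$. In the nonequivariant proof this is handled by the trick that $K(\sphere)\to\sphere$ splits the unit via the Dennis trace $K(\sphere)\to\THH(\sphere)\simeq\sphere$, as in \cite{RSW0}. Here I would use \cref{cor:dennis_trace}. The equivariant Dennis trace gives a colimit-preserving functor $\udl{\THH}_G\colon\udl{\Mot}\loc_G\to\myuline{\spectra}_G$ with $\udl{\THH}_G(\unit)\simeq\sphere_G$, and the composite $\sphere_G\to\udl{K}_G(\sphere_G)\to\sphere_G$ is the identity, as in \cref{prop:A-theory_splitting}. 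This exhibits $\sphere_G$ as a retract of $\mathrm{End}(\unit)$ as an $\mathbb{E}_1$-ring map.

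I would then run the argument of \cite[Thm. 1.6]{MotLoc} verbatim. Set $M$ to be an appropriate colimit/idempotent construction, built from $L(X)$ and the idempotent on $\mathrm{End}(\unit)$ induced by the trace splitting, so that $\myuline{\mapsp}(\unit,M)\simeq X$. The checks that each step commutes with restriction to subgroups are automatic because all constructions are $G$-functorial and $\spectra_G$-linear. Finally, essential surjectivity of $\udl{\sU}\loc_G$ gives the desired $\udl{\sC}$, and applying this at every level $H\leq G$ yields the parametrized essential surjectivity.
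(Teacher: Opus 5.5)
Your list of inputs matches the paper's: corepresentability, compactness of the unit, the trace splitting, and essential surjectivity of $\udl{\sU}\loc_G$. The gap is that the step which actually produces $M$ is never carried out, and neither mechanism you suggest for it works. Write $R\coloneqq \udl{K}_G(\myuline{\spectra}^\omega_G)=\mathrm{End}(\unit)$.

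\emph{Cell attachment.} Each cell $\unit\otimes\Sigma^n\sphere_G[G/H]$ becomes $R\otimes\Sigma^n\sphere_G[G/H]$ after applying $\mapsp(\unit,-)$. You give no argument that iterating absorbs these error terms.

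\emph{Idempotent fallback.} By the adjunction and compactness of $\unit$, endomorphisms of $L(X)=\unit\otimes X$ induce exactly the $R$-linear endomorphisms of $R\otimes X$. Such a map is determined by its restriction along $\eta\otimes X\colon X\to R\otimes X$. The projection $(\eta\circ\mathrm{tr})\otimes X$ has the same restriction as the identity, so it cannot come from $\Mot\loc_G$ unless it is the identity. Concretely, take $G=e$ and $X=\sphere$. Retracts of $\unit$ correspond to idempotents of $\pi_0K(\sphere)\cong\mathbb{Z}$, so they are $0$ or $\unit$, and neither has $\mapsp(\unit,-)\simeq\sphere$. The $\pi_0$ and negative $K$-group checks are also not needed.

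The missing idea is Morita theory.
\begin{enumerate}
\item Since $\mapsp(\unit,-)$ is lax symmetric monoidal, it lifts to a functor $\Mot\loc_G\to\module_R(\spectra_G)$.
\item Since $\unit$ is compact (\cref{cor:compactness_of_unit_in_motloc}), enriched Schwede--Shipley shows that $N\mapsto\unit\otimes_R N$ is fully faithful from $\module_R(\spectra_G)$ onto the subcategory generated by $\unit$. Hence $N\simeq\mapsp(\unit,\unit\otimes_R N)$ for every $R$-module $N$.
\item The trace is used through restriction of scalars, not to split anything off. Because $\mathrm{tr}\colon R\to\sphere_G$ is a map of rings (\cref{cor:dennis_trace}), every $X\in\spectra_G$ is the underlying $G$-spectrum of the $R$-module $\mathrm{tr}^*X$.
\item Then $M\coloneqq \unit\otimes_R\mathrm{tr}^*X$ satisfies $\mapsp(\unit,M)\simeq X$, and essential surjectivity of $\udl{\sU}\loc_G$ finishes the proof.
\end{enumerate}
This is the paper's argument, and the one in \cite[Thm. 1.6]{MotLoc}.
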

\begin{proof}
    Essential surjectivity is a fiberwise statement, and so we may deal with the ordinary categories at level $G$ without loss of generality. Since $\myuline{\mapsp}(\unit,-)\colon {\Mot}\loc_G\rightarrow{\spectra}_G$ is in particular lax symmetric monoidal, it lifts to a functor $\myuline{\mapsp}(\unit,-)\colon {\Mot}\loc_G\rightarrow {\module}_{{\spectra}_G}(\udl{K}(\myuline{\spectra}))$ by the corepresentability of K--theory from \cref{prop:corep}. On the other hand, by the Dennis trace map from \cref{cor:dennis_trace} and the fact that $\udl{\THH}$ is symmetric monoidal from \cref{ex:TrG}, we obtain a map $\udl{K}(\myuline{\spectra})\rightarrow\udl{\THH}(\myuline{\spectra})=\sphere_G$ in $\calg(\spectra_G)$ which witnesses $\sphere_G$ as a commutative algebra retract of $\udl{K}(\myuline{\spectra})$. Hence, restricting along the Dennis trace gives a section to the forgetful functor ${\module}_{{\spectra}_G}(\udl{K}(\myuline{\spectra}))\rightarrow {\module}_{{\spectra}_G}(\sphere_G)={\spectra}_G$. Moreover, since \cref{cor:compactness_of_unit_in_motloc} guarantees that the tensor unit $\sU(\myuline{\spectra})\in{\Mot}\loc_G$ is compact, we see by enriched Schwede--Shipley (cf. \cite[Prop. 4.8.5.8, Thm. 4.8.5.11, Rmk. 4.8.5.12]{lurieHA} setting $\sC=\mathcal{M}={\Mot}\loc_G$ therein)    that there is a fully faithful inclusion ${\module}_{{\spectra}_G}(\udl{K}(\myuline{\spectra}))\subseteq {\Mot}\loc_G$ whose essential image is the $G$-presentable--stable subcategory generated by $\sU(\myuline{\spectra})$. In particular, it provides a section to the functor $\myuline{\mapsp}(\unit,-)\colon {\Mot}\loc_G\rightarrow {\module}_{{\spectra}_G}(\udl{K}(\myuline{\spectra}))$. We may now conclude by combining these with the fact that  $\cat\perfect_G\rightarrow\Mot\loc_G$ is essentially surjective by \cref{thm:GMotGPres}.
\end{proof}

\appendix

\section{The fixed points--cofree adjunction}\label{appendix:fixCofreeAdjunction}

In this technical appendix, we work in the general setting of categories parametrized over an  atomic orbital category $\baseCat$, in the sense of \cite[Def. 4.1]{nardinExposeIV}, with a final object $T$, an example of which is $\orbit_G$. We write $\cat_{\baseCat}^{\times}$ for the nonfull subcategory of $\cat_{\baseCat}$ on those $\baseCat$--categories which admit finite indexed products and morphisms those that preserve these. For $\udl{\sC}\in\cat_{\baseCat}$, we write $\int\underline{\sC}$ for the cocartesian unstraightening of $\underline{\sC}$ under the equivalence $\cat_{\baseCat}\simeq \mathrm{coCart}(\baseCat\op)$.

\begin{lem}\label{lem:indexed_product_adjunction}
    Let $\underline{\sC}\in\cat_{\baseCat}^{\times}$. Then the fully faithful inclusion $i\colon \sC_T\hookrightarrow \int\underline{\sC}$ admits a right adjoint $R$ so that we get a Bousfield colocalization 
\[
\begin{tikzcd}
    i\colon \sC_T \rar[shift left = 1, hook] & \int\udl{\sC}\lar[shift left = 1] \cocolon R.
\end{tikzcd}
\]
\end{lem}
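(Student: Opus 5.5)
We construct $R$ pointwise: for each object $(V,x)$ of $\int\udl{\sC}$ we exhibit a coreflection into $\sC_T$, and then invoke the standard criterion that a fully faithful functor admits a right adjoint as soon as every object admits a couniversal arrow from its essential image. Here $V\in\baseCat$ and $x\in\sC_V$. The inclusion $i$ is the fiber inclusion over the final object $T\in\baseCat\op$. Since $T$ is final in $\baseCat$, it is initial in $\baseCat\op$, so the only endomorphism of $T$ is the identity and this fiber inclusion is fully faithful.

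The candidate value is $R(V,x)\coloneqq v_*x\in\sC_T$, where $v\colon V\to T$ is the unique map. This is the indexed product, which exists because $\udl{\sC}\in\cat^{\times}_{\baseCat}$. The counit is the cocartesian-unstraightening morphism $(T,v_*x)\to(V,x)$ lying over $v\op$, that is, the composite of the cocartesian lift $(T,v_*x)\to(V,v^*v_*x)$ with the counit $\epsilon\colon v^*v_*x\to x$ of $v^*\dashv v_*$.

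The verification is a mapping space computation. For $c\in\sC_T$, a map $(T,c)\to(V,x)$ in $\int\udl{\sC}$ consists of a map $V\to T$ in $\baseCat$, which is necessarily $v$ since $T$ is final, together with a map $v^*c\to x$ in $\sC_V$. This uses the standard formula for mapping spaces in a cocartesian unstraightening:
\[\map_{\int\udl{\sC}}((T,c),(V,x))\simeq\coprod_{\alpha\in\map_{\baseCat}(V,T)}\map_{\sC_V}(\alpha^*c,x)\simeq\map_{\sC_V}(v^*c,x).\]
Adjunction then gives $\map_{\sC_V}(v^*c,x)\simeq\map_{\sC_T}(c,v_*x)$. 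The remaining step is to check that this equivalence is induced by postcomposition with the proposed counit. That is formal: precomposing with $i$ and then composing with the cocartesian edge and with $\epsilon$ realizes exactly the adjunction bijection.

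Pointwise couniversal arrows assemble into a right adjoint $R$ by the usual criterion (e.g. Lurie, HTT 5.2.4.2 or the dual of the pointwise existence lemma). Because $i$ is fully faithful, the unit $\id\to Ri$ is an equivalence, so we obtain a Bousfield colocalization. Concretely, $R(T,c)=\id_*c=c$.

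The main obstacle is bookkeeping rather than substance. We need to identify the mapping spaces of $\int\udl{\sC}$ correctly, with the right variance for $\cat_{\baseCat}\simeq\mathrm{coCart}(\baseCat\op)$, and confirm that only the unique map $V\to T$ contributes. Note also that only indexed products along maps to $T$ are needed, and for a general $V\in\baseCat$ rather than a finite coproduct of orbits. This is fine, since $\cat^\times_{\baseCat}$ includes right adjoints to $v^*$ for all morphisms of $\baseCat$.
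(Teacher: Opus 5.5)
Your proposal is correct. It produces the same right adjoint $R(V,x)\simeq v_*x$ as the paper, but by a different route.

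\textbf{Your route.} You work pointwise. Using the fiberwise description of mapping spaces in a cocartesian fibration, you compute
$\map_{\int\udl{\sC}}((T,c),(V,x))\simeq\map_{\sC_V}(v^*c,x)\simeq\map_{\sC_T}(c,v_*x)$.
You check that this equivalence is induced by your explicit counit $(T,v_*x)\to(V,v^*v_*x)\xrightarrow{\epsilon}(V,x)$. You then apply the pointwise existence criterion for adjoints.

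\textbf{The paper's route.} The paper argues globally, with no mapping-space computation.
\begin{enumerate}
\item Finality of $T$ gives a natural transformation $\udl{\constant}\sC_T\to\udl{\sC}$. Levelwise this is $v^*$, which is a left adjoint because the indexed products supply $v_*$.
\item By the dual of \cite[Prop. 7.3.2.6]{lurieHA}, the induced functor of cocartesian fibrations $\sC_T\times\baseCat\op\to\int\udl{\sC}$ then has a right adjoint relative to $\baseCat\op$.
\item Since $T$ is initial in $\baseCat\op$, the inclusion $\sC_T\to\sC_T\times\baseCat\op$ at $T$ is left adjoint to the projection.
\item Composing these two adjunctions gives $R$.
\end{enumerate}

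\textbf{What each buys.} The paper hands off to a citation both the assembly of the pointwise coreflections into a functor and the compatibility of the counit with composition. Your argument is more elementary and self-contained. It also makes the formula for $R$ and its counit explicit, and that explicit description ($Rw_*x\simeq v_*w_*x$, $Rx\simeq f_*x$) is exactly what the paper uses later in the proof of the fixed points--cofree adjunction.

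\textbf{One small correction.} In the $\infty$-categorical setting, your mapping-space formula should be stated as a fibration over $\map_{\baseCat}(V,T)$, not as a coproduct over its points. Since that space is contractible, this changes nothing.
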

\begin{proof}
Since $T$ is final in $\baseCat$, we have a canonical natural transformation of functors on $\baseCat\op$ of the form $\udl{\constant}_{\baseCat\op}\sC_T\to \udl{\sC}$. By the existence of finite indexed products, this is levelwise a left adjoint, and so by (the dual of) \cite[Prop. 7.3.2.6]{lurieHA}, the induced functor of coCartesian fibrations $\sC_T\times\baseCat\op\to \int\udl{\sC}$ admits a (relative) right adjoint. Using again that $T$ is final in $\baseCat$, the inclusion $\sC_T\to \sC_T\times\baseCat\op$ at $T$ has a right adjoint, and the composition $\sC_T\to \sC_T\times\baseCat\op\to \int\udl{\sC}$ is clearly $i$, so we get the claim by composition of adjoints. 
\end{proof}
\begin{obs}\label{fact:counit_of_cofree_adjunction}
    Let $\D\in\cat$. Recall from \cite[Thm. 2.8]{nardinExposeIV} that there is a 2-adjunction $\int\colon \cat_{\baseCat} \rightleftharpoons \cat \cocolon \udl{\cofree}$ where $\int$ is the cocartesian unstraightening. The adjunction counit $\epsilon\colon \int\udl{\cofree}(\D)\rightarrow \D$  may be described as follows: precomposing $\epsilon$ with the inclusion of the fiber $\func((\baseCat_{/V})\op,\D)$ of $\udl{\cofree}(\D)$ over $V$, the map $\func((\baseCat_{/V})\op,\D)\rightarrow\int\udl{\cofree}(\D)\xrightarrow{\epsilon} \D$   is given by evaluating at the final object $(V=V)\in \baseCat_{/V}$. 

    To see this, without loss of generality by restricting if necessary, let $V\colon\ast\rightarrow\baseCat$ be the final object. Then the datum of an object in the fiber over $V$ of $\int\udl{\cofree}(\D)$ is equivalent to a map of $\baseCat$--functors $x\colon \udl{\ast}\rightarrow \udl{\cofree}(\D)$. We now contemplate the commuting square
    \[
    \begin{tikzcd}
        \func_{\baseCat}(\udl{\cofree}(\D),\udl{\cofree}(\D)) \rar["\simeq"] \dar["x^*"']&  \func(\int\udl{\cofree}(\D),\D) \dar["x^*"]\\
        \func_{\baseCat}(\udl{\ast},\udl{\cofree}(\D)) \rar["\simeq"] & \func(\int\udl{\ast},\D)=\func(\baseCat\op,\D) \rar["V^*"] & \D
    \end{tikzcd}
    \]
    coming from the universal property of the cofree construction. Note here that the bottom equivalence \textit{is} the identification of the $V$-fiber of $\udl{\cofree}(\D)$ as $\func(\baseCat\op,\D)$. Tracing where the identity in the top left corner goes yields the claim.

    Conversely, the adjunction unit $\eta\colon \udl{\sC}\rightarrow\udl{\cofree}(\int\udl{\sC})$ for $\udl{\sC}\in\cat_{\baseCat}$ may be described concretely as follows: without loss of generality by restricting if necessary, for $V\in \baseCat$ the final object, the functor $\eta_V\colon \sC_V\rightarrow\func(\baseCat\op,\int \udl{\sC})$ is given as follows: for $x\in\sC_V$, $\eta_V$ sends it to the $\baseCat\op$ diagram in $\int\udl{\sC}$ given by sending the unique morphism $w\colon W\rightarrow V$ in $\baseCat$ to the cocartesian pushforward $x\rightarrow w^*x$. To see this, consider as above the commuting diagram
    \[
    \begin{tikzcd}
        \func_{\baseCat}(\udl{\sC},\udl{\cofree}(\int\udl{\sC})) \dar["x^*"]\rar["\simeq"]& \func(\int\udl{\sC},\int\udl{\sC})\dar["x^*"]\\
        \func_{\baseCat}(\udl{\ast},\udl{\cofree}(\int\udl{\sC}))\rar["\simeq"] & \func(\baseCat\op,\int\udl{\sC}).
    \end{tikzcd}
    \] Tracing where the identity map in the top right corner goes yields the claim because the right vertical $x^*$ map is precisely implemented by restricting along $x\colon \baseCat\op\rightarrow \int\udl{\sC}$ which is a map of cocartesian fibrations over $\baseCat\op$.
\end{obs}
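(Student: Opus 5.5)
To identify the counit $\epsilon\colon \int\udl{\cofree}(\D)\rightarrow\D$ and the unit $\eta\colon\udl{\sC}\rightarrow\udl{\cofree}(\int\udl{\sC})$ of the $2$-adjunction $\int\dashv\udl{\cofree}$ from \cite[Thm. 2.8]{nardinExposeIV}, I will use Yoneda-type arguments: test each transformation against objects of the relevant fibre and trace the identity through the natural equivalence of mapping categories that defines the adjunction. Everything is compatible with restriction along $\baseCat_{/V}\rightarrow\baseCat$, since both $\int$ and $\udl{\cofree}$ are compatible with basechange along slices. So I will first reduce to the case where $V$ is the final object of $\baseCat$, with $\baseCat_{/V}\simeq\baseCat$.

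\textbf{Counit.} An object of the $V$-fibre of $\int\udl{\cofree}(\D)$, with $V$ final, is the same as a $\baseCat$-functor $x\colon\udl{\ast}\rightarrow\udl{\cofree}(\D)$, since $\int\udl{\ast}=\baseCat\op$ and the fibre over $V$ is $\func(\baseCat\op,\D)$. Naturality of the adjunction equivalence $\func_{\baseCat}(\udl{\sC},\udl{\cofree}(\D))\simeq\func(\int\udl{\sC},\D)$ in the variable $\udl{\sC}$ gives a commuting square. Its horizontal arrows are these equivalences for $\udl{\sC}=\udl{\cofree}(\D)$ and $\udl{\sC}=\udl{\ast}$, and its vertical arrows are precomposition with $x$ and with $\int x$. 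The bottom equivalence $\func_{\baseCat}(\udl{\ast},\udl{\cofree}(\D))\simeq\func(\baseCat\op,\D)$ is by definition the identification of the $V$-fibre. The identity of $\udl{\cofree}(\D)$ maps to $\epsilon$ in the top right. Going down, it maps to $x$, viewed as a diagram $\baseCat\op\rightarrow\D$. Hence $\epsilon\circ\int x$ is the functor $\baseCat\op\rightarrow\D$ corresponding to $x$. Restricting to the object of $\int\udl{\ast}$ lying over the final object $V$, i.e. the point of $\baseCat\op$ that $x$ picks out in the fibre, shows that $\epsilon(x)$ is evaluation of $x$ at $(V=V)$.

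\textbf{Unit.} The argument is dual. For $x\in\sC_V$ with $V$ final, I will form the analogous square, with top row $\func_{\baseCat}(\udl{\sC},\udl{\cofree}(\int\udl{\sC}))\simeq\func(\int\udl{\sC},\int\udl{\sC})$ and bottom row $\func_{\baseCat}(\udl{\ast},\udl{\cofree}(\int\udl{\sC}))\simeq\func(\baseCat\op,\int\udl{\sC})$. Here $\eta$ corresponds to $\id_{\int\udl{\sC}}$. The right vertical map is restriction along $\int x\colon\baseCat\op\rightarrow\int\udl{\sC}$. This is the map of cocartesian fibrations classifying $x$: it sends $w\colon W\rightarrow V$ to $w^*x$, and morphisms to the cocartesian lifts $x\rightarrow w^*x$. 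Tracing the identity therefore shows that $\eta_V(x)$ is exactly this diagram of cocartesian pushforwards.

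\textbf{Main obstacle.} The delicate point is making sure the bottom horizontal equivalences in both squares are literally the fibre identifications used to define $\udl{\cofree}$, not merely some abstract equivalence. I plan to handle this by unwinding the definition of $\udl{\cofree}(\D)$ as $V\mapsto\func(\int\udl{V},\D)=\func((\baseCat_{/V})\op,\D)$, in which the adjunction equivalence at $\udl{\sC}=\udl{V}$ is a tautology. I will also check that the basechange reduction to final $V$ commutes with the adjunction data, which follows from the $2$-functoriality of the adjunction along slices.
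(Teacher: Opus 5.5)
Your proposal is correct and follows the paper's argument essentially verbatim. You reduce to $V$ final, use the square coming from naturality of $\func_{\baseCat}(\udl{\sC},\udl{\cofree}(\D))\simeq\func(\int\udl{\sC},\D)$ in the variable $\udl{\sC}$, with vertical maps given by precomposition with $x$ and with $\int x$, and trace the identity from the top-left corner for the counit and from the top-right corner for the unit. Calling the unit argument ``dual'' is slightly loose, since it uses the same naturality in the first variable, but the square you write down is exactly the paper's.
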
  

\begin{lem}\label{lem:characterisation_of_indexed_product_preserving}
    Let $\udl{\sC}\in\cat^{\times}_{\baseCat}$ and $\D\in\cat^{\times}$. Then:
    \begin{enumerate}
        \item The $\baseCat$--category $\udl{\cofree}(\D)$ has finite indexed products. 

        \item Under the equivalence $\func(\int\udl{\sC},\D)\simeq \func_{\baseCat}(\udl{\sC},\udl{\cofree}(\D))$, a functor $F\colon \udl{\sC}\rightarrow \udl{\cofree}(\D)$ preserves finite indexed products if and only if the associated functor $\overline{F}\colon \int\udl{\sC}\rightarrow \D$ preserves finite products in each fiber and satisfies the property that for every $w\colon W \rightarrow V$ in $\baseCat$ and every $x\in \int\udl{\sC}\times_{\baseCat\op}\{W\}$, the cartesian lift $w_*x\rightarrow x$ in $\int\udl{\sC}$ is sent to an equivalence by $\overline{F}$.
    \end{enumerate}
\end{lem}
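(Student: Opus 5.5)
For (1), I would use the concrete description of $\udl{\cofree}(\D)$: its fiber over $V\in\baseCat$ is $\func((\baseCat_{/V})\op,\D)$, and for $w\colon W\to V$ the restriction $w^*$ is precomposition along the functor $(\baseCat_{/W})\op\to(\baseCat_{/V})\op$ induced by postcomposition with $w$. I would produce the right adjoint $w_*$ as right Kan extension along this functor. The point to check is that the right Kan extension only involves finite limits, so that it exists because $\D$ has finite products. Atomic orbitality of $\baseCat$ gives this. For $U\to V$, the relevant comma category is the slice of $\baseCat_{/W}$ over the fiber product $U\times_V W$, which decomposes as a finite coproduct of orbits. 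The comma category therefore has a finite discrete set of initial components, and the Kan extension is a finite product of values. The same computation shows the Beck–Chevalley condition for base change along $u\colon U\to V$. By \cref{recoll:adjunction_colimits}, these fiberwise adjoints then assemble into $\baseCat$-adjoints, so $\udl{\cofree}(\D)$ has finite indexed products.

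For (2), I would use that $F$ commutes with restrictions and translate everything through the adjunction $\int\dashv\udl{\cofree}$. \cref{fact:counit_of_cofree_adjunction} identifies $\overline{F}(x)$, for $x$ over $V$, with $F_V(x)$ evaluated at the terminal object $(V=V)$. By the formula in (1), the indexed-product comparison map $F_V(w_*x)\to w_*F_W(x)$ evaluated at $(V=V)$ is
\[
\overline{F}(w_*x)\longrightarrow \prod_i \overline{F}(\text{restriction of }x\text{ to the orbits of }W)\simeq \overline{F}(x),
\]
at least when $W$ is an orbit. Here the last equivalence uses that $\overline F$ inverts cocartesian-trivial data. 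This first map is $\overline{F}$ applied to the cartesian lift $w_*x\to x$. By the same formula, the value at a general object $(U\to V)$ reduces, after restricting along $U\to V$ (which commutes with $F$ and with $w_*$ by base change), to the terminal case for the pulled-back map $\overline w$. Hence $F$ preserves $w_*$ for all $w$ exactly when $\overline F$ inverts the cartesian lifts for maps of orbits and preserves fiberwise finite products. The fiberwise products account for non-orbit $W$ and for the empty product, i.e.\ terminal objects.

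The main obstacle is the bookkeeping identifying the comparison map with $\overline{F}$ of the cartesian edge. This needs the explicit unit and counit of \cref{fact:counit_of_cofree_adjunction}, together with \cref{lem:indexed_product_adjunction}, to know that cartesian lifts $w_*x\to x$ exist in $\int\udl{\sC}$ exactly because $w^*\dashv w_*$. I would first verify this in the universal case $\udl{\sC}=\udl{\cofree}(\D)$ with $F=\id$, where the cartesian lift is the counit of the Kan extension. I would then deduce the general case by naturality in $F$.
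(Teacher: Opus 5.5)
Your proposal is correct and follows essentially the paper's route. For (1) you realize $w_*$ as a right Kan extension that orbitality reduces to finite products; the paper gets Beck--Chevalley from smooth/proper base change, where you use the pointwise formula. For (2) you identify $\overline{F}$ of the cartesian lift $w_*x\to x$ with $\eval_V$ of the comparison map $F_Vw_*x\to w_*F_Wx$, then base change along $u\colon U\to V$ and conclude using fiberwise products and joint conservativity of the evaluations, exactly as the paper does; your reduction to the universal case $F=\mathrm{id}$ is a clean packaging of the paper's ``straightforward check''.

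One small correction: the existence of cartesian lifts is the standard fact that a cocartesian fibration with right-adjoint transition functors is also cartesian. It does not follow from \cref{lem:indexed_product_adjunction}, which only concerns the fiber over $T$.
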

\begin{proof}
    To see (1), for $w\colon W \rightarrow V$ in $\baseCat$, orbitality of $\baseCat$ ensures that the right Kan extension $w_* \colon \func((\baseCat_{/W})\op,\D)\rightarrow \func((\baseCat_{/V})\op,\D)$ exists because the appropriate limits that need to be taken are all finite products. Moreover, for $u\colon U \rightarrow V$ and writing $\sqcup_iZ_i$ for the orbital decomposition of $U\times_VW$, the commuting square of restriction functors obtained by applying $\func((-)\op,\D)$ to 
    \[
    \begin{tikzcd}
        \coprod_i\baseCat_{/Z_i}=\baseCat_{/\sqcup_iZ_i} \rar\dar\ar[dr, phantom, "\lrcorner"] & \baseCat_{/U}\dar["u"]\\
        \baseCat_{/W} \rar["w"] & \baseCat_{/V}
    \end{tikzcd}
    \]
    is horizontally right adjointable by smooth/proper basechange because the map $w$ is a cartesian fibration. Therefore, $\udl{\cofree}(\D)$ indeed admits finite indexed products.

    For part (2), note first that a $\baseCat$--category $\udl{\E}$ having finite indexed products is precisely the property that each fiber category has finite products, the cocartesian morphisms of the cocartesian fibration $p\colon \int\udl{\E}\rightarrow \baseCat\op$ preserve these finite products,  that $p$ is in fact also a cartesian fibration, and that these cartesian morphisms satisfy appropriate Beck--Chevalley basechanges. From this perspective, a $\baseCat$--functor $F\colon \udl{\sC}\rightarrow \udl{\cofree}(\D)$ which preserves finite indexed products is thus equivalently  a functor $F\colon \int\udl{\sC}\rightarrow \int\udl{\cofree}(\D)$ over $\baseCat\op$ which is a map of cocartesian and cartesian fibrations and which fiberwise preserves finite products.

    Now,  the functor $\overline{F}$ is given by the composite $\int\udl{\sC}\xrightarrow{\int F} \int\udl{\cofree}(\D) \xrightarrow{\epsilon} \D$. By \cref{fact:counit_of_cofree_adjunction}, for $W\in\baseCat$, the restriction of $\overline{F}$  to the fiber over $W$ is given by $\sC_W \xrightarrow{F_W} \func((\baseCat_{/W})\op,\D)\xrightarrow{\eval_{W}}\D$. In particular, for a morphism $f\colon W \rightarrow V$ in $\baseCat$, the cartesian morphism $w_*x\rightarrow x$ in $\int\udl{\sC}$ gets sent to $\eval_VF_Vw_*x\rightarrow \eval_WF_Wx$. But since for any $d\in \func((\baseCat_{/W})\op,\D)$, $\eval_Vw_*d\simeq \eval_Wd$, we get that $\eval_WF_Wx\simeq \eval_Vw_*F_Wx$. Under this identification, it is straightforward to check that the map $\eval_VF_Vw_*x\rightarrow \eval_WF_Wx$ is obtained by applying $\eval_V$ to the canonical map 
    \begin{equation}\label{eqn:canonical_map_to_cartesian_lift}
        F_Vw_*x\rightarrow w_*F_Wx
    \end{equation} coming from the universal property of the cartesian morphism $w_*F_Wx\rightarrow F_Wx$ in $\int\udl{\cofree}(\D)$. On the other hand, for any other morphism $u\colon U \rightarrow V$ in $\baseCat$, recalling the notation from \cref{setting:pullback_notations}, we have by the Beck--Chevalley basechange and that $F$ was a map of cocartesian fibrations preserving fiberwise products that applying $u^*$ to \cref{eqn:canonical_map_to_cartesian_lift} gives the map
    \begin{equation}\label{eqn:broken_up_canonical_map_to_cartesian_lift}F_U(\prod_i\overline{w}_{i*}\overline{u}_i^*x)\longrightarrow \prod_i\overline{w}_{i*}F_{Z_i}\overline{u}_i^*x.\end{equation} Applying $\eval_{U}$ to this map and using the hypothesis that $\overline{F}\colon \int\udl{\sC}\rightarrow \D$ preserves products in each fiber gives
    \[\prod_i\overline{F}\overline{w}_{i*}\overline{u}_i^*x\simeq \prod_i\eval_U{F}_U\overline{w}_{i*}\overline{u}_i^*x\longrightarrow \prod_i\eval_{Z_i}F_{Z_i}\overline{u}_i^*x\simeq \prod_i\overline{F}\overline{u}_i^*x.\]

    All in all, since the evaluation maps $\func((\baseCat_{/V})\op,\D)\rightarrow \D$ are jointly conservative, we thus see that \cref{eqn:canonical_map_to_cartesian_lift} is an equivalence for all $w\colon W \rightarrow V$ in $\baseCat$ and all $x\in \sC_W$ if and only if $\overline{F}u_*y \rightarrow\overline{F}y$ is an equivalence for all $u\colon U\rightarrow V$ in $\baseCat$ and all $y\in\sC_U$. This completes the proof.
\end{proof}
	
\begin{prop}\label{prop:fix_points_cofree_adjunction}
    Let $\baseCat$ be an atomic orbital category with final object $T$. Then there is an adjunction 
    $\eval_T\colon \cat^{\times}_{\baseCat}\rightleftharpoons \cat^{\times} \cocolon \udl{\cofree}$.

    For $\udl{\sC}\in\cat^{\times}_{\baseCat}$, the unit $\udl{\sC}\rightarrow \udl{\cofree}(\sC_T)$ of the adjunction is given at level $V$ by $\sC_V\xrightarrow{\eta}\func((\baseCat_{/V})\op,\sC_V) \xrightarrow{v_*} \func((\baseCat_{/V})\op,\sC_T)$ where $\eta$ is the adjunction unit for $\int\dashv \udl{\cofree}$ from \cref{fact:counit_of_cofree_adjunction}, and $v\colon V\rightarrow T$ is the unique map. Explicitly,    this takes $x\in\sC_V$ to the diagram $(\baseCat_{/V})\op\rightarrow \sC_T$ given by sending $w\colon W \rightarrow V$ to $v_*w_*w^*x\in\sC_T$.

    For $\D\in\cat^{\times}$, the counit $\func(\baseCat\op,\D)\rightarrow \D$ of the adjunction is given by evaluating at the final object $T$.
\end{prop}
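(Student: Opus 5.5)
The plan is to restrict the big 2-adjunction $\int\dashv\udl{\cofree}$ from \cref{fact:counit_of_cofree_adjunction} to the product-preserving subcategories, and then compose with the Bousfield colocalization of \cref{lem:indexed_product_adjunction}. For $\udl{\sC}\in\cat^{\times}_{\baseCat}$ and $\D\in\cat^{\times}$, \cref{lem:characterisation_of_indexed_product_preserving}(1) shows that $\udl{\cofree}(\D)\in\cat^{\times}_{\baseCat}$. Part (2) of the same lemma identifies $\func^{\times}_{\baseCat}(\udl{\sC},\udl{\cofree}(\D))$ with the full subcategory of $\func(\int\udl{\sC},\D)$ spanned by functors $\overline{F}$ with two properties: $\overline{F}$ preserves fiberwise finite products, and $\overline{F}$ inverts all cartesian edges $w_*x\to x$.

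The key step is to identify this subcategory with $\func^{\times}(\sC_T,\D)$ via restriction along $i\colon\sC_T\hookrightarrow\int\udl{\sC}$. The right adjoint $R$ of $i$ sends $x\in\sC_V$ to $v_*x\in\sC_T$, where $v\colon V\to T$ is the unique map; the counit $iRx\to x$ is precisely the cartesian edge $v_*x\to x$. A functor that inverts cartesian edges therefore inverts the counit, so it is determined by its restriction to $\sC_T$ through $\overline{F}\simeq \overline{F}iR$. Conversely, given $F_0\in\func^{\times}(\sC_T,\D)$, the functor $F_0R$ satisfies both properties. It inverts cartesian edges $w_*x\to x$ because $Rw_*x\simeq v_*w_*x\simeq Rx$, using the composite $v\circ w$. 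It preserves fiberwise products because $v_*$ preserves products, being a right adjoint. Thus restriction along $i$ and precomposition with $R$ are mutually inverse equivalences $\func^{\times}(\sC_T,\D)\simeq\func^{\times}_{\baseCat}(\udl{\sC},\udl{\cofree}(\D))$.

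Naturality in both variables holds because both identifications are natural, and this yields the adjunction $\eval_T\dashv\udl{\cofree}$. The main obstacle is to make sure that the equivalence is functorial and not merely objectwise. I would handle this by noting that restriction along the colocalization $i$ is an equivalence onto the functors inverting the $R$-local counit maps, which is a standard fact about Bousfield (co)localizations. The only remaining check is that the set of cartesian edges and the set of counit maps generate the same saturated class, which follows from the 2-out-of-3 computation above.

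The unit and counit are then read off by tracing identities. For the counit, take $\udl{\sC}=\udl{\cofree}(\D)$ and the identity functor. The corresponding $\overline{F}$ is the counit $\epsilon$ of $\int\dashv\udl{\cofree}$, and restricting it to the fiber $\sC_T=\func(\baseCat\op,\D)$ gives evaluation at $T$ by \cref{fact:counit_of_cofree_adjunction}. For the unit, take $\D=\sC_T$ and $F_0=\id$. The corresponding functor is $R\colon\int\udl{\sC}\to\sC_T$, and its transpose under $\int\dashv\udl{\cofree}$ is $\udl{\cofree}(R)\circ\eta$. Evaluating at level $V$ with the explicit description of $\eta$ from \cref{fact:counit_of_cofree_adjunction}, the object $x$ is sent to the diagram $w\mapsto R(w^*x)=v_*w_*w^*x$, which is the stated formula.
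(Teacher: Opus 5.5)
Your proposal is correct and follows the paper's proof essentially step for step. Like the paper, you restrict the colocalization $R^*\dashv i^*$ from \cref{lem:indexed_product_adjunction} to product-preserving functors via \cref{lem:characterisation_of_indexed_product_preserving}(2), verify the criterion for $F_0R$ using $Rw_*x\simeq v_*w_*x\simeq Rx$, and read off the unit $\udl{\cofree}(R)\circ\eta$ and the counit $\epsilon\circ i$ from \cref{fact:counit_of_cofree_adjunction}; your remark that $F_0R$ preserves fiberwise products because $v_*$ is a right adjoint makes explicit a check the paper leaves implicit.
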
   
\begin{proof}
    For $\udl{\sC}\in\cat^{\times}_{\baseCat}$, we write $i\colon \sC_T \hookrightarrow \int\udl{\sC}$ for the inclusion of the fiber over $T$, which is fully faithful since $T$ was the final object in $\baseCat$. Therefore, by the universal property of $\udl{\cofree}$ and the Bousfield colocalization from \cref{lem:indexed_product_adjunction}, we obtain Bousfield colocalization
    \[
    \begin{tikzcd}
        \func(\sC_T,\D) \rar["R^*", hook, shift left = 1] &\func(\int\udl{\sC},\D)\simeq \func_{\baseCat}(\udl{\sC},\udl{\cofree}(\D)). \lar[shift left = 1, "i^*"]
    \end{tikzcd}
    \]
    Now, by the characterisation from \cref{lem:characterisation_of_indexed_product_preserving} (2), $i^*$ restricts to a functor $\func^{\udl{\times}}(\int\udl{\sC},\D)\rightarrow \func^{\times}(\sC_T,\D)$. Conversely, suppose that $\overline{F}_T\colon \sC_T\rightarrow \D$ is a finite product preserving functor. We claim that $R^*\overline{F}_T=\overline{F}_TR\colon \int\udl{\sC}\rightarrow \D$ corresponds to a finite indexed product preserving functor $\udl{\sC}\rightarrow \udl{\cofree}(\D)$. By the characterisation from \cref{lem:characterisation_of_indexed_product_preserving} (2), it thus suffices to show that $\overline{F}_TR$ preserves finite products in each fiber and that for each $w\colon W \rightarrow V$ in $\baseCat$ and each $x\in\sC_W$, the map $\overline{F}_TRw_*x\rightarrow \overline{F}_TRx$ is an equivalence. But then, writing $v\colon V\rightarrow T$ and $f\colon W \rightarrow T$ for the unique maps to the final object $T$, we thus have that $f\simeq vw$, and by construction of $R$ from \cref{lem:indexed_product_adjunction}, $Rw_*x\simeq v_*w_*x\simeq f_*x$ and $Rx\simeq f_*x$. It is then straightforward to check that under these identifications, the map of interest is identified with the equivalence $\overline{F}_Tv_*w_*x\xrightarrow{\simeq} \overline{F}_Tf_*x$. This proves the claim, so that $R^*$ restricts to a functor $\func^{\times}(\sC_T,\D)\hookrightarrow \func^{\udl{\times}}(\int\udl{\sC},\D)$. 

    Therefore, we also have a Bousfield colocalization 
    \begin{equation}\label{eqn:iR*_adjunction}
    \begin{tikzcd}
        \func^{\times}(\sC_T,\D) \rar["R^*", hook, shift left = 1] &\func^{\udl{\times}}(\int\udl{\sC},\D). \lar[shift left = 1, "i^*"]
    \end{tikzcd}
    \end{equation} To show that this is an equivalence, it suffices now to argue that for $F\colon \int\udl{\sC}\rightarrow\D$ a finite indexed product preserving functor, $R^*i^*F\simeq FiR\rightarrow F$ is an equivalence. Again, by the characterisation \cref{lem:characterisation_of_indexed_product_preserving} (2) of finite indexed product preserving functors, this is true by hypothesis. By passing this equivalence to groupoid cores, we have thus shown that there is an equivalence
    $\map_{\times}(\sC_T,\D) \simeq\map_{\udl{\times}}(\udl{\sC},\udl{\cofree}(\D))$ which is clearly natural in $\udl{\sC}$ and $\D$. This witnesses the claimed adjunction in the statement.

    Finally, to see the descriptions for the adjunction (co)units, simply note that the equivalence \cref{eqn:iR*_adjunction} dictates that the adjunction unit and counit are given by 
    \[\udl{\sC}\xlongrightarrow{\eta} \udl{\cofree}(\int\udl{\sC}) \xlongrightarrow{R} \udl{\cofree}(\sC_T)\quad\quad\quad \quad \func(\baseCat\op,\D) \xhookrightarrow{i} \int\udl{\cofree}(\D) \xlongrightarrow{\epsilon} \D \] which, together with \cref{fact:counit_of_cofree_adjunction}, yields the required descriptions.
\end{proof}

We can now provide the proof of \cref{lem:fixmackadj}, as promised. 

\begin{proof}[Proof of \cref{lem:fixmackadj}.]
    The claimed adjunction is obtained by considering 
    \[
    \begin{tikzcd}
        \cat^{\oplus}_G \rar[shift left = 1, hook, "\inclusion"]& \cat^{\times}_G \rar[shift left = 1, "(-)^G"] \lar[shift left = 1, "\udl{\cmonoid}"] & \cat^{\times} \lar[shift left = 1, "\udl{\cofree}"]
    \end{tikzcd}
    \]
    where the first is by \cite[Prop. 5.11]{nardinExposeIV} (see also \cite[Thm. 7.4]{CLLSpans}) and the second by \cref{prop:fix_points_cofree_adjunction}. Now observe that this factors to an adjunction $(-)^G\colon \cat^{\oplus}_G\rightleftharpoons \cat^{\oplus} \cocolon \udl{\mackey}_G(-)$ via the full subcategory $\cat^{\oplus}\subseteq \cat^{\times}$ since for any $G$-semiadditive $\udl{\sC}$, $\udl{\sC}^G=\sC_G$ is semiadditive in the ordinary sense. This also uses the fact that $\udl{\cmonoid}\circ \udl{\cofree}\simeq \udl{\mackey}$ by \cite[6.6]{nardinExposeIV} or \cite[Thm. D]{philMasters}. The descriptions of the unit and counit are now clear by \cref{prop:fix_points_cofree_adjunction}.
\end{proof}

\section{\texorpdfstring{$G$}--presentable--stable categories as modules over \texorpdfstring{$G$}--spectra}
\label{appendix:modules_SpG}

We start with a result of independent interest, which will help us deduce \Cref{thm:GMotGPres} from \cite{MotLoc} instead of reproving it. 
\begin{thm}\label{thm:equivmod}
    The $G$-fixed points functor $\presentable^L_{G,\mathrm{st}}\rightarrow \presentable^L_{\mathrm{st}}$ induces an equivalence of commutative $\presentable^L_{\mathrm{st}}$-algebras \[\presentable^L_{G,\mathrm{st}}\simeq \module_{\spectra_G}(\presentable^L_{\mathrm{st}}). \]

    Furthermore, a $G$-presentable $G$-stable $G$-category is compactly generated if and only if its image in $\presentable^L_{\mathrm{st}}$ is, and the above equivalence restricts to a symmetric monoidal equivalence \[\cat\perfect_G\simeq \module_{\spectra_G^\omega}(\cat\perfect).\]
\end{thm}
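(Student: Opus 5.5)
The strategy is to realise $\presentable^L_{G,\mathrm{st}}$ as modules over its unit by a Barr--Beck/Lurie monadicity argument inside $\presentable^L_{\mathrm{st}}$. First, I would use that $\presentable^L_{G,\mathrm{st}}$ is symmetric monoidal with unit $\myuline{\spectra}_G$. The $G$-fixed points functor $(-)^G\colon \presentable^L_{G,\mathrm{st}}\to\presentable^L_{\mathrm{st}}$ is lax symmetric monoidal, being corepresented by the unit via $\udl{\func}^L(\myuline{\spectra}_G,-)^G$ and the universal property of $\myuline{\spectra}_G$ as the free $G$-presentable $G$-stable category on a point. It therefore factors through $\module_{\spectra_G}(\presentable^L_{\mathrm{st}})$. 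Conversely, there is the symmetric monoidal functor $\spectra_G\otimes-\colon \presentable^L_{\mathrm{st}}\to\presentable^L_{G,\mathrm{st}}$, left adjoint to $(-)^G$, and I would check that these two functors are adjoint.

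To show that the comparison $\presentable^L_{G,\mathrm{st}}\to\module_{\spectra_G}(\presentable^L_{\mathrm{st}})$ is an equivalence, I would apply Lurie's monadicity criterion to the adjunction $\myuline{\spectra}_G\otimes - \dashv (-)^G$. The first input is that the monad is $\spectra_G\otimes-$, i.e.\ a projection formula $(\myuline{\spectra}_G\otimes\D)^G\simeq\spectra_G\otimes\D^G$ for $\D$ in the image. The second input is that $(-)^G$ is conservative and preserves the relevant colimits.

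For conservativity, the key is that every object of a $G$-stable $G$-category is detected at the top level. Indeed, $\sC_H\simeq \module_{\udl{\func}}(\ldots)$: by $G$-semiadditivity, $\sC_H$ is a retract-type construction of $\sC_G$, namely $\sC_H\simeq \func^L_{\spectra_G}(\spectra_H,\sC_G)$ with $\spectra_H$ viewed as the $\spectra_G$-module $\module_{\ind^G_H\sphere}$. Concretely, I would argue that a $G$-presentable $G$-stable $\udl{\sC}$ is recovered from $\sC_G$ as $\udl{\sC}(G/H)\simeq \module_{A_H}(\sC_G)$, where $A_H=\sphere_G[G/H]=\ind^G_H\res^G_H\sphere$ is the commutative algebra in $\spectra_G$ whose modules give $\spectra_H$. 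This identification uses the Wirthmüller isomorphism ($\ind=\coind$), which makes $\res^G_H$ monadic with monad $\coind^G_H\res^G_H\simeq A_H\otimes-$. It immediately gives conservativity and essential surjectivity: given $M\in\module_{\spectra_G}(\presentable^L_{\mathrm{st}})$, set $\udl{M}(G/H)=\module_{A_H}(M)$. I would verify $G$-semiadditivity of $\udl{M}$ by tensoring the semiadditivity of $\myuline{\spectra}_G$ with $M$ over $\spectra_G$, and full faithfulness similarly.

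Symmetric monoidality follows once the comparison is known to be an equivalence, since it is lax monoidal and strong on generators $\spectra_G\otimes\D$. For the second part, compact objects of $\udl{\sC}$ are fiberwise compact, and since $\res^G_H$ has colimit-preserving right adjoint $\coind$, generators $\ind^G_H x$ of $\sC_G$ detect compact generation at all levels. Passing to $\ind$/compacts via \cref{eqn:ind_omega_equivalence} then yields $\cat\perfect_G\simeq\module_{\spectra_G^\omega}(\cat\perfect)$. The main obstacle is the monadicity step: proving $\udl{\sC}(G/H)\simeq\module_{A_H}(\sC_G)$ naturally and coherently in the $G$-category $\udl{\sC}$. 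If that proves too hard, I would instead invoke \cref{prop:fix_points_cofree_adjunction}-style arguments together with $\myuline{\spectra}_G=\udl{\mackey}_G(\spectra)$ and $\udl{\sC}\simeq\udl{\mackey}_G(\sC_G)\otimes_{\ldots}$ from \cref{lem:fixmackadj}.
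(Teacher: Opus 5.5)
Your overall architecture matches the paper's. You apply the linear Barr--Beck--Lurie theorem to $\myuline{\spectra}_G\otimes-\dashv(-)^G$, which reduces the equivalence to colimit preservation, $\presentable^L_{\mathrm{st}}$-linearity and conservativity of $(-)^G$. You then handle compactness using that $\res^G_H$ preserves compacts, because its right adjoint $\coind^G_H\simeq\ind^G_H$ preserves colimits.

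The paper takes the first two inputs from Martini--Wolf. Note that what is needed there is linearity $(\udl{\sC}\otimes\D)^G\simeq\sC_G\otimes\D$ for \emph{all} $\udl{\sC}$, or equivalently an identification of $(\myuline{\spectra}_G\otimes-)^G$ with $\spectra_G\otimes-$ as monads. An objectwise formula ``for $\D$ in the image'' is not enough.

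You genuinely diverge on conservativity. The paper's \cref{lem:Gcons} is a short retract argument. Since $f$ commutes with $\ind^G_H$, the functor $f_H$ is fully faithful and essentially surjective on the essential image of $\res^G_H$. By the double coset formula every $y$ is a retract of $\res^G_H\ind^G_H y$, and idempotent-completeness finishes the argument.

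You instead route through $\sC_H\simeq\module_{A_H}(\sC_G)$, and as written that step is misattributed. The functor that must be monadic is $\coind^G_H\colon\sC_H\to\sC_G$, whose monad is $\coind^G_H\res^G_H$. It is not $\res^G_H$, which is not conservative in general; for example $\res^G_e$ kills $\widetilde{EG}$. Moreover, conservativity of $\coind^G_H\simeq\ind^G_H$ is exactly the retract $y\to\res^G_H\ind^G_H y\to y$.

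So your route consumes the paper's key input and then adds more on top: identifying $\coind^G_H\res^G_H$ with $A_H\otimes-$ as monads, plus the naturality in $\udl{\sC}$ that you flag as the main obstacle. This buys an explicit description of every fiber and an explicit inverse $M\mapsto\{\module_{A_H}(M)\}_H$. None of it is needed for the theorem. Barr--Beck only asks for conservativity on single morphisms, and your separate essential-surjectivity, semiadditivity and full-faithfulness checks are redundant once monadicity is applied.

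The same retract also drives the compactness part. To see that $\res^G_H$ of compact generators of $\sC_G$ generate $\sC_H$, you need $\coind^G_H$ to be conservative; generators of the form $\ind^G_H x$ are the wrong direction. To see that $f_G$ compact-preserving forces $f_H$ compact-preserving, you use that $f_H y$ is a retract of $\res^G_H f_G\ind^G_H y$.
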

We will use the following lemma: 
\begin{lem}\label{lem:Gcons}
    The $G$-fixed points functor $\cat^{\oplus}_G\to \cat^{\oplus}$ from $G$-semiadditive $G$-categories to semiadditive $G$-categories is conservative on idempotent-complete categories. 
\end{lem}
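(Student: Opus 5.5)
Let $F\colon \udl{\sC}\to\udl{\D}$ be a $G$-semiadditive functor between idempotent-complete $G$-semiadditive $G$-categories such that $F^G\colon \sC_G\to\D_G$ is an equivalence. The plan is to show that $F_H\colon \sC_H\to\D_H$ is an equivalence for every $H\leq G$. Since an equivalence of $G$-categories can be checked levelwise, this suffices.

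The key tool is the induction functor $\ind^G_H\colon \sC_H\to\sC_G$. By $G$-semiadditivity it is both left and right adjoint to $\res^G_H$, and $F$ commutes with both $\ind^G_H$ and $\res^G_H$ since it preserves finite indexed (co)products. For $x,y\in\sC_H$ we have
\[\map_{\sC_G}(\ind^G_H x,\ind^G_H y)\simeq \map_{\sC_H}(x,\res^G_H\ind^G_H y),\]
and by the double coset formula (the basechange in \cref{setting:pullback_notations}) $\res^G_H\ind^G_H y\simeq \bigoplus_{HgH}\ind^H_{H\cap {}^gH}c_g^*\res y$. This decomposition contains the summand $y$ for the trivial double coset, and the decomposition is compatible with $F$.

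For fully faithfulness of $F_H$, consider the map $\map_{\sC_H}(x,y)\to\map_{\D_H}(Fx,Fy)$. It is a retract of the corresponding map on $\map_{\sC_H}(x,\res\ind y)\simeq\map_{\sC_G}(\ind x,\ind y)$. That latter map is an equivalence because $F^G$ is fully faithful. The retraction is given by the inclusion and projection of the identity double-coset summand, which is natural in $F$. A retract of an equivalence is an equivalence, so $F_H$ is fully faithful.

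Essential surjectivity is the step I expect to be the main obstacle, and it is where idempotent completeness enters. Given $d\in\D_H$, the object $\ind^G_H d\in\D_G$ lifts to some $c\in\sC_G$ with $F^G c\simeq \ind d$. Then $F_H\res c\simeq\res\ind d\simeq\bigoplus_{HgH}(\ldots)$ has $d$ as a direct summand, cut out by an idempotent $e$ of $F_H\res c$. By the fully faithfulness just proved, $e$ lifts to an idempotent of $\res c$ in $\sC_H$. Because $\sC_H$ is idempotent complete, this idempotent splits, and its image $c'$ satisfies $F_H c'\simeq d$, using that $F_H$ preserves splittings of idempotents. One must make sure the lifted idempotent is coherent, i.e. an $\mathrm{Idem}$-diagram rather than just a homotopy idempotent. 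Since $F_H$ is fully faithful, it induces an equivalence on $\func(\mathrm{Idem},-)$ restricted to the relevant subcategory, so the whole coherent idempotent lifts. This completes the argument.
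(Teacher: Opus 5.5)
Your proof is correct and follows essentially the same route as the paper. Full faithfulness of $F_H$ comes from the double coset formula, which exhibits the identity as a retract of $\res^G_H\ind^G_H$, combined with the adjunction $\ind^G_H\dashv\res^G_H$ and the compatibility of $F$ with induction; essential surjectivity then comes from lifting and splitting the idempotent, using idempotent-completeness of $\sC_H$. The only cosmetic differences are that the paper first proves full faithfulness on the essential image of $\res^G_H$ and then passes to retracts, whereas you retract only in the second variable, and that you spell out the coherence of the lifted idempotent, which the paper leaves implicit.
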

\begin{proof}
Suppose $\udl{\A}\to \udl{\B}$ is a $G$-semiadditive functor of $G$-semiadditive $G$-categories such that $\A_G\to \B_G$ is an equivalence. In this case, for any subgroup $H\leq G$, by $G$-semiadditivity, the following diagram is vertically left adjointable: 
\[\begin{tikzcd}
	{\A_G} & {\B_G} \\
	{\A_H} & {\B_H}.
	\arrow["{f_G}", from=1-1, to=1-2]
	\arrow["{\res_H^G}"', from=1-1, to=2-1]
	\arrow["{\res_H^G}", from=1-2, to=2-2]
	\arrow["{f_H}"', from=2-1, to=2-2]
\end{tikzcd}\]
If $f_G$ is fully faithful, it follows that $f_H$ is fully faithful on the image of $\res_H^G$, and therefore on retracts of objects in the image. But by the double coset formula, $\id_{\A_H}$ is a retract of $\res_H^G\ind_H^G$ so that $f_H$ is fully faithful. 

Second, by the commutativity of the above diagram, if $f_G$ is essentially surjective, then $f_H$ is essentially surjective on the essential image of $\res_H^G$. Thus, if $\A_H$ is idempotent-complete and $f_H$ is fully faithful, the above retract argument also shows that $f_H$ is essentially surjective. This proves that on idempotent-complete $G$-semiadditive $G$-categories, $G$-fixed points is conservative. 
\end{proof}
\begin{proof}[Proof of \Cref{thm:equivmod}]
Consider the basechange functor $\presentable^L_{\mathrm{st}}\to \presentable^L_{G,\mathrm{st}}, \A\mapsto \A\otimes\myuline{\spectra}_G$. This is a strong symmetric monoidal functor, left adjoint to the fixed points functor: combine \cite[Prop. 2.6.3.6, Prop. 2.6.3.7]{MartiniWolf2022Presentable} with the fact that $\presentable^L_{G,\mathrm{st}}\simeq \module_{\myuline{\spectra}_G}(\presentable_G)$ by \cite[Prop. 2.2.19]{kaifNoncommMotives}. By the linear Barr--Beck--Lurie theorem \cite[Prop. 4.8.5.8]{lurieHA}, to prove the equivalence, it therefore suffices to prove that the fixed points functor preserves colimits, tensors with objects of $\presentable^L_{\mathrm{st}}$, and is conservative. The first two properties are in fact much more general, see \cite[Prop. 2.4.4.11, Prop. 2.6.3.13]{MartiniWolf2022Presentable}, and conservativity follows from \Cref{lem:Gcons}.

Now, by definition, an object in $\udl{\A}$ is compact if and only if it is so fiberwise, that is, in $\A_H$ for all $H\leq G$. Since the restriction functors $\res_H^G$ have colimit-preserving right adjoints (the right adjoint is coinduction, which by definition of $G$-semiadditive is equivalent to induction), they preserve compact objects, so an object $x\in \A_G$ is compact in $\udl{\A}$ if and only if it is in $\A_G$.   

This proves that the equivariant notion of compact is the same as underlying notion in this case, so that compact generation and compact preservation also agree in the equivariant and underlying senses. Thus the equivalence restricts to an equivalence between categories of compactly generated categories and compact-preserving functors on either side, which yields the desired result.
\end{proof}

\begin{rmk}
    The proofs above also clearly works word-for-word when we replace $\orbit_G$ with an arbitrary atomic orbital category, in the sense of \cite[Def. 4.1]{nardinExposeIV}, with a final object.
\end{rmk}

\begin{cor}\label{cor:Gmotivic}
    Under the equivalence $\cat\perfect_G\simeq \module_{\spectra_G^\omega}(\cat\perfect)$ from \Cref{thm:equivmod}, the notion of (finitary) Mackey $G$-localizing invariant and the notion of (finitary) $\spectra_G^\omega$-localizing invariant from \cite[App. B]{MotLoc} are equivalent, and thus the notion of motivic equivalences coincide.    In particular, the localization of $\cat\perfect_G$ at motivic equivalences is presentable stable. 
\end{cor}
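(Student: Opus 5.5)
The plan is to transport both notions across the symmetric monoidal equivalence $\Theta\colon\cat\perfect_G\simeq \module_{\spectra_G^\omega}(\cat\perfect)$ of \cref{thm:equivmod}, whose underlying functor is $G$-fixed points $\udl{\sC}\mapsto \sC_G$. By definition (\cite[App. B]{MotLoc}), a $\spectra_G^\omega$-localizing invariant is an additive functor $\module_{\spectra_G^\omega}(\cat\perfect)\to\E$ to a stable category that sends Karoubi sequences to bifiber sequences, where a Karoubi sequence of $\spectra_G^\omega$-modules is a sequence which is a Karoubi sequence after forgetting to $\cat\perfect$. A Mackey $G$-localizing invariant is an additive functor out of $\cat\perfect_G$ sending Karoubi sequences in $\cat\perfect_G$ to bifiber sequences. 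Additivity and finitariness are intrinsic to the categories, so they correspond under $\Theta$ once we know it preserves finite products and filtered colimits, which is automatic for an equivalence. The whole claim therefore reduces to showing that $\Theta$ identifies the two classes of Karoubi sequences.

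For this, note first that a sequence in $\cat\perfect_G$ is Karoubi exactly when it is a fiber and a cofiber sequence in $\cat\perfect_G$. Since $\Theta$ is an equivalence, it carries these to fiber and cofiber sequences in $\module_{\spectra_G^\omega}(\cat\perfect)$. Limits and colimits in the module category are computed in $\cat\perfect$: limits always are, and colimits are because the tensor product on $\cat\perfect$ preserves colimits in each variable. So the Karoubi sequences in the module category are exactly the sequences whose underlying sequence in $\cat\perfect$ is bifiber, and these match the images of Karoubi sequences in $\cat\perfect_G$.

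As a consistency check, this agrees with the pointwise description from the remark following the definition of Karoubi sequences. There, (co)fiber sequences in $\cat\perfect_G\subset\mackey_G(\cat\perfect)$ are computed pointwise, and by the conservativity and retract argument of \cref{lem:Gcons} (each $\sC_H$ is a retract of $\res\ind$ of something in the $G$-level), being Karoubi at level $G$ forces it at every level. The one point needing care is exactly this: the notion in $\cat\perfect_G$ is a priori levelwise, while the module notion only sees level $G$. I expect this to be the main subtlety, and I would dispatch it by the abstract argument that $\Theta$ preserves fibers and cofibers, which sidesteps any levelwise analysis.

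It then follows that the two classes of finitary invariants correspond bijectively under $\Theta$. Hence the maps inverted by all of them coincide, that is, Mackey motivic equivalences equal $\spectra_G^\omega$-motivic equivalences. Combined with \cref{cor:Mackeyweq=weq}, these are exactly the motivic equivalences. Finally, \cite[Thm. B.6]{MotLoc} says that the localization of $\module_{\spectra_G^\omega}(\cat\perfect)$ at $\spectra_G^\omega$-motivic equivalences is presentable stable. Transporting along $\Theta$ gives the last sentence of the statement.
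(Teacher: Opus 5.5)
Your proposal is correct and takes the same route as the paper: on both sides Karoubi sequences are squares with a zero corner that are both cartesian and cocartesian, and these, together with additivity and filtered colimits, are invariant under the equivalence of \cref{thm:equivmod}, after which \cite[Thm. B.6]{MotLoc} gives presentable stability. Your additional check that the forgetful functor $\module_{\spectra_G^\omega}(\cat\perfect)\to\cat\perfect$ preserves and reflects fiber and cofiber sequences is a harmless elaboration that reconciles the two phrasings of the definition, and your levelwise ``consistency check'' paragraph is not needed for the argument.
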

\begin{proof}
    On either side, Karoubi sequences are defined as fiber/cofiber sequences, i.e. squares \[\begin{tikzcd}
	\A & \B \\
	0 & \sC
	\arrow[from=1-1, to=1-2]
	\arrow[from=1-1, to=2-1]
	\arrow[from=1-2, to=2-2]
	\arrow[from=2-1, to=2-2]
\end{tikzcd}\] that are both cartesian and cocartesian. Since this and filtered colimits are equivalence-invariants, the notions of finitary localizing invariants clearly agree. 
\end{proof}

\begin{rmk}
    The same proof works \textit{verbatim} in the $G$-semiadditive context. In the corollary above, one simply needs to replace $\spectra_G^\omega$ with $\spancategory(\finite_G)$ and all instances of ``$\perfect$'' with $\oplus$'s. But we will not be needing this though.
\end{rmk}

\section{Finite posets with action  and some consequences}\label{appendix:posets}

\begin{prop}\label{prop:finite_posets}
    Let $G$ be a finite group and $\udl{I}\in\func(BG,\poset)$ be a finite poset with $G$-action. Then $\udl{I}\in\cat_G$  is a finite $G$-category.
\end{prop}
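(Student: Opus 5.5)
Here $\udl{I}$ denotes the $G$-category $b_*I$, i.e. at level $G/H$ it is the fixed poset $I^H$; this is exactly how the equivariant cubes $w_*[1]$ arise. By a finite $G$-category we mean one in the smallest subcategory of $\cat_G$ containing the orbits $\udl{G/H}\times[n]$ (equivalently $\udl{G/H}\times[0]$ and $\udl{G/H}\times[1]$) and closed under finite colimits (pushouts and the initial object).

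The plan is to induct on the number of elements of $I$, peeling off a $G$-orbit of minimal elements at each step. If $I=\emptyset$ there is nothing to prove. Otherwise, choose a minimal element $m\in I$ and let $O=G\cdot m\cong G/K$ with $K=\mathrm{Stab}(m)$. Distinct points of $O$ are incomparable: if $gm<m$, then $g^km<\dots<gm<m$ would eventually return to $m$, a contradiction. Hence $O$ is a $G$-stable antichain of minimal elements, and $J\coloneqq I\setminus O$ is a $G$-poset with fewer elements. By induction $b_*J$ is finite. We then exhibit $b_*I$ as a pushout in $\cat_G$:
\[
b_*I\simeq b_*J\ \sqcup_{\,b_*\mathcal{U}_O\times\{1\}}\ \big(b_*\mathcal{U}_O\times[1]\big)^{\sim},
\]
where $\mathcal{U}_O\subseteq O\times J$ is the $G$-poset of pairs $(o,j)$ with $o<j$. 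Concretely, we glue in the cones on the upper sets of the points of $O$.

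To make the pushout precise, I would use the usual formula for adding minimal elements: $I$ is the pushout of posets (a pushout also in $\cat$, since these are cosieve/sieve gluings) of $J\leftarrow \mathcal{U}\to \mathcal{U}^{\triangleleft_O}$. Here $\mathcal{U}^{\triangleleft_O}$ is the fibrewise cone over $O$, namely $\coprod_{o\in O}(J_{>o})^{\triangleleft}$. Equivariantly, $\coprod_{o\in O}(J_{>o})^{\triangleleft}\simeq G\times_K (J_{>m})^{\triangleleft}$, i.e. $\mathrm{Ind}^G_K$ of the $K$-poset $(J_{>m})^{\triangleleft}$. Likewise, the part $\coprod_o J_{>o}$ equals $\mathrm{Ind}^G_K J_{>m}$.

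Finiteness of the glued-in pieces then follows from a second induction, on $|G|$ together with $|I|$. The $K$-posets $J_{>m}$ and $(J_{>m})^{\triangleleft}$ have fewer elements than $I$ when $K=G$. When $K<G$, the group is smaller, so the inductive hypothesis applies to the subgroup $K$. Induction $w_!=\mathrm{Ind}^G_K\colon\cat_K\to\cat_G$ preserves colimits and sends $K$-orbits to $G$-orbits, so it preserves finite objects. For the cone on a finite $G$-poset $P$ with a fixed cone point, I would write $P^{\triangleleft}=\{-\infty\}\sqcup_{P\times\{0\}}(P\times[1])$, using that $P\times[1]$ is finite: products with $[1]$ preserve finite colimits in $\cat_G$, and orbit times $[n]$ is finite.

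The main obstacle is checking that these poset pushouts are genuine pushouts in $\cat_G$, i.e. fiberwise at each level $G/H$ after passing to $H$-fixed points. Two facts are needed. First, taking $H$-fixed points commutes with the gluing: $(I)^H$ is the pushout of the fixed points, because the gluing is along a sieve–cosieve decomposition and fixed points preserve that decomposition. Second, pushouts of posets along such a decomposition, with one leg fully faithful, remain pushouts in $\infty$-categories. For the second point I would invoke the standard fact that a poset obtained by gluing a sieve and a cosieve along their overlap relation is the $\infty$-categorical pushout, since the relevant inclusion is a Dwyer map. Since colimits in $\cat_G=\func(\orbit_G\op,\cat)$ are levelwise, this suffices.
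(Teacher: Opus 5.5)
Your argument is the paper's proof run dually. The paper grows $G$-invariant sieves by adding a maximal orbit $G\times_H\{x\}$, gluing in $G\times_H(J_{/x})^{\triangleright}$ along $G\times_H J_{/x}$. You instead strip off a minimal orbit $O\cong G/K$ and glue in $G\times_K(J_{>m})^{\triangleleft}$ along $G\times_K J_{>m}$. Both use the same double induction on $|G|$ and $|I|$. Your finiteness bookkeeping is fine: induction for $J$ and for the $K$-poset $J_{>m}$, the cone as a collapse of $P\times[1]$, $\mathrm{Ind}^G_K$ preserving finite objects, and fixed points commuting with the gluing. One small slip: if $K=G$ and $m$ is a least element, then $(J_{>m})^{\triangleleft}=I$ does not have fewer elements. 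This is harmless, since you only need the induction for $J_{>m}$.

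The step that does not hold as written is your justification that the poset pushout is an $\infty$-categorical pushout. This is exactly where the paper's proof does its real work. When $K\neq G$, the leg $\mathcal{U}\to J$, $(o,j)\mapsto j$, is not injective as soon as some $j$ lies above two points of $O$. So the square is not a gluing of two subposets of $I$ along their intersection. Moreover, the cosieve $\mathcal{U}\subseteq\mathcal{U}^{\triangleleft_O}$ is not a Dwyer map in general: that would require a retraction adjoint to the inclusion, i.e. a least element in each $J_{>o}$, which fails if $o$ lies below two incomparable elements.

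The paper handles its analogous square by computing mapping spaces with the formula of \cite{maximePushout} for pushouts along a fully faithful functor. In your setting this gives $\mathrm{Map}(o,j)\simeq|\{j'\in J : o<j'\le j\}|$ at every fixed-point level, which is contractible (top element $j$) if $o<j$ and empty otherwise.

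A more elementary repair fits your set-up directly. At level $H$, add the points $o_1,\dots,o_r$ of $O^H$ one at a time. Then $I_k=J^H\cup\{o_1,\dots,o_k\}$ is covered by the two cosieves $I_{k-1}$ and $(I_k)_{\ge o_k}=(J^H_{>o_k})^{\triangleleft}$, which meet in $J^H_{>o_k}$. For a cover by two cosieves, the nerve is literally the union of the two nerves, since a chain lies in whichever cosieve contains its minimum. Hence it is a homotopy pushout in the Joyal model structure. Pasting these $r$ squares gives your square at level $H$, and since colimits in $\mathrm{Cat}_G$ are levelwise, this completes the argument.
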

\begin{proof}
    We will prove by a double induction on the order of $G$ as well as the number of points in the finite poset. The case of $G=e$ is standard, so suppose for induction that the statement holds for all groups strictly smaller than $|G|$. We write $I$ for the underlying poset, and without loss of generality, it is a connected poset. Suppose we have also shown the statement  for a $G$-invariant sieve ${J}\subseteq {I}$. 
    
    Now let $x\in I\backslash J$ be a minimal element which is not in $J$ and write $H\leq G$ for its stabilizer subgroup. We need to show that the full subposet $\overline{J}\coloneqq J\cup G\times_H\{x\}\in\func(BG,\poset)$ of $I$ also gives a finite $G$-category $\udl{\overline{J}}$. We write  $J_{/x}\coloneqq I_{/x}\cap J$ for the not necessarily $G$-invariant subposet of $J$.
    
    If $H=G$, then  we have  $\udl{\overline{J}}=\udl{I}_{/x}\cup_{\udl{J}_{/x}}\udl{J}$. Now $\udl{I}_{/x}=\udl{J}\cone_{/x}$ is given by  $\udl{J}_{/x}\times \Delta^1/\udl{J}_{/x}\times\{1\}$. This is a finite $G$-category because by induction on the number of points in a finite poset, $\udl{J}_{/x}$ was assumed to be a finite $G$-category already. Hence, $\udl{\overline{J}}$ is a finite $G$-category in this case.

    Next, suppose $H\lneq G$. We may consider the subposet $J_{/x}\subseteq J$ as a poset with a $H$--action so that by induction on the order of the group, this induces a finite $H$--category $\udl{J}_{/x}$ which in turn gives rise to the finite $H$--category $\udl{I}_{/x}=\udl{J}_{/x}\cone$ by the same argument as in the previous paragraph. 
    Then by adjunction, the inclusion $J_{/x} \subseteq {J}$ of elements in $J$ under $x$ induces a map of $G$-categories $G\times_H \udl{J}_{/x}\rightarrow {\udl{{J}}}$. Now we define $\udl{P}$ to be the following pushout in $\cat_G$ and consider the map $\varphi:P \to \udl{\overline{J}}$ induced by the following commutative diagram:
    \[
    \begin{tikzcd}
        G\times_H \udl{J}_{/x}\rar \dar[hook]\ar[dr,phantom, "\ulcorner"]&  {\udl{{J}}}\dar[hook]\ar[ddr, hook, bend left = 30 ]\\
        G\times_H \udl{I}_{/x} 
        \rar\ar[drr, bend right = 20]& \udl{P}\ar[dr, dashed, "\varphi"]\\
        & & \udl{\overline{J}}.
    \end{tikzcd}
    \]
    where the left vertical map is an inclusion of a sieve. In particular,  $\udl{P}$ is a finite $G$-category. 
    
    It thus suffices now to show that the map $\udl{P}\rightarrow
     \udl{\overline{J}}$ is an equivalence.    
    Since $G\times_H \udl{I}_{/x} \sqcup \udl{J}\rightarrow \udl{\overline{J}}$ is essentially surjective, so is $\udl{P}\rightarrow\udl{\overline{J}}$. On the other hand, it is an elementary exercise that there can be no map between elements in the same orbit, and so the composites $G\times_H\{x\}\subseteq G\times_H\udl{I}_{/x}\rightarrow \udl{P}$ and $G\times_H\{x\}\subseteq G\times_H\udl{I}_{/x}\rightarrow \udl{\overline{J}}$ are fully faithful (where the former case uses the mapping space formula in \cite[Cor. 5.2]{maximePushout}). All in all, $\varphi\colon \udl{P}\rightarrow \udl{\overline{J}}$ is fully faithful when restricted to the full subcategories $G\times_H\{x\}$ and $\udl{J}$. Hence, we are left with showing that for $b\in G\times_H\{x\}$ and $c\in \udl{J}$, $\varphi$ induces equivalences
    \[\myuline{\map}_{P}(b,c)\rightarrow \myuline{\map}_{\overline{J}}(\varphi b,\varphi c)=\emptyset\quad\quad \myuline{\map}_{P}(c,b)\rightarrow \myuline{\map}_{\overline{J}}(\varphi c,\varphi b).\] As any map to the empty space, the first map is indeed an equivalence.
    
    Furthermore, unwinding the mapping space formula \cite[Thm. 0.1 (2)]{maximePushout} for the fixed points of our diagram, we find that if $K\leq G$ is a subgroup, and $c\in J^K$, $b=(g,x)\in (G\times_H\{x\})^K$, then the homotopy type of $\myuline{\map}_{P}(c,b)^K$ is the realization of the following poset: $$\{(\sigma,j)\in (G\times_HJ_{/x})^K \mid  \sigma j\geq c,\: \: g^{-1}\sigma \in H,\:\: g^{-1}\sigma j \leq x \} $$
    If $c\not\leq gx$, then this poset is empty, just as $\myuline{\map}_{\overline{J}}(\varphi c,\varphi b)^K$.     On the other hand, if $c\leq gx$, then $(g,g^{-1}c)$ is easily seen to be an initial element of this poset, thus making it contractible, just as $\myuline{\map}_{\overline{J}}(\varphi c,\varphi b)^K$. Thus, the second map is also an equivalence, which proves fully faithfulness and completes the proof. 
\end{proof}

As a consequence of the proof above, we obtain the following:

\begin{lem}\label{lem:omega_compactness_of_generators}
    Let $\udl{P}\in\func(BG,\poset)$ be a finite poset, $\udl{\sC}\in\udl{\cat}\perfect$, and $\udl{\E}\in\udl{\presentable}^L_{\omega,\mathrm{st}}$. Then $\ind\udl{\func}(\udl{P},\udl{\sC})\simeq \udl{\func}(\udl{P},\ind\udl{\sC})$ and $\udl{\func}(\udl{P},\udl{\E})^{\omega}\simeq \udl{\func}(\udl{P},\udl{\E}^{\omega})$. Moreover, if $\udl{\sC}$ is $\omega$--compact, then so is $\udl{\func}(\udl{P},\udl{\sC})$.
\end{lem}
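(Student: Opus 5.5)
The plan is to deduce all three claims from the fact, established in \cref{prop:finite_posets}, that $\udl{P}$ is a \emph{finite} $G$-category. Concretely, $\udl{P}$ is built from the terminal $G$-category by finitely many pushouts along maps of the form $G\times_H(\partial\Delta^n\to\Delta^n)$ (or, in the poset form used there, along sieve inclusions and cones over induced pieces). The proof of that proposition gives such an iterative description, and I would induct along it.

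For the first equivalence, note first that $\udl{\func}(-,\udl{\D})$ sends finite $G$-colimits of $G$-categories to limits. The next step is to check the claim on the cells. For $\udl{P}=G\times_H\udl{Q}$ with $\udl{Q}$ constant, we have $\udl{\func}(G\times_H\udl{Q},\udl{\D})\simeq\coind_H^G\udl{\func}(\udl{Q},\res\udl{\D})$. Here fiberwise Ind commutes with coinduction, since coinduction is fiberwise a finite product, and the Ind-completion of a finite product is the product of Ind-completions. The nonequivariant fact $\ind\func(\Delta^n,\sC)\simeq\func(\Delta^n,\ind\sC)$ then finishes the cells.

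For the inductive step, we need $\ind$ to carry the relevant finite pullbacks of functor categories to pullbacks. The cleanest route is to work at the level of compact objects. Consider the statement $\udl{\func}(\udl{P},\udl{\E})^{\omega}\simeq\udl{\func}(\udl{P},\udl{\E}^{\omega})$. First, evaluation at objects of $\udl{P}$ is jointly conservative and has left adjoints given by left Kan extension along $x\colon \udl{\ast}\to\udl{P}$ (and its indexed analogues), which preserve compacts. The values of these left Kan extensions on compacts of $\udl{\E}$ are finite colimits in $\udl{\E}$ because $\udl{P}$ is finite (each fiber of $\udl{P}$ is a finite poset). Hence they lie in $\udl{\func}(\udl{P},\udl{\E}^\omega)$ and generate $\udl{\func}(\udl{P},\udl{\E})$. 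This gives compact generation by a set inside $\udl{\func}(\udl{P},\udl{\E}^{\omega})$. Conversely, each object of $\udl{\func}(\udl{P},\udl{\E}^{\omega})$ is a finite (equivariant) colimit of such left Kan extended objects, via the skeletal/cellular filtration coming from finiteness, so it is compact. Idempotent completeness of both sides then yields the equivalence $\udl{\func}(\udl{P},\udl{\E})^{\omega}\simeq\udl{\func}(\udl{P},\udl{\E}^{\omega})$. Applying this with $\udl{\E}=\ind\udl{\sC}$ and using $\udl{\func}(\udl{P},\ind\udl{\sC})$ compactly generated gives $\ind\udl{\func}(\udl{P},\udl{\sC})\simeq\udl{\func}(\udl{P},\ind\udl{\sC})$.

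For the last claim, suppose $\udl{\sC}$ is $\omega$-compact in $\udl{\cat}\perfect$. The finite $G$-colimit presentation of $\udl{P}$ exhibits $\udl{\func}(\udl{P},\udl{\sC})$ as a finite $G$-limit of categories of the form $\coind_H^G\func(\Delta^n,\res\udl{\sC})$. Since $\udl{\cat}\perfect$ is semiadditive-like, meaning finite products agree with coproducts there, coinduction agrees with induction in it, so these building blocks are compact. Equivalently, $\udl{\func}(\udl{P},\udl{\sC})\simeq\udl{\func}(\udl{P},\myuline{\spectra}^{\omega})\otimes\udl{\sC}$ with $\udl{\func}(\udl{P},\myuline{\spectra}^{\omega})$ dualizable; mapping out of it is then mapping out of $\udl{\sC}$ into a finite limit of functor categories, which commutes with filtered colimits.

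The main obstacle is the generation/compactness step in the second paragraph. There, one must make precise that objects of $\udl{\func}(\udl{P},\udl{\E}^{\omega})$ are finite $G$-colimits of representables, which is exactly where equivariant finiteness from \cref{prop:finite_posets} (rather than mere fiberwise finiteness) is needed.
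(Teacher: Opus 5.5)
Your strategy is viable and differs from the paper's, but as written it has a gap at exactly the step you flag. In addition, your first argument for the last claim does not work.

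\textbf{The crux.} The key claim is that every object of $\udl{\func}(\udl{P},\udl{\E}^{\omega})$ is a finite colimit of left Kan extended objects ``via the skeletal/cellular filtration''. You give no mechanism for how a cell structure on $\udl{P}$ produces such a presentation of a given functor. One mechanism that works is the following.
\begin{itemize}
\item If $\udl{P}\simeq\udl{A}\cup_{\udl{B}}\udl{C}$ in $\cat_G$, then $\udl{\func}(\udl{P},\udl{\E})$ is the pullback along the restriction functors.
\item These restrictions have left adjoints, so the pullback is also the pushout in $\presentable^L$ along the left Kan extensions $a_!,b_!,c_!$.
\item For any colimit in $\presentable^L$ one has $\id\simeq\colim_i\iota_i\iota_i^R$. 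Here this reads $F\simeq a_!a^*F\cup_{b_!b^*F}c_!c^*F$.
\item Since $a_!,b_!,c_!$ preserve compacts, induction along the cell structure reduces ``pointwise compact $\Rightarrow$ compact'' to the cells $\udl{G/H}\times\Delta^n$. There it is your coinduction argument plus the nonequivariant case.
\end{itemize}
The other inclusion is fine as you have it. The values of $x_!e$ are finite indexed coproducts, because comma categories into a poset are discrete.

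\textbf{The last claim.} Exhibiting $\udl{\func}(\udl{P},\udl{\sC})$ as a finite limit of compact building blocks proves nothing. Compact objects of $\udl{\cat}\perfect$ are closed under finite colimits, not finite limits. The paper's point is that the relevant pullback square of functor categories is simultaneously a pushout in $\udl{\cat}\perfect$ along left Kan extensions, by \cite[Prop. 2.5.13]{kaifNoncommMotives}. Hence $\udl{\func}(\udl{P},\udl{\sC})$ is a pushout of compacts.

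Your second argument does work, and the dualizability remark is not needed. Given the second claim, $\udl{\func}(\udl{P},\udl{\sC})\simeq\udl{\func}(\udl{P},\myuline{\spectra}^{\omega})\otimes\udl{\sC}$. Exact functors out of this into $\udl{\D}$ are exact functors $\udl{\sC}\to\udl{\func}(\udl{P}\op,\udl{\D})$. Finally, $\udl{\func}(\udl{P}\op,-)$ commutes with filtered colimits because $\udl{P}\op$ is a finite $G$-category by \cref{prop:finite_posets}.

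\textbf{Comparison with the paper.} The paper inducts along the specific poset decomposition $\udl{P}=\udl{I}\cup_{\udl{M}}\udl{J}$ from the proof of \cref{prop:finite_posets}. It shows the restrictions preserve compacts: their right adjoints are right Kan extensions computed by finite $G$-limits over the $G$-posets $J\times_P P_{/p}$, again finite by \cref{prop:finite_posets}. From this it deduces that compacts of the pullback are detected componentwise. So the square is a pullback in $\udl{\presentable}^L_{\omega,\mathrm{st}}$, and this single fact yields both equivalences and, read as a pushout, the compactness claim.

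Your route separates generation by free objects from detection of compacts. Once repaired as above, it applies to arbitrary finite $G$-categories, not only $G$-posets.
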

\begin{proof}
    We prove the first statement about commutation of Ind and compact objects by induction on the size of the poset, with the base of the empty poset being trivial. By the proof of the previous result, we may write $\udl{P}$ as a pushout in $\func(BG,\poset)$
    \[
    \begin{tikzcd}
        \udl{M}\rar["f"]\dar[hook, "i"']\ar[dr, phantom, "\lrcorner"] &\udl{J}\dar[hook, "j"]\\
        \udl{I} \rar["h"] & \udl{P}
    \end{tikzcd}
    \]
    where the left vertical map is a sieve inclusion, and by a double induction as in the previous proof, we may assume the desired conclusions on $M, I,$ and $J$. We thus have a pullback square
    \begin{equation}\label{eqn:pullback_of_functor_cats_from_posets}
    \begin{tikzcd}
        \udl{\func}(\udl{P},\udl{\A}) \rar["j^*"]\dar["h^*"']\ar[dr, phantom , "\lrcorner"] & \udl{\func}(\udl{J},\udl{\A})\dar["f^*"]\\
        \udl{\func}(\udl{I},\udl{\A})\rar["i^*"] & \udl{\func}(\udl{M},\udl{\A})
    \end{tikzcd}
    \end{equation} either in $\udl{\cat}\perfect$ or in $\udl{\presentable}^L_{\mathrm{st}}$, depending on whether $\udl{\A}$ is in the former or the latter category. 

    We claim that in the case when $\udl{\A}\in\udl{\presentable}^L_{\omega,\mathrm{st}}$, this is even a pullback  $\udl{\presentable}^L_{\omega,\mathrm{st}}$. In other words, we will show that $\udl{\func}(\udl{X},\udl{\A})$ is $\omega$--compactly generated for $\udl{X}\in\{\udl{P},\udl{I},\udl{J},\udl{M}\}$, that all the maps in \cref{eqn:pullback_of_functor_cats_from_posets} preserve $\omega$--compact objects, and that $\udl{\func}(\udl{P},\udl{\A})$ satisfies the appropriate universal property in $\udl{\presentable}^L_{\omega,\mathrm{st}}$. By induction, $\udl{\func}(\udl{J},\udl{\A})$ and $\udl{\func}(\udl{I},\udl{\A})$ are $\omega$--compactly generated by the sets $\udl{S}$ and $\udl{T}$, say. Then $j_!\udl{S}\cup h_!\udl{T}\subseteq \udl{\func}(\udl{P},\udl{\sC})$ forms a set of $\omega$--compact generators. This is because they form a jointly conservative set since $\myuline{\map}(j_!x,z)\simeq \myuline{\map}(x,j^*z)$ and $\myuline{\map}(h_!y,z)\simeq \myuline{\map}(y,h^*z)$ and $j^*\times h^*\colon \udl{\func}(\udl{P},\udl{\A})\rightarrow \udl{\func}(\udl{J},\udl{\A})\times \udl{\func}(\udl{I},\udl{\A})$ is conservative, and so we may conclude by \cite[Prop. 2.2.14]{kaifNoncommMotives}. Next, to see that all the maps in \cref{eqn:pullback_of_functor_cats_from_posets} preserve $\omega$--compact objects, we just argue for the case $j^*$ since the others are  similar. For this, the right adjoint is computed as the  right Kan extension $j_*$. By the pointwise formula, for $p\in\udl{P}$, the relevant limit to compute is over the diagram $\udl{J}_{/p}\coloneqq \udl{J}\times_{\udl{P}}\udl{P}_{/p}\in\cat_G$.  But this is induced by the object $J\times_{P}P_{/p}\in\func(BG,\poset)$, whose underlying poset is finite. Hence, by \cref{prop:finite_posets}, this is a finite $G$-category,  so the functor $j_*$ preserves $G$-colimits and so $j^*$ preserves $\omega$--compact objects. Finally, since filtered colimits commute with pullbacks, we conclude that an object in $\udl{\func}(\udl{P},\udl{\A})$ is $\omega$--compact if and only if it is so as objects in $\udl{\func}(\udl{J},\udl{\A})$ and $\udl{\func}(\udl{I},\udl{\A})$ under the functors $j^*$ and $h^*$, respectively. From this characterisation, it is clear that the pullback \cref{eqn:pullback_of_functor_cats_from_posets}, which is a priori computed in $\udl{\presentable}^L_{\mathrm{st}}$, is in fact a pullback in $\udl{\presentable}^L_{\omega,\mathrm{st}}$.

    With this claim established, we now deduce the  lemma. By the claim, the equivalence $\ind\colon \udl{\cat}\perfect\simeq \udl{\presentable}^L_{\omega,\mathrm{st}}\cocolon (-)^{\omega}$, and the induction hypotheses, we get pullback squares
    \[
    \begin{tikzcd}
        \ind\udl{\func}(\udl{P},\udl{\sC}) \rar\dar\ar[dr, phantom , "\lrcorner"] & \udl{\func}(\udl{J},\ind\udl{\sC})\dar\\
        \udl{\func}(\udl{I},\ind\udl{\sC})\rar & \udl{\func}(\udl{M},\ind\udl{\sC})
    \end{tikzcd}
    \hspace{6mm}
    \begin{tikzcd}
        \udl{\func}(\udl{P},\udl{\E})^{\omega} \rar\dar\ar[dr, phantom , "\lrcorner"] & \udl{\func}(\udl{J},\udl{\E}^{\omega})\dar\\
        \udl{\func}(\udl{I},\udl{\E}^{\omega})\rar & \udl{\func}(\udl{M},\udl{\E}^{\omega})
    \end{tikzcd}
    \] in $\udl{\presentable}^L_{\omega,\mathrm{st}}$ and $\udl{\cat}\perfect$ respectively. Hence, we get that $\ind\udl{\func}(\udl{P},\udl{\sC})\simeq \udl{\func}(\udl{P},\ind\udl{\sC})$ and $\udl{\func}(\udl{P},\udl{\E})^{\omega}\simeq \udl{\func}(\udl{P},\udl{\E}^{\omega})$, as required.\vspace{1mm}

    Finally, suppose that $\udl{\sC}\in (\udl{\cat}\perfect)^{\omega}$. We will again show that $\udl{\func}(\udl{P},\udl{\sC})$ is also  $\omega$--compact by induction on the size of $P$ again. So suppose we already know the statement for smaller posets, and we write $\udl{P}$ as a pushout as above. By \cite[Prop. 2.5.13]{kaifNoncommMotives}, we get $\udl{\func}(\udl{P},\ind\udl{\sC})\simeq \udl{\func}(\udl{I},\ind\udl{\sC})\cup_{\udl{\func}(\udl{M},\ind\udl{\sC})}\udl{\func}(\udl{J},\ind\udl{\sC})$. Since the right hand side is a pushout of $\omega$--compact objects by the inductive hypothesis, we thus see that $\udl{\func}(\udl{P},\ind\udl{\sC})$ is also $\omega$--compact.
\end{proof}

\begin{cor}
    Let $\lambda \leq \kappa$ be regular cardinals and let $\udl{\E}$ be any $G$-presentably $G$-symmetric monoidal category, i.e. an object in $\calg_G(\udl{\presentable}^L)$, which is $\lambda$--compactly generated by a set $\udl{S}$. Suppose that the set $\udl{S}$ is closed under the multiplicative norms.  Then $\udl{\E}^{\kappa}\subseteq \udl{\E}$ is closed under the multiplicative norms. 
\end{cor}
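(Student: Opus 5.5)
Fix a map $w\colon W\to V$ in $\finite_G$ and an object $x\in\E^{\kappa}_W$. The goal is to show that $w_{\otimes}x\in\E_V$ is $\kappa$-compact. By basechange we may assume $V$ is an orbit. The plan is to exhibit $x$ as a $\kappa$-small filtered colimit of retracts of $\kappa$-small colimits of generators. We then push each such operation through $w_{\otimes}$, using distributivity together with the finiteness results of \cref{appendix:posets}.

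\textbf{Step 1 (generators are $\kappa$-compact).} Since $\lambda\le\kappa$, every $\lambda$-compact object is $\kappa$-compact. Every object of $\udl{S}$ is therefore $\kappa$-compact, and $\udl{\E}^{\kappa}$ is the smallest full $G$-subcategory containing $\udl{S}$ that is closed under retracts and under $\kappa$-small $G$-colimits. The latter means fiberwise $\kappa$-small colimits together with finite indexed coproducts, the parametrized version of the usual statement. Since $w_{\otimes}$ preserves retracts, it suffices to show that the full $W$-subcategory of those $x$ with $w_{\otimes}x\in\E^{\kappa}_V$ (after all basechanges) contains $\udl{S}$ and is closed under $\kappa$-small $G$-colimits. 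It contains $\udl{S}$ by hypothesis.

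\textbf{Step 2 (closure under colimits, the main step).} Let $\partial\colon\udl{K}\tcocone\to W^*\udl{\E}$ be a $\kappa$-small $W$-colimit diagram whose vertices have $\kappa$-compact norms. By $w$-distributivity, $w_{\otimes}$ of the cone point is the $V$-colimit of
\[(w_*\udl{K})\tcocone\to w_*(\udl{K}\tcocone)\to w_*W^*\udl{\E}\xrightarrow{w_\otimes}\udl{\E}.\]
Two things need checking. First, each vertex of $w_*\udl{K}$, evaluated at a level $U\to V$, is of the form $\overline{w}_{\otimes}$ of a tuple of vertices; by the double coset formula this is a norm of restrictions of $\kappa$-compact objects, hence $\kappa$-compact. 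Second, $w_*\udl{K}$ must be $\kappa$-small as a $V$-category. For this I would reduce $\udl{K}$ to the shapes that generate $\kappa$-small $G$-colimits: finite indexed coproducts, $\kappa$-small coproducts, and pushouts (or finite posets). For posets, $w_*$ of a finite poset with $G$-action is again a finite poset with action, so \cref{prop:finite_posets} makes it $G$-finite. For $\kappa$-small discrete $\udl{K}$, $w_*\udl{K}$ is fiberwise a product of $|W|$-many $\kappa$-small sets, which is $\kappa$-small by regularity (indeed $\kappa$-smallness is preserved under finite products). Finally, a $\kappa$-small $V$-colimit of $\kappa$-compact objects is $\kappa$-compact, because compactness is fiberwise (\cref{recollect:ind_completions}) and the restrictions and indexed coproducts preserve it.

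\textbf{Step 3.} The main obstacle is the second check in Step 2, namely that the equivariant product $w_*$ preserves $\kappa$-smallness of general $G$-indexing categories rather than only of posets. I would first try to avoid this by writing every $\kappa$-compact object as a retract of a $\kappa$-small iterated colimit built from only two shapes: indexed coproducts, whose norms are controlled by the distributivity formula expressing $w_{\otimes}$ of an indexed coproduct as an indexed coproduct of norms, and pushouts. For pushouts, $w_*$ of the pushout shape is a finite poset with action, so \cref{prop:finite_posets} applies directly. In that case \cref{lem:omega_compactness_of_generators}-style arguments are no longer needed, and the transfinite $\kappa$-small induction finishes the proof.
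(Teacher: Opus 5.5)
Your outline follows the paper's proof in its essentials. You reduce to pushouts and indexed coproducts, plus retracts. You use distributivity to write the norm of a pushout as $\colim_{w_*\Lambda^2_0} w_\otimes X$, and you invoke \cref{prop:finite_posets} to know that $w_*\Lambda^2_0$ is equivariantly finite, so this colimit is an iterated finite $G$-colimit. Your direct treatment of $\kappa$-small coproducts (using that $w_*$ of a $\kappa$-small discrete category is a $\kappa$-small $V$-set) is a harmless variant. The paper instead writes them as $\kappa$-small filtered colimits of finite indexed coproducts and uses \cref{rmk:distributivity_over_sifted}.

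There is, however, one step that is circular as written: the compactness of the vertices of the distributivity diagram at levels $U\to V$ below the top one. Such a vertex is $\overline{w}_\otimes$ of a tuple, i.e. $\bigotimes_i(\overline{w}_i)_\otimes\overline{u}_i^*x_{k_i}$ over the orbits $Z_i$ of $U\times_VW$. Your justification ``a norm of restrictions of $\kappa$-compact objects, hence $\kappa$-compact'' is exactly the statement being proved, now for the maps $\overline{w}_i$.

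The paper breaks the circle by induction on $|G|$. These vertices are tensor products of norms for proper subgroups $K\lneq G$. The corollary for $K$ applies to $\res^G_K\udl{\E}$, which is again $\lambda$-compactly generated by the norm-closed set $\res^G_K\udl{S}$.

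Your parenthetical ``after all basechanges'' can become an alternative fix that avoids group induction, but only once it is made precise and orbitwise. Take the test class to consist of those $x$ such that $(\overline{w}_i)_\otimes\overline{u}_i^*x$ is $\kappa$-compact for every $u\colon U\to V$ and every orbit $Z_i$ of $U\times_VW$. At a level $A\to W$, replace $\overline{u}_i$ by arbitrary maps $Z_i\to A$ over $W$, so that the class is closed under restriction. Since a pullback of a pullback is a pullback, the lower-level vertices for any such basechanged norm are tensor products of further basechanged norms of restrictions of the vertices. They are therefore compact by membership in the class, together with closure of $\E^\kappa_U$ under $\otimes$, which holds because $\udl{S}$ is closed under tensor products (norms along fold maps). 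The orbitwise formulation is needed because a tuple can place different vertices $x_{k_i}$ on different orbits $Z_i$.

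With either fix, the rest of your argument goes through.
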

\begin{proof}
First, we note that without loss of generality, we may assume $\lambda =\kappa$.  We prove the result by induction on the order of the group $G$, the base case of $G=e$ being standard since $S$ is closed under tensor products. So suppose we know the statement to be true for all groups of order strictly smaller than $|G|$. By using again the standard consequence that ordinary tensor products preserve $\kappa$--compactness, we are thus reduced to showing that $f_{\otimes}\colon \E_H^{\kappa}\rightarrow \E_G$ lands in $\E_G^{\kappa}$ for the map $f\colon G/H\rightarrow  G/G$, for any $H\leq G$.
 
Now $\udl{\E}^\kappa$ is generated (up to retracts if $\kappa=\omega$) under $\kappa$-small indexed coproducts and pushouts by $\udl{S}$, and we note that $\kappa$-small coproducts are $\kappa$-small, filtered colimits of finite indexed coproducts. Thus, we are reduced to showing that the collection of objects of $\res_H^G\udl{\E}^\kappa$ whose norm $f_\otimes$ is $\kappa$-compact is closed under pushouts and indexed coproducts. The proof for both is similar, and easier for indexed coproducts (simply recall that distributivity of the norms implies that norms of a finite indexed coproduct is a finite indexed coproduct of smaller norms) so we do the case of pushouts.
    
   To see this case, let $Y\simeq \colim_{\Lambda^2_0}X \simeq X_1\cup_{X_0}X_2$ be a pushout where each $X_i\in \E_H^\kappa$ is such that $f_\otimes X_i \in \E_G^\kappa$.   By $G$-distributivity of the $G$-symmetric monoidal structure on $\udl{\E}$, we thus get 
    \[f_{\otimes}Y \simeq \colim_{f_*\Lambda^2_0}f_{\otimes}X \in \E_G.\] Note that $f_{\otimes}X\colon f_*\Lambda^2_0\rightarrow \udl{\E}$ is a diagram of $\kappa$-compact objects: at level $G$, since $(f_*\Lambda^2_0)^G\simeq \Lambda^2_0$, this diagram is given by $f_{\otimes}X_1\leftarrow f_{\otimes}X_0\rightarrow f_{\otimes}X_2$, which we have assumed to be $\kappa$-compact; at level $K\lneq G$, it is a diagram of terms all of which are indexed coproducts of norms of the $X_i$'s, which are $\kappa$-compact by assumption on the $X_i$'s and the induction hypothesis (some of the terms are tensor products of norms of restrictions of $X_i$'s for smaller groups).  Now, by \cref{prop:finite_posets}, since $f_*\Lambda^2_0$ is a finite poset with $G$-action, the diagram $f_{\otimes}X$ can be re-expressed as an iterated finite colimit of $\kappa$-compact objects, whence $\kappa$-compactness of $f_{\otimes}Y$.

 Finally, since this collection of objects is also clearly closed under $\kappa$-small filtered colimits and contains $\res_H^G\udl{S}$, we see that it is the whole of $\res_H^G\udl{\E}^\kappa$, as was to be shown. 
\end{proof}

\begin{cor}\label{cor:closure_under_norms_of_compacts}
    Let $\kappa $ be a regular cardinal. The full subcategory $(\udl{\cat}\perfect)^{\kappa}\subseteq \udl{\cat}\perfect$ is closed under the multiplicative norms.
\end{cor}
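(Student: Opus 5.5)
The plan is to deduce this from the preceding corollary, applied to $\udl{\E}=\udl{\presentable}^L_{\mathrm{st}}$ (or its compactly generated variant), combined with the $G$-symmetric monoidal equivalence $\ind\colon \udl{\cat}\perfect_G\simeq \udl{\presentable}^L_{G,\mathrm{st},\omega}$ of \cref{eqn:ind_omega_equivalence}. Directly applying the corollary is awkward, since $\udl{\cat}\perfect$ is not itself presentable. Instead I would work with $\udl{\cat}\perfect$ regarded as a $G$-presentable, $G$-presentably $G$-symmetric monoidal category: it is $\omega$-compactly generated (fiberwise, by the nonequivariant fact together with \cref{thm:equivmod}, which identifies $\cat\perfect_H\simeq\module_{\spectra_H^\omega}(\cat\perfect)$). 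Its norm structure is distributive, because norms of stable categories commute with $G$-colimits in the distributive sense, as used in \cref{prop:cubical_cofibs} and \cref{rmk:distributivity_over_sifted}.

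Given this, the key step is to exhibit a set $\udl{S}$ of $\omega$-compact generators of $\udl{\cat}\perfect$ that is closed under the multiplicative norms; for $\kappa\geq\omega$ the corollary then gives the result. The natural candidates, mimicking the nonequivariant generators $\func([n],\spectra^\omega)$ or $\perfectCat$ of finite quivers, are the $G$-categories $\udl{\func}(\udl{P},\myuline{\spectra}^\omega)$ for $\udl{P}$ ranging over finite posets with $G$-action, levelwise over subgroups $H$. Three things must be checked.
\begin{enumerate}
\item These objects are $\omega$-compact. This is exactly the last clause of \cref{lem:omega_compactness_of_generators}, since $\myuline{\spectra}^\omega$, the unit, is compact.
\item They generate. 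Already $\udl{\func}([1],\myuline{\spectra}^\omega)$, its induced versions, and the unit suffice, as in \cite{MotLoc}/BGT: every object is a filtered colimit of pushouts of these, fiberwise via \cref{thm:equivmod}.
\item They are closed under norms. Here I would use the identification $w_\otimes\udl{\func}(\udl{P},\myuline{\spectra}^\omega)\simeq \udl{\func}(w_*\udl{P},\myuline{\spectra}^\omega)$, obtained from the formula $\ind$ of functor categories in \cref{lem:omega_compactness_of_generators} and the fact that norms in $\udl{\presentable}^L_{\mathrm{st}}$ satisfy $w_\otimes\udl{\func}(\udl{P},\myuline{\spectra})\simeq\udl{\func}(w_*\udl{P},w_\otimes\myuline{\spectra})$ with $w_\otimes$ of the unit being the unit. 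The equivariant poset $w_*\udl{P}$ is again a finite poset with $G$-action, so it lies in $\udl{S}$.
\end{enumerate}

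I expect the main obstacle to be step (3), the compatibility of the norm with functor categories out of posets. Concretely, this is the statement that $\udl{\func}(\udl{P},-)\simeq\udl{\func}(\udl{P},\myuline{\spectra})\otimes-$ is compatible with norms, i.e. $w_\otimes(\udl{\func}(\udl{P},\myuline{\spectra}))\simeq \udl{\func}(w_*\udl{P},\myuline{\spectra})$. I would first try to prove it for the presheaf version $\udl{\presheaf}(\udl{P})\otimes\myuline{\spectra}$, using that the norm on $\udl{\presentable}^L$ sends free presheaf categories to free presheaf categories on the indexed product. If the self-duality issue blocks this, I would fall back on generating instead by induced orbits $\ind^G_H\myuline{\spectra}^\omega$ and iterated pushouts. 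In that approach, closure under norms follows by the same cubical distributivity argument as the preceding corollary's proof, with finiteness supplied by \cref{prop:finite_posets}.
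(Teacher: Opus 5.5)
Your proposal is correct and is essentially the paper's proof. The paper also applies the preceding corollary to $\udl{\E}=\udl{\cat}\perfect$ with a norm-closed set of $\omega$-compact generators of the form $\udl{\func}(\udl{P},\myuline{\spectra}^{\omega})$. It gets compactness and the $\ind$/compacts commutation from \cref{lem:omega_compactness_of_generators}, and the identification $w_\otimes\udl{\func}(\udl{P},\myuline{\spectra}^{\omega})\simeq\udl{\func}(w_*\udl{P},\myuline{\spectra}^{\omega})$ from the $G$-symmetric monoidal equivalence $\ind$ together with the $G$-symmetric monoidality of $\udl{\func}(-,\myuline{\spectra})\colon\udl{\cat}\to\udl{\presentable}^L_{\mathrm{st}}$, cited from \cite[Cor. 2.2.20]{kaifNoncommMotives}. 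That citation is exactly your ``norms of presheaf categories'' input; the only cosmetic difference is that the paper uses just the cubes $\udl{P}=w_*\udl{\Delta}^1$ rather than all finite $G$-posets.

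The ``self-duality issue'' you anticipate does not arise, since finite $G$-posets are closed under $(-)\op$ and $w_*$ commutes with $(-)\op$, so no fallback is needed. Also, $\udl{\cat}\perfect$ is in fact compactly generated, hence $G$-presentable, as you yourself go on to use.
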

\begin{proof}
    By \cite[Prop. 2.5.7]{kaifNoncommMotives}, $\{\myuline{\spectra}^{\omega},\udl{\func}(\udl{\Delta}^1,\myuline{\spectra}^{\omega})\}$ forms a set of $\omega$--compact generators of $\udl{\cat}\perfect$. Now, for $w\colon W\rightarrow V$ in $\finite_G$, by the equivalences
    \[w_{\otimes}\udl{\func}(\udl{\Delta}^1,\myuline{\spectra}^{\omega}) \simeq (w_{\otimes}\udl{\func}(\udl{\Delta}^1,\myuline{\spectra}))^{\omega}\simeq \udl{\func}(w_{*}\udl{\Delta}^1,\myuline{\spectra})^{\omega}\simeq \udl{\func}(w_*\udl{\Delta}^1,\myuline{\spectra}^{\omega})\]
    by virtue of the $G$-symmetric monoidal equivalence $\ind\colon \udl{\cat}\perfect\simeq \udl{\presentable}^L_{\omega,\mathrm{st}}\cocolon (-)^{\omega}$, the Ind and $(-)^{\omega}$ commutations from \cref{lem:omega_compactness_of_generators}, and the fact that $\udl{\func}(-,\myuline{\spectra})\colon \udl{\cat}\rightarrow \udl{\presentable}^L_{\mathrm{st}}$ refines to a $G$-symmetric monoidal functor by \cite[Cor. 2.2.20]{kaifNoncommMotives}, we thus see, again by \cref{lem:omega_compactness_of_generators}, that the set $\udl{S} \coloneq \{\udl{\func}(w_*\udl{\Delta}^1,\myuline{\spectra}^{\omega})\}_{w\in\udl{\func}(\udl{\Delta}^1,\udl{\finite})}$ forms a set of $\omega$--compact generators for $\udl{\cat}\perfect$ which is closed under multiplicative norms.
    We may thus specialise the previous corollary to the case $\udl{\E}=\udl{\cat}\perfect$,  $(\udl{\cat}\perfect)^{\kappa}\subseteq \udl{\cat}\perfect$ to conclude. 
\end{proof}

\section{2-categorical enrichment of norms on presentable categories}\label{appendix:2cat}

We give a 2-categorical upgrade of the $G$-symmetric monoidal structure on the 1-category of $G$-presentable-stable $G$-categories, following the strategy of \cite[Prop. 2.3.3]{blansblomChainRule} in the nonequivariant situation.

\begin{nota}\label{nota:notation_for_2-cats}
    We write $\CAT$ and $\CAT_2$ for the (very very large) 1-category of very large 1-categories and very large $2$-categories, respectively. In particular, passing from a 2-category to its underlying 1-category assembles into a product-preserving functor $\forget\colon \CAT_2\rightarrow \CAT$. We also write $\boldTwocat_2$ for the (very very large) 2-category of very large $2$-categories. Furthermore, we write $\widehat{\boldCat}$ and $\boldPr$ to denote the 2-categories of large 1-categories and presentable categories, respectively.
\end{nota}

\begin{fact}\label{fact:2-categorical_facts}
    We collect various facts that we will need to give a 2-categorical enhancement of the $G$-symmetric monoidal structure on presentable-stable categories.
    \begin{enumerate}
        \item By \cite[Prop. A.2.9]{blansblomChainRule}, there is a 2-categorical enhancement of the straightening-unstraightening equivalence $\func(\spancategory(\finite_G),\CAT_2)\simeq \cocartCat_2(\spancategory(\finite_G))$, where here a functor $p\colon \X\rightarrow \sC$ with $\X$ a 2-category and $\sC$ a 1-category is said to be a 2-cocartesian fibration if for all $x,y, z\in \X$, the square 
        \[
        \begin{tikzcd}
            \func_{\X}(y,z) \rar\dar & \func_{\X}(x,z) \dar \\
            \map_{\sC}(py,pz) \rar & \map_{\sC}(px,pz)
        \end{tikzcd}
        \]
        in $\cat$ is a pullback.

        \item Let $w\colon W \rightarrow V$ be a map in $\finite_G$, and $\udl{\sC}\in \presentable^L_{W,\mathrm{st}}$. Then by Nardin's thesis \cite{nardinThesis} (cf. the forthcoming work \cite{brankoKaifNatalie} also for a recent treatment which filled in several gaps in the original proof) there exists an object $w_{\otimes}\udl{\sC}\in  \presentable^L_{V,\mathrm{st}}$ and an $w$-distributive $V$-functor $w_*\udl{\sC}\rightarrow w_{\otimes}\udl{\sC}$ which induces an equivalence 
        \[\udl{\func}\distributeAll{w}_V(w_*\udl{\sC},\udl{\D}) \xlongrightarrow{\simeq } \udl{\func}^L_V(w_{\otimes}\udl{\sC},\udl{\D})\] for all $\udl{\D}\in \presentable^L_{V,\mathrm{st}}$. Moreover, these universal distributive functors compose in the following sense: suppose we are given a composition 
        \[
        \begin{tikzcd}
            && P \ar[dd, phantom, "\rotatebox{315}{$\lrcorner$}"]\ar[dl, "\bar{v}"'] \ar[dr, "\bar{w}"] \\
            & W\ar[dl, "f"'] \ar[dr, "w"]&& S \ar[dl, "v"'] \ar[dr, "g"]\\
            U &&V&& T
        \end{tikzcd}
        \]
        of spans in $\finite_G$ and $\udl{\sC}\in\presentable^L_{U,\mathrm{st}}$. Then writing for instance $\mu_{f,w}\colon w_*f^*\udl{\sC}\rightarrow w_{\otimes}f^*\udl{\sC}$ for the universal distributive functor, the composition
        \[g_*v^*w_*f^*\udl{\sC}\xlongrightarrow{g_*v^*(\mu_{f,w})} g_*v^*w_{\otimes}f^*\udl{\sC}\xlongrightarrow{\mu_{v,g}} g_{\otimes}v^*w_{\otimes}f^*\udl{\sC}\] agrees with the universal distributive functor $\mu_{f\bar{v},g\bar{w}}\colon g_*\bar{w}_*\bar{v}^*f^*\udl{\sC}\rightarrow g_{\otimes}\bar{w}_{\otimes}\bar{v}^*f^*\udl{\sC}$. This is a straightforward check of universal properties using the Beck-Chevalley equivalences $v^*w_*\simeq \bar{w}_*\bar{v}^*$ and $v^*w_{\otimes}\simeq \bar{w}_{\otimes}\bar{v}^*$.
    \end{enumerate}
\end{fact}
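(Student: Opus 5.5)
The plan is to verify the last claim of (2), namely the compatibility of universal distributive functors with composition of spans. Item (1) and the existence of the universal distributive functors $\mu_{f,w}$ are taken from the cited references. The strategy has two parts. First, show that the composite
\[g_*v^*w_*f^*\udl{\sC}\xrightarrow{g_*v^*(\mu_{f,w})} g_*v^*w_{\otimes}f^*\udl{\sC}\xrightarrow{\mu_{v,g}} g_{\otimes}v^*w_{\otimes}f^*\udl{\sC}\]
is $g\bar w$-distributive when its source is identified with $(g\bar w)_*(f\bar v)^*\udl{\sC}$. Second, show that it satisfies the universal property characterising $\mu_{f\bar v,g\bar w}$. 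The target is identified with $g_\otimes\bar w_\otimes\bar v^*f^*\udl{\sC}$ via the Beck--Chevalley equivalence $v^*w_\otimes\simeq\bar w_\otimes\bar v^*$. The source is identified via $v^*w_*\simeq\bar w_*\bar v^*$ together with $g_*\bar w_*\simeq(g\bar w)_*$.

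For the first step, I first rewrite $g_*v^*(\mu_{f,w})$ as $g_*(\mu_{f\bar v,\bar w})$. This uses that restriction $v^*$ sends the universal $w$-distributive functor to the universal $\bar w$-distributive functor, which is exactly the content of the Beck--Chevalley equivalence $v^*w_\otimes\simeq\bar w_\otimes\bar v^*$. It then remains to check that $g_*$ of a $\bar w$-distributive functor, postcomposed with a $g$-distributive functor, is $g\bar w$-distributive.

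To check this, I take a test map $u\colon U'\to T$ and a $Z$-colimit diagram $\partial$, where $Z=P\times_T U'$ is the relevant pullback. I then decompose the cube
\[\bigl((g\bar w)_*\udl{K}\bigr)^{\triangleright}\to(g\bar w)_*(\udl{K}^{\triangleright})\]
as
\[\bigl(g_*\bar w_*\udl{K}\bigr)^{\triangleright}\to g_*\bigl((\bar w_*\udl{K})^{\triangleright}\bigr)\to g_*\bar w_*(\udl{K}^{\triangleright}).\]
Applying $\bar w$-distributivity after base change along the pullback of $u$ shows that the inner map yields a colimit diagram. Applying $g$-distributivity to the outer stage then gives the claim.

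For the second step, I fix $\udl{\D}\in\presentable^L_{T,\mathrm{st}}$ and compute the chain
\[\udl{\func}^L_T(g_\otimes\bar w_\otimes X,\udl{\D})\simeq\udl{\func}^{\Delta_g}_T(g_*\bar w_\otimes X,\udl{\D})\simeq\udl{\func}^{\Delta_{g\bar w}}_T(g_*\bar w_*X,\udl{\D}),\]
where $X=\bar v^*f^*\udl{\sC}$ and the second equivalence is restriction along $g_*\mu_{f\bar v,\bar w}$. The first equivalence is the universal property of $\mu_{v,g}$. The second is a ``currying'' statement: a $g$-distributive functor out of $g_*\bar w_\otimes X$ is the same as a $g\bar w$-distributive functor out of $g_*\bar w_*X$.

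I expect this currying step to be the main obstacle. My plan for it is as follows. I reduce to $T$ an orbit by basechanging along orbits of $T$, which is allowed because all notions are fiberwise. I then observe that $g$-distributivity means $S$-parametrized colimit preservation in each ``variable'' after restricting along $g$. Concretely, the internal hom $T$-category of $g$-distributive functors out of $g_*\udl{\E}$ is identified with $S$-colimit-preserving functors out of $\udl{\E}$ into the $S$-category $\udl{\func}^{\Delta_g}$ of partially applied functors. Applying the universal property of $\bar w_\otimes$ parametrized over $S$ then gives the equivalence. Once the currying step is established, the Yoneda lemma in $\presentable^L_{T,\mathrm{st}}$ identifies $g_\otimes\bar w_\otimes X$ with the universal target and the composite with $\mu_{f\bar v,g\bar w}$.
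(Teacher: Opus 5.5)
\paragraph{Verdict.} Your first step is correct; the gap is in the currying step, which carries essentially all the content. Identifying $g_*v^*(\mu_{f,w})$ with $g_*\mu_{f\bar v,\bar w}$ via Beck--Chevalley, and factoring the cube map through $g_*((\bar w_*\udl{K})^{\triangleright})$, does show that the composite is $g\bar w$-distributive. This is in line with the paper, which only asserts that the claim is a straightforward universal-property check using the Beck--Chevalley equivalences.

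\paragraph{Why the currying step fails.} The mechanism you propose does not work equivariantly. The factors of an indexed product $g_*\udl{\E}$ are permuted by $G$ (transitively when $S$ is an orbit), so you cannot curry one variable at a time in a $G$-equivariant way. This is exactly the failure of currying that the paper's introduction emphasizes.

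Restricting along $g$ only splits off the diagonal summand of $S\times_T S\simeq S\sqcup C$. It therefore only controls the levels of $\udl{\func}^{\Delta_g}_T(g_*\udl{\E},\udl{\D})$ lying over $S$, and loses the top level. At the top level, $g$-distributivity imposes genuinely new conditions through test maps $u$ not factoring through $g$, for instance $u=\id_T$.

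Concretely, take $G=C_2$, $g\colon C_2/e\to C_2/C_2$ and $\udl{\E}=\spectra$. Since $g_\otimes\spectra\simeq\myuline{\spectra}_{C_2}$, the top level of $\udl{\func}^{\Delta_g}(g_*\udl{\E},\myuline{\spectra}_{C_2})$ is $\func^L_{C_2}(\myuline{\spectra}_{C_2},\myuline{\spectra}_{C_2})\simeq\spectra_{C_2}$. Your ``$S$-colimit-preserving functors into partially applied functors'' produces only a bilinear functor $\spectra\times\spectra\to\spectra$, i.e.\ an object of $\spectra$. Even adding the swap symmetry gives only $\spectra^{BC_2}$. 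The fixed-point (``norm'') component of a distributive functor is invisible to this description. So the equivalence $\udl{\func}^{\Delta_g}_T(g_*\bar w_\otimes X,\udl{\D})\simeq\udl{\func}^{\Delta_{g\bar w}}_T(g_*\bar w_*X,\udl{\D})$ is not established, in particular at the top level.

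\paragraph{How to close the gap.} The simplest fix is not to curry at all. The $1$-categorical $G$-symmetric monoidal structure from \cite{nardinThesis,brankoKaifNatalie} unstraightens to a cocartesian fibration $\int\udl{\presentable}^{\otimes}_{G,\mathrm{st}}\to\spancategory(\finite_G)$. Its morphisms over $U\xleftarrow{f}W\xrightarrow{w}V$ are $w$-distributive functors $w_*f^*\udl{\sC}\to\udl{\D}$, composed via exactly the Beck--Chevalley identifications, and its cocartesian edges are the universal $\mu_{f,w}$. Cocartesian edges are closed under composition, so $\mu_{v,g}\circ g_*v^*(\mu_{f,w})$ is cocartesian over the composite span. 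It therefore agrees with $\mu_{f\bar v,g\bar w}$, and the category-level universal property follows.

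If you prefer a from-scratch argument, it must handle all levels simultaneously, including the top one. One way is to treat spectral presheaf categories first: there the norm of presheaves on $\udl{B}$ is presheaves on $g_*\udl{B}$, so the claim reduces to $g_*\bar w_*\simeq(g\bar w)_*$. You would then pass to localizations.
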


\begin{lem}\label{lem:cartesian_2-symmon}
    The Mackey functor $\udl{\widehat{\cat}}^\times \colon \spancategory(\finite_G)\to {\CAT}$ classifying the  cartesian $G$-symmetric monoidal structure upgrades to an object in $\commmonoid_G(\CAT_2)$, i.e. there exists a finite-product-preserving dashed lift as in the diagram
    \[
    \begin{tikzcd}
         & \CAT_2 \dar["\forget"]\\
         \spancategory(\finite_G) \rar["\udl{\widehat{\cat}}^{\times}"] \ar[ur, "\udl{\widehat{\boldCat}}^{\times}", dashed] & \CAT
    \end{tikzcd}
    \] where the vertical map is \cref{nota:notation_for_2-cats}.
\end{lem}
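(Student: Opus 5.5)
The plan is to follow the nonequivariant strategy of \cite[Prop. 2.3.3]{blansblomChainRule}: build the 2-categorical Mackey functor directly on the unstraightened side, using \cref{fact:2-categorical_facts} (1). Concretely, let $p\colon \X\to\spancategory(\finite_G)$ be the cocartesian fibration classifying $\udl{\widehat{\cat}}^{\times}$. The fiber over $V$ is $\widehat{\cat}_V$. A morphism from $\udl{\sC}\in\widehat{\cat}_U$ to $\udl{\D}\in\widehat{\cat}_T$ over a span $U\xleftarrow{f}W\xrightarrow{w}T$ is a $T$-functor $w_*f^*\udl{\sC}\to\udl{\D}$. Since $\udl{\widehat{\cat}}_G$ is cartesian closed and $w_*$, $f^*$ are 2-functors on $\widehat{\boldCat}_G$, these hom-sets naturally form categories $\func_T(w_*f^*\udl{\sC},\udl{\D})$. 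We therefore define a 2-category $\mathbb{X}$ with the same objects, and with mapping categories
\[\func_{\mathbb{X}}(\udl{\sC},\udl{\D})\coloneqq\coprod_{[U\leftarrow W\rightarrow T]}\func_T(w_*f^*\udl{\sC},\udl{\D}),\]
taken over the mapping space of spans. Composition is defined using the Beck--Chevalley equivalences $v^*w_*\simeq\overline{w}_*\overline{v}^*$ from \cref{setting:pullback_notations}, exactly as in the composition formula of \cref{fact:2-categorical_facts} (2) but with $(-)_*$ in place of $(-)_{\otimes}$.

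To make this coherent rather than ad hoc, I would produce $\mathbb{X}$ as an enriched unstraightening. The functor $\spancategory(\finite_G)\to\CAT_2$ is obtained from the 2-functor $\finite_G\op\to\boldTwocat_2$, $V\mapsto\widehat{\boldCat}_V$, with restrictions $w^*$. Each $w^*$ admits a right 2-adjoint $w_*$, because $\widehat{\boldCat}_V\simeq\func((\orbit_G)_{/V}\op,\widehat{\boldCat})$ and right Kan extension is 2-functorial. These adjunctions satisfy Beck--Chevalley at the level of 2-functors, and that condition can be checked on underlying 1-categories since it concerns invertibility of a transformation. The span-extension machinery (Barwick's unfurling, or \cite[App. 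A]{blansblomChainRule} in its 2-categorical form) then gives a functor $\spancategory(\finite_G)\to\CAT_2$ that sends backward maps to $w^*$ and forward maps to $w_*$. Its underlying 1-categorical functor is, by construction, the same input data as the one defining $\udl{\widehat{\cat}}^{\times}$ through \cite[Thm. A']{natalieTensorEquivariant}. Hence $\forget$ applied to it recovers $\udl{\widehat{\cat}}^{\times}$.

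Next, finite-product preservation: $\widehat{\boldCat}_{V\sqcup V'}\simeq\widehat{\boldCat}_V\times\widehat{\boldCat}_{V'}$ as 2-categories. Since products in $\CAT_2$ are detected after applying $\forget$ together with mapping categories, this follows from the same statement levelwise.

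I expect the main obstacle to be the coherence of the span extension in the 2-categorical setting. We need the forward functoriality to act by $w_*$ on 2-cells compatibly with the Beck--Chevalley equivalences, and not merely on underlying 1-categories. I would first try to reduce this to the 1-categorical statement via \cref{fact:2-categorical_facts} (1): the 2-cocartesian condition is the pullback square of mapping categories, and the relevant mapping categories $\func_T(w_*f^*\udl{\sC},\udl{\D})$ are themselves the internal homs in $\cat_T$. One can therefore apply the 1-categorical construction to $\widehat{\cat}$ enriched over itself, i.e. use the cartesian closed self-enrichment to upgrade the 1-categorical cocartesian fibration $p$ to a 2-cocartesian one. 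Checking the pullback condition of \cref{fact:2-categorical_facts} (1) then amounts to the adjunction equivalence $\func_T(w_*f^*\udl{\sC},\udl{\D})\simeq\func_W(f^*\udl{\sC},w^*\udl{\D})$, which holds on the nose for the internal hom.
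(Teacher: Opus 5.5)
Your outline matches the paper's proof. You start from the 2-functor $V\mapsto\widehat{\boldCat}_V$ on $\finite_G\op$ with restrictions. You use that the $w^*$ have right 2-adjoints $w_*$ satisfying Beck--Chevalley, which can be checked pointwise, as you say. You then extend along spans and identify the underlying 1-functor with $\udl{\widehat{\cat}}^{\times}$ via uniqueness of $G$-cartesian structures. Your product-preservation remark is also fine.

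\textbf{The gap.} The step you flag as the main obstacle is essentially the whole content of the lemma, and your proposal does not supply it. Neither tool you name performs it. Barwick's unfurling is a 1-categorical statement producing functors to $\CAT$, not to $\CAT_2$. The 2-straightening of \cite[App.~A]{blansblomChainRule} only translates between 2-functors and 2-cocartesian fibrations, so it presupposes the coherent 2-category $\mathbb{X}$ rather than building it.

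\textbf{The missing ingredient.} It is the universal property of the span 2-category $\spancategory_{1,5}(\finite_G)$ from \cite[Thm.~1.1.1]{stefanich}. A 2-functor out of $\finite_G\op$ into a 2-category (here $\boldTwocat_2$), whose values on morphisms admit right adjoints satisfying Beck--Chevalley, extends uniquely to $\spancategory_{1,5}(\finite_G)^{\mathrm{co}}$. The other leg of each span goes to the right adjoint $w_*$, and all coherence on 2-cells is built in. Restricting along $\spancategory(\finite_G)\to\spancategory_{1,5}(\finite_G)^{\mathrm{co}}$ gives a 2-functor out of a 1-category, i.e.\ a 1-functor $\spancategory(\finite_G)\to\CAT_2$. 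That is exactly the paper's argument.

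\textbf{Your workaround.} The last paragraph does not close the gap, for two reasons.
\begin{enumerate}
\item \cref{fact:2-categorical_facts} (1) is a criterion for an already constructed functor of 2-categories to be 2-cocartesian. It cannot manufacture the composition law of $\mathbb{X}$ on 2-cells or its compatibility with the Beck--Chevalley equivalences. ``Upgrading $p$ using the self-enrichment'' is not a construction.
\item The equivalence you propose to check is false. Since $w_*$ is the right adjoint of $w^*$, in general $\func_T(w_*f^*\udl{\sC},\udl{\D})\not\simeq\func_W(f^*\udl{\sC},w^*\udl{\D})$. For $G=e$ and $w\colon\ast\sqcup\ast\to\ast$ it would assert $\func(A\times B,D)\simeq\func(A,D)\times\func(B,D)$; that formula holds for $w_!$, not $w_*$.
\end{enumerate}
Once $\mathbb{X}$ exists, the pullback condition for the edges $\udl{\sC}\to w_*f^*\udl{\sC}$ reduces instead to the base-change equivalences $u^*w_*\simeq\overline{w}_*\overline{u}^*$ of \cref{setting:pullback_notations} that encode span composition. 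So it holds essentially by construction, which confirms that the real difficulty is the coherent construction provided by Stefanich's theorem.
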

\begin{proof}
Note that the restriction functoriality clearly has a lift to a 2-functor $\udl{\widehat{\cat}}_G \colon \finite_G\op\to \boldTwocat_2$. Using the $2$-category structure of the latter together with the universal property of span $2$-categories, we obtain a unique $2$-functor $$\spancategory_{1,5}(\finite_G)^{\mathrm{co}}\to \boldTwocat_2$$  extending the original one, and sending the backwards arrows to the right adjoints of the restriction functors, i.e. to indexed products. Here, $\spancategory_{1,5}$ denotes the usual span $2$-category\footnote{The subscript $1,5$ is meant to evoke that this is a $2$-category, but the $2$-morphisms between spans are themselves only allowed to go in one direction, where there are also span categories where the $2$-morphims are spans between spans.}, also called $2$-category of correspondences in \cite[Thm. 1.1.1]{stefanich}, and we use the universal property established in \textit{loc. cit.}.  We may then restrict this functor along the finite-product-preserving functor $\spancategory(\finite_G)\rightarrow \spancategory_{1,5}(\finite_G)^{\mathrm{co}}$ to obtain a finite-product-preserving 2-functor $\spancategory(\finite_G)\rightarrow  \boldTwocat_2$. But since $\spancategory(\finite_G)$ was a 1-category, this factors as a finite-product-preserving 1-functor $\spancategory(\finite_G)\rightarrow  \CAT_2$ which we denote as $\udl{\widehat{\boldCat}}^{\times}\in \cmonoid_G(\CAT_2)$. Finally, the statement that $\udl{\widehat{\boldCat}}^{\times}$ provides the dashed lift is an immediate consequence of the uniqueness of $G$-cartesian symmetric monoidal structures, cf. \cite[Thm. A']{stewartCommOperads}.
\end{proof}

\begin{prop}\label{prop:mackey2}
    The Mackey functor $\udl{\presentable}_{G,\mathrm{st}}^\otimes \colon \spancategory(\finite_G)\to {\CAT}$ classifying the $G$-symmetric monoidal structure  upgrades to an object in $\commmonoid_G(\CAT_2)$, i.e. there exists a finite-product-preserving dashed lift as in the diagram
    \[
    \begin{tikzcd}
         & \CAT_2 \dar["\forget"]\\
         \spancategory(\finite_G) \rar["\udl{{\presentable}}^{\otimes}_{G,\mathrm{st}}"] \ar[ur, "\udl{{\boldPr}}^{\otimes}_{G,\mathrm{st}}", dashed] & \CAT
    \end{tikzcd}
    \] where the vertical map is as in \cref{nota:notation_for_2-cats}.
\end{prop}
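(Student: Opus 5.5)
The strategy follows \cite[Prop. 2.3.3]{blansblomChainRule}: realise $\udl{\boldPr}^{\otimes}_{G,\mathrm{st}}$ as a locally full 2-subfunctor of a 2-categorical Mackey functor of large categories. The natural ambient object is the cartesian one $\udl{\widehat{\boldCat}}^{\times}\in\commmonoid_G(\CAT_2)$ from \cref{lem:cartesian_2-symmon}. Its underlying functor $\spancategory(\finite_G)\to\CAT$ does not map directly onto the norm structure, since $w_*$ is not $w_\otimes$. So we pass through a 2-cocartesian unstraightening and a localization that converts indexed products into norms.

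\textbf{Step 1: the 2-cocartesian unstraightening.} Apply \cref{fact:2-categorical_facts}(1) to $\udl{\widehat{\boldCat}}^{\times}$. This gives a 2-cocartesian fibration $p\colon\X\to\spancategory(\finite_G)$.
\begin{itemize}
\item Objects over $V$ are $V$-categories.
\item A 1-morphism lying over a span $U\xleftarrow{f}W\xrightarrow{w}V$ is a $V$-functor $w_*f^*\udl{\sC}\to\udl{\D}$.
\item 2-morphisms are natural transformations.
\end{itemize}

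\textbf{Step 2: the 2-subcategory $\X^{\mathrm{dist}}$.} Inside $\X$, take the locally full 2-subcategory $\X^{\mathrm{dist}}$ defined as follows.
\begin{itemize}
\item Objects are the $G$-presentable $G$-stable categories.
\item 1-morphisms are the $w$-distributive functors $w_*f^*\udl{\sC}\to\udl{\D}$.
\item 2-morphisms are all natural transformations between these.
\end{itemize}
Closure under composition follows from the composition compatibility of distributive functors in \cref{fact:2-categorical_facts}(2), together with Beck--Chevalley.

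\textbf{Step 3: $\X^{\mathrm{dist}}$ is 2-cocartesian.} The goal is to show that $\X^{\mathrm{dist}}\to\spancategory(\finite_G)$ is again a 2-cocartesian fibration, with cocartesian lifts given by the universal distributive functors $\mu_{f,w}\colon w_*f^*\udl{\sC}\to w_\otimes f^*\udl{\sC}$. By the criterion in \cref{fact:2-categorical_facts}(1), we must check that precomposition with $\mu$ induces equivalences of functor \emph{categories}, not merely of mapping spaces. Concretely, this reduces to
\[\func^{L}_V(w_\otimes f^*\udl{\sC},\udl{\D})\xrightarrow{\;\simeq\;}\func^{\Delta_w}_V(w_*f^*\udl{\sC},\udl{\D}),\]
taken fiberwise and compatibly with composition by spans. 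This is precisely the categorical (not just groupoidal) universal property stated in \cref{fact:2-categorical_facts}(2). The compatibility of cocartesian lifts under composition of spans is the second half of that fact.

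\textbf{Step 4: straightening back.} Straighten $\X^{\mathrm{dist}}$ via \cref{fact:2-categorical_facts}(1). This produces a functor $\spancategory(\finite_G)\to\CAT_2$. It preserves finite products because the fibres over $V\sqcup V'$ split as products, just as they do for $\udl{\widehat{\boldCat}}^{\times}$. Applying $\forget$ yields a Mackey functor with the correct values, since:
\begin{itemize}
\item restrictions are the usual ones;
\item transfers are $w_\otimes$;
\item the $G$-symmetric monoidal structure on $\presentable^L_{G,\mathrm{st}}$ in \cite{nardinThesis,brankoKaifNatalie} is itself constructed by exactly this universal property.
\end{itemize}
Hence $\forget$ of the result identifies with $\udl{\presentable}^{\otimes}_{G,\mathrm{st}}$. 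If one instead works from an abstract characterisation of the $G$-symmetric monoidal structure on $\presentable^L_{G,\mathrm{st}}$, the identification can be made by comparing the underlying 1-categorical cocartesian fibrations, both of which are cut out of $\X$ by the same distributivity condition.

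\textbf{Main obstacle.} I expect Step 3 to be the hardest. The difficulty is making the 2-categorical cocartesianness precise: showing that the mapping-category equivalence of the universal property is natural in all three variables, so that the pullback squares in \cref{fact:2-categorical_facts}(1) hold at the level of categories and not just of groupoid cores. My first approach would be to deduce it from the $\udl{\func}$-valued (parametrized) universal property and then take fibres.
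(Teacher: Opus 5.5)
Your proposal is correct and follows essentially the same route as the paper: unstraighten $\udl{\widehat{\boldCat}}^{\times}$, cut out the locally full sub-2-category of distributive 1-morphisms, show it is 2-cocartesian via the universal property and composition compatibility of norms from \cref{fact:2-categorical_facts}(2), and straighten back (the paper phrases your Step 3 as ``locally 2-cocartesian, with lifts closed under composition'' and invokes \cite[Lem.~A.2.12]{blansblomChainRule}). The one divergence is the finite-product step. The paper appeals to $\forget$ reflecting products, whereas you argue directly that the fibres over $V\sqcup V'$ split because the sub-2-category is locally full inside the product-preserving ambient $\udl{\widehat{\boldCat}}^{\times}$. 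Your version is arguably the cleaner justification, since $\forget$ is not conservative in general.
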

\begin{proof}
    The construction, which ultimately goes back to that of Lurie's \cite[\textsection 4.8.1]{lurieHA} in the nonequivariant situation, proceeds exactly as in \cite[\textsection 3.4]{nardinThesis} or \cite{brankoKaifNatalie} by replacing all mention of 1-cocartesian fibrations to 2-cocartesian fibrations.
    
    In more detail, by \cref{fact:2-categorical_facts} (1) and \cref{lem:cartesian_2-symmon}, we obtain a $2$-cocartesian fibration $p\colon \int\udl{\widehat{\boldCat}}^\times_G\to \spancategory(\finite_G)$. As in \cite[Prop. 2.3.3]{blansblomChainRule}, we can now consider the locally full subcategory (i.e. the 2-subcategory which induces fully faithful inclusions on the mapping 1-categories) $\int \udl{{\boldPr}}^{\otimes}_{G,\mathrm{st}}\subset \int\udl{\widehat{\boldCat}}^\times_G$ given as follows: an object over a finite set $V\in \finite_G$ is given by an object in $\presentable^L_{V,\mathrm{st}}$; a 1-morphism $\udl{\sC}\rightarrow \udl{\D}$ with $\udl{\sC}\in\presentable^L_{U,\mathrm{st}}$ and $\udl{\D}\in\presentable^L_{V,\mathrm{st}}$ lies over a morphism $U\xleftarrow{f} W \xrightarrow{w} V$ in $\spancategory(\finite_G)$ if and only if the associated morphism $w_*f^*\udl{\sC}\rightarrow\udl{\D}$ of $V$-categories is $w$-distributive. By construction, it is clear that the underlying 1-category $\forget(\int \udl{{\boldPr}}^{\otimes}_{G,\mathrm{st}})$ agrees with the 1-category $\int \udl{{\presentable}}^{\otimes}_{G,\mathrm{st}}$

    Now, by \cref{fact:2-categorical_facts} (2), the restricted functor $\int \udl{{\boldPr}}^{\otimes}_{G,\mathrm{st}}\rightarrow\spancategory(\finite_G)$ is a locally 2-cocartesian fibration in the sense of \cite[Def. A.2.11]{blansblomChainRule}. By \cref{fact:2-categorical_facts} (2) again and \cite[Lem. A.2.12]{blansblomChainRule}, it is in fact even a 2-cocartesian fibration (note however that the inclusion $\int \udl{{\boldPr}}^{\otimes}_{G,\mathrm{st}}\subset \int\udl{\widehat{\boldCat}}^\times_G$ does not preserve all cocartesian edges, only the inert ones). 

    Finally, by performing 2-straightening on this via \cref{fact:2-categorical_facts} (1), we obtain a functor $\udl{{\boldPr}}^{\otimes}_{G,\mathrm{st}}\colon \spancategory(\finite_G)\rightarrow \CAT_2$ which agrees with $\udl{{\presentable}}^{\otimes}_{G,\mathrm{st}}$ upon postcomposing with $\forget\colon \CAT_2\rightarrow \CAT$. Since $\forget\colon \CAT_2\rightarrow \CAT$ reflects products and $\udl{{\presentable}}^{\otimes}_{G,\mathrm{st}}\colon \spancategory(\finite_G)\rightarrow \CAT$ is known to preserve finite products already, we also get that $\udl{{\boldPr}}^{\otimes}_{G,\mathrm{st}}\colon \spancategory(\finite_G)\rightarrow \CAT_2$ preserves finite products. This completes the proof.
 \end{proof}

\printbibliography

\end{document}